\title{Heat kernel estimates on spaces with varying dimension}
\setlist[enumerate]{label={\upshape (\roman*)}}
\newtheorem{Def}{Definition}[section]
\newtheorem{Thm}[Def]{Theorem}
\newtheorem{Lem}[Def]{Lemma}
\newtheorem{Prop}[Def]{Proposition}
\newtheorem{Rem}[Def]{Remark}
\newtheorem{Cor}[Def]{Corollary}
\newcommand{\R}{\mathbb{R}}
\newcommand{\e}{e^{-\rho(x,y)^2/t}}
\newcommand{\ex}{e^{-|x|_{\rho}^2/t}}
\newcommand{\ey}{e^{-|y|_{\rho}^2/t}}
\newcommand{\exy}{e^{-(|x|_{\rho}+|y|_{\rho})^2/t}}
\newcommand{\ee}{e^{-|x-y|^2/t}}
\author{Takumu Ooi\thanks{Research Institute for Mathematical Sciences, Kyoto University, Kyoto, 606-8502, JAPAN.\ E-mail:ooitaku@kurims.kyoto-u.ac.jp}}
\date{}
\begin{document}
\maketitle

\begin{abstract}
 We obtain sharp two-sided heat kernel estimates on spaces with varying dimension, in which two spaces of general dimension are connected at one point. On these spaces, if the dimensions of the two constituent parts are different, the volume doubling property fails with respect to the measure induced by the associated Lebesgue measures. Thus the parabolic Harnack inequalities fail and the heat kernels do not enjoy Aronson type estimates. Our estimates show that the on-diagonal estimates are independent of the dimensions of the two parts of the space for small time, whereas they depend on their transience or recurrence for large time. These are multidimensional version of a space considered by Z.-Q. Chen and S. Lou (Ann. Probab. 2019), in which a 1-dimensional space and a 2-dimensional space are connected at one point.
\end{abstract} 
{\bf Key words} Space of varying dimension, Brownian motion, transition density, heat kernel estimates\\
{\bf MSC(2010)} 60J60, 60J35, 31C25, 60H30, 60J45

\section{Introduction}\label{intro}
The heat kernel, the fundamental solution of the heat equation, has been studied in many areas, both for mathematical interest and for its importance in applications. The heat kernel is the transition density of Brownian motion, and it is difficult to determine its explicit form except in some special cases, such as on Euclidean spaces. Thus, heat kernel estimates have been studied on various spaces, see for example, \cite{CKS,G3,GSe,GSc,GIS,S1}.
In a remarkable series of result, Grigor'yan \cite{G3}, Saloff-Coste \cite{S1} and Sturm \cite{St1,St2} proved that the following are equivalent on a metric measure space : $($i$)$ the volume doubling property and scaled Poincar\'{e} inequalities, $($ii$)$ the parabolic Harnack inequalities, $($iii$)$ Aronson type estimates of the heat kernel. These results were extended to the setting of graphs in \cite{D}.

In studies of heat kernel estimates, the volume doubling property is a natural assumption. However, there are many spaces that do not satisfy this property. One such example is a space with varying dimension given as following: for fixed $\varepsilon >0$, 
$$\R_{\varepsilon}^2\cup \R_+\cup \{a^*\}:=\{(x,0)\ |\ x\in\R^2, |x| >\varepsilon \} \cup \{(0,0,x)\ |\ x>0 \} \cup \{a^*\}.$$ Here, we identify $\{x\in \R^2\ |\ |x| \leq \varepsilon \}$ and $0\in \R$ with a point $a^*$. Z.-Q. Chen and S. Lou  \cite{CL} constructed a stochastic process on $\R_{\varepsilon}^2\cup \R_+\cup \{a^*\}$ that they called Brownian motion with varying dimension (BMVD). Note that $ \R_{\varepsilon}^2\cup \R_+\cup \{a^*\}$ was considered instead of $\R^2\cup \R_+$ because 2-dimensional Brownian motion never hits $0$. For BMVD, the following heat kernel estimates were given. To state the result, let $\rho$ be the shortest path metric derived from the Euclidean metric on the two parts of the space (the precise definition is denoted below) and $|x|_{\rho}$ be the distance between $x$ and $a^*$ with respect to $\rho$. 

\begin{Thm}{\rm ({\cite[Theorem 1.3,\ 1.4]{CL}})}\label{small12}
$[{\rm I}]$
Let $T>0$ be fixed. The transition density $p(t,x,y)$ of BMVD satisfies the following estimates when $t\in (0,T].$\\
$($i$)$ For $x\in \R_+$ and $y\in \R_{\varepsilon}^2\cup \R_+\cup \{a^*\}$, $$p(t,x,y) \asymp \frac{1}{\sqrt{t}}\e.$$
$($ii$)$ For $x,y\in \R_{\varepsilon}^2\cup \{a^*\}$ with $|x|_{\rho}\vee |y|_{\rho}<1$,
\begin{eqnarray*}p(t,x,y)\asymp \frac{1}{\sqrt{t}}\e+\frac{1}{t}\left(1\wedge \frac{|x|_{\rho}}{\sqrt{t}} \right)\left(1\wedge \frac{|y|_{\rho}}{\sqrt{t}} \right)\ee, \end{eqnarray*}
and for $x,y\in \R_{\varepsilon}^2\cup \{a^*\}$ with $|x|_{\rho}\vee|y|_{\rho}\geq1$,
$$p(t,x,y) \asymp \frac{1}{t}\e.$$
$[{\rm II}]$ The transition density $p(t,x,y)$ of BMVD satisfies the following estimates for $t\geq 8.$\\
$($i$)$ For $x,y\in \R_{\varepsilon}^2\cup \{a^*\}$, $$p(t,x,y) \asymp \frac{1}{t}\e.$$
$($ii$)$ For $x\in \R_+$ and $y\in \R_{\varepsilon}^2\cup \{a^*\}$, when $|y|_{\rho}\leq 1$,
$$p(t,x,y)\asymp \frac{1}{t}\left(1+\frac{|x|\log{t}}{\sqrt{t}} \right)\e,$$
and when $|y|_{\rho}>1$, $$p(t,x,y)\asymp \frac{1}{t}\left(1+\frac{|x|}{\sqrt{t}}\log{\left(1+\frac{\sqrt{t}}{|y|_{\rho}}\right)} \right)\e. $$
$($iii$)$ For $x,y\in \R_+$
$$p(t,x,y)\! \asymp \! \frac{\ee}{\sqrt{t}}\! \left(1\wedge \frac{|x|}{\sqrt{t}} \right)\! \! \left(1\wedge \frac{|y|}{\sqrt{t}} \right)\!+\frac{\exy}{t}\! \left( \!1\!+\frac{(|x|\!+|y|)\log{t}}{\sqrt{t}}\right).$$
\end{Thm}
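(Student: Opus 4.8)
The plan is to reduce every estimate to the behaviour of BMVD near the junction point $a^*$, as in Chen--Lou, the two essential inputs being the explicit one-dimensional killed kernel on $\R_+$ and the classical sharp Dirichlet heat kernel bounds for the exterior of a disc on $\R^2_\varepsilon=\{|x|>\varepsilon\}$. Writing $\sigma$ for the first hitting time of $a^*$, the strong Markov property gives, for $x$ in one of the pieces,
\begin{equation*}
p(t,x,y)=p^{D}(t,x,y)\,\mathbf{1}_{\{y\text{ in the piece of }x\}}+\int_0^t\mathbb{P}_x(\sigma\in ds)\,p(t-s,a^*,y),
\end{equation*}
where $p^{D}$ is the transition density of the part process on the piece of $x$, killed at $a^*$: on $\R_+$ this and $\mathbb{P}_x(\sigma\in ds)$ are explicit, while on $\R^2_\varepsilon$ one has $p^{D}(t,x,y)\asymp t^{-1}\bigl(1\wedge\tfrac{\delta(x)}{\sqrt t}\bigr)\bigl(1\wedge\tfrac{\delta(y)}{\sqrt t}\bigr)\exp(-c|x-y|^2/t)$ with $\delta(\cdot)\asymp|\cdot|_\rho$, and $\mathbb{P}_x(\sigma\in ds)$ is governed by the normal derivative of $p^{D}$ on the circle (this is where the logarithmic factors of planar hitting enter --- bounded for $t\le T$, but not for large $t$). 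A last-exit decomposition from $a^*$ then yields $p(t,a^*,y)=\int_0^t h(s)\,\eta_i(t-s,y)\,ds$, with $h(s):=p(s,a^*,a^*)$ the occupation density at $a^*$ and $\eta_i$ the entrance law of an excursion into the piece $i$ of $y$. Thus, once $h$ is known, every $p(t,x,y)$ is an explicit convolution in time of known kernels, and the remaining task is to find $h$.

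I would obtain $h$ by excursion theory. Its Laplace transform satisfies $\int_0^\infty e^{-\lambda t}h(t)\,dt\asymp 1/\Phi(\lambda)$, where $\Phi(\lambda)=\lambda\int_0^\infty e^{-\lambda s}\,n(\zeta>s)\,ds$ is the Laplace exponent of the inverse local time at $a^*$, and $n(\zeta>s)=n_1(\zeta>s)+n_2(\zeta>s)$ splits according to whether the excursion enters $\R_+$ or $\R^2_\varepsilon$. Here $n_1(\zeta>s)\asymp s^{-1/2}$, while $n_2(\zeta>s)\asymp s^{-1/2}$ for small $s$ (the normal direction at the circle is one-dimensional) and $n_2(\zeta>s)\asymp 1/\log s$ for large $s$ (planar excursions are long-lived, planar Brownian motion being only barely recurrent). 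Hence $\Phi(\lambda)\asymp\lambda^{1/2}$ as $\lambda\to\infty$ and $\Phi(\lambda)\asymp 1/\log(1/\lambda)$ as $\lambda\to0$, and a Tauberian argument using the complete monotonicity of $h$ gives $p(t,a^*,a^*)\asymp t^{-1/2}$ for small $t$ and $\asymp t^{-1}$ for large $t$ --- already the announced small-time/large-time dichotomy. The upper bound $h(t)\lesssim t^{-1/2}$ for small $t$ can also be read off from a generalized Nash (Faber--Krahn) inequality; the failure of a pure power-type one is the analytic shadow of the failure of volume doubling.

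Substituting $h$ back into the two decompositions and estimating the resulting convolutions --- splitting the time integral at $s\asymp t$, $|x|_\rho^2$ and $|y|_\rho^2$, and using that $\rho$ is the intrinsic path metric so that the Gaussian factors combine into $\e$ --- reproduces each displayed formula; the long case list is simply the bookkeeping of which mechanism, hitting $a^*$ versus staying inside one piece, dominates in a given regime, the $(1\wedge|x|_\rho/\sqrt t)$-type factors coming from the exterior-disc kernel and the $\log t$ and $\log(1+\sqrt t/|y|_\rho)$ factors from the large-$s$ behaviours above. For the \emph{upper} Gaussian exponent it suffices to insert the known Gaussian bounds for $p^{D}$, $\mathbb{P}_x(\sigma\in ds)$ and $\eta_i$ into the convolutions (or, alternatively, to apply Davies' method / Grigor'yan's integrated maximum principle with the metric $\rho$, fed only the on-diagonal information above); for the \emph{lower} Gaussian exponent one keeps a single favourable term in each decomposition and chains a near-diagonal lower bound --- itself obtained from conservativeness, the on-diagonal upper bound, and the identity $p_{2t}(x,x)=\int p_t(x,z)^2\,m(dz)$ --- along a $\rho$-geodesic.

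The main obstacle, I expect, is precisely this last family of steps near $a^*$. Since volume doubling fails there is no parabolic Harnack inequality, so the standard equivalences are unavailable: the near-diagonal lower bound cannot be chained naively through $a^*$, where the local dimension --- hence the volume growth of the geodesic tube --- jumps, and the chain must be chosen by hand to keep its successive balls comparable across the junction. Equally delicate is carrying out the convolution estimates with \emph{matching} two-sided constants, i.e.\ showing that the one- and two-dimensional excursion mechanisms combine into exactly the displayed terms, with the correct powers of $t$ and the correct boundary and Gaussian prefactors, rather than merely into comparable quantities up to logarithms --- all the while making the excursion-theoretic identification of $h$ rigorous given that $a^*$ carries no mass.
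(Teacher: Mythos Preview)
This theorem is quoted from Chen--Lou \cite{CL}; the present paper does not reprove it but rather generalizes it (Theorems \ref{small}--\ref{d'=3}), and its Section~\ref{smallsec} follows \cite{CL} closely for part [I]. So the relevant comparison is between your outline and the \cite{CL}/paper method.

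Your decomposition $p=p^{D}+\int\mathbb{P}_x(\sigma\in ds)\,p(t-s,a^*,y)$ is exactly what is used. The real divergence is in how to control the ``through~$a^*$'' term $\overline p(t,x,y)=\int_0^t p(t-s,a^*,y)\,\mathbb{P}_x(\sigma\in ds)$ and, in particular, $p(t,a^*,a^*)$. You propose excursion theory: compute the Laplace exponent $\Phi(\lambda)$ of the inverse local time from the excursion lifetime tails, invert by Tauberian theorems, and then chain a near-diagonal lower bound along $\rho$-geodesics. The paper (and \cite{CL}) instead introduces the \emph{signed radial process} $Y_t=u(X_t)$, where $u=\pm|x|_\rho$ according to the piece. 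A Fukushima decomposition shows $Y$ is a one-dimensional diffusion---a skew Brownian motion with a bounded radial drift (Proposition~\ref{dY_t})---and hence its transition density satisfies two-sided one-dimensional Gaussian bounds $p^{(Y)}(t,x,y)\asymp t^{-1/2}e^{-|x-y|^2/t}$ on $(0,T]$ (Proposition~\ref{Yestimate}). Since $p(t,x,y)$ for $x,y$ in different pieces, and $\overline p(t,x,y)$ for $x,y$ in the same piece, depend only on $|x|_\rho,|y|_\rho$, a change of variables identifies them (up to a surface Jacobian $\asymp|y|^{d-1}$) with $p^{(Y)}(t,-|x|_\rho,|y|_\rho)$. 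This single move delivers $p(t,a^*,a^*)\asymp t^{-1/2}$, the cross-piece estimate, and the $\overline p$-estimate simultaneously, with matching Gaussian exponents, and \emph{without} any chaining through $a^*$, any Tauberian inversion, or any explicit excursion measure. For part [II], \cite{CL} again computes $p(t,a^*,a^*)$ directly (now $\asymp t^{-1}$) by feeding the hitting-time density of $a^*$ into the Chapman--Kolmogorov identity $p(t,a^*,a^*)=\int p(t/2,a^*,x)^2\,dm_p(x)$, and then convolves as you describe; the paper also notes (end of Section~\ref{lsec1}) that [II] can be read off from Grigor'yan--Ishiwata--Saloff-Coste's estimates on $(\R_+\times S^1_\varepsilon)\#\R^2$ by projecting.

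Your excursion/Tauberian route is plausible, and you correctly identify its obstacles: Tauberian theorems give asymptotics, not two-sided bounds uniform on $(0,T]$; the chaining through $a^*$ is exactly where the absence of doubling bites; and making the last-exit decomposition rigorous at a point of zero mass requires work. The radial-process trick bypasses all of this, because the one-dimensional object $Y$ \emph{does} live on a doubling space, so the full parabolic Harnack machinery applies to it and is then transferred back to $X$.
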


Here and throughout this paper, we use the notation $a\wedge b:=\min{\{a,b\}}$, $a\vee b:=\max{\{a,b\}}$, and for $(t,x,y)\in A\subset [0,\infty)\times (\R_{\varepsilon}^d\cup \R_{\varepsilon'}^{d'}\cup\{a^*\})\times (\R_{\varepsilon}^d\cup \R_{\varepsilon'}^{d'}\cup\{a^*\})$ and non-negative functions $f(t,x,y), g(t,x,y), h(t,x,y)$,
$$fe^{-h} \lesssim ge^{-h}$$
 ($\gtrsim,$ respectively) means that there exist $C>0, c_1>0, c_2>0$, independent of $(t,x,y)\in A$, such that $fe^{-c_1h} \leq  Cge^{-c_2h}$ for $(t,x,y)\in A$ ($\geq,$ respectively). Moreover, 
 $$fe^{-h}\asymp ge^{-h}$$ means that $fe^{-h}\lesssim ge^{-h}$ and $fe^{-h}\gtrsim ge^{-h}$. In computations, constants $C,c$ may change from line to line.

Concerning other work for heat kernel estimates on spaces with varying dimension, S. Lou deduced such for Brownian motion with drift on $\R_{\varepsilon}^2\cup \R_+\cup \{a^*\}$ in \cite{L1} and obtained an explicit expression for the heat kernel of distorted Brownian motion on $\R^3\cup \R_+$ in \cite{L2}. 

In this paper, we estimate the heat kernel for Brownian motion on spaces with general varying dimension.
To introduce the setting more precisely, let $d\geq d' \geq 1$ and $\varepsilon,\ \varepsilon'>0.$ We define $$\R_{\varepsilon}^d:=\{x\in \R^d\ ;\ |x|>\varepsilon \}, \ \R_{\varepsilon'}^d:=\{x\in \R^{d'}\ ;\ |x|>\varepsilon' \},$$ where $|\cdot |$ is the Euclidean norm. For simplicity, set $\R_{\varepsilon}^1:=\R_+:=(0,\infty)$ for all $\varepsilon$. For $\R^d$ and $\R^{d'}$, we identify $\{ x\in \R^d; |x|\leq \varepsilon \}$ and $\{ x\in \R^{d'};|x|\leq \varepsilon' \}$ with a point $a^*$. We will establish heat kernel estimates for Brownian motion on $\R_{\varepsilon}^d \cup \R_{\varepsilon'}^{d'}\cup \{a^*\}$, where $\R_{\varepsilon}^d \cup \R_{\varepsilon'}^{d'}$ means $\{(x,\overbrace{0,\cdots ,0}^{d'})\ |\  x\in \R_{\varepsilon}^d \}\cup \{(\overbrace{0,\cdots,0}^{d},y)\ |\ y\in \R_{\varepsilon'}^{d'}\}$.

We define a neighborhood of $a^*$ as $\{a^*\} \cup (U_1\cap \R_{\varepsilon}^d)\cup (U_2\cap \R_{\varepsilon'}^{d'})$ for some neighborhoods $U_1$ of $\{ x\in \R^d\ ;\ |x|\leq \varepsilon \}$ and $U_2$ of $\{ x\in \R^{d'}\ ;\ |x|\leq \varepsilon' \}$. Moreover, we consider the topology on $\R_{\varepsilon}^d \cup \R_{\varepsilon'}^{d'}\cup \{a^*\}$ induced by the neighborhoods. We denote the Borel $\sigma$-field by $\mathcal{B}:=\mathcal{B}(\R_{\varepsilon}^d \cup \R_{\varepsilon'}^{d'}\cup \{a^*\})$.

For a constant $p>0$, we define $m_p(A):=m^{(d)}(A\cap \R^d)+p\ m^{(d')}(A\cap \R^{d'})$ for $A\in \mathcal{B}$. Here, $m^{(d)}$ is the Lebesgue measure on $\R^d$. In particular, $m_p(\{a^*\})=0$.

We extend the definition of Brownian motion with varying dimension as follows. In Theorem \ref{BMVDexuni}, we will describe the existence and the uniqueness of a process satisfying the following definition.
\begin{Def}\label{BMVD_def}
	Let $d\geq d'\geq 1$, $\varepsilon, \varepsilon'>0$ and $p>0$. Brownian motion with varying dimension $($BMVD in abbreviation$)$ with parameters $(\varepsilon, \varepsilon', p)$ on $\R_{\varepsilon}^d \cup \R_{\varepsilon'}^{d'}\cup \{a^*\}$ is an $m_p$-symmetric diffusion $X=(\{X_t\}, \{\mathbb{P}_x\})$ on $\R_{\varepsilon}^d \cup \R_{\varepsilon'}^{d'}\cup \{a^*\}$ such that$:$
	\begin{enumerate}
	\item its part process on $\R_{\varepsilon}^d$ or $\R_{\varepsilon'}^{d'}$ has the same law as Brownian motion killed upon leaving $\R_{\varepsilon}^d$ or $\R_{\varepsilon'}^{d'}$, respectively,
	\item it admits no killings on $a^*$.
	\end{enumerate}\end{Def}
Throughout the paper, $X=(\{X_t\},\{\mathbb{P}_x \})$ denotes BMVD, $\mathbb{E}_x$ denotes the expectation corresponding to $\mathbb{P}_x$ and $P_tf(x):=\mathbb{E}_x\left(f(X_t)\right)$ for a bounded Borel measurable function $f$. Let $p(t,x,y)$ be the heat kernel with respect to $m_p$ whose existence will be proved in Proposition \ref{Nash}. Let $C_c^\infty$ be the set of all smooth functions with compact support and $\sigma_{K}:=\inf{\{t>0\ |\ X_t\in K \}}$ be the hitting time of $K\in \mathcal{B}$.

Next, we introduce a distance $\rho$ on $\R_{\varepsilon}^d \cup \R_{\varepsilon'}^{d'}\cup \{a^*\}$, as follows,
\begin{equation*} |x|_{\rho}:=\left \{ \begin{split} &|x|-\varepsilon&\ {\rm for}&\ x\in \R_{\varepsilon}^d, \\ &|x|-\varepsilon'&\ {\rm for}&\ x\in \R_{\varepsilon'}^{d'},\\
&\ \ \ \ 0\ \ &\ {\rm for}&\ x=a^*,\end{split} \right.\end{equation*}
$$\rho (x,y):=\left(|x|_{\rho}+|y|_{\rho}\right) \wedge |x-y|\\ \ \ {\rm for}\  x,y\in \R_{\varepsilon}^d\cup \R_{\varepsilon'}^{d'}\cup \{a^*\}.$$
Here, for $x\in \R_{\varepsilon}^d\cup\{a^*\}, y\in \R_{\varepsilon'}^{d'}\cup\{a^*\}$ or $x\in \R_{\varepsilon'}^{d'}\cup\{a^*\}, y\in \R_{\varepsilon}^{d}\cup\{a^*\}$, we define $|x-y|:=\infty$.\\

The following theorems are the main results in this paper.
\begin{Thm}[{\bf Small time estimates}]
 	\label{small}
 	Let $d\geq d' \geq 1$ and $T\geq 1$ be fixed. The heat kernel $p(t,x,y)$ satisfies the following estimates when $t\in (0,T].$
 	\begin{enumerate}
 	\item For $x,y \in \R_{\varepsilon'}^{d'}$ with $|x|_{\rho}\vee |y|_{\rho} \leq 1$, $$p(t,x,y)\asymp \frac{1}{\sqrt{t}}\e+\frac{1}{t^{d'/2}}\left( 1\wedge \frac{|x|_{\rho}}{\sqrt{t}}\right)\left( 1\wedge \frac{|y|_{\rho}}{\sqrt{t}}\right)\ee .$$
 	For $x,y \in \R_{\varepsilon'}^{d'}$ with $|x|_{\rho}\vee |y|_{\rho} > 1$, $$p(t,x,y)\asymp \frac{1}{t^{d'/2}}\e$$
 	\item For $x,y \in \R_{\varepsilon}^{d}$ with $|x|_{\rho}\vee |y|_{\rho} \leq 1$, $$p(t,x,y)\asymp \frac{1}{\sqrt{t}}\e+\frac{1}{t^{d/2}}\left( 1\wedge \frac{|x|_{\rho}}{\sqrt{t}}\right)\left( 1\wedge \frac{|y|_{\rho}}{\sqrt{t}}\right)\ee .$$
 	For $x,y \in \R_{\varepsilon}^{d}$ with $|x|_{\rho}\vee |y|_{\rho} > 1$,\ $$p(t,x,y)\asymp \frac{1}{t^{d/2}}\e$$
 	\item For $x\in \R_{\varepsilon}^{d}\cup \{a^*\}, y\in \R_{\varepsilon'}^{d'}\cup \{a^*\}$, $$p(t,x,y)\asymp \frac{1}{\sqrt{t}}\e .$$
 	\end{enumerate}
\end{Thm}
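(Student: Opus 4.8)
The plan is to obtain the estimates by comparing the heat kernel $p(t,x,y)$ of BMVD with explicitly known heat kernels on the two constituent pieces, using a decomposition of the Brownian path according to whether it visits the junction point $a^*$ before time $t$. Concretely, I would first fix notation: write $p^{D}(t,x,y)$ for the Dirichlet heat kernel on $\R_\varepsilon^d$ (the part process killed on leaving $\R_\varepsilon^d$, which by Definition \ref{BMVD_def}(i) coincides with the part process of $X$ there), and similarly $p^{D'}$ on $\R_{\varepsilon'}^{d'}$. By the strong Markov property at $\sigma_{\{a^*\}}$ we have, for $x,y$ in the same piece, the identity
\begin{equation*}
 p(t,x,y) = p^{D}(t,x,y) + \mathbb{E}_x\!\left[\, \mathbf{1}_{\{\sigma_{\{a^*\}}<t\}}\, p(t-\sigma_{\{a^*\}}, a^*, y)\,\right],
\end{equation*}
and for $x,y$ in different pieces only the second term survives (with $p^{D}\equiv 0$). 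The first term is controlled by the classical Dirichlet heat kernel estimates for the exterior of a ball in $\R^d$ (respectively $\R^{d'}$); the factors $1\wedge(|x|_\rho/\sqrt t)$ come from the standard surviving-probability estimate near the boundary sphere. So the real work is in estimating the second term, which reduces everything to understanding $p(t,a^*,y)$ and the law of $\sigma_{\{a^*\}}$.

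Next I would establish two-sided bounds on $p(t,a^*,y)$ and on the hitting-time density. The key point is that, near $a^*$ and for small time, the space looks like two half-lines glued at a point only when $d=d'=1$; when $d\ge 2$ the piece $\R_\varepsilon^d$ near $a^*$ behaves like the exterior of a ball, so a Brownian particle started at $a^*$ spends most of its early time with the local ``$d$-dimensional'' or ``$d'$-dimensional'' heat kernel normalization $t^{-d/2}$, $t^{-d'/2}$. For the off-diagonal estimate $p(t,a^*,y)\asymp t^{-d'/2}\exp(-|y|_\rho^2/t)$ when $y\in\R_{\varepsilon'}^{d'}$, I would use: (a) an upper bound from the Davies--Gaffney / integrated maximum principle together with the on-diagonal bound $p(t,a^*,a^*)\asymp t^{-d'/2}$ for small $t$ (this on-diagonal bound itself following from a Nash-type inequality adapted to the local geometry near $a^*$, cf.\ Proposition \ref{Nash}); and (b) a matching lower bound via a chaining argument along the segment from $a^*$ to $y$, using that once the particle is at distance $\asymp\sqrt t$ from $a^*$ inside $\R_{\varepsilon'}^{d'}$, the ordinary Gaussian lower bound applies. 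The case $|x|_\rho\vee|y|_\rho>1$ is easier: then the Euclidean term $\exp(-|x-y|^2/t)$ is comparable to $\exp(-\rho(x,y)^2/t)$ and the particle effectively never reaches $a^*$ on the scale that matters, or does so only through the $t^{-d'/2}$ (resp.\ $t^{-d/2}$) channel, so the single exponential with the appropriate polynomial prefactor dominates.

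Then, for items (i) and (ii) with $|x|_\rho\vee|y|_\rho\le 1$, I would combine the Dirichlet term with the estimate
\begin{equation*}
 \mathbb{E}_x\!\left[\mathbf{1}_{\{\sigma_{\{a^*\}}<t\}}\,p(t-\sigma_{\{a^*\}},a^*,y)\right] \asymp \left(1\wedge\frac{|x|_\rho}{\sqrt t}\right)\left(1\wedge\frac{|y|_\rho}{\sqrt t}\right)\frac{1}{t^{d'/2}}\,\ee + \frac{1}{\sqrt t}\,\e,
\end{equation*}
where the first summand captures the contribution of paths that go essentially straight from $x$ to $a^*$ to $y$ within $\R_{\varepsilon'}^{d'}$ (distance $\asymp|x-y|$, with two independent ``boundary survival/escape'' factors), and the second summand, carrying the one-dimensional normalization $t^{-1/2}$ and the $\rho$-distance, captures paths that, upon hitting $a^*$, wander into the \emph{other} piece before returning to $y$ --- such paths see the effective one-dimensional bottleneck at $a^*$. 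The identity $\rho(x,y)=(|x|_\rho+|y|_\rho)\wedge|x-y|$ is exactly what makes these two regimes glue correctly. Item (iii) is then the degenerate case where the only route is through $a^*$, so only the $t^{-1/2}\,\e$ term appears, and both the upper and lower bounds follow from the $p(t,a^*,\cdot)$ estimates combined with the hitting-time analysis.

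The main obstacle I anticipate is the sharp two-sided control of $\mathbb{E}_x[\mathbf 1_{\{\sigma_{\{a^*\}}<t\}}\,p(t-\sigma_{\{a^*\}},a^*,y)]$ near $a^*$ when $d\ne d'$: one must show that the ``wrong-dimension'' excursions into the larger-dimensional piece really do degrade the prefactor from $t^{-d'/2}$ down to $t^{-1/2}$ (the bottleneck effect), which requires a careful estimate of the return mechanism to $a^*$ and of how the hitting-time density of $a^*$ behaves --- in the higher-dimensional exterior piece, $a^*$ is hit with a density that decays like a one-dimensional boundary local time would, and quantifying this (uniformly for $t\in(0,T]$ and all positions of $x,y$) is the technical heart of the argument. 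The glueing of the lower bounds, where one cannot simply drop terms, will need an explicit near-geodesic path construction together with a Harnack-chain / interior lower bound argument on each piece away from $a^*$.
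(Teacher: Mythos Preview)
Your decomposition $p(t,x,y)=p^D(t,x,y)+\mathbb{E}_x[\mathbf 1_{\{\sigma_{a^*}<t\}}\,p(t-\sigma_{a^*},a^*,y)]$ is exactly the one the paper uses, and your treatment of the Dirichlet piece via Proposition~\ref{part kernel} is correct. The problem is in your handling of the second term, and it is a genuine gap, not just a technical difficulty.

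You assert $p(t,a^*,a^*)\asymp t^{-d'/2}$ and $p(t,a^*,y)\asymp t^{-d'/2}e^{-|y|_\rho^2/t}$ for small $t$, and you propose to obtain this from the Nash inequality in Proposition~\ref{Nash}. Both claims are wrong: the correct small-time behaviour is $p(t,a^*,a^*)\asymp t^{-1/2}$ and $p(t,a^*,y)\asymp t^{-1/2}e^{-|y|_\rho^2/t}$, which is precisely the content of part~(iii) of the theorem at $x=a^*$. Proposition~\ref{Nash} only yields $p(t,a^*,a^*)\lesssim t^{-d/2}\vee t^{-d'/2}$, which is far from sharp for small~$t$ and cannot produce $t^{-d'/2}$ either. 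Consequently your displayed asymptotic for the hitting term is also wrong: the entire ``through-$a^*$'' contribution $\overline p(t,x,y)$ carries the one-dimensional prefactor $t^{-1/2}$, not a split into $t^{-d'/2}$ and $t^{-1/2}$ pieces. The second summand in the statement of (i) and (ii) with the factors $(1\wedge|x|_\rho/\sqrt t)(1\wedge|y|_\rho/\sqrt t)\,t^{-d'/2}$ comes from the Dirichlet term $p^D$, not from paths visiting $a^*$.

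The paper avoids the direct estimation of $p(t,a^*,a^*)$ altogether by introducing the \emph{signed radial process} $Y_t:=u(X_t)$, where $u(x)=|x|_\rho$ on $\R_\varepsilon^d$ and $u(x)=-|x|_\rho$ on $\R_{\varepsilon'}^{d'}$. Via the Fukushima decomposition one derives an explicit one-dimensional SDE for $Y$ (Proposition~\ref{dY_t}) with bounded drift away from~$0$ and a skew reflection at~$0$; the heat kernel $p^{(Y)}$ of this SDE then enjoys genuine one-dimensional Gaussian bounds $p^{(Y)}(t,a,b)\asymp t^{-1/2}e^{-|a-b|^2/t}$ on $(0,T]$ (Proposition~\ref{Yestimate}). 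Because both $p(t,x,y)$ for $x,y$ in different pieces and $\overline p(t,x,y)$ for $x,y$ in the same piece depend only on $(|x|_\rho,|y|_\rho)$, they can be read off from $p^{(Y)}$ after a surface-area Jacobian, and this is what yields the $t^{-1/2}$ factor cleanly. The bottleneck effect you identified as the ``technical heart'' is thus handled not by a delicate hitting-time analysis but by this one-dimensional reduction; without it, proving the sharp two-sided bound $p(t,a^*,a^*)\asymp t^{-1/2}$ directly is not at all obvious and your proposal does not supply a mechanism for it.
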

Note that when $d'=1$ and $d=2$, the estimates are the same as those in Theorem \ref{small12} [I]. Intuitively, if BMVD hits $a^*$, or both $x$ and $y$ are close to $a^*$, a 1-dimensional effect appears in the heat kernel. If either $x$ or $y$ is far from $a^*$, the dimension on which BMVD lives affects the heat kernel. We will prove Theorem \ref{small} in Section \ref{smallsec}.

Concerning large time estimates, we give four theorems depending on the dimensions of the two parts of the space.
 \begin{Thm}[{\bf Large time estimates I}]
 	\label{d'=1}
 	Let $d\geq3, d'=1$ and $T>0$ be large. The heat kernel $p(t,x,y)$ satisfies the following estimates when $T\leq t.$
 	\begin{enumerate}
 	\item For $x,y\in \R_+$ with $|x|_{\rho}\wedge |y|_{\rho} > 1$, $$p(t,x,y)\asymp \frac{|x||y|}{\sqrt{t}(|x|+\sqrt{t})(|y|+\sqrt{t})}\e .$$
 	\item For $x,y\in \R_{\varepsilon}^d$ with $|x|_{\rho}\wedge |y|_{\rho} > 1$, $$p(t,x,y)\asymp \frac{1}{t^{3/2}|x|^{d-2}|y|^{d-2}}\exy+\frac{1}{t^{d/2}}\e .$$
 	\item For $x\in \R_+\cup \{a^*\}, y\in \R_{\varepsilon}^{d}\cup \{a^*\}$ or $x,y\in \R_+$ with $|y|_{\rho} \leq 1$ or $x,y\in \R_{\varepsilon}^d$ with $|x|_{\rho} \leq 1$, $$p(t,x,y)\asymp \left(\frac{1}{t^{d/2}}+\frac{|x|}{t^{3/2}|y|^{d-2}} \right)\e .$$
 	\end{enumerate}
 \end{Thm}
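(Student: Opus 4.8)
The whole analysis rests on the strong Markov property at $\sigma:=\sigma_{a^*}$. When $x$ and $y$ lie in the same one of the two components one has
\[
p(t,x,y)=q(t,x,y)+\int_{0}^{t}\mathbb{P}_x(\sigma\in ds)\,p(t-s,a^*,y),
\]
and when they lie in different components the first term drops out. Here $q$ is the transition density of the part process killed at $a^*$: for $x,y\in\R_+$ it is the Dirichlet heat kernel of $(0,\infty)$, i.e. $q^{(1)}(t,x,y)=(4\pi t)^{-1/2}(e^{-|x-y|^2/4t}-e^{-|x+y|^2/4t})\asymp t^{-1/2}(1\wedge\tfrac{|x||y|}{t})\ee$, and for $x,y\in\R_{\varepsilon}^{d}$ it is the Dirichlet heat kernel $q^{(d)}$ of the exterior domain $\R^d\setminus\overline{B(0,\varepsilon)}$, for which we use the known sharp two-sided estimates for exterior domains with $d\ge 3$ (see e.g. \cite{GSc}); this is the source of the term $t^{-d/2}\e$ in (ii). The problem is thus reduced to two ingredients: the hitting law $\mathbb{P}_x(\sigma\in ds)$ and the junction kernel $p(\cdot,a^*,\cdot)$. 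On the recurrent side $\mathbb{P}_x(\sigma\in ds)=\tfrac{|x|}{\sqrt{4\pi}\,s^{3/2}}e^{-|x|^2/4s}\,ds$ is explicit of total mass $1$; on the transient side $x\in\R_{\varepsilon}^{d}$, $d\ge3$, one realizes $\mathbb{P}_x(\sigma\in ds)$ as the boundary flux of $q^{(d)}$ along $\{|z|=\varepsilon\}$ and integrates the exterior estimate to obtain $\mathbb{P}_x(\sigma\in ds)\asymp(1\wedge\tfrac{|x|_{\rho}}{\sqrt s})\,s^{-d/2}e^{-c|x|_{\rho}^2/s}\,ds$, of total mass $\asymp(\varepsilon/|x|)^{d-2}=\mathbb{P}_x(\sigma<\infty)<1$ --- the transience that produces the harmonic profiles $|x|^{2-d}$ in the statements.

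The second ingredient is obtained first at $a^*$. The anchor is $p(t,a^*,a^*)\asymp t^{-d/2}$ for large $t$. For the upper bound we use Grigor'yan's Faber--Krahn method: large $\rho$-balls about $a^*$ have $m_p$-volume $\asymp r^{d}$ and the space contains large $d$-dimensional Euclidean balls, so its Faber--Krahn profile at large scales is the $d$-dimensional one, whence $\sup_z p(t,z,z)\lesssim t^{-d/2}$ for large $t$; in particular $X$ is transient and $G(a^*,a^*)<\infty$. For the lower bound we use $p(t,a^*,a^*)=\|p(t/2,a^*,\cdot)\|_{L^2(m_p)}^2$ together with the elementary lower bound $p(t/2,a^*,z)\gtrsim t^{-d/2}$ for $z\in\R_{\varepsilon}^{d}$ with $|z|\asymp\sqrt t$ (the process enters $\R_{\varepsilon}^{d}$ in bounded time and then satisfies a near-free Gaussian lower bound), integrated over such $z$. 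Feeding this back, the estimates for $p(t,a^*,y)$ in either component follow from one further application of the decomposition --- with respect to the first return to $a^*$, equivalently to the exit of a fixed bounded neighbourhood of $a^*$ --- which reduces $p(t,a^*,y)$ to a convolution of $\mathbb{P}_y(\sigma\in ds)$ against $p(\cdot,a^*,a^*)$. Splitting that convolution at $s=t/2$ produces exactly the two competing terms appearing in (iii): the part $s\lesssim t$ gives $(\text{mass of }\mathbb{P}_y(\sigma\in\cdot)\text{ near }a^*)\times p(t,a^*,a^*)\asymp|y|^{2-d}t^{-d/2}$, and the part $s\asymp t$ gives $(\text{a plateau value})\times\int_0^{t/2}p(u,a^*,a^*)\,du$, the last integral being $\asymp G(a^*,a^*)$ since $d\ge 3$, which is where the extra $t^{-1}$ in $|x|\,t^{-3/2}|y|^{2-d}$ comes from.

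With these in hand, (i)--(iii) follow by substituting into the master identity and evaluating $\int_0^t\mathbb{P}_x(\sigma\in ds)\,p(t-s,a^*,y)$, again split at $s=t/2$. For (i) this is added to $q^{(1)}$, and the elementary comparison $\tfrac{|x||y|}{(|x|+\sqrt t)(|y|+\sqrt t)}\asymp(1\wedge\tfrac{|x|}{\sqrt t})(1\wedge\tfrac{|y|}{\sqrt t})$ together with $\rho(x,y)=|x-y|$ on $\R_+$ puts the result in the stated form; for (ii) the killed term gives $t^{-d/2}\e$ and the convolution gives $t^{-3/2}|x|^{2-d}|y|^{2-d}\exy$; for (iii) there is no killed term and the convolution alone gives $(t^{-d/2}+|x|\,t^{-3/2}|y|^{2-d})\e$, while the sub-cases with a point near $a^*$ reduce directly to the $p(\cdot,a^*,\cdot)$ estimates just proved. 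The small-time inputs needed in the convolutions, such as $p(r,a^*,a^*)\asymp r^{-1/2}$ for $r\le T$, come from Theorem~\ref{small}; since the relation $\asymp$ permits the Gaussian constant to change, prefactors need only be compared modulo such slop.

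The main obstacle is the sharp two-sided evaluation of the convolution $\int_0^t\mathbb{P}_x(\sigma\in ds)\,p(t-s,a^*,y)$: there is no volume doubling and hence no uniform Gaussian factor to absorb errors, on the transient side $\mathbb{P}_x(\sigma\in\cdot)$ decays only polynomially in $s$, and $p(t-s,a^*,y)$ genuinely changes regime as $t-s$ crosses $T$, so obtaining matching upper and lower bounds for every position of $|x|_{\rho}$ and $|y|_{\rho}$ relative to $\sqrt t$ is delicate. Upstream, the other genuinely new point is the sharp lower bound $p(t,a^*,a^*)\gtrsim t^{-d/2}$ --- that the single junction point really feels the $d$-dimensional geometry --- which is what the localization argument above is designed to deliver.
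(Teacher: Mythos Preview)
Your overall architecture --- decompose via the strong Markov property at $\sigma_{a^*}$, first pin down $p(t,a^*,a^*)$, then $p(t,a^*,\cdot)$, then the general case --- is sound and is in fact one of the two routes the paper acknowledges. But the anchor you build everything on is wrong: for $d\ge 3$, $d'=1$ one has
\[
p(t,a^*,a^*)\;\asymp\; t^{-1/2}\wedge t^{-3/2}\qquad(t>0),
\]
not $t^{-d/2}$. Both halves of your argument for $t^{-d/2}$ fail. For the upper bound, the $d$-dimensional Faber--Krahn inequality does \emph{not} hold on this space: take $\Omega=(0,L)\subset\R_+$ with $L$ large, so that $\lambda_1(\Omega)\asymp L^{-2}$ while $m_p(\Omega)=L$; the inequality $\lambda_1(\Omega)\gtrsim m_p(\Omega)^{-2/d}$ becomes $L^{-2}\gtrsim L^{-2/d}$, which is false. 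The thin recurrent half-line spoils any global $d$-dimensional isoperimetry, so the best Nash-type bound is only $p(t,a^*,a^*)\lesssim t^{-1/2}$ for large $t$. For the lower bound, your $L^2$ argument over $\{z\in\R_\varepsilon^d:|z|\asymp\sqrt t\}$ is correct but not sharp: it yields $t^{-d/2}$, which is \emph{below} the true value. Running the same $L^2$ argument over $\{z\in\R_+:|z|\asymp\sqrt t\}$ instead (using $p(t/2,a^*,z)\gtrsim t^{-1}$ there, from the small-time bound and the explicit $\R_+$ hitting density near $s=t$) produces the correct $t^{-3/2}$. The heuristic is that returns to $a^*$ are governed by the recurrent side, with the transient $\R^d$ acting only as a partial sink --- exactly the mechanism behind the $t^{-3/2}$ decay in Theorem~\ref{GSc1}.

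With the wrong anchor your computation of $p(t,a^*,y)$ for $y\in\R_\varepsilon^d$ collapses to $\asymp t^{-d/2}$ and misses the dominant term $t^{-3/2}|y|^{2-d}$; that term is what, after one more convolution with $\mathbb P_x(\sigma\in ds)$ for $x\in\R_+$, becomes the $|x|\,t^{-3/2}|y|^{2-d}$ in (iii). As a separate point, your asserted density $\mathbb P_x(\sigma\in ds)\asymp (1\wedge|x|_\rho/\sqrt s)\,s^{-d/2}e^{-c|x|_\rho^2/s}$ on the $\R_\varepsilon^d$ side does not match Lemma~\ref{hitting3}; for bounded $|x|$ and $s\to\infty$ the correct density is of order $s^{-d/2}$, not $s^{-(d+1)/2}$.

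For comparison, the paper does \emph{not} attack $p(t,a^*,a^*)$ directly here. It realises the signed radial process $Y=u(X)$ as the radial projection of Brownian motion on the manifold with ends $M=(\R_+\times S_\varepsilon^{d-1})\#\R^d$ (Theorem~\ref{proj1}), so the estimates for $x$ or $y\in\R_+$ --- and in particular $p(t,a^*,a^*)\asymp t^{-3/2}$ --- are read off from the known Grigor'yan--Saloff-Coste bounds on $M$ (Theorem~\ref{GSc1}); only then does it run the Markov decomposition for $x,y\in\R_\varepsilon^d$. Your direct route can be made to work, but it requires first obtaining the sharp $t^{-3/2}$ upper bound at $a^*$ by some other means (e.g.\ an $h$-transform/relative Faber--Krahn argument as in Section~\ref{lsec2}, or the Chen--Lou computation alluded to in the introduction).
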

Since 1-dimensional Brownian motion is recurrent and $d$-dimensional Brownian motion is transient for $d\geq 3$, if BMVD starting from a point in $\R_+$ enters $\R_{\varepsilon}^d$ and stays there for a long time, it is likely to escape to infinity. Thus, intuitively $\R_+$ affects the heat kernel more than $\R^d$. We will prove Theorem \ref{d'=1} in Section \ref{lsec1} using the projection.

\begin{Thm}[{\bf Large time estimates II}]
 	\label{d'=2,2}
 	Let $d=d'=2$ and $T$ be large. The heat kernel $p(t,x,y)$ satisfies the following estimates when $T\leq t.$
 	\begin{enumerate}
 	\item For $x,y\in \R_{\varepsilon}^2$ or $x,y\in \R_{\varepsilon'}^2$, $$p(t,x,y)\asymp \frac{1}{t}\e .$$
 	\item  For $x\in \R_{\varepsilon}^{2}\cup \{a^*\}$ and $y\in \R_{\varepsilon'}^{2}\cup \{a^*\}$,\end{enumerate}
 	$$p(t,x,y)\asymp \frac{1}{t}\left( U_t(x)U_t(y)+\frac{U_t(x)\log{|y|}}{\log{(1+t|y|)}}+\frac{U_t(y)\log{|x|}}{\log{(1+t|x|)}} \right) \e .$$
 	Here, $U_t(x):=\frac{1}{\log{(t+|x|)}}+\left(1-\frac{\log{|x|}}{\log{\sqrt{t}}} \right)_+ .$
 \end{Thm}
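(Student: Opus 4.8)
The plan is to pass through the $\lambda$-resolvent. Since the state space is obtained by gluing two copies of $\R^2$ at the single point $a^*$, the resolvent density $G_\lambda(x,y):=\int_0^\infty e^{-\lambda t}p(t,x,y)\,dt$ decomposes, for $x\in\R_\varepsilon^2\cup\{a^*\}$, $y\in\R_{\varepsilon'}^2\cup\{a^*\}$ (and analogously when $x,y$ lie in the same copy), as
$$G_\lambda(x,y)=G^D_\lambda(x,y)\,\mathbf 1_{\{x,y\text{ in the same copy}\}}+\frac{\varphi_\lambda(x)\,\varphi_\lambda(y)}{\mathcal N(\lambda)},$$
where $G^D_\lambda$ is the resolvent density of Brownian motion in the relevant exterior domain $\{|z|>\varepsilon\}$ or $\{|z|>\varepsilon'\}$ killed on its boundary circle, $\varphi_\lambda(x):=\mathbb E_x\!\left[e^{-\lambda\sigma_{a^*}}\right]$ (with $\varphi_\lambda(a^*)=1$), and $\mathcal N(\lambda)>0$ is the normalising factor forced by the flux-balance (Kirchhoff-type) condition at $a^*$ that encodes the parameter $p$ and the absence of killing. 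I would first prove this identity — the one-point-gluing analogue of what is done in \cite{CL,L2} — observe that $\lambda\mapsto G_\lambda(x,y)$ continues analytically to $\mathbb C\setminus(-\infty,0]$, and recover $p(t,x,y)$ by inverting the Laplace transform along a contour wrapped around that cut. (Probabilistically this is just the strong Markov property at $\sigma_{a^*}$: $p(t,x,y)=q(t,x,y)\mathbf 1_{\{x,y\text{ same copy}\}}+\mathbb E_x[\,p(t-\sigma_{a^*},a^*,y);\sigma_{a^*}<t\,]$, with $q=\mathcal L^{-1}[G^D_\lambda]$.)

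Next I would estimate the three ingredients. The function $\varphi_\lambda$ is the bounded radial solution of $\tfrac12\Delta u=\lambda u$ on the exterior domain with $u\equiv1$ on the boundary, so $\varphi_\lambda(x)=K_0(\sqrt{2\lambda}\,|x|)/K_0(\sqrt{2\lambda}\,\varepsilon)$; from $K_0(r)\asymp\log(2/r)$ for $0<r\le1$ and $K_0(r)\asymp r^{-1/2}e^{-r}$ for $r\ge1$ one obtains two-sided bounds for $\varphi_\lambda(x)$ — and, on differentiating the Bessel identities, for $\partial_\lambda\varphi_\lambda(x)$ — of the form $\varphi_\lambda(x)\asymp\bigl(1\wedge\tfrac{\log(1/(\sqrt\lambda\,|x|))}{\log(1/\sqrt\lambda)}\bigr)e^{-c\sqrt\lambda\,|x|}$. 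For $\mathcal N$ the decisive point, and the one that separates $d=d'=2$ from the transient situations of Theorem \ref{d'=1} and the remaining cases, is that \emph{both} exterior pieces are recurrent, so each contributes a flux of order $1/\log(1/\lambda)$ and hence $\mathcal N(\lambda)\asymp 1/\log(1/\lambda)$ as $\lambda\to0$, with comparable control for moderate $\lambda$. For part (i) I also need sharp two-sided estimates for the killed heat kernel $q(t,x,y)$, i.e.\ the Dirichlet heat kernel of the exterior of a disk in $\R^2$; this is classical and gives, up to the precise shape of the logarithmic factors, $q(t,x,y)\asymp\frac1t\bigl(1\wedge\tfrac{\log(|x|/\varepsilon)}{\log\sqrt t}\bigr)\bigl(1\wedge\tfrac{\log(|y|/\varepsilon)}{\log\sqrt t}\bigr)\,\ee$, which is always $\lesssim\frac1t\,\ee$ and comparable to $\frac1t\,\ee$ only when $|x|\vee|y|\gtrsim\sqrt t$.

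The core of the proof is the inversion. Wrapping the Bromwich contour around $(-\infty,0]$,
$$p(t,x,y)=q(t,x,y)\,\mathbf 1_{\{x,y\text{ same copy}\}}+\frac1\pi\int_0^\infty e^{-st}\,\mathrm{Im}\!\left[\frac{\varphi_{-s}(x)\,\varphi_{-s}(y)}{\mathcal N(-s)}\right]ds,$$
and I would estimate the last integral through a dyadic decomposition of the $s$-axis. The range $s\sim1/t$ dominates: there $1/\mathcal N(-s)\asymp\log t$ while each factor $\varphi_{-s}(x)$ carries $1\wedge\tfrac{\log(\sqrt t/|x|)}{\log\sqrt t}$, and the neighbouring ranges $s\sim1/|x|^2$ together with the exponential tail contribute the correction $\tfrac1{\log(t+|x|)}$ — i.e.\ precisely $U_t(x)$; multiplying the $x$- and $y$-contributions yields $U_t(x)U_t(y)$, the mixed ranges (one variable already resolved near $a^*$, the other still spreading at scale $\sqrt t$) produce $\tfrac{U_t(x)\log|y|}{\log(1+t|y|)}+\tfrac{U_t(y)\log|x|}{\log(1+t|x|)}$, and the ranges $s\gtrsim1/(|x|^2\wedge|y|^2)$ supply the Gaussian factor, which after invoking $\rho(x,y)=(|x|_\rho+|y|_\rho)\wedge|x-y|$ assembles into $\e$. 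For part (i) one then adds $q(t,x,y)\asymp\frac1t\,\ee$, which dominates the whole expression when $x,y$ lie in the same copy, so the estimate collapses to $\frac1t\,\e$; for part (ii) only the gluing integral is present and gives the stated formula.

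I expect the principal obstacle to be exactly the \emph{logarithmic} — as opposed to power-law — branch point of $G_\lambda$ at $\lambda=0$: a factor $\log(1/\lambda)$ or $1/\log(1/\lambda)$ does not invert to a clean power of $t$, so one cannot extract the large-time behaviour from a single scale by a Tauberian theorem; instead one must control $\varphi_{-s}$, $\mathcal N(-s)$ and their $s$-derivatives with genuine two-sided precision across all dyadic ranges and then match the three position regimes $|x|\ll\sqrt t$, $\sqrt t\lesssim|x|\lesssim t$, $|x|\gtrsim t$ (and likewise in $y$) to reproduce the exact form of $U_t$ and of $\tfrac{\log|y|}{\log(1+t|y|)}$. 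The second heavy ingredient is the sharp two-sided exterior-disk Dirichlet heat kernel bound in $\R^2$ with its logarithmic factors. By contrast the off-diagonal Gaussian decay is routine here: for $d=d'=2$ it is non-trivial only on the range $\rho(x,y)\gtrsim\sqrt t$, where $\rho(x,y)^2\asymp|x|_\rho^2+|y|_\rho^2$, and can be obtained by the standard exponential-perturbation (Davies-type) arguments already used for the small-time estimates.
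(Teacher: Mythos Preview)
Your resolvent/contour-integration route is a genuinely different strategy from the paper's. The paper does not touch Laplace transforms or Bessel functions at all. For the \emph{upper} bound it observes that, since $p(t,a^*,a^*)\lesssim t^{-1}$ (Corollary~\ref{on22}) and both the hitting laws $\mathbb P_x(\sigma_{a^*}\in ds)$ and the killed kernels $p_{\R^2_\varepsilon}$, $p_{\R^2_{\varepsilon'}}$ coincide with the corresponding objects for Brownian motion on the connected sum $\R^2\#\R^2$, one has $p(t,x,y)\lesssim\tilde p(t,x,y)$, and the sharp estimates for $\tilde p$ are simply quoted from \cite[Example~2.12]{GIS}. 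For the \emph{lower} bound the paper works probabilistically: it first bounds $p(t,x,a^*)$ from below via $\int_0^{t/2}p(t-s,a^*,a^*)\,\mathbb P_x(\sigma_{a^*}\in ds)$ and $\int_{t/2}^{t-1}$ (using Corollary~\ref{on22} and the hitting-time Lemmas~\ref{hitting2}, \ref{GSh2}), and then feeds this back into $p(t,x,y)\ge p_{\R^2_\varepsilon}(t,x,y)+\int_0^{t/2}p(t-s,a^*,y)\,\mathbb P_x(\sigma_{a^*}\in ds)$, splitting into the regimes $|x|,|y|\lessgtr\sqrt t/2$. So the paper outsources essentially all of the delicate logarithmic bookkeeping to \cite{GIS}, whereas you propose to rebuild that structure from scratch via the branch-cut inversion.

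Your outline is plausible in principle --- the decomposition $G_\lambda=G^D_\lambda+\varphi_\lambda\otimes\varphi_\lambda/\mathcal N(\lambda)$ is correct, the $K_0$ asymptotics are as you say, and $\mathcal N(\lambda)\asymp(\log(1/\lambda))^{-1}$ is the right small-$\lambda$ behaviour --- but two points deserve caution. First, in your treatment of part~(i) you write ``adds $q(t,x,y)\asymp\frac1t\,\ee$, which dominates''; this is not what you said two paragraphs earlier (where you correctly noted that $q$ carries the extra factors $\bigl(1\wedge\frac{\log(|x|/\varepsilon)}{\log\sqrt t}\bigr)\bigl(1\wedge\frac{\log(|y|/\varepsilon)}{\log\sqrt t}\bigr)$), and indeed for $|x|_\rho,|y|_\rho\ll\sqrt t$ the killed part is \emph{strictly smaller} than $\frac1t\e$ --- it is the gluing term, not $q$, that supplies the missing mass there, so you must verify that your contour integral really produces $\asymp\frac1t$ (no log loss) in that regime. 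Second, the passage ``the mixed ranges \dots\ produce $\frac{U_t(x)\log|y|}{\log(1+t|y|)}+\frac{U_t(y)\log|x|}{\log(1+t|x|)}$'' is asserted rather than argued; matching the dyadic pieces of $\int_0^\infty e^{-st}\,\mathrm{Im}[\varphi_{-s}(x)\varphi_{-s}(y)/\mathcal N(-s)]\,ds$ to this exact three-term form with genuine two-sided bounds is the entire content of the theorem, and would require a substantially more detailed case analysis than you have sketched. The paper's route sidesteps both issues by leaning on \cite{GIS}; your route would be more self-contained but considerably longer to execute rigorously.
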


\begin{Thm}[{\bf Large time estimates III}]
 	\label{d'=2,3}
 	Let $d\geq 3, d'=2$ and $T$ be large. The heat kernel $p(t,x,y)$ satisfies the following estimates when $T\leq t.$
 	\begin{enumerate}
 	\item For $x,y \in \R_{\varepsilon}^{d}$, $$p(t,x,y)\asymp \frac{1}{t(\log{t})^2|x|^{d-2}|y|^{d-2}}\exy+\frac{1}{t^{d/2}}\e .$$
 	\item  For $x,y \in \R_{\varepsilon'}^{2}$, $$p(t,x,y)\asymp \frac{\log{(1+|x|)}\log{(1+|y|)}}{t\log{(1+t|y|)}\log{(1+t|x|)}} \e .$$
 	\item For $x\in \R_{\varepsilon}^{d}\cup \{a^*\}, y\in \R_{\varepsilon'}^{2}\cup \{a^*\}$, $$p(t,x,y)\asymp \left(\frac{1}{t(\log{t})^2|x|^{d-2}}+\frac{H_t(y)}{t^{d/2}} \right)\e .$$
 	\end{enumerate}
 	Here, $H_t(y):=\frac{1}{(\log{(1+|y|)})^2}+\left(\frac{1}{2\log{(1+|y|)}}-\frac{1}{\log{t}} \right)_+ .$
\end{Thm}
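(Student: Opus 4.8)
Write $p^{d}_0$ and $p^{2}_0$ for the transition densities of Brownian motion killed on hitting the inner sphere of $\R_{\varepsilon}^d$, resp.\ $\R_{\varepsilon'}^{2}$; by Definition \ref{BMVD_def} these are the part processes, and the exterior of a ball being a classical domain (polynomial volume growth, parabolic Harnack in $\R^2$; transient for $d\ge 3$), their sharp two--sided estimates are available off the shelf. The plan is to reduce everything to two ``building block'' estimates at $a^*$. By the strong Markov property of $X$ at $\sigma_{a^*}$, for $x\in\R_{\varepsilon}^d$ (the case $x\in\R_{\varepsilon'}^{2}$ being symmetric),
\[ p(t,x,y)=p^{d}_0(t,x,y)\,\mathbf 1_{\{y\in\R_{\varepsilon}^d\}}+\int_0^t\mathbb P_x(\sigma_{a^*}\in ds)\,p(t-s,a^*,y), \]
with the boundary cases $x$ or $y=a^*$ recovered from joint continuity of $p$ (Proposition \ref{Nash}) or directly from step (B) below. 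Thus it suffices to estimate \textbf{(A)} the return density $p(t,a^*,a^*)$, \textbf{(B)} the mixed densities $p(t,a^*,y)$ for $y\in\R_{\varepsilon}^d$ and for $y\in\R_{\varepsilon'}^{2}$, and to control the hitting law $\mathbb P_x(\sigma_{a^*}\in ds)$; the latter is governed by the classical hitting probabilities of a ball, $\mathbb P_x(\sigma_{a^*}<\infty)\asymp|x|^{2-d}$ for $x\in\R_{\varepsilon}^d$ (transience), and $\mathbb P_x(\sigma_{a^*}<\infty)=1$ with $\mathbb P_x(\sigma_{a^*}>s)\asymp \log(1+|x|_\rho)/\log(1+\sqrt s)$ for $x\in\R_{\varepsilon'}^{2}$ (recurrence up to logarithms).

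For \textbf{(A)} I would pass to the resolvent density $G_\lambda(a^*,a^*)=\int_0^\infty e^{-\lambda t}p(t,a^*,a^*)\,dt$. Using the Dirichlet form of $X$, the function $G_\lambda(\cdot,a^*)$ is $\lambda$--harmonic on each part off $a^*$, takes the common value $G_\lambda(a^*,a^*)$ at $a^*$, and decays at infinity, hence equals $G_\lambda(a^*,a^*)$ times the $\lambda$--equilibrium potential of the inner sphere on each side; balancing the normal fluxes across $a^*$ (with the weight $p$ on the $\R^2$ flux, coming from $m_p$) gives the ``parallel capacity'' identity $G_\lambda(a^*,a^*)=\bigl(\mathrm{Cap}^{(d)}_\lambda+p\,\mathrm{Cap}^{(2)}_\lambda\bigr)^{-1}$. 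As $\lambda\downarrow0$ one has $\mathrm{Cap}^{(d)}_\lambda\to\mathrm{Cap}^{(d)}_0>0$ with error $o\bigl(1/\log(1/\lambda)\bigr)$, while $\mathrm{Cap}^{(2)}_\lambda\asymp 1/\log(1/\lambda)\to0$, so $G_\lambda(a^*,a^*)=G_0(a^*,a^*)-c/\log(1/\lambda)+o\bigl(1/\log(1/\lambda)\bigr)$. Since $t\mapsto p(t,a^*,a^*)$ is nonincreasing, a Karamata--type Tauberian theorem then yields $p(t,a^*,a^*)\asymp 1/\bigl(t(\log t)^2\bigr)$ for large $t$ — this is precisely where the recurrence/transience dichotomy of the two parts enters, and it produces the prefactor $t^{-1}(\log t)^{-2}$ in (i) and (iii).

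For \textbf{(B)} I would use a last--exit (excursion) decomposition of $X$ at $a^*$: if $\mu^{(\bullet)}_u(y)$ denotes the entrance--law density of the excursions of $X$ into part $\bullet$ — equivalently, up to a constant, the inner--boundary normal derivative of $p^{\bullet}_0(u,\cdot,y)$, a quantity with classical sharp estimates for the exterior of a ball, with the characteristic two--regime shape (Gaussian for $u\lesssim|y|_\rho^2$, slowly varying for $u\gtrsim|y|_\rho^2$) — then $p(t,a^*,y)=\int_0^t p(s,a^*,a^*)\,\mu^{(\bullet)}_{t-s}(y)\,ds$. Feeding in (A), the small--time bound $p(s,a^*,a^*)\asymp s^{-1/2}$ from Theorem \ref{small}, and splitting the convolution at $s\asymp|y|_\rho^2$ and $s\asymp t$, one isolates two mechanisms: a ``hit $a^*$ early, then relax'' contribution of order $|y|^{2-d}\,t^{-1}(\log t)^{-2}\,\exy$ for $y\in\R_{\varepsilon}^d$ (resp.\ $t^{-1}(\log t)^{-2}\,\e$ for $y\in\R_{\varepsilon'}^{2}$), and a ``stay near $a^*$ until late'' contribution proportional to $t^{-d/2}$ times the truncated Green function $\int_0^{t/2}p(u,a^*,y)\,du$; evaluating the latter in the regimes $|y|_\rho\lessgtr\sqrt t$ gives exactly $H_t(y)$ (and, in the $d=d'=2$ setting of Theorem \ref{d'=2,2}, $U_t$), and for $y\in\R_{\varepsilon'}^{2}$ it gives the factor $\log(1+|y|)/\log(1+t|y|)$ of part (ii). Plugging (A), (B) and the hitting estimates back into the displayed identity and simplifying on each range of $(|x|_\rho,|y|_\rho,t)$ — the genuinely $d$--dimensional summand $t^{-d/2}\e$ in (i) being the ``never hit $a^*$'' term $p^{d}_0(t,x,y)$, which is absent from (ii) because the planar part is recurrent up to logarithms — yields (i), (ii), (iii).

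The main obstacle is the logarithmic bookkeeping in step (B), and in particular the lower bounds, where no term may be discarded: one must track the competition between the slow, $(\log t)^{-2}$--rate return mechanism at $a^*$ from step (A) and the slow, $(\log)^{-1}$--rate spreading of planar Brownian motion, which is exactly what forces $H_t(y)$ and $U_t(x)$ to be two--regime functions and is the origin of all the $\log(1+\,\cdot\,)$ factors; making the implied constants uniform as $y$ approaches $a^*$ or escapes to infinity — so that $\asymp$ holds in the sense fixed in the introduction — requires separate care in each of these regimes, typically via matched test--function (lower) and chaining/maximal (upper) arguments. The remaining ingredients — the classical Dirichlet heat kernel and hitting estimates for exteriors of balls, the Tauberian step, and the final case analysis — are then routine.
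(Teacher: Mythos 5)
Your plan is structurally sound and the decomposition in step (B) is essentially the same convolution-over-$\sigma_{a^*}$ that the paper uses (the paper writes $p(t,a^*,y)=\int_0^t p(t-s,a^*,a^*)\,\mathbb P_y(\sigma_{a^*}\in ds)$, which by symmetry is your last-exit formula), and the hitting-law inputs you cite are exactly Lemmas~\ref{hitting3}--\ref{GSh2}. But step (A), which you call ``routine'', is where the real work lies, and as stated it has a genuine gap. Karamata's Tauberian theorem applied to $G_0(a^*,a^*)-G_\lambda(a^*,a^*)=\lambda\int_0^\infty e^{-\lambda t}P(t)\,dt$ (with $P(t):=\int_t^\infty p(s,a^*,a^*)\,ds$) only delivers $P(t)\sim c'/\log t$, i.e.\ a slowly varying limit of index $0$. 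Monotonicity of $p(\cdot,a^*,a^*)$ together with $P(t)\asymp 1/\log t$ then gives only $p(t,a^*,a^*)\lesssim 1/(t\log t)$ (from $t\,p(2t)\le P(t)$), which is off by a $\log$, and gives \emph{no} nontrivial lower bound at all, since $P(t)-P(2t)$ need not be comparable to $1/(\log t)^2$ once $P$ is only known up to a multiplicative $(1+o(1))$. To extract $p(t,a^*,a^*)\asymp 1/(t(\log t)^2)$ this way you would need a second-order (de Haan/$\Pi$-variation) Tauberian theorem plus a monotone-density theorem at index $0$, together with an error bound $G_0-G_\lambda = c'/\log(1/\lambda)+O(1/(\log(1/\lambda))^2)$ on the resolvent; none of this is developed in your sketch, and the lower bound in particular does not follow from monotonicity and a one-term asymptotic.

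The paper sidesteps all of this. The lower bound $p(t,a^*,a^*)\gtrsim 1/(t(\log t)^2)$ is Proposition~\ref{on23l}: an elementary Chapman--Kolmogorov step $p(2t,a^*,a^*)\ge\int_A p(t,a^*,x)^2\,dm_p(x)$ over an annulus $A\subset\R_{\varepsilon'}^2$ at scale $\sqrt t$, with $p(t,a^*,x)$ bounded below by a single integral against $\mathbb P_x(\sigma_{a^*}\in ds)$. The upper bound is Proposition~\ref{on23}: a Doob $h$-transform with a harmonic $h\asymp 1$ on $\R_\varepsilon^d$ and $\asymp\log|x|_\rho$ on $\R_{\varepsilon'}^2$ (constructed in Proposition~\ref{harmonic} à la Sung--Tam--Wang), followed by a relative Faber--Krahn inequality for the transformed form (Theorem~\ref{RFKineq}) and Grigor'yan's on-diagonal upper bound machinery. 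This is more robust than the Tauberian route and is what actually produces the $(\log t)^{-2}$ factor. Your ``parallel capacity'' formula $G_\lambda(a^*,a^*)^{-1}=\mathrm{Cap}^{(d)}_\lambda+p\,\mathrm{Cap}^{(2)}_\lambda$ is a correct and pleasant observation, but by itself it does not settle (A).

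One more difference worth noting: for the off-diagonal \emph{upper} bounds the paper does not grind out the convolution directly but instead compares $X$ with Brownian motion on $\R^d\#(\R^2\times S_\varepsilon^{d-2})$ and imports the ready-made heat kernel estimates of Grigor'yan--Saloff-Coste (this is the Proposition~\ref{off3u}-style argument invoked at the start of the proof of Theorem~\ref{d'=2,3}). Your plan of doing the convolution upper bounds by hand is feasible (the paper even remarks so), but if you go that way you should recognize that the near-diagonal-in-time part of the convolution (where $t-s$ is of order $1$) interacts with the small-time estimate $p(s,a^*,a^*)\asymp s^{-1/2}$ and needs the same careful splitting the paper performs in its lower-bound Steps 1--4.
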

For $d=d'=2$, BMVD is recurrent and a 2-dimensional effect appears in the large time estimates.
For $d\geq 3,\  d'=2$, we have a mixed case of recurrent and transient parts of the space. In this case, $\R^2$ affects the heat kernel more than $\R^d$ for a similar reason as in the case of $d\geq 3,\ d'=1$. We will prove Theorem \ref{d'=2,2} and \ref{d'=2,3} in Section \ref{lsec2} using Doob's $h$-transform and the relative Faber-Krahn inequality.

\begin{Thm}[{\bf Large time estimates IV}]
	\label{d'=3}
	Let $d\geq d'\geq3$ and $T$ be large. The heat kernel $p(t,x,y)$ satisfies the following estimates when $T\leq t.$
	\begin{enumerate}
 	\item For $x,y \in \R_{\varepsilon'}^{d'}$, $$p(t,x,y)\asymp \frac{1}{t^{d'/2}}\e .$$
 	\item For $x,y \in \R_{\varepsilon}^{d}$ with $|x|_{\rho}\vee |y|_{\rho} \leq 1$, $$p(t,x,y)\asymp \frac{1}{t^{d'/2}}\e .$$
 	For $x,y \in \R_{\varepsilon}^{d}$ with $|x|_{\rho}\vee |y|_{\rho} > 1$, $$p(t,x,y)\asymp \frac{1}{t^{d'/2}|x|^{d-2}|y|^{d-2}}\exy+\frac{1}{t^{d/2}}\e .$$
 	\item For $x\in \R_{\varepsilon}^{d}\cup \{a^*\}, x\in \R_{\varepsilon'}^{d'}\cup \{a^*\}$, $$p(t,x,y)\asymp \left( \frac{1}{t^{d'/2}|x|^{d-2}}+\frac{1}{t^{d/2}|y|^{d-2}}\right) \e .$$
	\end{enumerate}
\end{Thm}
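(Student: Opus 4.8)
The plan is to reduce all three cases to classical objects --- Dirichlet heat kernels in the exterior of a ball --- together with the single two-point function $p(t,a^*,\cdot)$, by conditioning on the hitting time $\sigma:=\sigma_{\{a^*\}}$ and using the strong Markov property. For $x,y\in\R_{\varepsilon}^{d}$,
\begin{equation*}
p(t,x,y)=p^{\R_{\varepsilon}^{d}}(t,x,y)+\mathbb{E}_x\bigl[\mathbf{1}_{\{\sigma<t\}}\,p(t-\sigma,a^*,y)\bigr],
\end{equation*}
where $p^{\R_{\varepsilon}^{d}}$ is the transition density of Brownian motion killed on hitting $\{|z|=\varepsilon\}$; the analogous identity holds for $x,y\in\R_{\varepsilon'}^{d'}$ with $p^{\R_{\varepsilon'}^{d'}}$ in place of $p^{\R_{\varepsilon}^{d}}$, and for $x\in\R_{\varepsilon}^{d}$, $y\in\R_{\varepsilon'}^{d'}$ the Dirichlet term is absent. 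It therefore suffices to (a) estimate the exterior-ball Dirichlet kernels and the $\mathbb{P}_x$-law of $\sigma$, (b) estimate $p(t,a^*,\cdot)$, and (c) evaluate the resulting space--time convolution.

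For (a), since $d,d'\ge 3$ each exterior domain is transient, with bounded positive harmonic profile $h_d(x)=1-(\varepsilon/|x|)^{d-2}\asymp 1\wedge|x|_{\rho}$ (and similarly $h_{d'}$). Doob's $h$-transform converts killed Brownian motion on $\R_{\varepsilon}^{d}$ into an $h_d^2\,dx$-symmetric diffusion whose reference measure is volume doubling and satisfies the relative Faber--Krahn inequality; the Li--Yau estimates for the transformed kernel $p^{\R_{\varepsilon}^{d}}(t,x,y)/(h_d(x)h_d(y))$ give, for $t\ge T$,
\begin{equation*}
p^{\R_{\varepsilon}^{d}}(t,x,y)\asymp\frac{h_d(x)h_d(y)}{t^{d/2}}\,\ee,
\end{equation*}
and symmetrically on $\R_{\varepsilon'}^{d'}$, the short-time range being the classical exterior-domain estimate; one also obtains $\mathbb{P}_x(\sigma<\infty)=(\varepsilon/|x|)^{d-2}\asymp|x|^{-(d-2)}$ together with a Gaussian-type estimate for the density of $\sigma$ under $\mathbb{P}_x$ (mass concentrated near $s\asymp|x|_{\rho}^{2}$, with tail factor $\ex$). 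For (b), the decisive ingredient is the on-diagonal bound $p(t,a^*,a^*)\asymp t^{-d'/2}$ for $t\ge T$: the upper bound follows from a Nash/Faber--Krahn inequality tuned to the volume profile of balls about $a^*$ --- which grows like $r$ for small $r$ and like $r^{d}$ for large $r$ --- the effective exponent being set by the slowest escape direction, into the lower-dimensional part $\R_{\varepsilon'}^{d'}$; the matching lower bound is obtained by forcing an excursion from $a^*$ out to distance $\asymp\sqrt t$ in $\R_{\varepsilon'}^{d'}$ and back, using reversibility and a near-diagonal lower bound there. Off-diagonal upper bounds follow everywhere from Davies' method (the metric $\rho$ is adapted to the Dirichlet form), and a last-exit decomposition from $a^*$, combined with (a), upgrades the on-diagonal bound to $p(t,a^*,y)\asymp t^{-d'/2}\ey$ for $y\in\R_{\varepsilon'}^{d'}$ and to $\bigl(t^{-d'/2}|y|^{-(d-2)}+t^{-d/2}\bigr)\ey$ for $y\in\R_{\varepsilon}^{d}$ (for $t\le T$ these are given by Theorem \ref{small}).

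It remains to evaluate the convolution $\int_0^t\mathbb{P}_x(\sigma\in ds)\,p(t-s,a^*,y)\,ds$ and add the Dirichlet term, splitting the integral at $s=t/2$. On $s\le t/2$ one has $t-s\asymp t$ and $p(t-s,a^*,y)\asymp t^{-d'/2}e^{-|y|_{\rho}^{2}/t}$, so the remaining $\sigma$-mass contributes $\mathbb{P}_x(\sigma<\infty)\asymp|x|^{-(d-2)}$, producing the leading term $t^{-d'/2}|x|^{-(d-2)}$ of cases (ii) and (iii). On $s>t/2$ the contribution is governed by small values of $t-s$, where $p(t-s,a^*,y)$ is Theorem \ref{small}; here $\int_0^{\cdot}u^{-d'/2}e^{-|y|_{\rho}^{2}/u}\,du\asymp|y|_{\rho}^{2-d'}$ together with the value of the $\sigma$-density near time $t$ yields the secondary terms. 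The exponents recombine via $|x|_{\rho}^{2}/s+|y|_{\rho}^{2}/(t-s)\gtrsim(|x|_{\rho}+|y|_{\rho})^{2}/t$ into $e^{-c\rho(x,y)^{2}/t}$, using $|x-y|=\infty$ across the two parts. Case (i) then follows because for $x,y\in\R_{\varepsilon'}^{d'}$ the Dirichlet term and the $a^*$-contribution are both $\asymp t^{-d'/2}e^{-\rho(x,y)^{2}/t}$, consistently with $p(t,a^*,a^*)\asymp t^{-d'/2}$. The matching lower bounds come from retaining a single favourable slice of the convolution and, when $x$ and $y$ lie in different pieces or far apart, from chaining a near-diagonal lower bound along the $\rho$-geodesic, which passes through $a^*$; the passage through $a^*$ accounts for the transient prefactors $|x|^{-(d-2)}$ and the analogous factors in $y$.

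The step I expect to be the main obstacle is the sharp two-sided behaviour at $a^*$ --- both $p(t,a^*,a^*)\asymp t^{-d'/2}$ and the $\mathbb{P}_x$-law of $\sigma$ --- precisely because $a^*$ is the point at which volume doubling fails (the ball-volume exponent jumps from $1$ to $d$), so neither the parabolic Harnack inequality nor the usual Faber--Krahn/Davies machinery applies there directly, and one must instead argue from the excursion structure at $a^*$ and comparison with the two component processes. A secondary difficulty is the careful tracking of the exponential factors through the convolution so that the constants $c_1,c_2$ implicit in the relation $\asymp$ are controlled uniformly in $(t,x,y)$.
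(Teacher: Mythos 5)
Your overall strategy---decompose $p(t,x,y)$ by first passage through $a^*$, pin down $p(t,a^*,a^*)$ by an on-diagonal argument, and then evaluate the resulting space-time convolution together with the exterior-domain Dirichlet kernels---is exactly the paper's strategy. What you describe is in fact the ``direct'' route that the paper's Remark after Proposition \ref{off3u} mentions as an alternative, rather than the route the paper actually takes for the upper bound, which is to compare $X$ with Brownian motion on the manifold with ends $(\R^{d'}\times S_\varepsilon^{d-d'})\#\R^d$ and import the sharp estimates of Grigor'yan--Saloff-Coste. Your lower bound for $p(t,a^*,a^*)$ (excursion out to $|x|\asymp\sqrt t$ in $\R^{d'}_{\varepsilon'}$, then Chapman--Kolmogorov by reversibility) is precisely Proposition \ref{on3}. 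Your part-kernel estimate via Doob's $h$-transform with $h_d(x)=1-(\varepsilon/|x|)^{d-2}$ is a valid substitute for the citation of Zhang's estimate (Proposition \ref{part kernel}); both give $p_{\R_\varepsilon^d}(t,x,y)\asymp t^{-d/2}(1\wedge|x|_\rho)(1\wedge|y|_\rho)e^{-c|x-y|^2/t}$ for $t\ge1$.

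Two points need more care. First, your description of the upper on-diagonal bound at $a^*$ as ``a Nash/Faber--Krahn inequality tuned to the volume profile of balls about $a^*$, which grows like $r$ for small $r$ and like $r^d$ for large $r$'' does not match the mechanism: the $m_p$-volume of $B(a^*;\sqrt t)$ grows like $t^{d/2}$, not $t^{d'/2}$, and there is no volume-doubling/Faber--Krahn picture at $a^*$ that directly produces the exponent $d'/2$. What actually works (Proposition \ref{Nash}) is to superpose the two classical Nash inequalities on $\R_\varepsilon^d$ and $\R_{\varepsilon'}^{d'}$ and observe that the slower decay $t^{-d'/2}$ dominates for $t\ge1$; your concluding remark ``the effective exponent is set by the slowest escape direction'' is the right intuition, but the ball-volume phrasing is a red herring. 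Second, the invocation of ``Davies' method'' for the pointwise off-diagonal upper bounds is the weak link. Davies-type or Grigor'yan--Saloff-Coste machinery for upgrading on-diagonal bounds to Gaussian off-diagonal bounds typically presupposes volume doubling or a global Faber--Krahn inequality, neither of which is available here (Proposition \ref{VD}). The Davies--Gaffney/integrated-maximum-principle estimate gives $L^2$-averaged Gaussian decay with respect to the intrinsic metric, but turning that into a pointwise bound matching the three distinct regimes in Theorem \ref{d'=3} requires a genuine argument --- and one must also control the singular contribution from $p(t-s-w,a^*,a^*)\asymp(t-s-w)^{-1/2}$ as $t-s-w\to0$ inside the double convolution, which the paper handles via a separate computation in Proposition \ref{off3u}. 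Either flesh out the direct convolution upper bound (which, per Remark \ref{off3u}, is feasible but long), or adopt the paper's comparison with $(\R^{d'}\times S_\varepsilon^{d-d'})\#\R^d$; as written, ``Davies' method'' does not carry the weight placed on it.
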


For $d\geq d'\geq 3$, both Brownian motion on $\R^d$ and $\R^{d'}$ are transient. Intuitively, $d\geq d'$ yields that  $d$-dimensional Brownian motion escape to infinity faster than $d'$-dimensional Brownian motion. Thus, $\R^d$ affects the large time heat kernel more than $\R^{d'}$ if BMVD starts near $a^*$. We will prove Theorem \ref{d'=3} in Section \ref{lsec3} by estimating $p(t,a^*,a^*)$ and using $\mathbb{P}_x(\sigma_{a^*}\in ds)$.

In related works, A. Grigor'yan, L. Saloff-Coste and S. Ishiwata obtained heat kernel estimates for Brownian motion on the connected sum of manifolds \cite{GSc,GIS}. To explain their results, we present the following definition. 

\begin{Def}
Let $M_1$ and $M_2$ be n-dimensional manifolds. A connected sum $M:=M_1\# M_2$ is a manifold constructed by removing a ball inside each manifold and gluing together these boundary spheres. A non-empty compact set $K \subset M$ is a central part of $M$ if the exterior $M\setminus K$ is a disjoint union of open sets $E_1$ and $E_2$ such that each $E_i$ is homeomorphic to $M_i\setminus K_i$ for some compact $K_i\subset M_i$. 
\end{Def}

Let $S^{d-d'}$ be the $d-d'$ dimensional unit sphere. For $d> d'\geq 1$, $\R^d \# (\R^{d'}\times S^{d-d'})$ is not a space with varying dimension but, by considering the ball to have large radius, it looks similar to a space with varying dimension. 
Furthermore, our large time heat kernel estimates for BMVD are, up to the distances with which the results are stated, of the same form as those for Brownian motion on $\R^d \# (\R^{d'}\times S^{d-d'})$ given in \cite{GSc,GIS}. In fact, in order to prove Theorem \ref{d'=1}, \ref{d'=2,3}, we borrow some techniques from \cite{GSc}.

Finally, we give some remark about the approach by Chen and Lou (\cite{CL}). For large time, they estimated $p(t,a^*,a^*)$ by using the estimate of $\mathbb{P}_x(\sigma_{a^*}\in ds)$ and the Markov property $p(t,a^*,a^*)=\int p(t/2,a^*,x)^2 dm_p(x)$, and obtained desired off-diagonal estimates. By careful calculations, their method also works for general dimensions. However, our method gives relations between the behaviour of BMVD and that of Brownian motion on the connected sum of manifolds studied by \cite{GSc, GIS}, which is of independent interest. So we take this approach.\\

{\bf Acknowledgements\ } I would like to thank Professor Takashi Kumagai, my supervisor, for helpful discussions and for carefully reading this paper, and Professor Ryoki Fukushima for useful comments about Brownian motion with darning. I also thank Professor Laurent Saloff-Coste for giving me important advice about the relationship between the problem studied here and the estimation of heat kernels on the connected sum of manifolds, Professor David A. Croydon for checking the introduction of this paper. After the manuscript was written, Professor Zhen-Qing Chen pointed me out that the approach in \cite{CL} should work for general dimensions as well, and it was indeed true. He also gave me valuable comments including one concerning the proof of Proposition 5.1. I would deeply thank him for the comments.

\section{Preliminary}
Throughout the paper, we fix $\varepsilon, \varepsilon ' ,p>0$. In this section, we first prove the existence and the uniqueness of BMVD. We  then show the existence and some properties of the heat kernel for BMVD. We also prove the space with varying dimension fails to the volume doubling property and we give some lemmas that will be used in Section \ref{lsec1}-\ref{lsec2}.

\begin{Thm}\label{BMVDexuni} For $d\geq d'\geq 1$, $\varepsilon, \varepsilon'>0$ and $p>0$, BMVD with parameters $(\varepsilon, \varepsilon',p)$ on $\R_{\varepsilon}^d \cup \R_{\varepsilon'}^{d'}\cup \{a^*\}$ exists and is unique in law. Furthermore, its associated Dirichlet form $(\mathcal{E},\mathcal{F})$ on $L^2(\R_{\varepsilon}^d \cup \R_{\varepsilon'}^{d'}\cup \{a^*\};m_p)$ is given by 
\begin{eqnarray*} \mathcal{F}:=\left \{f\in L^2(\R_{\varepsilon}^d \cup \R_{\varepsilon'}^{d'}\cup \{a^*\};m_p)\  \middle|\  \begin{split} f|_{\R_{\varepsilon}^d}\in H^1(\R_{\varepsilon}^d), f|_{\R_{\varepsilon'}^{d'}}\in H^1(\R_{\varepsilon'}^{d'})\\
f(x)=f(a^*)\  {\rm q.e.\  on}\  \partial \R_{\varepsilon}^d \cup \partial \R_{\varepsilon'}^{d'} \end{split} \right \}, \\
\mathcal{E}(f,g):=\frac{1}{2}\int_{\R_{\varepsilon}^d}\nabla f \cdot \nabla g \ dm_p +\frac{1}{2}\int_{\R_{\varepsilon'}^{d'}}\nabla f \cdot \nabla g \ dm_p \ {\rm for}\  f,g\in \mathcal{F} . \end{eqnarray*}
 \end{Thm}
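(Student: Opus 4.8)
The plan is to construct BMVD as the $m_p$-symmetric diffusion associated with the Dirichlet form $(\mathcal{E},\mathcal{F})$ written in the statement, to read off the two defining properties from the structure of $\mathcal{E}$, and then to show that, conversely, any process meeting Definition~\ref{BMVD_def} must carry exactly this Dirichlet form. First I would check that $(\mathcal{E},\mathcal{F})$ is a regular, strongly local Dirichlet form on $L^2(\R_{\varepsilon}^d \cup \R_{\varepsilon'}^{d'}\cup\{a^*\};m_p)$. Closedness: $\mathcal{F}$ is the intersection of $H^1(\R_{\varepsilon}^d)\oplus H^1(\R_{\varepsilon'}^{d'})$ (with the value at the $m_p$-null point $a^*$ playing the role of a free parameter that is then constrained) with the subspace cut out by the trace conditions $f=f(a^*)$ q.e.\ on $\partial\R_{\varepsilon}^d\cup\partial\R_{\varepsilon'}^{d'}$; since the trace maps $H^1\to L^2$ of these boundaries are continuous, this subspace is $\mathcal{E}_1$-closed. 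The unit contraction property is inherited from the two Dirichlet integrals, and strong locality is immediate because $\mathcal{E}$ is a sum of Dirichlet integrals with no zeroth-order term. For regularity I would exhibit a core consisting of functions that are $C_c^\infty$ on each of $\R_{\varepsilon}^d$ and $\R_{\varepsilon'}^{d'}$ and are constant, equal to their common value at $a^*$, on a neighbourhood of $\partial\R_{\varepsilon}^d\cup\partial\R_{\varepsilon'}^{d'}$: such functions are continuous on the whole space, lie in $\mathcal{F}\cap C_c$, are uniformly dense in $C_c$ and $\mathcal{E}_1$-dense in $\mathcal{F}$ (this uses density of $C_c^\infty$ in $H^1_0$ of each piece together with an approximation of the darning value near $a^*$). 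Granting regularity and strong locality, there is an $m_p$-symmetric diffusion $X$ properly associated with $(\mathcal{E},\mathcal{F})$.

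Next I would verify Definition~\ref{BMVD_def}. For the open set $\R_{\varepsilon}^d$, its part Dirichlet form is $(\mathcal{E},\mathcal{F}^{\R_{\varepsilon}^d})$ with $\mathcal{F}^{\R_{\varepsilon}^d}=\{f\in\mathcal{F}:\ f=0\ \text{q.e.\ on }(\R_{\varepsilon}^d)^c\}$; but $f=0$ q.e.\ off $\R_{\varepsilon}^d$ forces $f(a^*)=0$, hence the boundary trace of $f$ vanishes q.e.\ and $f|_{\R_{\varepsilon}^d}\in H^1_0(\R_{\varepsilon}^d)$, while conversely any element of $H^1_0(\R_{\varepsilon}^d)$ extended by $0$ lies in $\mathcal{F}^{\R_{\varepsilon}^d}$. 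Thus the part form is $(\tfrac12\int\nabla\cdot\nabla,\,H^1_0(\R_{\varepsilon}^d))$, which is the Dirichlet form of Brownian motion killed on exiting $\R_{\varepsilon}^d$; the same computation applies on $\R_{\varepsilon'}^{d'}$, giving property~(i). Property~(ii) follows from the Beurling--Deny decomposition: $\mathcal{E}$ has zero killing measure (no zeroth-order term), so $X$ admits no killing anywhere, in particular none at $a^*$. Hence $X$ is a BMVD with parameters $(\varepsilon,\varepsilon',p)$.

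For uniqueness, let $Y$ satisfy Definition~\ref{BMVD_def}, with associated (quasi-regular, and then transferred to regular) Dirichlet form $(\mathcal{E}^Y,\mathcal{F}^Y)$ on $L^2(m_p)$; it is strongly local since $Y$ is a diffusion, has zero killing measure by~(ii), and its part forms on $\R_{\varepsilon}^d$ and on $\R_{\varepsilon'}^{d'}$ coincide with those of absorbing Brownian motion by~(i). What remains --- and what I expect to be the main obstacle --- is to show that the behaviour at $a^*$ is thereby forced, i.e.\ $\mathcal{F}^Y=\mathcal{F}$ and $\mathcal{E}^Y=\mathcal{E}$. This is a one-point-extension (darning) uniqueness statement: adjoining the single non-polar point $a^*$ to the disjoint union $\R_{\varepsilon}^d\cup\R_{\varepsilon'}^{d'}$ and prescribing the killed part process on it, a symmetric diffusion with no killing at $a^*$ is unique in law, and its form has domain precisely the $\mathcal{E}_1$-closure of $\mathcal{F}^{\R_{\varepsilon}^d}\oplus\mathcal{F}^{\R_{\varepsilon'}^{d'}}$ together with functions locally constant near $a^*$ --- which is exactly the $\mathcal{F}$ displayed, the trace condition $f=f(a^*)$ q.e.\ being the Dirichlet-form encoding of continuity across the darning point. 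This step is handled by the theory of symmetric Brownian motion with darning; it is the $(d',d)=(1,2)$ argument of Chen--Lou \cite{CL} carried over verbatim to general $(d',d)$. Once $\mathcal{E}^Y=\mathcal{E}$, the uniqueness of the Hunt process associated with a regular Dirichlet form gives that $Y$ and $X$ agree in law started from quasi-every point; since every singleton other than $a^*$ is regular and $\{a^*\}$ is non-polar, this holds from every starting point, completing the proof.
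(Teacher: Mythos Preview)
Your proposal is correct and follows exactly the approach the paper takes: the paper's proof consists of the single line ``The proof is the same as that of \cite[Theorem~2.2]{CL}'', and what you have written is precisely an outline of that Chen--Lou argument (regular strongly local Dirichlet form via darning, identification of the part forms with killed Brownian motion, no-killing from the Beurling--Deny decomposition, and uniqueness via one-point extension theory) transported to general $(d,d')$.
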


\begin{proof}
The proof is the same as that of {\cite[Theorem 2.2]{CL}}.
\end{proof}

\begin{Prop}\label{Nash} There exists a heat kernel $p(t,x,y)$ with respect to $m_p$ which is continuous for each $t>0$. Moreover, for all $t>0$, it holds that $p(t,a^*,a^*)\lesssim t^{-{d}/{2}}\vee t^{-{d'}/{2}}$. \end{Prop}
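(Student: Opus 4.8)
The plan is to obtain the heat kernel via the standard Nash-inequality argument, and then read off the on-diagonal bound at $a^*$ by tracking how the Nash inequality degrades because the space has two parts of possibly different dimension. First I would recall that the Dirichlet form $(\mathcal{E},\mathcal{F})$ from Theorem \ref{BMVDexuni} is a regular, strongly local, conservative Dirichlet form on $L^2(\R_{\varepsilon}^d\cup\R_{\varepsilon'}^{d'}\cup\{a^*\};m_p)$, and that the associated semigroup $P_t$ is Markovian. Each part $\R_{\varepsilon}^d$ (resp. $\R_{\varepsilon'}^{d'}$) carries the restriction of the Euclidean Dirichlet form, so for functions supported in one part the classical Sobolev/Nash inequalities on $\R^d$ (resp. $\R^{d'}$) apply. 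Since $d\ge d'$, functions concentrated near $a^*$ feel at worst the lower-dimensional ($d'$-dimensional) Nash inequality --- this is the source of the $t^{-d/2}\vee t^{-d'/2}=t^{-d'/2}$ term in the statement, which is what dominates as the claimed bound is $\lesssim t^{-d/2}\vee t^{-d'/2}$.

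The key steps, in order. \emph{Step 1:} Establish a Nash-type inequality of the form
\begin{equation*}
\|f\|_2^{2+4/\nu}\le C\,\mathcal{E}(f,f)\,\|f\|_1^{4/\nu}\qquad(f\in\mathcal{F}\cap L^1),
\end{equation*}
valid for small $L^2$-norm regime, with $\nu=d$; and separately a global/large-scale version with $\nu=d'$. Concretely, split $f=f\mathbf{1}_{\R_\varepsilon^d}+f\mathbf{1}_{\R_{\varepsilon'}^{d'}}+f(a^*)\mathbf{1}_{\{a^*\}}$; apply the Euclidean Nash inequality on each piece (extending $f|_{\R_\varepsilon^d}$ across the filled-in ball, using that $f$ is constant $=f(a^*)$ q.e.\ on the boundary so the extension stays in $H^1$ with controlled Dirichlet energy), and combine. \emph{Step 2:} From the Nash inequality deduce, by the Carlen--Kusuoka--Stroock / Davies machinery, the ultracontractivity bound $\|P_t\|_{1\to\infty}\le C(t^{-d/2}\vee t^{-d'/2})$; equivalently the existence of a bounded symmetric integral kernel $p(t,x,y)$ with $p(t,x,y)\le C(t^{-d/2}\vee t^{-d'/2})$, where the $t^{-d/2}$ controls small $t$ and $t^{-d'/2}$ controls large $t$. \emph{Step 3:} Upgrade to joint continuity of $(t,x,y)\mapsto p(t,x,y)$ on $(0,\infty)\times(\R_\varepsilon^d\cup\R_{\varepsilon'}^{d'}\cup\{a^*\})^2$: on each part $\R_\varepsilon^d$, $\R_{\varepsilon'}^{d'}$ this is local parabolic regularity for the Laplacian; near $a^*$ one uses the semigroup identity $p(t,x,y)=\int p(t/2,x,z)p(t/2,z,y)\,dm_p(z)$ together with strong continuity of $P_{t}$ and the uniform bound from Step 2 (dominated convergence), plus equicontinuity coming from a local Nash-type oscillation estimate near $a^*$. \emph{Step 4:} Specialize $x=y=a^*$ in the bound of Step 2 to get $p(t,a^*,a^*)\lesssim t^{-d/2}\vee t^{-d'/2}$ for all $t>0$.

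The main obstacle is Step 1, specifically making the Nash inequality quantitatively correct near $a^*$: one must verify that gluing the two Euclidean pieces along the single identified point does not inflate the Dirichlet energy, and that the point $a^*$ itself (of zero $m_p$-measure, hence $\mathcal{E}$-polar in the relevant sense only if $d'\ge 2$) is handled correctly --- when $d'=1$ the point is \emph{not} polar and the constant value $f(a^*)$ genuinely contributes, which is exactly why the $1$-dimensional rate $t^{-1/2}$ can dominate for large $t$. A clean way around this is to prove two separate inequalities: a small-scale one giving $\|P_t\|_{1\to\infty}\lesssim t^{-d/2}$ for $t\le 1$ (using that on scale $\le 1$ one stays essentially in a $d$-dimensional region away from, or in a fixed neighborhood of, $a^*$ where the worse dimension is $d$), and a large-scale one giving $\|P_t\|_{1\to\infty}\lesssim t^{-d'/2}$ for $t\ge 1$ (using that the slowest-decaying part of the space is $d'$-dimensional). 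Combining yields $t^{-d/2}\vee t^{-d'/2}$, which is precisely the asserted bound. The continuity in Step 3 near $a^*$ is routine once the uniform kernel bound is in hand, following the argument of \cite{CL} verbatim in the regions away from $a^*$ and via the convolution identity at $a^*$.
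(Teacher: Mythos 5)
Your overall plan matches the paper's: establish a two-exponent Nash inequality by applying the classical Nash inequality separately on $\R_\varepsilon^d$ and $\R_{\varepsilon'}^{d'}$, then invoke the Carlen--Kusuoka--Stroock machinery to get existence of the kernel together with the ultracontractivity bound $p(t,x,y)\lesssim t^{-d/2}\vee t^{-d'/2}$, and specialize to $x=y=a^*$. Concretely, the paper produces the single inequality
\[
\|f\|_{L^2}^2\le C\Bigl(\mathcal{E}(f,f)^{d/(d+2)}\|f\|_{L^1}^{4/(d+2)}+\mathcal{E}(f,f)^{d'/(d'+2)}\|f\|_{L^1}^{4/(d'+2)}\Bigr),
\]
which is exactly the form \cite[Corollary 2.12]{CKS} is designed for; your split into a small-scale ($\nu=d$) and a large-scale ($\nu=d'$) version is morally the same, and your extension-by-constant trick across the filled-in ball is a reasonable alternative to the paper's direct use of Nash on the exterior domains (which have smooth boundary).

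The one genuine gap is in Step 3, continuity at $a^*$. You propose ``equicontinuity coming from a local Nash-type oscillation estimate near $a^*$,'' but no such off-the-shelf estimate is available at the gluing point: there the space is not a manifold, $m_p$ fails the volume doubling property when $d>d'$ (Proposition \ref{VD}), and the parabolic Harnack inequality fails, so the usual De Giorgi--Nash--Moser oscillation machinery cannot be invoked. The paper closes this step with a Dirichlet-form argument rather than a PDE one: for fixed $t$ and $y$ the function $p(t,\cdot,y)=P_{t/2}p(t/2,\cdot,y)$ is quasi-continuous (\cite[Proposition 3.1.9]{CF}), it is continuous on $\R_\varepsilon^d\cup\R_{\varepsilon'}^{d'}$ because the part processes are killed Brownian motions, and since $\{a^*\}$ is non-polar for BMVD the quasi-continuity upgrades to genuine continuity at $a^*$; symmetry in $x,y$ then finishes. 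You would need to substitute an argument of this kind for the unsupported equicontinuity claim.
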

\begin{proof}
$\| \cdot \| _{L^i}$ denotes $L^i$-norm with respect to $m_p$. Since $\R_{\varepsilon}^d$ and $\R_{\varepsilon'}^{d'}$ have smooth boundaries, for all $f\in \mathcal{F} \cap L^1(\R_{\varepsilon}^d\cup \R_{\varepsilon'}^{d'}\cup \{a^*\})$, by classical Nash's inequality, there is $C>0$ such that 
	$$\|f|_{\R_{\varepsilon}^d}\|^{1+{2}/{d}}_{L^2}\leq C \|f|_{\R_{\varepsilon}^d}\|^{{2}/{d}}_{L^1}\cdot \| \nabla f|_{\R_{\varepsilon}^d}\|_{L^2},$$
	$$\|f|_{\R_{\varepsilon}^{d'}}\|^{1+{2}/{d'}}_{L^2}\leq C \|f|_{\R_{\varepsilon}^{d'}}\|^{{2}/{d'}}_{L^1}\cdot \| \nabla f|_{\R_{\varepsilon}^{d'}}\|_{L^2}.$$
	Then, for all $f\in \mathcal{F}$, we have
	$$\|f\|_{L^2}^2\leq C\left( \mathcal{E}(f,f)^{{d}/{d+2}}\|f\|_{L^1}^{{4}/{d+2}}+\mathcal{E}(f,f)^{{d'}/{d'+2}}\|f\|_{L^1}^{{4}/{d'+2}} \right) .$$
	By {\cite[Corollary 2.12]{CKS}}, the heat kernel $p(t,x,y)$ with respect to $m_p$ exists and the desired inequarity holds for a.e. $x,y$, so it is sufficient to prove the continuity of $p(t,\cdot,\cdot)$. By Definition \ref{BMVD_def} (i), $p(t,\cdot,\cdot)$ is continuous on $(\R_{\varepsilon}^d\cup \R_{\varepsilon'}^{d'})\times (\R_{\varepsilon}^d\cup \R_{\varepsilon'}^{d'})$. For fixed $t,y$, $p(t,x,y)=\int p(t/2,x,z)p(t/2,z,y)dm_p(z)=P_{t/2}p(t/2,\cdot,y)$ is quasi-continuous ({\cite[Proposition 3.1.9]{CF}}) and, since $a^*$ is nonpolar for $X$, $p(t,\cdot, y)$ is continuous. By the symmetry, $p(t, \cdot,\cdot)$ is continuous.
\end{proof}

In this paper, for $x$ and $r>0$, we define $B(x;r):=\{y\in \R_{\varepsilon}^d \cup \R_{\varepsilon'}^{d'}\cup \{a^*\}\ |\ \rho(x,y)<r \}$.

\begin{Prop}\label{VD}
For $d>d'\geq 1$, the volume doubling property fails on $\R_{\varepsilon}^d \cup \R_{\varepsilon'}^{d'}\cup \{a^*\}$ for $m_p$.
\end{Prop}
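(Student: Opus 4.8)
The plan is to exhibit a family of balls, centered at $a^*$, along which the ratio of the measure of the ball of radius $2r$ to that of radius $r$ is unbounded, which directly contradicts the volume doubling property. First I would compute $m_p\bigl(B(a^*;r)\bigr)$ for $0<r\le 1$ (say), using that for small $r$ the ball $B(a^*;r)$ is, by the definition of $\rho$ and of the neighborhoods of $a^*$, the union of the annular shell $\{x\in\R_\varepsilon^d : |x|_\rho<r\}=\{x\in\R^d:\varepsilon<|x|<\varepsilon+r\}$ and the corresponding shell in $\R_{\varepsilon'}^{d'}$, together with $a^*$ itself (which has zero $m_p$-measure). Hence $m_p\bigl(B(a^*;r)\bigr)=\sigma_{d-1}\int_\varepsilon^{\varepsilon+r}s^{d-1}\,ds+p\,\sigma_{d'-1}\int_{\varepsilon'}^{\varepsilon'+r}s^{d'-1}\,ds$, where $\sigma_{k}$ denotes the surface area of the unit sphere in $\R^{k+1}$ (with the obvious convention in the degenerate case $d'=1$, where the contribution is simply $pr$). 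For small $r$ this behaves like $C_1 r + C_2 r + o(r)$, i.e. it is comparable to $r$, with constants depending only on $\varepsilon,\varepsilon',d,d',p$.

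The key point is the contrast with large $r$. For $r\ge R_0$ large, $B(a^*;r)$ contains the full Euclidean ball (minus the excised part) $\{x\in\R^d:\varepsilon<|x|<\varepsilon+r\}$ in the $d$-dimensional component, so $m_p\bigl(B(a^*;r)\bigr)\ge \sigma_{d-1}\int_\varepsilon^{\varepsilon+r}s^{d-1}\,ds \ge c\, r^{d}$ for $r$ large, while on the other hand $m_p\bigl(B(a^*;r)\bigr)\le C\,r^{d}$ as well, since the $d'$-dimensional piece contributes at most $C' r^{d'}\le C' r^d$ (using $d\ge d'$; and in fact $d>d'$ here). Thus $m_p\bigl(B(a^*;r)\bigr)\asymp r^{d}$ for $r\ge R_0$. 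Comparing the two regimes: if volume doubling held with constant $C_D$, then iterating from a small radius $r_0$ up to a large radius $R$ would force $m_p\bigl(B(a^*;R)\bigr)\le C_D^{N} m_p\bigl(B(a^*;r_0)\bigr)$ with $N\asymp \log_2(R/r_0)$, i.e. polynomial growth $m_p\bigl(B(a^*;R)\bigr)\lesssim (R/r_0)^{\log_2 C_D} m_p\bigl(B(a^*;r_0)\bigr)$; but more simply, doubling at the single scale $R$ (i.e. $m_p(B(a^*;2R))\le C_D\, m_p(B(a^*;R))$) combined with $m_p(B(a^*;R))\asymp R^d$ for large $R$ already forces the exponent to be consistent — so the real contradiction comes from comparing the small-scale linear growth with the large-scale $r^d$ growth. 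Concretely, volume doubling would give $m_p(B(a^*;2^k r_0))\le C_D^k\, m_p(B(a^*;r_0))$, whereas the lower bound gives $m_p(B(a^*;2^k r_0))\ge c\,(2^k r_0)^{d}$ once $2^k r_0\ge R_0$; since $d\ge 1$ and $2^{kd}\to\infty$ faster than... no: one must be careful, since $C_D^k$ can also grow. The clean argument is therefore the following: choose $x=a^*$ and consider the two balls $B(a^*;r)$ and $B(a^*;2r)$ for $r\in(0,1)$, where both are comparable to $r$ (same exponent $1$), so that scale does not disprove doubling; instead, one uses that for large $r$ the ratio $m_p(B(a^*;2r))/m_p(B(a^*;r))\to 2^{d}$, which is fine, and the failure must be detected at the transition. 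Hence I would instead compare balls centered at a far-away point: take $z\in\R_\varepsilon^d$ with $|z|_\rho=R$ large and look at $B(z;R)$ versus $B(z;2R)$: the former is essentially a $d$-dimensional Euclidean ball of radius $R$, of measure $\asymp R^{d}$, while the latter also contains the entire $d'$-dimensional component out to Euclidean distance $\asymp R$ from $a^*$, contributing $\asymp R^{d'}$, which is lower order — so this does not work either.

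The correct and simplest execution, which I now commit to: the failure of volume doubling on $\R_\varepsilon^d\cup\R_{\varepsilon'}^{d'}\cup\{a^*\}$ for $d>d'$ is detected by comparing, at a fixed large scale, a ball centered at $a^*$ with a ball of twice the radius centered far out in the \emph{lower}-dimensional component. Precisely, fix $R$ large and let $w\in\R_{\varepsilon'}^{d'}$ with $|w|_\rho=R$. Then $B(w;R/2)\subset\R_{\varepsilon'}^{d'}$ is a genuine $d'$-dimensional Euclidean ball, so $m_p\bigl(B(w;R/2)\bigr)\asymp R^{d'}$, with implied constants independent of $R$. On the other hand $B(w;R)\ni a^*$, and hence $B(w;R)\supset B(a^*;R/2)$... wait, $\rho(w,a^*)=|w|_\rho=R$, so $a^*\notin B(w;R)$; use radius $2R$: $B(w;2R)\ni a^*$ since $\rho(w,a^*)=R<2R$, and by the triangle inequality $B(w;2R)\supset B(a^*;R)\supset\{x\in\R_\varepsilon^d:|x|_\rho<R\}$, whence $m_p\bigl(B(w;2R)\bigr)\gtrsim R^{d}$. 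Comparing, $m_p\bigl(B(w;2R)\bigr)/m_p\bigl(B(w;R/2)\bigr)\gtrsim R^{d-d'}\to\infty$ as $R\to\infty$; a doubling inequality $m_p(B(w;2R))\le C_D^2\, m_p(B(w;R/2))$ (two doublings) would bound this ratio by $C_D^2$, a contradiction. I therefore first record the exact formula for $m_p$ of annular shells, then verify the two displayed comparisons, then conclude. The only mild subtlety — the step I expect to require the most care in the writeup rather than being genuinely hard — is checking that for $R$ large the ball $B(w;R/2)$ really is contained in $\R_{\varepsilon'}^{d'}$ and equals an ordinary Euclidean half-space ball there (so that its measure is $\asymp p\,R^{d'}$), which follows because any point at $\rho$-distance $<R/2$ from $w$ and not in $\R_{\varepsilon'}^{d'}$ would have to route through $a^*$ at cost $\ge|w|_\rho=R>R/2$; the degenerate case $d'=1$ is handled by the convention $\R_\varepsilon^1=\R_+$ and needs only trivial modification.
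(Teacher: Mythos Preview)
Your final argument is correct, though the writeup meanders through several abandoned approaches before arriving at it; in a clean version you should delete everything before ``The correct and simplest execution'' and also the residual hesitation inside that paragraph.

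The paper's proof is essentially the dual of yours. You center at a point $w$ in the \emph{lower}-dimensional component with $|w|_\rho=R$ and send $R\to\infty$: the ball $B(w;R/2)$ is a genuine $d'$-dimensional Euclidean ball of measure $\asymp R^{d'}$, while $B(w;2R)$ contains a $d$-dimensional shell of measure $\gtrsim R^d$, so two applications of doubling would force $R^{d-d'}\lesssim C_D^2$, a contradiction as $R\to\infty$. The paper instead centers at a point $x$ in the \emph{higher}-dimensional component with $|x|_\rho=r$ and sends $r\to 0$: $B(x;r)$ is a genuine $d$-dimensional Euclidean ball of measure $\asymp r^d$, while $B(x;2r)$ contains a $d'$-dimensional shell of measure $\gtrsim r^{d'}$, so a single doubling would force $1+r^{d'-d}\lesssim C_D$, a contradiction as $r\to 0$. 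Both arguments exploit the same dimensional mismatch; the paper's is marginally tidier in that it needs only one doubling step and gives an exact formula for $m_p(B(x;r))$, whereas yours requires two doublings (from $R/2$ to $2R$) and asymptotic bounds. Either way the proof is a few lines once the right family of balls is chosen.
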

\begin{proof}
For $r>0$, we take $x\in \R_{\varepsilon}^d$ with $|x|=r+\varepsilon$, see Figure \ref{vdfig}, then we have
$$m_p\left(B(x;r)\right)=\frac{\pi ^{d/2}}{\Gamma (d/2+1)}r^d \ \ {\rm and}$$ 
$$m_p\left(B(x;2r)\right)\geq \frac{\pi ^{d/2}r^d}{\Gamma (d/2+1)}+\frac{p\pi ^{d'/2}\left((r+\varepsilon')^{d'}-\varepsilon'^{d'} \right)}{\Gamma(d'/2+1)}\geq c(r^d+r^{d'}).$$
Now, if there exists $C>0$ such that $m_p\left( B(x;2r) \right)\leq C\ m_p\left( B(x;r) \right)$ for all $x$, then we obtain  $r^d+r^{d'}\leq  cr^d$, so $1+r^{d'-d}\leq  c$. $1+r^{d'-d} \to \infty$ as $r\to 0$ and this is a contradiction.\end{proof}
\begin{figure}[h]
\hspace{10mm}
\tdplotsetmaincoords{65}{10}
\begin{tikzpicture}[tdplot_main_coords]
 \draw[->,>=stealth,very thick] (-3,0,0)--(3,0,0)node[above]{};
 \draw[->,>=stealth,very thick] (0,-4,0)--(0,4,0)node[right]{};
 \draw[->,>=stealth,very thick] (0,0,-2)--(0,0,3)node[right]{};
 \draw (-2,1.0,1.8)--(-2,1.0,2.4);
 \draw (-2,1,1.8)--(-2.6,1,1.75);
 \draw (-2.3,1,2.1)node{$\R^{d}_{\varepsilon}$};

 \node[circle,shading=ball, outer color=red, inner color=pink, minimum width=8mm][label=below left:$a^*$] (ball) at (0,0) {};
 
 \shade[ball color = blue!80, opacity = 0.4] (1.2,0) circle (0.8cm);
 \shade[ball color = gray!40, opacity = 0.4] (1.2,0) circle (1.6cm);
 \shade[ball color = gray!40, opacity = 0.4] (0,0,0) circle (1cm);

\draw[] (1.2,0,0) -- node[above]{$r$} (1.8,0.3,0.4);
\draw[] (1.2,0,0) -- node[below]{$2r$} (2.7,-0.5,-0.5);
\draw[] (-0.33,0.1,0.2) -- node[above]{$r$} (-0.9,0.1,0.4);

\fill (1.2,0,0) circle (2pt) coordinate (x) circle node [above] {$x$};

 \end{tikzpicture}
 \begin{tikzpicture}
 \draw[->,>=stealth,very thick] (-2,0)--(2,0)node[above]{};
 \draw[->,>=stealth,very thick] (0,-1.5)--(0,2)node[right]{};
 \draw (1.2,2.0)--(1.2,1.4);
 \draw (1.2,1.4)--(1.8,1.4);
 \draw (1.6,1.7)node{$\R^{d'}_{\varepsilon'}$};

 \draw[fill=red!80](0,0)circle(0.4);
 \draw[ball color = gray!40, opacity = 0.4] (0,0)circle(1);
\draw[] (0.2828,0.2828) -- node[above]{$r$} (0.7071,0.70710);

 \draw (-0.2,0)node[above left]{$a^*$} ;

\end{tikzpicture}
 
\caption{$B(x;r)$ and $B(x;2r)$}
\label{vdfig}
 \end{figure}
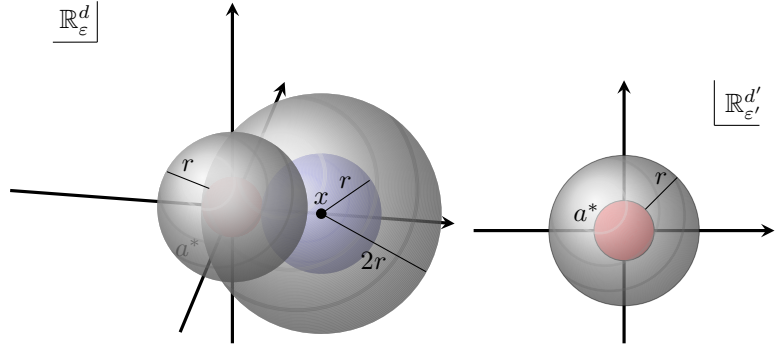

Let $p_{\R_{\varepsilon}^d}(t,x,y)$ be the transition density of the part process of BMVD killed upon exiting $\R_{\varepsilon}^d$. According to \cite{Z}, the following proposition holds.
\begin{Prop}\label{part kernel}
Let $d\geq 2$. For $x,y\in \R_{\varepsilon}^d$ and $t>0$, it holds that\begin{equation}p_{\R_{\varepsilon}^d}(t,x,y)\asymp \frac{1}{t^{{d}/{2}}}\left(1\wedge \frac{|x|_{\rho}}{\sqrt{t}\wedge 1} \right)\left(1\wedge \frac{|y|_{\rho}}{\sqrt{t}\wedge 1} \right)\ee. \label{eq:part kernel}\end{equation}
\end{Prop}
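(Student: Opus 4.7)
The estimate \eqref{eq:part kernel} is the classical two-sided Dirichlet heat kernel bound for the smooth exterior domain $\R^d\setminus\overline{B(0,\varepsilon)}$ (with $d\geq 2$), and is proved in the cited reference \cite{Z}. If I were to reconstruct it, I would invoke the standard Dirichlet-heat-kernel toolkit for such domains.

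For the upper bound, I would start from $p_{\R_\varepsilon^d}(t,x,y)\leq (4\pi t)^{-d/2}e^{-|x-y|^2/(4t)}$, which follows by domain monotonicity from the free Gaussian on $\R^d$ and already supplies the Gaussian exponential and the on-diagonal size $t^{-d/2}$. To produce the boundary factor $1\wedge |x|_\rho/(\sqrt{t}\wedge 1)$, I would apply the Markov property at time $s\asymp |x|_\rho^2\wedge 1$:
\begin{equation*}
p_{\R_\varepsilon^d}(t,x,y) = \mathbb{E}_x\bigl[p_{\R_\varepsilon^d}(t-s, X_s, y)\,\mathbf{1}_{\{\sigma_{a^*}>s\}}\bigr]\leq \|p_{\R_\varepsilon^d}(t-s,\cdot,y)\|_\infty\,\mathbb{P}_x(\sigma_{a^*}>s),
\end{equation*}
and estimate $\mathbb{P}_x(\sigma_{a^*}>s)\lesssim |x|_\rho/\sqrt{s}$ by locally straightening the sphere $\partial\R_\varepsilon^d$ via a smooth diffeomorphism and comparing with the half-space Dirichlet heat kernel, which is known explicitly by reflection. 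Symmetry in $y$ provides the other boundary factor.

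For the lower bound, when $|x|_\rho\wedge|y|_\rho\geq c(\sqrt{t}\wedge 1)$, I would use the classical chaining argument: cover a path from $x$ to $y$ lying well inside $\R_\varepsilon^d$ by overlapping balls of radius $\sqrt{t}\wedge 1$ and apply the parabolic Harnack inequality in each, giving $p_{\R_\varepsilon^d}(t,x,y)\gtrsim t^{-d/2}e^{-C|x-y|^2/t}$. When one of the two points—say $x$—lies close to $\partial\R_\varepsilon^d$, I would use the semigroup identity at time $s\asymp |x|_\rho^2\wedge 1$, restricting the intermediate point $z$ to a region with $|z|_\rho\asymp \sqrt{s}\wedge 1$; the matching half-space lower bound at the first factor then supplies the desired $|x|_\rho/(\sqrt{t}\wedge 1)$.

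The main technical point is the crossover at scale $\sqrt{t}=1$: for $\sqrt{t}\leq 1$ the sphere is locally flat at the relevant scale and the half-space comparison applies cleanly, while for $\sqrt{t}\geq 1$ the whole obstacle becomes visible and the boundary factor saturates, because the survival probability starting at distance $|x|_\rho\ll 1$ from $\partial\R_\varepsilon^d$ stabilises on timescales of order $1$. Some additional care is needed in $d=2$, where the unkilled planar Brownian motion is recurrent although the killed process on $\R_\varepsilon^2$ is still well-behaved; these subtleties, together with the uniform verification of the estimate across all the regimes, are handled in \cite{Z}.
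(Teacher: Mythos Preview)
Your proposal is correct and matches the paper exactly: the paper does not give its own proof of this proposition but simply records it as a consequence of \cite{Z}, and you do the same while additionally supplying a reasonable sketch of the standard argument behind that reference.
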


Let $\overline{p}_{\R_{\varepsilon}^d}(t,x,y):=\int_0^t p(t-s,a^*,y)\mathbb{P}_x(\sigma_{a^*}\in ds)$ for $x,y\in \R_{\varepsilon}^d$. In order to estimate $\overline{p}_{\R_{\varepsilon}^d}(t,x,y)$, we prepare some lemmas for $\sigma_{a^*}$.
According to {\cite[Theorem 3]{BMR}}, the following two lemmas hold when $\varepsilon =1$. By the scaling, they hold for every $\varepsilon >0$.
\begin{Lem}\label{hitting3}
For $d\geq 3$ and $x\in \R_{\varepsilon}^d$, it holds that$$\mathbb{P}(\sigma_{a^*}\in ds)\asymp \frac{|x|_{\rho}}{|x|}\frac{e^{-{|x|_{\rho}^2}/{s}}}{s^{{d}/{2}}+s^{{3}/{2}}|x|^{{(d-3)}{/2}}}ds.$$
\end{Lem}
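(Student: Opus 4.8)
The plan is to reduce the statement to the result of \cite{BMR} on the first hitting time of the unit ball by a $d$-dimensional Brownian motion, and then to transfer the estimate along the identification that defines BMVD. First I would recall that, by Definition \ref{BMVD_def}(i), the part process of $X$ on $\R_\varepsilon^d$ before $\sigma_{a^*}$ agrees in law with Brownian motion on $\R^d$ killed upon hitting $\overline{B(0;\varepsilon)}$ (the closed ball of radius $\varepsilon$ that is collapsed to $a^*$). Consequently, for $x\in\R_\varepsilon^d$ the distribution $\mathbb{P}_x(\sigma_{a^*}\in ds)$ coincides with the distribution of the first hitting time of $\overline{B(0;\varepsilon)}$ by a $d$-dimensional Brownian motion started at $x$. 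Writing $|x|_\rho=|x|-\varepsilon$, this is exactly the quantity estimated in \cite[Theorem 3]{BMR} once we put $\varepsilon=1$: the hitting-time density of the unit ball from a point at Euclidean distance $|x|$ from the origin behaves like $\tfrac{|x|_\rho}{|x|}\,\dfrac{e^{-|x|_\rho^2/s}}{s^{d/2}+s^{3/2}|x|^{(d-3)/2}}$.

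Next I would remove the normalization $\varepsilon=1$ by Brownian scaling. If $W$ is a $d$-dimensional Brownian motion and $\tau_r$ denotes its first hitting time of $\overline{B(0;r)}$, then $(\,r^{-1}W_{r^2 t}\,)_t$ is again a Brownian motion, so starting $W$ from $x$ with $|x|=r+\varepsilon'$ we get $\tau_\varepsilon \overset{d}{=}\varepsilon^2\,\widetilde\tau_1$ where $\widetilde\tau_1$ is the hitting time of the unit ball started from $x/\varepsilon$, whose modulus is $|x|/\varepsilon$. Substituting $s\mapsto s/\varepsilon^2$ into the $\varepsilon=1$ estimate and using $|x/\varepsilon|_\rho=|x|_\rho/\varepsilon$, every power of $s$ and of the radial variables rescales consistently, and one checks that the stated two-sided bound is invariant in form under this scaling (the constants $C,c$ absorb the powers of $\varepsilon$, which is legitimate since $\varepsilon$ is fixed throughout). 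This yields the claim for every $\varepsilon>0$ and every $x\in\R_\varepsilon^d$.

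The only genuine content is therefore the translation between \cite{BMR} and our formulation, together with verifying that \cite{BMR} is indeed stated for hitting the ball from an exterior point (rather than, say, exit times or hitting a point), so the main obstacle is bookkeeping rather than a new estimate: one must match conventions (the exponent $d/2$ versus $3/2$ in the denominator reflecting the transient/recurrent split of the radial Bessel process of dimension $d$ killed at level $\varepsilon$) and confirm that the identification of $\overline{B(0;\varepsilon)}$ with $a^*$ does not change the law of the hitting time of that set. Since collapsing a set to a point affects only the behaviour after $\sigma_{a^*}$, and $\sigma_{a^*}$ is by definition the hitting time of that set, there is nothing to prove there, and the lemma follows.
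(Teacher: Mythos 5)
Your proposal matches the paper's proof exactly: the paper, just more tersely, invokes {\cite[Theorem 3]{BMR}} for the hitting time of the unit ball ($\varepsilon=1$) and observes that Brownian scaling gives the estimate for general $\varepsilon$ (with implicit constants allowed to depend on the fixed $\varepsilon$). Your extra bookkeeping — identifying $\sigma_{a^*}$ with the hitting time of $\overline{B(0;\varepsilon)}$ via Definition \ref{BMVD_def}(i), and checking that the two-sided bound keeps its form after the substitution $s\mapsto s/\varepsilon^2$ — is correct and fills in precisely what the paper leaves to the reader.
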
		

\begin{Lem}\label{hitting2}
For $d=2$ and $x\in \R_{\varepsilon}^2$, it holds that$$\mathbb{P}(\sigma_{a^*}\in ds)\asymp \frac{|x|_{\rho}}{|x|}\frac{1+\log{|x|}}{\left(1+\log{\left( 1+s/|x| \right)}\right)\left(1+\log{(s+|x|)} \right)}\frac{(|x|+s)^{{1}/{2}}}{s^{3/2}}e^{-|x|_{\rho}/s}ds.$$
\end{Lem}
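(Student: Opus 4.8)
The plan is to deduce the general-$\varepsilon$ statement from the case $\varepsilon=1$, which is precisely {\cite[Theorem 3]{BMR}}, by means of Brownian scaling. The first step is to recognise what $\sigma_{a^*}$ actually is: by Definition \ref{BMVD_def} (i) the part process of $X$ on $\R_{\varepsilon}^2$ is planar Brownian motion killed upon leaving $\R_{\varepsilon}^2$, and since $X$ is a diffusion whose only exit from $\R_{\varepsilon}^2$ is through $\partial\R_{\varepsilon}^2=\{|z|=\varepsilon\}$, which is identified with $a^*$, the random time $\sigma_{a^*}$ under $\mathbb{P}_x$ (for $x\in\R_{\varepsilon}^2$) has the same law as the first hitting time of the Euclidean ball $\overline B(0,\varepsilon)\subset\R^2$ by a planar Brownian motion started at $x$ --- equivalently, the first time a two-dimensional Bessel process started at $|x|$ reaches level $\varepsilon$. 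In particular $\mathbb{P}_x(\sigma_{a^*}\in ds)$ is exactly the hitting-time density treated in {\cite{BMR}}, and for $\varepsilon=1$ (so that $|x|_{\rho}=|x|-1$) the asserted two-sided bound is the content of {\cite[Theorem 3]{BMR}}.

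For general $\varepsilon>0$ I would run the standard scaling argument. If $B$ is planar Brownian motion with $B_0=x$, $|x|>\varepsilon$, then $B'_t:=\varepsilon^{-1}B_{\varepsilon^{2}t}$ is again a planar Brownian motion, now started at $x/\varepsilon$ with $|x/\varepsilon|>1$, and the first hitting time $\tau_\varepsilon$ of $\overline B(0,\varepsilon)$ by $B$ equals $\varepsilon^{2}$ times the first hitting time $\tau'_1$ of $\overline B(0,1)$ by $B'$. Hence $\mathbb{P}_x(\sigma_{a^*}\in ds)=\mathbb{P}_{x/\varepsilon}(\sigma_{a^*}\in\varepsilon^{-2}\,ds)$, and substituting the $\varepsilon=1$ formula at the point $x/\varepsilon$ and time $s/\varepsilon^{2}$, using $|x/\varepsilon|-1=|x|_{\rho}/\varepsilon$ and $(|x/\varepsilon|-1)/|x/\varepsilon|=|x|_{\rho}/|x|$, and collecting the powers of $\varepsilon$ (the $\varepsilon^{-2}$ Jacobian from $ds\mapsto\varepsilon^{-2}\,ds$ cancels the $\varepsilon^{2}$ produced by $(|x/\varepsilon|+s/\varepsilon^{2})^{1/2}/(s/\varepsilon^{2})^{3/2}$), I would obtain the claimed expression, except that the arguments of the three logarithms become $|x|/\varepsilon$, $1+s/(\varepsilon|x|)$, $s/\varepsilon+|x|$ instead of $|x|$, $1+s/|x|$, $s+|x|$, and the exponential rate becomes $\varepsilon|x|_{\rho}/s$ instead of $|x|_{\rho}/s$.

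The only real work --- and the place I expect to have to be slightly careful --- is checking that these $\varepsilon$-dependent modifications do not affect the order of magnitude, which is legitimate precisely because $\varepsilon$ is a fixed parameter (so the comparison constants hidden in $\asymp$ may depend on it). For the exponential this is just a change of the constants $c_1,c_2$ in the definition of $\asymp$; for the polynomial prefactor one uses $\min(1,\varepsilon)(|x|+s)\le\varepsilon|x|+s\le\max(1,\varepsilon)(|x|+s)$; and for the logarithmic factors one splits according to whether $|x|$ and $s$ stay in a bounded region --- where every factor is comparable to a positive constant depending only on $\varepsilon$, so the precise argument of the logarithm is immaterial --- or tend to infinity, where the additive constant $\log\varepsilon$ is of lower order and $1+\log(|x|/\varepsilon)\asymp 1+\log|x|$, $1+\log(s/\varepsilon+|x|)\asymp 1+\log(s+|x|)$, $1+\log(1+s/(\varepsilon|x|))\asymp 1+\log(1+s/|x|)$. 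Combining these comparisons with {\cite[Theorem 3]{BMR}} yields Lemma \ref{hitting2}; the identical scheme, now invoking the $d\ge 3$ part of {\cite[Theorem 3]{BMR}}, gives Lemma \ref{hitting3}.
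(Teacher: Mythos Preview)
Your proposal is correct and follows exactly the approach of the paper, which simply states that the $\varepsilon=1$ case is {\cite[Theorem 3]{BMR}} and that the general case follows ``by the scaling''; you have merely supplied the details of that scaling step that the paper leaves implicit. The verification that the $\varepsilon$-dependent shifts in the logarithmic, polynomial and exponential factors are absorbed into the $\asymp$ constants is straightforward and matches what the paper takes for granted.
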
	

We will use the following elementary estimate.
\begin{Lem}\label{elem3}Let $d\geq 3$. Then for $t\geq 1$ and $x\in \R_{\varepsilon}^d$, we have
	\begin{equation}\frac{\ex}{t^{{d}/{2}}+t^{{3}/{2}}|x|^{{(d-3)}/{2}}} \gtrsim \frac{\ex}{t^{{d}/{2}}}.\label{eq:elem31}\end{equation}
	
\end{Lem}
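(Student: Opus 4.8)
The plan is to prove the inequality \eqref{eq:elem31} by comparing the two denominators $t^{d/2}$ and $t^{d/2}+t^{3/2}|x|^{(d-3)/2}$; since the left-hand side has the larger denominator, the inequality $\gtrsim$ is the nontrivial direction and amounts to showing that, after absorbing a constant into the Gaussian factor $\ex$, the extra term $t^{3/2}|x|^{(d-3)/2}$ is controlled by $t^{d/2}$ up to the exponential. Equivalently, it suffices to show
\begin{equation*}
t^{d/2}+t^{3/2}|x|^{(d-3)/2}\lesssim t^{d/2}\,e^{c|x|_\rho^2/t}
\end{equation*}
for some $c>0$, uniformly in $t\geq 1$ and $x\in\R_\varepsilon^d$; that is, it is enough to prove $t^{3/2}|x|^{(d-3)/2}\lesssim t^{d/2}e^{c|x|_\rho^2/t}$.

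First I would reduce to a one-variable estimate. Write $u:=|x|_\rho=|x|-\varepsilon\ge 0$, so $|x|=u+\varepsilon$. Since $d\geq 3$, we have $(d-3)/2\geq 0$, and because $\varepsilon$ is fixed, $|x|^{(d-3)/2}=(u+\varepsilon)^{(d-3)/2}\lesssim 1+u^{(d-3)/2}$. The term coming from the constant $1$ contributes $t^{3/2}\leq t^{d/2}$ for $t\geq 1$ (as $d\geq 3$), which is harmless. So the remaining task is to bound $t^{3/2}u^{(d-3)/2}$ by $t^{d/2}e^{cu^2/t}$, i.e. to show
\begin{equation*}
\left(\frac{u^2}{t}\right)^{(d-3)/4}\lesssim e^{cu^2/t}.
\end{equation*}
Setting $s:=u^2/t\geq 0$, this is just the elementary fact that $s^{(d-3)/4}\leq C_d\,e^{cs}$ for all $s\geq 0$, which holds for any fixed exponent and any $c>0$ with a suitable constant $C_d$ (polynomial growth is dominated by exponential growth; near $s=0$ the left side is bounded).

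Assembling these pieces: from $s^{(d-3)/4}\le C_d e^{s}$ we get $u^{(d-3)/2}\le C_d t^{(d-3)/2}e^{u^2/t}=C_d t^{(d-3)/2}e^{|x|_\rho^2/t}$, hence $t^{3/2}u^{(d-3)/2}\le C_d t^{d/2}e^{|x|_\rho^2/t}$, and combined with $t^{3/2}\le t^{d/2}$ we obtain $t^{d/2}+t^{3/2}|x|^{(d-3)/2}\le C_d' t^{d/2}e^{|x|_\rho^2/t}$. Dividing through by $t^{d/2}(t^{d/2}+t^{3/2}|x|^{(d-3)/2})$ and multiplying by $\ex$ gives
\begin{equation*}
\frac{\ex}{t^{d/2}+t^{3/2}|x|^{(d-3)/2}}\ge \frac{1}{C_d'}\frac{e^{-|x|_\rho^2/t}\,\ex}{t^{d/2}}\,,
\end{equation*}
and since $e^{-|x|_\rho^2/t}\,\ex$ is of the form $e^{-c_1|x|_\rho^2/t}$ with $c_1=2$, this is exactly the claimed relation $\gtrsim$ in the sense of the $\lesssim/\gtrsim$ convention defined in the introduction. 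The reverse inequality $\lesssim$ is immediate since $t^{d/2}+t^{3/2}|x|^{(d-3)/2}\ge t^{d/2}$, so in fact \eqref{eq:elem31} holds as stated. There is no real obstacle here; the only point requiring a word of care is that the constant in the exponential on the two sides differs, which is precisely why the statement is phrased with the $\asymp$-type convention rather than with literal equality of exponents, and one should invoke that convention explicitly when writing up.
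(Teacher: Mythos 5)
Your proof is correct, and it hinges on exactly the same elementary inequality as the paper's proof: the only genuine content of the lemma is that a polynomial in $s=|x|_\rho^2/t$ can be absorbed into a Gaussian factor, i.e.\ $s^{(d-3)/4}\le C_d e^{cs}$. What differs is the bookkeeping. The paper splits into the two regimes $|x|\le\sqrt t/2$ and $|x|>\sqrt t/2$: in the first regime $t^{3/2}|x|^{(d-3)/2}\lesssim t^{(d+3)/4}\le t^{d/2}$ already, and only in the second regime is the polynomial--exponential trade invoked, applied with $|x|_\rho$ in place of $|x|$ (which tacitly uses $|x|_\rho\asymp|x|$ there). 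You instead split algebraically, $|x|^{(d-3)/2}=(|x|_\rho+\varepsilon)^{(d-3)/2}\lesssim 1+|x|_\rho^{(d-3)/2}$, absorb the constant piece via $t^{3/2}\le t^{d/2}$ for $t\ge1$, $d\ge3$, and apply the polynomial--exponential bound directly to the remaining piece. This avoids the $|x|\lessgtr\sqrt t/2$ dichotomy and makes the passage from $|x|$ to $|x|_\rho$ fully explicit, which is a small but real gain: it sidesteps the (minor, fixable) issue in the paper's second case that $|x|_\rho\asymp|x|$ is not automatic when $|x|$ is close to $\varepsilon$. One cosmetic remark: in the sentence reducing to ``$(u^2/t)^{(d-3)/4}\lesssim e^{cu^2/t}$,'' the reduction is not an equivalence (the exact condition is $(u/t)^{(d-3)/2}\lesssim e^{cu^2/t}$, which is weaker for $t\ge1$), so ``i.e.'' should be ``for which it suffices,'' but your subsequent assembly only uses the sufficient direction, so the argument is sound. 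Your verification that the result lands in the paper's $\gtrsim$ convention with $c_1=1$, $c_2=2$ is also correct.
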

\begin{proof}
When $|x|\leq {\sqrt{t}}/{2}$,  (\ref{eq:elem31}) follows from $t^{{d}/{2}}+t^{{3}/{2}}|x|^{{(d-3)}/{2}}\lesssim t^{{d}/{2}}+t^{{3}/{2}}t^{{(d-3)}/{4}} \lesssim t^{{d}/{2}}$.\\
When $|x|> {\sqrt{t}}/{2}$,
\begin{eqnarray*}\frac{e^{-c|x|^2_{\rho}/t}}{t^{{d}/{2}}+t^{{3}/{2}}|x|^{{(d-3)}/{2}}} \geq \frac{e^{-(c+1)|x|^2_{\rho}/t}\left( \frac{|x|^2_{\rho}}{t} \right)^{{(d-3)}/{4}}}{t^{{d}/{2}}+t^{{3}/{2}}|x|^{{(d-3)}/{2}}}
 \gtrsim \frac{\ex}{t^{{d}/{2}}+t^{{(d+3)}/{4}}}\gtrsim \frac{\ex}{t^{{d}/{2}}}.\end{eqnarray*}
\end{proof}

In the next two lemmas, we obtain the estimates of hitting distribution.
\begin{Lem}\label{GSh3}
Let $d\geq 3$. Then for $x\in \R_{\varepsilon}^d$ and $t>1$, we have
\begin{equation}\mathbb{P}_x(\sigma_{a^*}\leq t)\asymp \frac{1}{|x|^{d-2}}\ex. \label{eq:GS3} \end{equation}
\end{Lem}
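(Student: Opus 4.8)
The plan is to integrate the exact hitting density of Lemma~\ref{hitting3} and to control the resulting one-dimensional integral from above and below, keeping in mind that (in the notation of the paper) the asserted $\asymp$ need only hold up to constants that may also be absorbed into the Gaussian exponent, so no exact matching of polynomial factors across the various scales is required. Write $r:=|x|_{\rho}$; for $x\in\R_{\varepsilon}^{d}$ we have $0<r<|x|=r+\varepsilon$ and $|x|>\varepsilon$. By Lemma~\ref{hitting3},
\[
\mathbb{P}_x(\sigma_{a^*}\leq t)=\int_0^t\mathbb{P}_x(\sigma_{a^*}\in ds)\asymp\frac{r}{|x|}\int_0^t\frac{e^{-r^2/s}}{s^{d/2}+s^{3/2}|x|^{(d-3)/2}}\,ds,
\]
so the task reduces to showing that the last integral is $\asymp\dfrac{1}{r|x|^{d-3}}\ex$. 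I would first record the auxiliary estimate
\[
\int_0^{\infty}\frac{e^{-cr^2/s}}{s^{d/2}+s^{3/2}|x|^{(d-3)/2}}\,ds\asymp\frac{1}{r|x|^{d-3}}\qquad\text{for any fixed }c>0,
\]
which follows by integrating Lemma~\ref{hitting3} over $(0,\infty)$ together with $\mathbb{P}_x(\sigma_{a^*}<\infty)=(\varepsilon/|x|)^{d-2}\asymp|x|^{-(d-2)}$ (the part process on $\R_{\varepsilon}^d$ being $d$-dimensional Brownian motion, transient for $d\geq3$), the $c$-dependence being harmless since rescaling $s$ multiplies the integral only by a constant depending on $c$; alternatively one checks it directly by splitting the integral at $s=|x|$.

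For the upper bound I would use $e^{-r^2/s}\leq e^{-r^2/(2t)}e^{-r^2/(2s)}$ for $0<s\leq t$, extend the integral to $(0,\infty)$, and apply the auxiliary estimate:
\[
\int_0^t\frac{e^{-r^2/s}}{s^{d/2}+s^{3/2}|x|^{(d-3)/2}}\,ds\leq e^{-r^2/(2t)}\int_0^{\infty}\frac{e^{-r^2/(2s)}}{s^{d/2}+s^{3/2}|x|^{(d-3)/2}}\,ds\asymp\frac{1}{r|x|^{d-3}}e^{-r^2/(2t)}.
\]
Multiplying by $r/|x|$ yields $\mathbb{P}_x(\sigma_{a^*}\leq t)\leq C|x|^{-(d-2)}e^{-r^2/(2t)}$.

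For the lower bound I would discard everything except a single well-chosen subinterval. Set $\tau:=t\wedge r^2\in(0,t]$ and integrate only over $[\tau/2,\tau]$. On this interval $r^2/s\leq 2r^2/\tau=2\max(r^2/t,1)\leq 2(r^2/t+1)$, hence $e^{-r^2/s}\geq e^{-2}e^{-2r^2/t}$; and the denominator is increasing in $s$, so it is at most $\tau^{d/2}+\tau^{3/2}|x|^{(d-3)/2}\asymp\tau^{3/2}(\tau+|x|)^{(d-3)/2}$. Therefore
\[
\mathbb{P}_x(\sigma_{a^*}\leq t)\gtrsim\frac{r}{|x|}\cdot\frac{\tau}{\tau^{3/2}(\tau+|x|)^{(d-3)/2}}e^{-2r^2/t}=\frac{r}{|x|\,\tau^{1/2}(\tau+|x|)^{(d-3)/2}}e^{-2r^2/t},
\]
and it only remains to verify $r|x|^{d-3}\gtrsim\tau^{1/2}(\tau+|x|)^{(d-3)/2}$. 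This is elementary: $\tau\leq r^2$ gives $\tau^{1/2}\leq r$, while $\tau\leq r^2<|x|^2$ gives $\tau+|x|<2|x|^2$ when $|x|\geq1$ and $\tau+|x|<2$ when $\varepsilon<|x|<1$; since $|x|$ is bounded below by $\varepsilon$, in either case $(\tau+|x|)^{(d-3)/2}\lesssim|x|^{d-3}$. Thus $\mathbb{P}_x(\sigma_{a^*}\leq t)\geq c|x|^{-(d-2)}e^{-2r^2/t}$, and together with the upper bound this is exactly \eqref{eq:GS3}.

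The point I expect to require the most care is the choice of interval in the lower bound. The hitting mass of $\sigma_{a^*}$ concentrates on the time scale $t\wedge|x|_{\rho}^2$ rather than on $[t/2,t]$, so integrating over $[t/2,t]$ is too lossy once $t\gg|x|_{\rho}^2$, whereas the crude bound $\mathbb{P}_x(\sigma_{a^*}\leq t)\leq\mathbb{P}_x(\sigma_{a^*}<\infty)\asymp|x|^{-(d-2)}$ discards the Gaussian factor once $t\ll|x|_{\rho}^2$; restricting to $[(t\wedge|x|_{\rho}^2)/2,\,t\wedge|x|_{\rho}^2]$ is precisely what captures the bulk uniformly in all regimes. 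Everything else is routine manipulation of elementary integrals, made painless by the loose form of the $\asymp$ relation.
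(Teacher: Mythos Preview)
Your argument is correct. You integrate the exact hitting density from Lemma~\ref{hitting3} (taken from \cite{BMR}) and bound the resulting one-dimensional integral directly: the upper bound by splitting the exponential and extending to $(0,\infty)$, the lower bound by restricting to the window $[\tau/2,\tau]$ with $\tau=t\wedge|x|_\rho^2$. The elementary inequalities you quote all check out, including the crucial comparison $(\tau+|x|)^{(d-3)/2}\lesssim|x|^{d-3}$, which uses only $\tau\le|x|^2$ and $|x|\ge\varepsilon$.

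This is a genuinely different route from the paper's proof. The paper does not compute anything: for $|x|_\rho\ge1$ it simply invokes \cite[Theorem~4.4(1)]{GSh}, a general hitting-probability estimate for Brownian motion on Riemannian manifolds, and for $|x|_\rho<1$ it observes that both sides of \eqref{eq:GS3} are $\asymp1$ because $t>1$, $|x|$ is trapped in $(\varepsilon,1+\varepsilon)$, and $\mathbb{P}_x(\sigma_{a^*}\le t)\ge\mathbb{P}_x(\sigma_{a^*}\le1)\ge C>0$. What the paper buys is brevity at the cost of importing a heavier external result; what your approach buys is a self-contained argument that stays entirely within the Bessel-process toolkit already set up in Lemma~\ref{hitting3}, and makes the role of the time scale $t\wedge|x|_\rho^2$ explicit. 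Your final paragraph correctly identifies this as the only delicate point.
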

\begin{proof}When $|x|_{\rho}\geq 1$, (\ref{eq:GS3}) follows from {\cite[Theorem 4.4 (1)]{GSh}}.\\
When $|x|_{\rho}< 1$, $\mathbb{P}_x(\sigma_{a^*}\leq t)\leq 1$ and there is some $C>0$ with $\mathbb{P}_x(\sigma_{a^*}\leq t)\geq \mathbb{P}_x(\sigma_{a^*}\leq 1)\geq C$, so (\ref{eq:GS3}) holds.
\end{proof}

\begin{Lem}\label{GSh2}
	Let $d=2$. Then for $x\in \R_{\varepsilon}^2$ with $|x|_{\rho}\geq1$, we have
	$$( {\rm i})\ \mathbb{P}_x(\sigma_{a^*}\leq t)\asymp \frac{1}{\log{|x|}}\ex \ {\rm for}\  0<t<2|x|^2, $$
	$$({\rm ii})\ \mathbb{P}_x(\sigma_{a^*}\leq t)\asymp 1-\frac{\log{|x|}}{\log{\sqrt{t}}}\  {\rm for} \ 2|x|^2\leq t. \hspace{12mm}$$
	\end{Lem}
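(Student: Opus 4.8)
The plan is to integrate the density estimate from Lemma \ref{hitting2} and split the analysis according to the regime of $t$ relative to $|x|^2$, exactly the same dichotomy that appears in the statement. Throughout, fix $x \in \R_\varepsilon^2$ with $|x|_\rho \geq 1$; note that in this regime $|x|_\rho \asymp |x|$ and $1 + \log(s+|x|) \asymp \log(s+|x|)$ and $1 + \log|x| \asymp \log|x|$, which I will use freely. Writing $f(s)\,ds := \mathbb{P}_x(\sigma_{a^*}\in ds)$, from Lemma \ref{hitting2} we have
$$
f(s) \asymp \frac{|x|_\rho}{|x|}\,\frac{\log|x|}{\bigl(1+\log(1+s/|x|)\bigr)\,\log(s+|x|)}\,\frac{(|x|+s)^{1/2}}{s^{3/2}}\,e^{-|x|_\rho^2/s},
$$
and $\mathbb{P}_x(\sigma_{a^*}\leq t) = \int_0^t f(s)\,ds$. (I read the exponent in Lemma \ref{hitting2} as $e^{-|x|_\rho^2/s}$; the stated $e^{-|x|_\rho/s}$ must be a typo, since dimensional consistency and the case $d\geq 3$ both demand the square.)

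First, for part (i), assume $0 < t < 2|x|^2$. The key observation is that the Gaussian factor $e^{-|x|_\rho^2/s}$ forces the mass of $f$ to concentrate on $s \gtrsim |x|^2$, so on the interval of integration the exponential is the dominant decaying term. Concretely, I would split $\int_0^t$ at $s = |x|$: on $(0,|x|)$ one uses $e^{-|x|_\rho^2/s} \leq e^{-c|x|}$ together with the crude bounds $(|x|+s)^{1/2} \asymp |x|^{1/2}$, $\log(s+|x|)\asymp \log|x|$, and $\int_0^{|x|} s^{-3/2}e^{-c|x|_\rho^2/s}\,ds \lesssim |x|^{-2}e^{-c|x|}$ (substitute $u = |x|^2/s$), which is exponentially small compared with the claimed answer $\tfrac{1}{\log|x|}e^{-c|x|_\rho^2/t}$ since $t < 2|x|^2$ gives $e^{-c|x|_\rho^2/t} \gtrsim e^{-c'|x|^2}$ — wait, that is not obviously comparable, so more care is needed: actually on $(0,|x|)$ one should compare directly against $e^{-c|x|_\rho^2/t}$ using $|x|_\rho^2/s \geq |x|_\rho^2/|x| \gtrsim |x| $ while $|x|_\rho^2/t$ can be as small as order $|x|^{-? }$... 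The clean route is instead to bound the whole integral $\int_0^t f(s)\,ds \leq \int_0^\infty f(s)\,ds\,$-type arguments are too lossy; rather, I use the standard heat-kernel trick: for $0<t<2|x|^2$,
$$
\int_0^t f(s)\,ds \asymp \frac{1}{|x|^2}\cdot\frac{\log|x|}{\log(1+|x|)\cdot\log(2|x|)}\cdot |x|^{1/2}\cdot\Bigl(\text{value of }\int_0^t s^{-3/2}e^{-|x|_\rho^2/s}ds\Bigr),
$$
after checking that the slowly varying factors $\log(1+s/|x|)$ and $\log(s+|x|)$ are essentially constant ($\asymp \log|x|$ and $\asymp \log(1+|x|)$) on the effective range $s \asymp |x|^2 \gtrsim t$ where the Gaussian integral lives; and $\int_0^t s^{-3/2}e^{-|x|_\rho^2/s}\,ds \asymp |x|_\rho^{-2}\bigl(\tfrac{|x|_\rho^2}{t}\bigr)^{1/2}e^{-|x|_\rho^2/t} \asymp |x|^{-1}t^{-1/2}e^{-|x|_\rho^2/t}$ by the substitution $u=|x|_\rho^2/s$ and the elementary estimate $\int_a^\infty u^{-1/2}e^{-u}\,du\asymp a^{1/2}e^{-a}$ for $a\geq c$ (here $a = |x|_\rho^2/t \geq 1/2$). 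Combining, the powers of $|x|$ and $t^{1/2}$ cancel and we are left with $\mathbb{P}_x(\sigma_{a^*}\leq t)\asymp \tfrac{1}{\log|x|}e^{-|x|_\rho^2/t}$, which is (i).

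For part (ii), assume $2|x|^2 \leq t$. Write $\int_0^t f(s)\,ds = \int_0^{2|x|^2} f(s)\,ds + \int_{2|x|^2}^t f(s)\,ds$. The first integral is $\asymp \tfrac{1}{\log|x|}e^{-|x|_\rho^2/(2|x|^2)} \asymp \tfrac{1}{\log|x|} \asymp \tfrac{1}{\log\sqrt{t_0}}$ at the scale $t_0 = |x|^2$ — a bounded quantity — by part (i) applied at $t=2|x|^2$ (boundary case). For the second integral, on $s \geq 2|x|^2$ we have $e^{-|x|_\rho^2/s}\asymp 1$, $(|x|+s)^{1/2}\asymp s^{1/2}$, $1+\log(1+s/|x|)\asymp \log s$, and $\log(s+|x|)\asymp \log s$, hence
$$
\int_{2|x|^2}^t f(s)\,ds \asymp \frac{\log|x|}{|x|}\cdot\frac{|x|_\rho}{|x|}\int_{2|x|^2}^t \frac{ds}{s\,(\log s)^2} \asymp \log|x|\int_{2|x|^2}^t \frac{ds}{s(\log s)^2}.
$$
Since $\int \frac{ds}{s(\log s)^2} = -\frac{1}{\log s}$, this equals $\log|x|\bigl(\tfrac{1}{\log(2|x|^2)} - \tfrac{1}{\log t}\bigr) \asymp \log|x|\bigl(\tfrac{1}{2\log|x|} - \tfrac{1}{\log t}\bigr) = \tfrac12 - \tfrac{\log|x|}{\log t} = \tfrac12 - \tfrac{\log|x|}{\log\sqrt t}\cdot\tfrac12 \asymp 1 - \tfrac{\log|x|}{\log\sqrt t}$ (using $|x|_\rho\geq 1$ so that $\log|x|\geq 0$ and $t\geq 2|x|^2$ so this bracket is $\geq$ a positive constant times itself — one should check the bracket is nonnegative, which holds since $\sqrt t \geq |x|$). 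Adding the bounded first integral, which is dominated by or comparable to the $1 - \log|x|/\log\sqrt t$ term when that term is of order $1$, and absorbed otherwise — here one notes $1 - \log|x|/\log\sqrt t \leq 1$ always and $\geq$ a constant is false in general, so the correct statement is that the first (bounded, order-$\frac{1}{\log|x|}$) integral is $\lesssim 1 - \log|x|/\log\sqrt t$ precisely because $t\geq 2|x|^2$ forces $1-\log|x|/\log\sqrt t \gtrsim 1/\log|x|$ — yields $\mathbb{P}_x(\sigma_{a^*}\leq t) \asymp 1 - \log|x|/\log\sqrt t$, which is (ii).

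The main obstacle is bookkeeping the slowly varying logarithmic factors: one must verify that $\log(1+s/|x|)$, $\log(s+|x|)$, and $\log|x|$ are mutually comparable (up to constants) on precisely the range of $s$ that carries the mass of the integral in each regime, and that the contributions from the complementary ranges are genuinely negligible. In regime (i) this requires the Gaussian concentration argument (mass on $s\asymp|x|^2$); in regime (ii) it requires separating the "transient tail" $s\in(2|x|^2,t)$, where the answer is built from $\int ds/(s(\log s)^2)$, from the bounded contribution of small $s$, and checking that the latter is absorbed into the constants of the former — which is exactly where the hypothesis $|x|_\rho\geq 1$ (equivalently $\log|x|\gtrsim 1$) and $t\geq 2|x|^2$ are used.
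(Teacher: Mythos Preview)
The paper does not prove this lemma from scratch: its entire proof is the one-line citation ``See \cite[Theorem~4.11]{GSh}.'' Your route --- integrating the density estimate of Lemma~\ref{hitting2} --- is therefore genuinely different, and more self-contained within the paper's toolbox. Your plan for part~(ii) is sound: the split at $s=2|x|^2$, the antiderivative $\int ds/(s(\log s)^2)=-1/\log s$, and the check that the bounded piece $\asymp 1/\log|x|$ is dominated by $1-\log|x|/\log\sqrt{t}$ whenever $t\geq 2|x|^2$ are all correct.

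For part~(i), however, the sketch has a real gap. The displayed formula with the stray $1/|x|^2$ is unmotivated, and $\int_a^\infty u^{-1/2}e^{-u}\,du\asymp a^{-1/2}e^{-a}$, not $a^{1/2}e^{-a}$ (harmless in the paper's $\asymp$ convention, but symptomatic). The substantive issue is your claim that the slowly varying factors are ``essentially constant'' on the effective range. When $t$ is close to $2|x|^2$ the mass of $f$ sits on $s\in[|x|,t]$, and over that interval $1+\log(1+s/|x|)$ runs from $O(1)$ up to $O(\log|x|)$; since $|x|_\rho^2/t\asymp 1$ here, a stray factor of $\log|x|$ \emph{cannot} be absorbed into the Gaussian. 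If you simply drop this denominator factor you get an upper bound of order $1$, not $1/\log|x|$. A correct execution needs a finer split: treat $s\leq |x|$ (where your freezing is valid and the contribution is exponentially small), and on $s\in[|x|,t]$ substitute $r=s/|x|$ to reduce to $\int_1^{t/|x|}\frac{\log|x|}{(1+\log r)(\log|x|+\log r)}\,\frac{e^{-c|x|/r}}{r}\,dr$, then split once more at $r\asymp |x|$ to isolate the range where both log factors are $\asymp\log|x|$ and the exponential is $\asymp 1$. This yields the claimed $1/\log|x|$, but it is more work than your sketch suggests --- which is presumably why the paper simply cites the result.
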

\begin{proof}
See {\cite[Theorem 4.11]{GSh}}.
\end{proof}

The next lemma gives the relations between $\e$, $\exy$ and $\ee$ for large time.
\begin{Lem}\label{rholem}
Let $T>0$ be fixed and $d\geq 1$. For $T\leq t$ and $x,y\in \R_{\varepsilon}^d$, we have
$$({\rm i})\e \asymp \ee \gtrsim \exy \ {\rm if}\  |x|_{\rho}\vee |y|_{\rho} > 1,$$
$$({\rm ii})\e \asymp \ee \asymp \exy \  {\rm if}\  |x|_{\rho}\vee |y|_{\rho} \leq 1,$$
$$({\rm iii})\e \asymp \ee \asymp \exy \hspace{27mm}$$ \hspace{57mm} ${\rm if}\  |x|_{\rho}> 1>b \geq |y|_{\rho} \ {\rm for \ some}\  b.$

\end{Lem}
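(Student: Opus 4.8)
The plan is to reduce everything to the identity $\e=\exy\vee\ee$ --- which follows from $\rho(x,y)=(|x|_{\rho}+|y|_{\rho})\wedge|x-y|$ by squaring and exponentiating --- together with the two triangle inequalities available because $x,y$ lie in the same Euclidean part $\R_{\varepsilon}^d$:
\[
  \left|\,|x|_{\rho}-|y|_{\rho}\,\right|\ \le\ |x-y|\ \le\ |x|_{\rho}+|y|_{\rho}+2\varepsilon ,
\]
where the constant $2\varepsilon$ is to be read as $0$ when $d=1$. The identity already yields $\e\ge\exy$ and $\e\ge\ee$ pointwise, so ``$\e\gtrsim\exy$'' and ``$\e\gtrsim\ee$'' hold with constant $1$ in all three cases; only the reverse inequalities need work, and the hypothesis $t\ge T$ is precisely what allows a bounded additive change of the exponent to be absorbed into a multiplicative constant.

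First I would prove $\e\asymp\ee$ in all three cases. Only $\e\lesssim\ee$ is missing, and since $\e=\exy\vee\ee$ it suffices to establish $\exy\lesssim\ee$. Combining the right-hand triangle inequality with the elementary estimate $4\varepsilon s\le\tfrac12 s^2+8\varepsilon^2$ (with $s=|x|_{\rho}+|y|_{\rho}$) gives $|x-y|^2\le\tfrac32(|x|_{\rho}+|y|_{\rho})^2+12\varepsilon^2$, so $\ee\ge e^{-12\varepsilon^2/T}\,e^{-\frac32(|x|_{\rho}+|y|_{\rho})^2/t}$ whenever $t\ge T$; this is exactly $\exy\lesssim\ee$, which finishes part~(i).

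Next, for parts~(ii) and~(iii) it remains to prove $\exy\asymp\e$, and by the previous step together with $\e=\exy\vee\ee$ this reduces to $\ee\lesssim\exy$. The left-hand triangle inequality gives $(|x|_{\rho}+|y|_{\rho})^2\le|x-y|^2+4|x|_{\rho}|y|_{\rho}$, so everything comes down to absorbing the cross term. In case~(ii), $|x|_{\rho}|y|_{\rho}\le1$, hence $(|x|_{\rho}+|y|_{\rho})^2\le|x-y|^2+4$ and $\exy\ge e^{-4/T}\ee$. In case~(iii), with $|x|_{\rho}>1>b\ge|y|_{\rho}$, I would split on the size of $|x|_{\rho}$: if $|x|_{\rho}\ge 2b$ then $|x-y|\ge|x|_{\rho}-|y|_{\rho}\ge|x|_{\rho}/2$, while $4|x|_{\rho}|y|_{\rho}\le4b|x|_{\rho}\le4|x|_{\rho}^2\le16|x-y|^2$, so $(|x|_{\rho}+|y|_{\rho})^2\le17|x-y|^2$ and $\ee\lesssim\exy$; if instead $1<|x|_{\rho}<2b$, then $|x|_{\rho},|y|_{\rho},\rho(x,y)$ and $|x-y|$ are all bounded by a constant depending only on $\varepsilon$, so $\e,\ee,\exy$ all lie between a positive constant (depending on $T,\varepsilon$) and $1$ and are mutually $\asymp$. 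Combined with the previous step, this completes~(ii) and~(iii).

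I do not expect a genuine obstacle. The only things to watch are the bookkeeping of constants in the paper's ``$\lesssim$'' convention (which permits changing the constant inside the exponential) and the two-way split in case~(iii) according to whether $|x|_{\rho}$ is large or of order~$1$; in the bounded sub-case, however, every relevant exponent is $O(1)$ and the comparison is immediate.
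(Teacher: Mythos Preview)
Your argument is correct. The overall toolkit is the same as the paper's---triangle inequalities on $\R_{\varepsilon}^d$ together with the fact that a bounded additive change in the exponent is harmless once $t\ge T$---but the organization differs in two places. First, you prove $\exy\lesssim\ee$ once and for all (via $|x-y|\le|x|_{\rho}+|y|_{\rho}+2\varepsilon$ and AM--GM), without using any case hypothesis; the paper instead handles~(i) by invoking $|x|_{\rho}>1$ to get the multiplicative bound $|x-y|\le(2\varepsilon+1)\rho(x,y)$ directly. Your route is slightly more uniform. Second, for~(iii) the paper avoids your case split entirely: writing $|x|_{\rho}+|y|_{\rho}=|x|+|y|-2\varepsilon\le|x-y|+2|y|_{\rho}$ and then $(a+b)^2\le 2a^2+2b^2$ gives $(|x|_{\rho}+|y|_{\rho})^2\le 2|x-y|^2+8|y|_{\rho}^2\le 2|x-y|^2+8b^2$ in one line, so $\ee\lesssim\exy$ with a single constant and no subdivision on the size of $|x|_{\rho}$. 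Both arguments are short; the paper's is a bit cleaner in~(iii), yours is a bit cleaner in isolating the universal direction $\exy\lesssim\ee$.
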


\begin{proof}
	$($i$)$ When\  $|x|_{\rho}\vee |y|_{\rho} > 1$, we may assume $|x|_{\rho}>1$ without loss of generality. If $\rho(x,y)=|x-y|$, there is nothing to prove.
	If $\rho(x,y)=|x|_{\rho}+|y|_{\rho}$, then we have
	$$\rho(x,y)\leq |x-y|\leq |x|+|y|=|x|_{\rho}+|y|_{\rho}+2\varepsilon \leq (2\varepsilon +1)(|x|_{\rho}+|y|_{\rho})=(2\varepsilon +1)\rho(x,y).$$
	Hence, the desired estimate holds.\\
	$($ii$)$ When\  $|x|_{\rho}\vee |y|_{\rho} \leq 1$, we have
	$$\frac{(|x|_{\rho}+\!|y|_{\rho})^2}{t} \leq \frac{4}{T}\leq \frac{4}{T}+\frac{|x\!-\!y|^2}{t}\leq \frac{4}{T}+\frac{(|x|\!+\!|y|)^2}{t} \leq \frac{4}{T}+\frac{2(2\varepsilon)^2}{T}+\frac{2(|x|_{\rho}+\!|y|_{\rho})^2}{t}.$$
	Hence, desired estimate holds.\\
	$($iii$)$ When\  $|x|_{\rho} \geq 1>b \geq |y|_{\rho}$, we have
	\begin{eqnarray*}\frac{(|x|_{\rho}+|y|_{\rho})^2}{t}-2\frac{|x-y|^2}{t}=\frac{(|x|+|y|-2\varepsilon)^2-2|x-y|^2}{t}\hspace{20mm}\\
	\leq \frac{(|x-y|+|y|+|y|-2\varepsilon)^2-2|x-y|^2}{t}=\frac{(2|y|_{\rho}+|x-y|)^2-2|x-y|^2}{t}\\
\leq  \frac{2(2|y|_{\rho})^2+2|x-y|^2-2|x-y|^2}{t}
=\frac{8|y|^2_{\rho}}{t} \leq \frac{8b^2}{T}. \hspace{32mm}\end{eqnarray*}
Hence, we have $\ee \lesssim e^{-(|x|_{\rho}+|y|_{\rho})^2/2t}e^{4b^2/T}$ and by $($i$)$, the desired estimate holds.
\end{proof}

\section{Small time estimate} \label{smallsec}
In this section, we prove Theorem \ref{small} in the same way as {\cite[section 4]{CL}}. 
First, we define \begin{eqnarray}u(x):=\left \{ \begin{split} -&|x|_{\rho}& :x\in \R_{\varepsilon'}^{d'}\\ &|x|_{\rho}& :x\in \R_{\varepsilon}^d \end{split}\right. \end{eqnarray} and $Y_t:=u(X_t)$. Then $u\in \mathcal{F}^{loc}$, where $\mathcal{F}^{loc}$ denotes the local Dirichlet space of $(\mathcal{E},\mathcal{F})$. We will prove that the heat kernel for $Y$ enjoys 1-dimensional Gaussian estimates. Combining this with the fact that $\overline{p}_{\R_{\varepsilon}^d}(t,x,y)$ (resp. $p(t,x,y)$) depends only on $|x|_{\rho}$ and $|y|_{\rho}$ for $x,y \in \R_{\varepsilon}^d$ (resp. $x\in \R_{\varepsilon}^d, y\in \R_{\varepsilon'}^{d'}$), we prove Theorem \ref{small}.

We first derive the stochastic differential equation that $Y$ satisfies, and then use it to obtain Gaussian heat kernel estimates of $Y$.

 \begin{Prop}
 	\label{dY_t}
 	\begin{eqnarray*}\label{eq:dY}dY_t=dB_t+\frac{(d-1){\bf 1}_{\{Y_t>0\}}}{2(Y_t+\varepsilon)}dt+\frac{p(d'-1){\bf 1}_{\{Y_t<0\}}}{2(Y_t-\varepsilon')}dt+\frac{|\partial \R_{\varepsilon}^d|-p|\partial \R_{\varepsilon'}^{d'}|}{|\partial \R_{\varepsilon}^d|+p|\partial \R_{\varepsilon'}^{d'}|}d\hat{L}_t^0(Y),\end{eqnarray*}
 where $|\cdot|$ is the Lebesgue measure, $B$ is one-dimensional Brownian motion and $\hat{L}^0(Y)$ is symmetric semimartingale local time of $Y$ at $0$. Here, for convenience, we define $|\partial \R_{\varepsilon'}^{d'}|=1$ when $d'=1$.
\end{Prop}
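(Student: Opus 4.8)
The plan is to apply a generalized Itô formula (the Itô–Tanaka–Meyer formula, or Fukushima's decomposition for Dirichlet forms) to the function $u$ and the process $X$. First I would note that $u$ is smooth on each piece $\R_{\varepsilon}^d$ and $\R_{\varepsilon'}^{d'}$: on $\R_{\varepsilon}^d$ we have $u(x)=|x|-\varepsilon$, and on $\R_{\varepsilon'}^{d'}$ we have $u(x)=-(|x|-\varepsilon')$. The only non-smoothness is at $a^*$ (where $Y_t=0$), which contributes a local-time term. On $\R_{\varepsilon}^d$, since $X$ there is a $d$-dimensional Brownian motion (away from $a^*$), a direct computation with the radial part gives $d|X_t| = d\beta_t + \frac{d-1}{2|X_t|}\,dt$ for a one-dimensional Brownian motion $\beta$; translating by $\varepsilon$ yields the drift $\frac{(d-1)\mathbf 1_{\{Y_t>0\}}}{2(Y_t+\varepsilon)}$. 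Symmetrically on $\R_{\varepsilon'}^{d'}$, the radial part of the $d'$-dimensional Brownian motion produces, after the sign flip in the definition of $u$, the drift $\frac{(d'-1)\mathbf 1_{\{Y_t<0\}}}{2(Y_t-\varepsilon')}$. The martingale parts on the two pieces patch together into a single one-dimensional Brownian motion $B$ because $|\nabla u| = 1$ on both pieces, so the quadratic variation of the martingale part of $Y$ is exactly $t$; one then invokes Lévy's characterization.

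The heart of the matter is the local-time term at $0$ and the identification of its coefficient. For this I would use the theory of symmetric Dirichlet forms: $u\in\mathcal F^{loc}$, so Fukushima's decomposition gives $Y_t = u(X_t) = M_t^{[u]} + N_t^{[u]}$ with $M^{[u]}$ a local MAF of finite energy and $N^{[u]}$ of zero energy. The part of $N^{[u]}$ coming from the smooth regions is the absolutely continuous drift identified above; the remaining contribution is a local time at $a^*$, and its coefficient is computed from the behaviour of $u$ near $a^*$ via the Revuz measure / the jump of the normal derivative across the interface. Concretely, the coefficient is governed by the "flux balance" at $a^*$: the Dirichlet form weights the gradient on $\R_{\varepsilon}^d$ with factor $1$ and on $\R_{\varepsilon'}^{d'}$ with factor $p$, while the relevant surface measures are $|\partial\R_{\varepsilon}^d|$ and $|\partial\R_{\varepsilon'}^{d'}|$. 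Writing the expected occupation/local-time identity $\mathbb E_x\!\left[\int_0^t g(Y_s)\,dt\right]$ and $\mathbb E_x[\hat L_t^0(Y)]$ against test functions in $\mathcal F$ and comparing, the signed coefficient $\frac{|\partial\R_{\varepsilon}^d| - p|\partial\R_{\varepsilon'}^{d'}|}{|\partial\R_{\varepsilon}^d| + p|\partial\R_{\varepsilon'}^{d'}|}$ emerges as the asymmetry ratio of the skew reflection at $0$; when $d'=1$ the convention $|\partial\R_{\varepsilon'}^{d'}|=1$ makes this consistent with the one-point boundary of $\R_+$. This is essentially the computation in \cite[Section 4]{CL} specialized/generalized to general dimensions, so I would follow that argument, adjusting the radial drift exponents from $d-1$ and the $p$-weight on the $d'$-side.

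The main obstacle I expect is the rigorous justification of the local-time term and the precise constant in front of it — i.e. showing that $N^{[u]}$ has no singular continuous part other than a constant multiple of the semimartingale local time $\hat L^0(Y)$, and pinning down that constant. This requires knowing that $a^*$ is regular (nonpolar, visited, non-sticky — which follows from Definition \ref{BMVD_def}(ii), "no killing at $a^*$", together with the diffusion property), that $u$ is continuous across $a^*$ so there is no jump part, and identifying the Revuz measure of the zero-energy part supported at $a^*$ with the surface-integral data. A clean way to handle this is to approximate $u$ by $C^2$ functions away from a shrinking neighbourhood of $a^*$, apply the classical Itô formula on the smooth parts, and pass to the limit while controlling the boundary contributions at $\partial\R_{\varepsilon}^d$ and $\partial\R_{\varepsilon'}^{d'}$ via the conditions $f(x)=f(a^*)$ q.e. built into $\mathcal F$; the boundary flux terms then assemble exactly into the stated skew-reflection local-time coefficient. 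Once the SDE is established, it is of the one-dimensional form needed in Section \ref{smallsec} to deduce Gaussian bounds for the heat kernel of $Y$.
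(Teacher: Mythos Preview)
Your plan is essentially the same as the paper's: both use the Fukushima decomposition of $u(X_t)$, compute $\mathcal{E}(u,\psi)$ by integration by parts to read off the Revuz measure of $N^{[u]}$ (yielding the two radial drift terms plus a multiple of $\delta_{a^*}$), and identify $M^{[u]}$ as a one-dimensional Brownian motion via $d\mu_{\langle u\rangle}=|\nabla u|^2\,dm_p=dm_p$ and L\'evy's characterization. So your overall strategy is correct.

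The one place where your plan is vaguer than the paper's proof is the step you yourself flag as the main obstacle: converting the local-time term from ``a constant times $L^0_t(X)$'' (the PCAF of $X$ with Revuz measure $\tfrac12\delta_{a^*}$) into ``a constant times $\hat L^0_t(Y)$'' (the \emph{semimartingale} symmetric local time of $Y$ at $0$). Your proposal to compare expected occupation identities would work in principle but is roundabout. The paper's device is cleaner and worth knowing: repeat the Fukushima decomposition for $v(x):=|x|_\rho$, so that $|Y_t|=v(X_t)$ admits a decomposition with a local-time term $(|\partial\R_\varepsilon^d|+p|\partial\R_{\varepsilon'}^{d'}|)\,dL^0_t(X)$; then apply Tanaka's formula $d|Y_t|=\mathrm{sign}(Y_t)\,dY_t+dL^0_t(Y)$ and match the bounded-variation parts to get $dL^0_t(Y)=2|\partial\R_\varepsilon^d|\,dL^0_t(X)$. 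Doing the same for $-Y$ and averaging gives $d\hat L^0_t(Y)=(|\partial\R_\varepsilon^d|+p|\partial\R_{\varepsilon'}^{d'}|)\,dL^0_t(X)$, from which the stated coefficient $\tfrac{|\partial\R_\varepsilon^d|-p|\partial\R_{\varepsilon'}^{d'}|}{|\partial\R_\varepsilon^d|+p|\partial\R_{\varepsilon'}^{d'}|}$ drops out immediately. This Tanaka-comparison trick is the missing concrete ingredient in your outline; with it, no approximation by $C^2$ functions is needed.

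One minor point: in your drift computation on $\R_{\varepsilon'}^{d'}$ you wrote $\tfrac{(d'-1)\mathbf 1_{\{Y_t<0\}}}{2(Y_t-\varepsilon')}$, whereas the statement carries a factor $p$. Be careful here when passing from the Revuz measure (which is written against Lebesgue measure $dx$ in the paper's integration-by-parts computation) to the drift coefficient; the reference measure on $\R_{\varepsilon'}^{d'}$ is $p\,dx$, and you should track how this $p$ does or does not survive into the additive-functional density.
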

\begin{proof}
	By the Fukushima decomposition ({\cite[Chapter 4]{CF}}), there exist local martingale additive functional $M^{[u]}$ and continuous additive functional locally having zero energy $N^{[u]}$ such that $Y_t-Y_0=M_t^{[u]}+N^{[u]}$, $\mathbb{P}_x$-a.s. for q.e. $x\in \R_{\varepsilon}^d \cup \R_{\varepsilon'}^{d'}\cup\{a^*\}$.
	For any $\psi \in C_c^{\infty}(\R_{\varepsilon}^d \cup \R_{\varepsilon'}^{d'}\cup\{a^*\})$, it holds that
	\begin{eqnarray}\mathcal{E}(u,\psi)&=&\frac{1}{2}\int_{\R_{\varepsilon}^d}\nabla |x|_{\rho}\cdot \nabla{\psi} dx+\frac{p}{2}\int_{\R_{\varepsilon'}^{d'}}\nabla (-|x|_{\rho})\cdot \nabla{\psi} dx\\
	\nonumber&=&-\frac{1}{2}\int_{\R_{\varepsilon}^d}\frac{d-1}{|x|}\psi dx+\frac{1}{2}\int_{\partial \R_{\varepsilon}^d}\psi (a^*)\frac{\partial |x|_{\rho}}{\partial {\bf n}}\sigma(dx)\\
	\nonumber&&+\frac{p}{2}\int_{\R_{\varepsilon'}^{d'}}\frac{d'-1}{|x|}\psi dx-\frac{p}{2}\int_{\partial \R_{\varepsilon'}^{d'}}\psi (a^*)\frac{\partial |x|_{\rho}}{\partial {\bf n}}\sigma(dx)\\
	\nonumber&=&-\frac{1}{2}\int_{\R_{\varepsilon}^d}\frac{d-1}{|x|}\psi dx+\frac{p}{2}\int_{\R_{\varepsilon'}^{d'}}\frac{d'-1}{|x|}\psi dx-\frac{1}{2}(|\partial \R_{\varepsilon}^d|-p|\partial \R_{\varepsilon'}^{d'}|)\psi(a^*)\\
	\nonumber&=&-\int_{\R_{\varepsilon}^d\cup\R_{\varepsilon'}^{d'}\cup \{a^*\}} \psi(x)\nu(dx), \label{eq:Fdecom}\end{eqnarray}
where {\bf n} is the outward normal vector of the surface $\partial \R_{\varepsilon}^d \cup \partial \R_{\varepsilon'}^{d'}$, $\sigma$ is the surface measure on $\partial \R_{\varepsilon}^d \cup \partial \R_{\varepsilon'}^{d'}$, and
 $$\nu(dx):=\frac{d-1}{2|x|}{\bf 1}_{\R_{\varepsilon}^d}(x)dx-\frac{p(d'-1)}{2|x|}{\bf 1}_{\R_{\varepsilon'}^{d'}}(x)dx+\frac{(|\partial \R_{\varepsilon}^d|-p|\partial \R_{\varepsilon'}^{d'}|)}{2}\delta_{\{a^*\}}.$$
By {\cite[Theorem 5.5.5]{FOT}}, it holds that
 $$dN_{t}^{[u]}=\frac{(d-1){\bf 1}_{\{X_t\in \R_{\varepsilon}^d\}}}{2(u(X_t)+\varepsilon)}dt-\frac{p(d'-1){\bf 1}_{\{X_t\in \R_{\varepsilon'}^{d'}\}}}{2(-u(X_t)+\varepsilon')}dt+(|\partial \R_{\varepsilon}^d|-p|\partial \R_{\varepsilon'}^{d'}|)dL_t^0(X).$$
Here, $L_t^0(X)$ is the positive continuous additive functional of $X$ whose Revuz measure is $\frac{1}{2}\delta_{\{a^*\}}$.\\
Let $u_n:= (-n\vee u)\wedge n$, then $u_n\in \mathcal{F}$. By {\cite[Theorem 4.3.11]{CF}} and strongly locality of $(\mathcal{E},\mathcal{F})$, for any $\varphi \in \mathcal{F}\cap C_c(\R_{\varepsilon}^d\cup\R_{\varepsilon'}^{d'}\cup \{a^*\})$, we have $\int \varphi d\mu _{\langle u_n \rangle}=2\mathcal{E}(u_n\varphi,u_n)-\mathcal{E}(u_n^2,\varphi)=\int\varphi |\nabla u_n|^2dm_p$. Here, $d\mu _{\langle u_n \rangle}$ is the Revuz measure corresponding to $\langle M^{[u_n]}\rangle$. Then we obtain $d\mu _{\langle u_n \rangle}=|\nabla u_n|^2dm_p={\bf 1}_{\{|x|_{\rho}\leq n\}}dm_p$. It yields $d\mu _{\langle u \rangle}=dm_p$. By {\cite[Theorem 4.1.8]{CF}}, $\langle M^{[u]}\rangle _t=t$ and $B_t:=M_t^{[u]}$ is one-dimensional Brownian motion. Thus it holds that
\begin{eqnarray}\nonumber dY_t=dB_t+\frac{(d-1){\bf 1}_{\{Y_t>0\}}}{2(Y_t+\varepsilon)}dt&+&\frac{p(d'-1){\bf 1}_{\{Y_t<0\}}}{2(Y_t-\varepsilon')}dt\\
&+&(|\partial \R_{\varepsilon}^d|-p|\partial \R_{\varepsilon'}^{d'}|)dL_t^0(X) .\label{eq:dYdL}\end{eqnarray}
Next, we show $d\hat{L}_t^0(Y)=(|\partial \R_{\varepsilon}^d|+p|\partial \R_{\varepsilon'}^{d'}|)dL_t^0(X)$.\\
Let $v(x):=|x|_{\rho}$, so $|Y_t|=v(X_t)$ holds. Then, by the similar computation as above, for one-dimensional Brownian motion $\tilde{B}$, we have
$$d|Y_t|=d\tilde{B}_t+\frac{(d-1){\bf 1}_{\{Y_t>0\}}}{2(Y_t+\varepsilon)}dt-\frac{p(d'-1){\bf 1}_{\{Y_t<0\}}}{2(Y_t-\varepsilon')}dt+(|\partial \R_{\varepsilon}^d|+p|\partial \R_{\varepsilon'}^{d'}|)dL_t^0(X).$$
While, by Tanaka's formula and (\ref{eq:dYdL}), we have \begin{eqnarray}d|Y_t|&=&{\rm sign}(Y_t)dY_t+dL_t^0(Y)\label{eq:Tanaka} \\
\nonumber &=&{\rm sign}(Y_t)dB_t+\frac{(d-1){\bf 1}_{\{Y_t>0\}}}{2(Y_t+\varepsilon)}dt-\frac{p(d'-1){\bf 1}_{\{Y_t<0\}}}{2(Y_t-\varepsilon')}dt\\
\nonumber &&-(|\partial \R_{\varepsilon}^d|-p|\partial \R_{\varepsilon'}^{d'}|)dL_t^0(X)+dL_t^0(Y),\end{eqnarray}
where ${\rm sign}(x):={\bf 1}_{\{x>0\}}-{\bf 1}_{\{x\leq0\}}$. By the uniqueness of the decomposition of a continuous semi-martingale to a continuous local martingale and a continuous bounded variation process, we have $dL_t^0(Y)=2|\partial \R_{\varepsilon}^d|dL_t^0(X)$.\\
By the similar computation as above for $-Y$ and $|Y_t|=|-Y_t|$, it holds that $dL_t^0(-Y)=dL_t^0(Y)-2(|\partial \R_{\varepsilon}^d|-p|\partial \R_{\varepsilon'}^{d'}|)dL_t^0(X)$. Then we have 
\begin{eqnarray}d\hat{L}^0_t(Y)=\frac{dL_t^0(Y)+dL_t^0(-Y)}{2}=(|\partial \R_{\varepsilon}^d|+p|\partial \R_{\varepsilon'}^{d'}|)dL_t^0(X) .\label{eq:L^} \end{eqnarray}
By (\ref{eq:dYdL}) and (\ref{eq:L^}), the desired SDE follows.
\end{proof}

\begin{Prop}
\label{Yestimate}
	$Y$ has a jointly continuous density function $p^{(Y)}(t,x,y)$ with respect to the Lebesgue measure on $\R$. Furthermore, for any $T\geq 1$, $p^{(Y)}(t,x,y)\asymp \frac{1}{\sqrt{t}}\ee$ for $(t,x,y)\in (0,T]\times \R \times \R$.
\end{Prop}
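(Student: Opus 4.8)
The plan is to derive the density bounds for $Y$ from the SDE in Proposition \ref{dY_t} by comparison with well-understood one-dimensional diffusions, exploiting the fact that the drift coefficients $\frac{(d-1)\mathbf{1}_{\{y>0\}}}{2(y+\varepsilon)}$ and $\frac{p(d'-1)\mathbf{1}_{\{y<0\}}}{2(y-\varepsilon')}$ are \emph{bounded} (since $y+\varepsilon\geq\varepsilon>0$ on $\{y>0\}$ and $|y-\varepsilon'|\geq\varepsilon'>0$ on $\{y<0\}$) and smooth away from $0$, while the only singular term is the local-time term at $0$, whose coefficient $\frac{|\partial\R_\varepsilon^d|-p|\partial\R_{\varepsilon'}^{d'}|}{|\partial\R_\varepsilon^d|+p|\partial\R_{\varepsilon'}^{d'}|}$ has absolute value strictly less than $1$. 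First I would record that joint continuity of the density $p^{(Y)}(t,x,y)$ follows from the continuity of $p(t,\cdot,\cdot)$ established in Proposition \ref{Nash} together with the relation $Y_t=u(X_t)$, since $\overline{p}_{\R_\varepsilon^d}$ and the relevant pieces of $p$ depend only on $|x|_\rho,|y|_\rho$; alternatively one can invoke the standard theory for one-dimensional SDEs with bounded measurable drift and a skew reflection term.

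The core of the argument is a Dirichlet-form comparison. The diffusion $Y$ is associated with a Dirichlet form on $L^2(\R;\mu)$ where $\mu$ is (up to the obvious weights $1$ and $p$ on the two half-lines, plus the skew-reflection normalization) comparable to Lebesgue measure on $\R$ — bounded above and below by constant multiples of $dy$. Its energy form is $\frac12\int_\R (f')^2\,d\mu$ with the domain encoding the skew-gluing condition at $0$. I would show this form is comparable, in the sense of forms with equivalent domains, to the Dirichlet form of a standard one-dimensional diffusion (e.g. reflected or skew Brownian motion, or simply an operator of the form $\frac{d}{dm}\frac{d}{ds}$ with speed and scale measures comparable to Lebesgue measure). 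Because the drift is bounded, one can absorb it via a bounded (Girsanov-type / $h$-transform) change that alters the heat kernel only by constants over the finite time interval $(0,T]$; equivalently, one notes the generator is a uniformly elliptic one-dimensional operator in divergence-type form with bounded lower-order terms and an interior interface condition of skew type at a single point. For such operators the two-sided Gaussian (Aronson) estimate $p^{(Y)}(t,x,y)\asymp t^{-1/2}e^{-|x-y|^2/t}$ on $(0,T]\times\R\times\R$ is classical — the single skew-gluing point does not destroy it because skew Brownian motion itself enjoys exactly these bounds, and the general case follows by the equivalence-of-forms / Davies' method or by a direct comparison with killed and reflected pieces glued across $0$.

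Concretely, the steps in order: (1) identify the Dirichlet form of $Y$ on $L^2(\R;\mu)$ from the SDE, reading off that $\mu\asymp dx$ and the gluing condition at $0$; (2) remove the bounded drift by a locally bounded $h$-transform (or handle it directly as a bounded perturbation), incurring only multiplicative constants and a harmless $e^{ct}$ factor that is bounded on $[0,T]$; (3) compare the resulting driftless form with that of (a constant multiple of) skew Brownian motion with the same skew parameter at $0$, using that the speed and scale measures are mutually comparable; (4) invoke the known two-sided Gaussian estimate for skew Brownian motion (or derive it by gluing the reflected-Brownian-motion kernels on $[0,\infty)$ and $(-\infty,0]$ along the interface), and transfer it back through the comparison to obtain $p^{(Y)}(t,x,y)\asymp \frac{1}{\sqrt t}\ee$ for $t\in(0,T]$. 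The main obstacle I anticipate is step (2)–(3): making the drift-removal rigorous and controlling the constants uniformly for \emph{all} $x,y\in\R$ (not just in a compact set), since the drift, though bounded, does not decay, so one must check that the associated $h$-function and the form comparison are uniform; this is where one should lean on the boundedness of the drift coefficients and on stability of Gaussian bounds under bounded perturbations (Davies' method / the results underlying \cite{CKS}) rather than on any compactness.
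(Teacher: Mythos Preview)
Your approach is essentially that of \cite[Proposition~4.4]{CL}, which the paper simply cites: the drift in Proposition~\ref{dY_t} is bounded, the skew parameter at $0$ has modulus strictly less than $1$, and the Gaussian bounds follow by comparison with skew Brownian motion via a bounded Girsanov-type perturbation on $(0,T]$.

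One correction to your step~(1): the symmetrizing measure of $Y$ is the pushforward $u_*m_p$, whose density is $\asymp (y+\varepsilon)^{d-1}$ on $(0,\infty)$ and $\asymp (-y+\varepsilon')^{d'-1}$ on $(-\infty,0)$, and this is \emph{not} comparable to Lebesgue measure globally when $d\geq 2$ or $d'\geq 2$. It is only \emph{after} the bounded drift is removed (your step~(2)) that the resulting skew Brownian motion is symmetric with respect to a measure $\asymp dy$. So the Dirichlet-form comparison you describe with $\mu\asymp dy$ should be applied to the drift-removed process, not to $Y$ itself. This does not affect the validity of steps~(2)--(4), which are what actually carry the argument.
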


\begin{proof}
	This follows from the proof of {\cite[Proposition 4.4]{CL}}.
\end{proof}

In the following propositions, we prove Theorem \ref{small}.
\begin{Prop}[Theorem \ref{small}(iii)]\label{smallprop3}
Fix $T\geq 1$. Then it holds that $$p(t,x,y)\asymp \frac{1}{\sqrt{t}}\e \  {\rm for} \ t\in(0,T], x\in \R_{\varepsilon'}^{d'}\cup \{a^*\}, y\in \R_{\varepsilon}^{d}\cup \{a^*\}.$$
\end{Prop}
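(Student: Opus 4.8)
The statement asserts that for $x$ and $y$ in different constituent pieces (or equal to $a^*$), the heat kernel looks exactly like a $1$-dimensional Gaussian in the path metric $\rho$. The plan is to reduce everything to the process $Y_t = u(X_t)$, whose transition density was estimated in Proposition~\ref{Yestimate} to satisfy $p^{(Y)}(t,x,y) \asymp \frac{1}{\sqrt t}\ee$ on $(0,T]\times\R\times\R$. The key observation is that for $x\in\R_{\varepsilon}^d$ and $y\in\R_{\varepsilon'}^{d'}$ (or when one of them is $a^*$), any path from $x$ to $y$ must pass through $a^*$, so the relevant quantity is $|x|_\rho + |y|_\rho$, and $\rho(x,y) = |x|_\rho+|y|_\rho$ in this regime; moreover $|u(x)| = |x|_\rho$ and $u(x)u(y) < 0$, so $|u(x) - u(y)| = |x|_\rho + |y|_\rho = \rho(x,y)$. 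Thus the claimed estimate is precisely $p(t,x,y)\asymp p^{(Y)}(t,u(x),u(y))$.

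First I would establish the exact relation between $p(t,x,y)$ and $p^{(Y)}$. Since $X$ restricted to $\R_\varepsilon^d$ (resp.\ $\R_{\varepsilon'}^{d'}$) behaves like a rotationally symmetric Brownian motion and the gluing at $a^*$ respects this symmetry, the law of $X_t$ under $\mathbb{P}_x$, pushed forward by $u$, is exactly the law of $Y_t$ under $\mathbb{P}_{u(x)}$; equivalently, $P_t f(x)$ depends only on $|x|_\rho$ (and the piece $x$ lies in) whenever $f$ is radial. To turn this into a statement about densities, I would argue that for $x\in\R_\varepsilon^d$, $y\in\R_{\varepsilon'}^{d'}$, integrating $p(t,x,\cdot)$ over the sphere $\{|z|_\rho = r\}\subset\R_{\varepsilon'}^{d'}$ recovers the density of $|Y_t|$ (times a normalizing surface-area factor), and by radial symmetry $p(t,x,z)$ is constant on that sphere; carefully tracking the Jacobian of polar coordinates and the factor $p$ in $m_p$ then yields $p(t,x,y) = c\, p^{(Y)}(t,u(x),u(y))/(\text{surface factor})$, up to explicit constants that are bounded above and below for $|y|_\rho$ in any compact range — but since we only need $\asymp$, and the surface factors are comparable to powers of $|y|$ that I will need to absorb, I should instead phrase things in terms of the symmetric hitting-time decomposition.

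The cleaner route, which I expect to follow, is: decompose $p(t,x,y) = \overline p(t,x,y)$ via the strong Markov property at $\sigma_{a^*}$, namely $p(t,x,y) = \int_0^t p(t-s,a^*,y)\,\mathbb{P}_x(\sigma_{a^*}\in ds)$ when $x,y$ are in different pieces (there is no other way across), and then use that the same decomposition holds for $Y$: $p^{(Y)}(t,u(x),u(y)) = \int_0^t p^{(Y)}(t-s,0,u(y))\,\mathbb{P}_{u(x)}(\sigma_0^{(Y)}\in ds)$. Because $\sigma_{a^*}$ under $\mathbb{P}_x$ and $\sigma_0^{(Y)}$ under $\mathbb{P}_{u(x)}$ have the same law (both depend only on $|x|_\rho$ and on the relevant SDE coefficients restricted to one piece, which match by construction), it remains to compare $p(t,a^*,y)$ with $p^{(Y)}(t,0,u(y))$. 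This last comparison is again a radial-symmetry-plus-polar-coordinates computation: starting from $a^*$, by symmetry $p(t,a^*,y)$ depends only on $|y|_\rho$, and integrating against the surface measure of the sphere at $a^*$-distance $|y|_\rho$ relates it to the density of $|Y_t|$ started at $0$, i.e.\ to $p^{(Y)}(t,0,-|y|_\rho) = p^{(Y)}(t,0,u(y))$ up to the surface-area Jacobian $\sim |y|_\rho^{d'-1}$, which is bounded away from $0$ and $\infty$... and here is the subtlety.

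The main obstacle is precisely handling that surface-area Jacobian: the naive relation $p(t,a^*,y) \asymp |y|_\rho^{-(d'-1)} p^{(Y)}(t,0,u(y))$ would give an unwanted factor $|y|_\rho^{-(d'-1)}$, which is \emph{not} in the claimed answer. The resolution is that this is an artefact of conflating "density at the point $y$" with "density of the radial distance"; the correct statement is that $p(t,a^*,y)$ (the true heat kernel w.r.t.\ $m_p$) equals the density of $|Y_t|$ at $|y|_\rho$ \emph{divided} by the surface measure of the corresponding sphere, so the factors cancel when one is careful, but only after one checks that $p(t,a^*,\cdot)$ is genuinely constant on spheres and that the total mass on each sphere is the radial density. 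I would make this rigorous by verifying $\int_{\R_{\varepsilon'}^{d'}, |z|_\rho \le r} p(t,a^*,z)\,dm_p(z) = \mathbb{P}_{0}(u(Y_t) \le r,\, Y_t < 0 \text{ side})$ type identities via the symmetry of the Dirichlet form under rotations of each piece, differentiate in $r$, and use continuity of $p$ from Proposition~\ref{Nash}. Once the identification $p(t,x,y) \asymp p^{(Y)}(t,u(x),u(y))$ is in hand for $x\in\R_\varepsilon^d\cup\{a^*\}$, $y\in\R_{\varepsilon'}^{d'}\cup\{a^*\}$, the conclusion follows immediately from Proposition~\ref{Yestimate} together with $|u(x)-u(y)| = |x|_\rho + |y|_\rho = \rho(x,y)$, noting that $\ee$ for $Y$ means $e^{-|u(x)-u(y)|^2/t} = e^{-\rho(x,y)^2/t} = \e$.
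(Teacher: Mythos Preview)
Your overall strategy---reduce to the one-dimensional process $Y_t=u(X_t)$ via radial symmetry and invoke Proposition~\ref{Yestimate}---is exactly the paper's approach, and your hitting-time decomposition correctly shows that $p(t,x,y)$ depends only on $|x|_\rho$ and $|y|_\rho$ in this regime.

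The genuine gap is in your handling of the surface-area Jacobian. The relation you call ``naive'',
\[
p(t,x,y)\;\asymp\;\frac{1}{|y|^{d-1}}\,p^{(Y)}\bigl(t,-|x|_\rho,|y|_\rho\bigr),
\]
is not an artefact: it is the correct identity, obtained exactly as you describe (integrate $p(t,x,\cdot)$ over the sphere $\{|z|_\rho=r\}$ and compare with the density of $Y_t$). The factor $|y|^{d-1}$ does \emph{not} cancel, and consequently your target identification $p(t,x,y)\asymp p^{(Y)}(t,u(x),u(y))$ is false as an equality of functions; what is true is that $p(t,x,y)\asymp |y|^{1-d}\,t^{-1/2}\,\e$. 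A separate argument is then required to absorb $|y|^{1-d}$ into the $\asymp$ relation.

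That argument is short but essential, and it is what the paper does: for the upper bound, $|y|\ge\varepsilon$ gives $|y|^{1-d}\le\varepsilon^{1-d}$. For the lower bound one splits into cases. If $|y|_\rho\le 1$ then $|y|\le 1+\varepsilon$ and again the factor is harmless. If $|y|_\rho>1$ then $|y|\le(1+\varepsilon)|y|_\rho\le(1+\varepsilon)\rho(x,y)$, so
\[
\frac{1}{|y|^{d-1}\sqrt t}\,e^{-c\rho(x,y)^2/t}
\;\gtrsim\;
\frac{1}{\sqrt t}\Bigl(\frac{t}{\rho(x,y)^2}\Bigr)^{(d-1)/2} e^{-c\rho(x,y)^2/t}
\;\gtrsim\;
\frac{1}{\sqrt t}\,e^{-(c+1)\rho(x,y)^2/t},
\]
using the elementary bound $u^{(d-1)/2}\le C e^{u}$ for $u=\rho(x,y)^2/t>0$. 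This is precisely the step your proposal is missing; without it the lower bound does not follow.
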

\begin{proof}
	Since BMVD hits $a^*$, we have 
	\begin{eqnarray*}p(t,x,y)&=&\int _0^t p(t-s,a^*,x)\mathbb{P}_y(\sigma_{a^*}\in ds)\\
	&=&\int _0^t\int_0^{t-s} p(t-s-w,a^*,a^*)\mathbb{P}_x(\sigma_{a^*}\in dw)\mathbb{P}_y(\sigma_{a^*}\in ds). \end{eqnarray*}
	Thus $(x,y)\mapsto p(t,x,y)$ depends only on $|x|_{\rho}$ and $|y|_{\rho}.$ For $a>b>0$, we have \begin{eqnarray}\int_a^bp^{(Y)}(t,-|x|_{\rho},|y|_{\rho})d|y|_{\rho}&=&\mathbb{P}_{-|x|_{\rho}}(a\leq Y_t\leq b)\label{eq:siii1} \\
	\nonumber&=&\mathbb{P}_{x}(X_t\in\R_{\varepsilon}^d,a\leq |X_t|_{\rho}\leq b)\\
	\nonumber&=&\int_{\{y\in \R_{\varepsilon}^d, a\leq |y|_{\rho}\leq b\}}p(t,x,y)m_p(dy)\\
	\nonumber&=&\int_a^b |\partial B(0;|y|_{\rho}+\varepsilon)|p(t,x,y)d|y|_{\rho} .\end{eqnarray}
	Thus, we have $p^{(Y)}(t,-|x|_{\rho},|y|_{\rho})d|y|_{\rho}=|\partial B(0;|y|)|p(t,x,y)\asymp |y|^{d-1}p(t,x,y)$.\\
	By Proposition \ref{Yestimate}, it holds that \begin{eqnarray}p(t,x,y)\asymp \frac{1}{|y|^{d-1}}\frac{1}{\sqrt{t}}e^{-\left(-|x|_{\rho}-|y|_{\rho}\right)^2/t}=\frac{1}{|y|^{d-1}\sqrt{t}}\e. \label{eq:siii2}\end{eqnarray}
	Since $\varepsilon \leq |y|$ and (\ref{eq:siii2}), we have $$p(t,x,y)\asymp \frac{1}{|y|^{d-1}\sqrt{t}}\e \lesssim \frac{1}{\sqrt{t}}\e.$$\\
	Moreover, if $|y|_{\rho}\leq 1$ we have $p(t,x,y)\gtrsim \frac{1}{\sqrt{t}}\e$ and if $|y|_{\rho}>1$ we have
	 \begin{eqnarray*}p(t,x,y) &\asymp &\frac{1}{|y|^{d-1}\sqrt{t}}\e \geq \frac{1}{|y|^{d-1}\sqrt{t}}\left(\frac{t}{T}\right)^{{(d-1)}/{2}}\e \\
	&\gtrsim &\frac{1}{\rho(x,y)^{d-1}\sqrt{t}}\left(\frac{t}{T}\right)^{{(d-1)}/{2}}e^{-c{\rho(x,y)^2}/{t}} \gtrsim \frac{1}{\sqrt{t}}e^{-(c+1){\rho(x,y)^2}/{t}}.  \end{eqnarray*}\end{proof}

\begin{Prop}[Theorem \ref{small}(ii)]\label{smallprop2}Fix $T\geq 1$, then for all $t\leq T, x,y\in \R_{\varepsilon}^d$, it holds that
$$p(t,x,y)\asymp \frac{\e}{\sqrt{t}}+\frac{\ee}{t^{{d}/{2}}}\left( 1\wedge \frac{|x|_{\rho}}{\sqrt{t}}\right)\left( 1\wedge \frac{|y|_{\rho}}{\sqrt{t}}\right) {\rm if}\ |x|_{\rho}\vee |y|_{\rho} \leq 1,$$
$$p(t,x,y)\asymp \frac{1}{t^{{d}/{2}}}\e \ {\rm if}\  |x|_{\rho}\vee |y|_{\rho} > 1.\hspace{50mm}$$
\end{Prop}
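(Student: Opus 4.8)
I would prove Proposition~\ref{smallprop2} by the standard decomposition of the heat kernel into the "part" (killed) kernel on $\R_\varepsilon^d$ plus the contribution of paths that have visited $a^*$. Concretely, for $x,y\in\R_\varepsilon^d$,
\[
p(t,x,y)=p_{\R_\varepsilon^d}(t,x,y)+\overline{p}_{\R_\varepsilon^d}(t,x,y),\qquad \overline{p}_{\R_\varepsilon^d}(t,x,y)=\int_0^t p(t-s,a^*,y)\,\mathbb{P}_x(\sigma_{a^*}\in ds).
\]
The first term is controlled by Proposition~\ref{part kernel}, which for $t\le T$ gives $p_{\R_\varepsilon^d}(t,x,y)\asymp t^{-d/2}(1\wedge |x|_\rho/\sqrt t)(1\wedge|y|_\rho/\sqrt t)\ee$; for $|x|_\rho\vee|y|_\rho>1$ the two "$1\wedge$" factors are comparable to $1$ on the relevant range (after absorbing the truncation at $\sqrt t\wedge 1$, using $t\le T$), so $p_{\R_\varepsilon^d}(t,x,y)\asymp t^{-d/2}\ee$. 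So the heart of the matter is to show
\[
\overline{p}_{\R_\varepsilon^d}(t,x,y)\asymp \frac{1}{\sqrt t}\,\e
\]
when $|x|_\rho\vee|y|_\rho\le 1$, and that for $|x|_\rho\vee|y|_\rho>1$ this term is dominated by $t^{-d/2}\e$ (so it does not spoil the clean formula). Here I would use the small-time on-diagonal bound $p(s,a^*,a^*)\lesssim s^{-1/2}$ valid for $s\le T$ (this is the "$1$-dimensional" behaviour near $a^*$, following from Proposition~\ref{Nash} together with the fact that near $a^*$ the space looks one-dimensional — more precisely one can get it, and a matching lower bound, from Proposition~\ref{Yestimate} via the identity $p^{(Y)}(t,0,0)=(|\partial\R_\varepsilon^d|+p|\partial\R_\varepsilon^{d'}|)\,p(t,a^*,a^*)$ analogous to~(\ref{eq:siii1})).

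**Reduction via $Y$.** The cleanest route, and the one consistent with the preceding setup, is to bypass the subordination integral entirely for the case $|x|_\rho\vee|y|_\rho\le 1$ by using the diffusion $Y_t=u(X_t)$. Exactly as in~(\ref{eq:siii1}), projecting onto radial coordinates gives, for $x,y\in\R_\varepsilon^d$,
\[
\int_a^b p^{(Y)}(t,|x|_\rho,|y|_\rho)\,d|y|_\rho=\int_a^b |\partial B(0;|y|_\rho+\varepsilon)|\,\overline{p}_{\R_\varepsilon^d}(t,x,y)\,d|y|_\rho,
\]
since $p^{(Y)}$ sees only the part of $p$ depending on $(|x|_\rho,|y|_\rho)$, which is precisely $\overline p_{\R_\varepsilon^d}$ (the killed part vanishes after this projection once both arguments are in $\R_\varepsilon^d$ — actually $p_{\R_\varepsilon^d}$ does depend only on $|x|,|y|$, so more carefully one should subtract the known radial profile of $p_{\R_\varepsilon^d}$). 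This yields $\overline p_{\R_\varepsilon^d}(t,x,y)\asymp |y|^{-(d-1)}\,p^{(Y)}(t,|x|_\rho,|y|_\rho)$ up to the analogous term from the killed kernel, and Proposition~\ref{Yestimate} gives $p^{(Y)}(t,|x|_\rho,|y|_\rho)\asymp t^{-1/2}e^{-(|x|_\rho-|y|_\rho)^2/t}$. Since $|x|_\rho\vee|y|_\rho\le 1$ we have $|y|\asymp\varepsilon\asymp 1$ and $|x|_\rho+|y|_\rho\asymp|x|_\rho-|y|_\rho$ up to constants is false in general, so here I would instead argue directly: $e^{-(|x|_\rho+|y|_\rho)^2/t}\le e^{-(|x|_\rho-|y|_\rho)^2/t}$ trivially, and for the reverse I use $(|x|_\rho+|y|_\rho)^2\le 2(|x|_\rho-|y|_\rho)^2+8|x|_\rho|y|_\rho\le 2(|x|_\rho-|y|_\rho)^2+8$ together with $t\le T$ to absorb the additive constant, matching the convention in the $\asymp$ notation. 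Finally $\rho(x,y)=(|x|_\rho+|y|_\rho)\wedge|x-y|$, and when both $|x|_\rho,|y|_\rho\le 1$ one checks $\rho(x,y)\asymp |x|_\rho+|y|_\rho$ precisely because $|x-y|\ge ||x|-|y||$ can be small only when $|x-y|$ itself could be large — here I'd just note $|x-y|\le|x|+|y|\le|x|_\rho+|y|_\rho+2\varepsilon$, so $\e$ and $\exy$ are comparable (Lemma~\ref{rholem}(ii)), giving $\overline p_{\R_\varepsilon^d}(t,x,y)\asymp t^{-1/2}\e$.

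**Assembling the two regimes and the main obstacle.** For $|x|_\rho\vee|y|_\rho\le 1$ I then add: $p=p_{\R_\varepsilon^d}+\overline p_{\R_\varepsilon^d}\asymp t^{-d/2}(1\wedge|x|_\rho/\sqrt t)(1\wedge|y|_\rho/\sqrt t)\ee+t^{-1/2}\e$, which is exactly the claimed formula (here I use $t\le T$ and $\sqrt t\wedge 1=\sqrt t$ only when $t\le 1$; for $1<t\le T$ both sides are $\asymp$ constants times the Gaussian, so it still holds after adjusting constants). For $|x|_\rho\vee|y|_\rho>1$: the killed term is $\asymp t^{-d/2}\e$ as noted, so it suffices to show $\overline p_{\R_\varepsilon^d}(t,x,y)\lesssim t^{-d/2}\e$. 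Here I would go back to the subordination integral, bound $p(t-s,a^*,y)\lesssim (t-s)^{-1/2}e^{-c|y|_\rho^2/(t-s)}$ by the $d'=1$-type estimate near $a^*$ combined with the radial profile (or directly cite the already-proved cases (iii) / the $Y$-estimate for $p(s,a^*,\cdot)$), and use the hitting-time tail $\mathbb{P}_x(\sigma_{a^*}\in ds)$; since $|x|_\rho>1$ say, the extra decay $e^{-c|x|_\rho^2/s}$ forces the relevant $s$ to be of order $t$, and a Laplace-type estimate on $\int_0^t (t-s)^{-1/2}e^{-c|y|_\rho^2/(t-s)}\mathbb{P}_x(\sigma_{a^*}\in ds)$ produces $t^{-1/2}e^{-c(|x|_\rho+|y|_\rho)^2/t}\lesssim t^{-d/2}e^{-c(|x|_\rho+|y|_\rho)^2/t}$ because $t\le T$ makes $t^{-1/2}\lesssim t^{-d/2}$ up to a constant depending on $T$ — and then $\exy\lesssim\e$ by Lemma~\ref{rholem}(i). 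I expect the main technical obstacle to be the $|x|_\rho\vee|y|_\rho>1$ subregime: one must show the $a^*$-contribution $\overline p_{\R_\varepsilon^d}$ is genuinely negligible compared to $t^{-d/2}\e$ for \emph{all} $t\le T$ uniformly, which requires a careful Laplace-asymptotics bound on the convolution of the hitting-time density with the near-$a^*$ kernel, and correctly tracking that $\rho(x,y)$ may equal $|x-y|$ rather than $|x|_\rho+|y|_\rho$ in this regime (so one cannot freely replace $\exy$ by $\e$ without Lemma~\ref{rholem}). The $t\le T$ assumption is what rescues all the exponent mismatches, and I would emphasize that throughout.
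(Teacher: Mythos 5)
Your decomposition $p = p_{\R_\varepsilon^d} + \overline p_{\R_\varepsilon^d}$ and the idea to control $\overline p_{\R_\varepsilon^d}$ via the one-dimensional process $Y$ are the same as the paper's, but the key step is carried out incorrectly and the gap is genuine. The quantity $\int_a^b p^{(Y)}(t,|x|_\rho,|y|_\rho)\,d|y|_\rho = \mathbb{P}_x(X_t\in\R_\varepsilon^d,\, a\le|X_t|_\rho\le b)$ is the shell integral of the \emph{full} $p(t,x,y)$, not of $\overline p_{\R_\varepsilon^d}$, and the full $p$ is not a function of $(|x|_\rho,|y|_\rho)$ alone (the killed kernel depends on the angle between $x$ and $y$), so one cannot read off $\overline p_{\R_\varepsilon^d}\asymp|y|^{-(d-1)}p^{(Y)}(t,|x|_\rho,|y|_\rho)$. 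You half-notice this (``more carefully one should subtract the known radial profile of $p_{\R_\varepsilon^d}$'') but that subtraction is exactly where the content lies and would need to be done with matching two-sided error control. The paper avoids it by a different identity: condition on $\{\sigma_{a^*}<t\}$, so
\[
(|y|_\rho+\varepsilon)^{d-1}\,\overline p_{\R_\varepsilon^d}(t,x,y)\asymp\int_0^t p^{(Y)}(t-s,0,|y|_\rho)\,\mathbb{P}^{(Y)}_{|x|_\rho}(\sigma_0\in ds),
\]
and then compare $\mathbb{P}^{(Y)}_{|x|_\rho}(\sigma_0\in ds)\asymp\mathbb{P}^{(Y)}_{-|x|_\rho}(\sigma_0\in ds)$ so that the convolution becomes $p^{(Y)}(t,-|x|_\rho,|y|_\rho)\asymp t^{-1/2}\exy$. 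Starting $Y$ on the \emph{opposite} side forces the path through $0$, which is what produces the exponent $(|x|_\rho+|y|_\rho)^2$. Your same-side $p^{(Y)}(t,|x|_\rho,|y|_\rho)\asymp t^{-1/2}e^{-(|x|_\rho-|y|_\rho)^2/t}$ is genuinely different: for $|x|_\rho=|y|_\rho=1$ it is of order $t^{-1/2}$, while $\overline p_{\R_\varepsilon^d}$ must be $\asymp t^{-1/2}e^{-c/t}$, and these are not comparable for small $t$. Your attempted algebraic fix, $(|x|_\rho+|y|_\rho)^2\le 2(|x|_\rho-|y|_\rho)^2+8$ ``absorbed using $t\le T$,'' is wrong: the extra factor $e^{-8/t}$ tends to $0$ as $t\to0$ and cannot be absorbed into the $\asymp$ convention, which only permits adjusting multiplicative constants in the prefactor and in the exponent, not additive terms that blow up as $t\to0$.

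A second, smaller gap: in the regime $|x|_\rho\vee|y|_\rho>1$ you only propose to show $\overline p_{\R_\varepsilon^d}\lesssim t^{-d/2}\e$ and let the killed kernel supply the lower bound, but $p_{\R_\varepsilon^d}(t,x,y)\asymp t^{-d/2}(1\wedge|x|_\rho/\sqrt t)(1\wedge|y|_\rho/\sqrt t)\ee$, and when (say) $|x|_\rho\ll\sqrt t$ the prefactor vanishes, so the killed part alone cannot give the lower bound $\gtrsim t^{-d/2}\e$. The paper handles this by also proving $\overline p_{\R_\varepsilon^d}\gtrsim t^{-d/2}\exy$ (their inequality (\ref{eq:smallii2})), which is exactly what rescues the lower bound in that sub-regime. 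You would need the two-sided control on $\overline p_{\R_\varepsilon^d}$, not just the one-sided bound you sketch via a Laplace-type estimate.
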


\begin{proof}
	When $d=1$, the statement holds from Proposition \ref{Yestimate}, so we assume $d\geq 2$.\\
	For $x,y\in \R_{\varepsilon}^d$, it holds that $p(t,x,y)=p_{\R_{\varepsilon}^d}(t,x,y)+\overline{p}_{\R_{\varepsilon}^d}(t,x,y)$.
	Since $\overline{p}_{\R_{\varepsilon}^d}(t,x,y)$ depends only on $|x|_{\rho}$ and $|y|_{\rho}$, for $0<a<b$, we have
	\begin{eqnarray}\mathbb{P}_x(\sigma_{a^*}<t, X_t\in \R_{\varepsilon}^d, a\leq |X_t|_{\rho}\leq b)=&\int _{\{a\leq|y|_{\rho}\leq b\}}\overline{p}_{\R_{\varepsilon}^d}(t,x,y)m_p(dy)\label{eq:hita} \\
	\nonumber \asymp &\int_a^b (|y|_{\rho}+\varepsilon)^{d-1}\overline{p}_{\R_{\varepsilon}^d}(t,x,y)d|y|_{\rho}.  \end{eqnarray}
The left hand side of (\ref{eq:hita}) is equal to $$\mathbb{P}_{|x|_{\rho}}^{(Y)}\left(\sigma _0<t, Y_t>0,a\leq Y_t\leq b\right)=\int_a^b\int_0^tp^{(Y)}(t-s,0,|y|_{\rho})\mathbb{P}^{(Y)}_{|x|_{\rho}}(\sigma_0\in ds)d|y|_{\rho}.$$
Here, $\mathbb{P}^{(Y)}$ is a probability measure with respect to $Y$. Thus, by using Proposition \ref{Yestimate}, it follows that
\begin{eqnarray}\nonumber (|y|_{\rho}+\varepsilon)^{d-1}\overline{p}_{\R_{\varepsilon}^d}(t,x,y)&\asymp &\int_0^tp^{(Y)}(t-s,0,|y|_{\rho})\mathbb{P}^{(Y)}_{|x|_{\rho}}(\sigma_0\in ds)d|y|_{\rho}\\
\nonumber&\asymp& \int_0^tp^{(Y)}(t-s,0,|y|_{\rho})\mathbb{P}^{(Y)}_{-|x|_{\rho}}(\sigma_0\in ds)d|y|_{\rho}\\
\nonumber&=&p^{(Y)}(t,-|x|_{\rho},|y|_{\rho})\\
&\asymp& \frac{1}{\sqrt{t}}\exy. \label{eq:smallii1}\end{eqnarray}
	{\bf Case1} $|x|_{\rho}\vee |y|_{\rho} \leq 1$: Since $\varepsilon \leq |y|_{\rho}+\varepsilon\leq1+\varepsilon$, we have by (\ref{eq:smallii1}), 
	\begin{equation}\overline{p}_{\R_{\varepsilon}^d}(t,x,y)\asymp \frac{1}{\sqrt{t}}\exy .\label{eq:smalliia}\end{equation}
	If $\rho(x,y)\asymp |x|_{\rho}+|y|_{\rho}$, we obtain $\overline{p}_{\R_{\varepsilon}^d}(t,x,y)\asymp \frac{1}{\sqrt{t}}\e.$\\
	If $\rho(x,y)= |x-y|$ and $|x|_{\rho}\wedge|y|_{\rho} \leq \sqrt{t}$, we may assume $|x|_{\rho}\leq \sqrt{t}$ without loss of generality. Then, it holds that
	\begin{eqnarray}\nonumber \rho(x,y)&\leq &|x|_{\rho}+|y|_{\rho}\leq \sqrt{t} +|y|_{\rho}\leq \sqrt{t}+ |x|+|x-y|-\varepsilon \\
	& =&\sqrt{t}+ |x|_{\rho}+|x-y|\leq 2\sqrt{t}+|x-y|.\label{eq:smalllem1}\end{eqnarray}
	By (\ref{eq:smalllem1}), it holds that $\e \geq \exy \geq e^{-2(2\sqrt{t})^2/t}\e$. Thus, by (\ref{eq:smalliia}), we have $\overline{p}_{\R_{\varepsilon}^d}(t,x,y)\asymp \frac{1}{\sqrt{t}}\e$.\\
	If $\rho(x,y)= |x-y|$ and $|x|_{\rho}\wedge|y|_{\rho} > \sqrt{t}$, by (\ref{eq:smalliia}) and (\ref{eq:part kernel}), we have 
	$$p(t,x,y)\asymp \frac{1}{\sqrt{t}}\exy+\frac{1}{t^{{d}/{2}}}\ee \lesssim \frac{1}{\sqrt{t}}\e+\frac{1}{t^{{d}/{2}}}\ee,
	$$
	\begin{eqnarray*}p(t,x,y)\gtrsim \frac{1}{t^{{d}/{2}}}\ee \gtrsim \frac{1}{\sqrt{t}}\e+\frac{1}{t^{{d}/{2}}}\ee.
	\end{eqnarray*}	
	{\bf Case 2} $|x|_{\rho}\vee |y|_{\rho} > 1$: Without loss of generality, we may assume $|y|_{\rho}> 1$. By (\ref{eq:smallii1}), it holds that
	\begin{eqnarray}\nonumber \overline{p}_{\R_{\varepsilon}^d}(t,x,y)&\asymp& \frac{1}{(|y|_{\rho}+\varepsilon)^{d-1}}\frac{1}{\sqrt{t}}\exy \\
	\nonumber&\geq& \frac{1}{2(\varepsilon+1)(|x|_{\rho}+|y|_{\rho})^{d-1}}\frac{1}{\sqrt{t}}\exy \\
	\nonumber&\gtrsim&\frac{1}{(|x|_{\rho}+|y|_{\rho})^{d-1}}\frac{1}{\sqrt{t}}\left(\frac{(|x|_{\rho}+|y|_{\rho})^2}{t} \right)^{(d-1)/2}\exy\\
	&\asymp& \frac{1}{t^{{d}/{2}}}\exy .\label{eq:smallii2}\end{eqnarray}
	By (\ref{eq:part kernel}) and (\ref{eq:smallii1}), we obtain
	\begin{eqnarray}\nonumber p(t,x,y)&\asymp& \frac{1}{t^{{d}/{2}}}\left(1\wedge \frac{|x|_{\rho}}{\sqrt{t}} \right)\left(1\wedge \frac{|y|_{\rho}}{\sqrt{t}} \right)\ee +\frac{\exy}{\sqrt{t}(|y|_{\rho}+\varepsilon)^{d-1}}\label{eq:smalliib}\\
	&\lesssim&\frac{1}{t^{{d}/{2}}}\left(\ee+\exy \right).
	\end{eqnarray}
	If $|x|_{\rho}\wedge|y|_{\rho} \leq \sqrt{t}$, then we have $p(t,x,y)\asymp \frac{1}{t^{{d}/{2}}}\e$ in the same way as Case1.\\
	If $|x|_{\rho}\wedge|y|_{\rho} > \sqrt{t}$, then we have $p(t,x,y)\asymp \frac{1}{t^{{d}/{2}}}\e$ since $\rho(x,y)=|x-y|\wedge (|x|_{\rho}+|y|_{\rho})$.\\
	This completes the proof.

\end{proof}

\begin{Prop}[Theorem \ref{small}(i)]\label{smallprop1}Fix $T\geq 1$, then for all $t\in(0,T], x,y\in \R_{\varepsilon'}^{d'},$ it holds that
$$p(t,x,y)\asymp \frac{\e}{\sqrt{t}}+\frac{\ee}{t^{{d'}/{2}}}\left( 1\wedge \frac{|x|_{\rho}}{\sqrt{t}}\right)\left( 1\wedge \frac{|y|_{\rho}}{\sqrt{t}}\right)\ {\rm if}\ |x|_{\rho}\vee |y|_{\rho} \leq 1.$$
$$p(t,x,y)\asymp \frac{1}{t^{{d'}/{2}}}\e \ {\rm if}\ |x|_{\rho}\vee |y|_{\rho} > 1. \hspace{50mm}$$

\end{Prop}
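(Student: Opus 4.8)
\section*{Proof proposal for Proposition \ref{smallprop1}}

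The plan is to argue exactly as in Proposition \ref{smallprop2}, with $d$, $\varepsilon$, $\R_{\varepsilon}^{d}$ replaced throughout by $d'$, $\varepsilon'$, $\R_{\varepsilon'}^{d'}$. When $d'=1$ the statement reduces to Proposition \ref{Yestimate}, just as the case $d=1$ of Proposition \ref{smallprop2}: for $x,y\in\R_+$ one has $\rho(x,y)=|x-y|$, the deterministic relation $Y=u(X)$ gives $p(t,x,y)\asymp p^{(Y)}(t,-|x|_\rho,-|y|_\rho)\asymp t^{-1/2}\e$, and the second summand of the claimed expression is then dominated by the first. So assume $d'\ge 2$; since $\R_{\varepsilon'}^{d'}$ is the exterior of a ball in $\R^{d'}$, the analog of Proposition \ref{part kernel} with $d$ replaced by $d'$ holds.

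Decompose $p(t,x,y)=p_{\R_{\varepsilon'}^{d'}}(t,x,y)+\overline p_{\R_{\varepsilon'}^{d'}}(t,x,y)$, where $\overline p_{\R_{\varepsilon'}^{d'}}(t,x,y):=\int_0^t p(t-s,a^*,y)\,\mathbb{P}_x(\sigma_{a^*}\in ds)$ depends only on $|x|_\rho$ and $|y|_\rho$. As in \eqref{eq:hita}, integrating $\overline p_{\R_{\varepsilon'}^{d'}}(t,x,\cdot)$ over an annulus $\{a\le|y|_\rho\le b\}$ against $m_p$ equals $\mathbb{P}_x(\sigma_{a^*}<t,\ X_t\in\R_{\varepsilon'}^{d'},\ a\le|X_t|_\rho\le b)=\mathbb{P}^{(Y)}_{-|x|_\rho}(\sigma_0<t,\ -b\le Y_t\le -a)$, which by the strong Markov property of $Y$ at $\sigma_0$ is $\int_a^b\int_0^t p^{(Y)}(t-s,0,-r)\,\mathbb{P}^{(Y)}_{-|x|_\rho}(\sigma_0\in ds)\,dr$. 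Comparing the two and using Proposition \ref{Yestimate} (which makes replacing the endpoint $-r$ by $r$ harmless) gives, in analogy with \eqref{eq:smallii1},
\begin{equation*}(|y|_\rho+\varepsilon')^{d'-1}\,\overline p_{\R_{\varepsilon'}^{d'}}(t,x,y)\asymp p^{(Y)}(t,-|x|_\rho,|y|_\rho)\asymp \frac{1}{\sqrt t}\exy.\end{equation*}

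From here the argument is verbatim that of Proposition \ref{smallprop2}. If $|x|_\rho\vee|y|_\rho\le 1$, then $(|y|_\rho+\varepsilon')^{d'-1}\asymp 1$, so $\overline p_{\R_{\varepsilon'}^{d'}}(t,x,y)\asymp t^{-1/2}\exy$; distinguishing whether $\rho(x,y)$ equals $|x|_\rho+|y|_\rho$ or $|x-y|$, and in the latter case whether $|x|_\rho\wedge|y|_\rho\le\sqrt t$ or $>\sqrt t$, and adding the part-kernel term via the $\R_{\varepsilon'}^{d'}$-analog of Proposition \ref{part kernel}, yields the first claimed estimate. If $|x|_\rho\vee|y|_\rho>1$, say $|y|_\rho>1$, then bounding $(|y|_\rho+\varepsilon')^{-(d'-1)}$ and using $\big((|x|_\rho+|y|_\rho)^2/t\big)^{(d'-1)/2}\gtrsim 1$ turns the displayed estimate into $\overline p_{\R_{\varepsilon'}^{d'}}(t,x,y)\asymp t^{-d'/2}\exy$; adding the part-kernel term and invoking Lemma \ref{rholem} to identify $\ee$, $\exy$, and $\e$ yields $p(t,x,y)\asymp t^{-d'/2}\e$.

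Every step mirrors Proposition \ref{smallprop2}, so there is no genuine obstacle; the one point needing care is the bookkeeping in the displayed identity, where the constant $p$ and the surface weight $(|y|_\rho+\varepsilon')^{d'-1}$ on the $\R^{d'}$ side must be tracked against the fact (Proposition \ref{Yestimate}) that $Y$ has a density with respect to plain Lebesgue measure on $\R$ --- this is exactly the computation \eqref{eq:hita}--\eqref{eq:smallii1} performed with $\R^{d'}$-spheres in place of $\R^{d}$-spheres.
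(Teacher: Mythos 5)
Your proposal is correct and takes exactly the paper's approach: the paper's own proof of Proposition \ref{smallprop1} is the single sentence ``the proof is the same as that of Proposition \ref{smallprop2}'', and you have carried out the $d\to d'$, $\varepsilon\to\varepsilon'$, $\R_{\varepsilon}^{d}\to\R_{\varepsilon'}^{d'}$ substitution in detail, correctly tracking the constant $p$ from $m_p$ and the surface weight $(|y|_\rho+\varepsilon')^{d'-1}$ against the fact that $Y$ has a density with respect to one-dimensional Lebesgue measure. One small inaccuracy: near the end you invoke Lemma \ref{rholem} to relate $\ee$, $\exy$ and $\e$, but that lemma is stated only for large time $t\geq T$; here $t\leq T$, so you should instead use, exactly as the Case 2 argument in Proposition \ref{smallprop2} does, the identity $\e=\ee\vee\exy$ coming directly from $\rho(x,y)=|x-y|\wedge(|x|_\rho+|y|_\rho)$, together with the inequality (\ref{eq:smalllem1}) when $|x|_\rho\wedge|y|_\rho\leq\sqrt{t}$.
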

\begin{proof}
The proof is the same as that of Proposition \ref{smallprop2}.
\end{proof}
This completes the proof of Theorem \ref{small}.\\

\section{Large time estimate($d'=1$)} \label{lsec1}
In this section, we prove Theorem \ref{d'=1}. Let $d'=1$. When $d=1$, $\R_+\cup \R_+\cup \{a^*\}$ can be identified with $\R$. In this case, BMVD is 1-dimensional Brownian motion, so there is nothing to prove. When $d=2$, it was proved by \cite{CL}.
Hence we consider the case of $d\geq 3$. 
Let $\varepsilon >0$ and $S^{d-1}_{\varepsilon}:=\{x\in \R^d\ ;\ |x|=\varepsilon\}$.
We will prove Theorem \ref{d'=1} by projecting $(\R_+\times S^{d-1}_{\varepsilon})\# \R^d$ to $\R_+ \cup \R_{\varepsilon}^d\cup \{a^*\}$.

The following theorem is a special case of {\cite[Corollary 6.13]{GSc}}. 
\begin{Thm}
	\label{GSc1}
	Let $K$ be central part of $M:=(\R_+\times S^{d-1}_{\varepsilon})\# \R^d$. Let $E_1:=(M\setminus K)\cap(\R_+\times S^{d-1}_{\varepsilon})$, $E_2:=(M\setminus K)\cap \R^d$, and $E_0\subset M$ be a precompact open set having smooth boundary and containing $K$. Then heat kernel $\check{p}(t,x,y)$ of standard Browian motion $\check{X}$ on $M$ satisfies the following estimates for $1\leq t.$
	\begin{enumerate}
	\item For $x,y\in E_1$, $$\check{p}(t,x,y)\asymp \frac{|x|_e|y|_e}{\sqrt{t}(|x|_e+\sqrt{t})(|y|_e+\sqrt{t})}e^{-{d(x,y)^2}/{t}}.$$
	
	\item For $x,y\in E_2$, $$\check{p}(t,x,y)\asymp \frac{1}{t^{{3}/{2}}|x|_e^{d-2}|y|_e^{d-2}}e^{-{(|x|_e+|y|_e)^2}/{t}}+\frac{1}{t^{{d}/{2}}}e^{-{d(x,y)^2}/{t}}.$$
	
 	\item For $x\in E_0\cup E_1,y\in E_0\cup E_2$, $$\check{p}(t,x,y)\asymp \left(\frac{1}{t^{{d}/{2}}}+\frac{|x|_e}{t^{{3}/{2}}|y|_e^{d-2}} \right)e^{-{\rho (x,y)^2}/{t}}.$$
 	\end{enumerate}
 	Here, $d$ is a geodesic distance, and $|x|_e:=\sup_{z\in K}d(x,z)\asymp 1+d(x,K)$.
 \end{Thm}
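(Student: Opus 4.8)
The plan is to obtain Theorem~\ref{GSc1} as a direct specialization of \cite[Corollary 6.13]{GSc}: I would present $M$ as a connected sum of two standard model manifolds and then verify that the hypotheses of that corollary hold in the present situation.

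First I would identify the two model ends. The end $E_1$ is a product cylinder isometric to (the exterior of a compact set in) $\R_+\times S^{d-1}_{\varepsilon}$; a convenient ambient model is the full cylinder $M_1:=\R\times S^{d-1}_{\varepsilon}$, which has non-negative Ricci curvature --- it is a Riemannian product of the flat line with the round sphere $S^{d-1}_{\varepsilon}$, and $d\ge 3$ --- hence satisfies the parabolic Harnack inequality, equivalently the relative Faber-Krahn inequality, by Li-Yau; its volume function is $\mathcal{V}_1(r)\asymp r$ for $r\ge 1$, so $M_1$ is parabolic. The end $E_2$ is isometric to $\R^d\setminus\overline{B(0,\varepsilon)}$, with ambient model $M_2:=\R^d$, volume function $\mathcal{V}_2(r)\asymp r^d$, which satisfies the relative Faber-Krahn inequality classically and is non-parabolic since $d\ge 3$. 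Thus $M$ is, up to a compact central part $K$ and a smoothing of the metric on the gluing collar (absorbed into $K$), the connected sum $M_1\#M_2$ in the sense of the definition in Section~\ref{intro}, with $K$ the central part and any precompact smooth neighbourhood of $K$ available as $E_0$.

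Next I would invoke \cite[Corollary 6.13]{GSc}. The configuration just described is exactly the mixed parabolic/non-parabolic case treated there, with volume exponents $N_1=1$ (parabolic end) and $N_2=d$ (non-parabolic end). Substituting $N_1=1$, $N_2=d$ into that corollary, and noting that in this two-ended space $|x|_e=\sup_{z\in K}d(x,z)\asymp 1+d(x,K)$ and that the distance $\rho$ appearing in the cross-end estimate is comparable to the geodesic distance $d$ (every path from $E_1$ to $E_2$ crosses $K$, so $d(x,y)\asymp d(x,K)+d(y,K)$), one reads off the three displayed bounds: on $E_1\times E_1$ the parabolic-end weight $|x|_e|y|_e/\bigl(\sqrt t\,(|x|_e+\sqrt t)(|y|_e+\sqrt t)\bigr)$; on $E_2\times E_2$ the sum of a bottleneck contribution $t^{-3/2}|x|_e^{-(d-2)}|y|_e^{-(d-2)}e^{-(|x|_e+|y|_e)^2/t}$, coming from trajectories that reach $y$ only after entering the $1$-dimensional end through $K$, and an intrinsic Euclidean term $t^{-d/2}e^{-d(x,y)^2/t}$; and on the mixed set $E_0\cup E_1$ versus $E_0\cup E_2$ the interpolating bound $\bigl(t^{-d/2}+|x|_e\,t^{-3/2}|y|_e^{-(d-2)}\bigr)e^{-\rho(x,y)^2/t}$.

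I expect the only genuinely substantive step to be the bookkeeping of the second paragraph: checking carefully that the cylindrical model $\R\times S^{d-1}_{\varepsilon}$ and the Euclidean model $\R^d$ meet the precise regularity, volume-growth and (non)parabolicity hypotheses required in \cite{GSc}, that the gluing can be arranged so that $M$ is a genuine complete Riemannian manifold fitting their connected-sum framework, and that the data $(K,E_0,E_1,E_2)$ match their conventions. Once this is in place, Theorem~\ref{GSc1} follows at once from \cite[Corollary 6.13]{GSc}.
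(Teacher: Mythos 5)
Your proposal is correct and takes exactly the same route as the paper, which simply observes that Theorem~\ref{GSc1} is a special case of \cite[Corollary~6.13]{GSc}. The additional verification you sketch (identifying the two model ends, their volume growth and (non)parabolicity, and matching $|x|_e$ and the distance conventions) is precisely the bookkeeping implicit in that citation.
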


From now on, we fix $K:=\left(\{0\} \times \{x\in \R^d; |x|<1+\varepsilon \}\right) \cup \left([0,1)\times S_{\varepsilon}^{d-1}\right)$. Then it holds that $M=(\R_+ \cup \{0\})\times S_{\varepsilon}^{d-1}\  \cup \  \R_{\varepsilon}^d$. See Figure \ref{Fig:ends}.

\begin{figure}[h] 
\begin{tikzpicture}
\draw (-2,5) -- (-3,5) --(-5,3) -- (3,3) -- (5,5) -- (2,5);
\draw[] (-2,5) -- (2,5);

\draw  (2,4) ++ (-2,2) circle (0.5cm and 0.2cm);
\draw  (2,4) ++ (-2,0) circle (0.5cm and 0.2cm);
\draw (-0.5,4)--(-0.5,7);
\draw (0.5,4)--(0.5,7);

\draw  (2,4) ++ (-2,0.6) circle (0.5cm and 0.2cm);
\draw  (2,4) ++ (-2,0) circle (1cm and 0.3cm);

\fill[gray,opacity=0.5] 
 (2,4) ++ (-2,0.6) circle (0.5cm and 0.2cm)
 (2,4) ++ (-2,0) circle (1cm and 0.3cm)
 (-0.5,4.6)--(-0.5,4)--(0.5,4)--(0.5,4.6);
 
 \draw (0.8,4.6)node{$K$};
 \draw (4.3,4.8)node{$R^d$};
  \draw (2,4)node{$E_2$};
 \draw (-1,6)node{$E_1$};
 \draw (1.6,6.5)node{$\R_+\times S^{d-1}_{\varepsilon}$};

\end{tikzpicture}

\caption{$M:=(\R_+\times S^{d-1}_{\varepsilon})\# \R^d\ =\ (\R_+ \cup \{0\})\times S_{\varepsilon}^{d-1}\  \cup \  \R_{\varepsilon}^d$}\label{Fig:ends}

\end{figure}
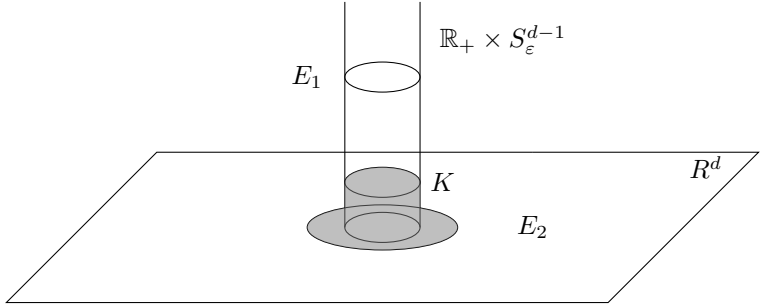

We define $q:=p/|\partial \R_{\varepsilon}^d|$ and $\tilde{m}_q(A):=m^{(d)}(A\cap \R ^d)+q\ m^{(1,d-1)}(A\cap(\R_+\times S^{d-1}_{\varepsilon}))$ for a Borel set $A\subset M$. Here, $m^{(d)}$ and $m^{(1,d-1)}$ are the Lebesgue measures on $\R^d$ and $\R_+\times S^{d-1}_{\varepsilon}$, respectively. 
Then $\tilde{m}_q$-symmetric Brownian motion $\{\tilde{X}_t\}$ on $M$ is a time-changed process of standard Brownian motion $\{\check{X}_t\}$ on $M$ by a positive continuous additive functional having the Revuz measure $\tilde{m}_q$. To be precise, we have $\tilde{X}_t=\check{X}_{\tau_t}$, where $A_t:=\int _0^t ({\bf 1}_{\R^d}+q{\bf 1}_{\R_+\times S^{d-1}_{\varepsilon}})(\check{X}_s)ds$ and $\tau _t:=\{s>0\ |\ A_s>t\}$. Let $\tilde{p}(t,x,y)$ (resp. $\check{p}(t,x,y)$) be the heat kernel of $\{\tilde{X}_t\}$ (resp. $\{\check{X}_t\}$). Since $(1\wedge q)t\leq A_t \leq (1\vee q)t$ and $\frac{t}{1\vee q}\leq \tau _t \leq \frac{t}{1\wedge q}$, we have $\tilde{p}(t,x,y)\asymp \check{p}(t,x,y)$.
 Thus $\tilde{p}(t,x,y)$ satisfies the same estimates as Theorem $\ref{GSc1}$.\\
 
We define $$v(x):= \left\{ \begin{split} -x^{(1)} &: x=(x^{(1)}, x^{(2)})\in (\R_+\cup \{0\})\times S^{d-1}_{\varepsilon},&\\
|x|_{\rho} &: x\in \R_{\varepsilon}^d,& \end{split} \right. $$ 
and $\tilde{Y}_t:=v(\tilde{X}_t).$

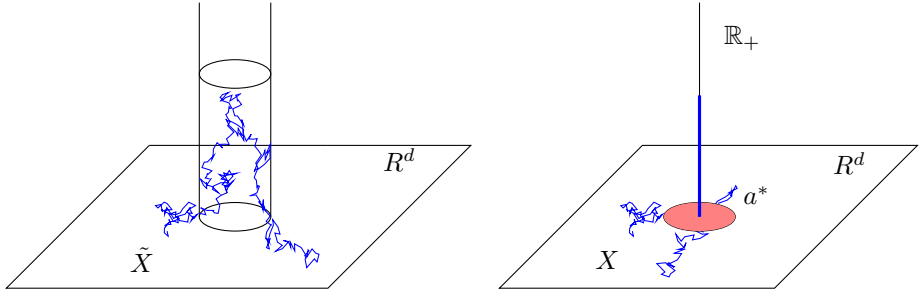
\begin{figure}[h] 
\begin{tikzpicture}[scale=0.95]
\draw (-0.5,5) -- (-2.5,3) -- (2,3) -- (4,5) -- (2,5);
\draw[] (-0.5,5) -- (2,5);

\draw  (2.7,4) ++ (-2,2) circle (0.5cm and 0.2cm);
\draw  (2.7,4) ++ (-2,0) circle (0.5cm and 0.2cm);
\draw (0.2,4)--(0.2,7);
\draw (1.2,4)--(1.2,7);
 \draw (3,4.8)node{$R^d$};
 
\pgfmathsetseed{14348}
\draw[blue] (-0.25,3.8)
\foreach \x in {1,...,52}
{   -- ++(rand*0.1,rand*0.1)
};
\pgfmathsetseed{1}
\draw[blue] (0.2,4.05)
\foreach \x in {1,...,50}
{   -- ++(rand*0.1,-rand*0.1)
};
\pgfmathsetseed{879}
\draw[blue] (0.345,4.53)
\foreach \x in {1,...,55}
{   -- ++(-rand*0.1,-rand*0.1)
};
\pgfmathsetseed{879}
\draw[blue] (0.764,5.72)
\foreach \x in {1,...,45}
{   -- ++(-rand*0.09,rand*0.13)
};

\pgfmathsetseed{1197}
\draw[blue] (0.92,4.3)
\foreach \x in {1,...,16}
{   -- ++(rand*0.11,rand*0.13)
};

\pgfmathsetseed{14787}
\draw[blue] (1.21,3.9)
\foreach \x in {1,...,40}
{   -- ++(-rand*0.11,rand*0.13)
};

\draw (-0.3,3.4)node[left]{$\tilde{X}$};

\pgfmathsetseed{14348}
\draw[blue] (6.26,3.8)
\foreach \x in {1,...,52}
{   -- ++(rand*0.1,rand*0.1)
};

\pgfmathsetseed{14787}
\draw[blue, rotate around={140:(7.6,4.2)}] (7.7,4.3)
\foreach \x in {1,...,13}
{   -- ++(-rand*0.11,rand*0.13)
};
\pgfmathsetseed{14787}
\draw[blue, rotate around={265:(7.63,3.82)}] (7.7,3.45)
\foreach \x in {1,...,40}
{   -- ++(-rand*0.11,rand*0.13)
};

\draw (6.4,5) -- (4.4,3) -- (8.25,3) -- (10.25,5) -- (6.4,5);

\draw  (7.7,4) ++ (-0.5,0) circle (0.5cm and 0.2cm);

\fill[red!50](7.7,4) ++ (-0.5,0) circle (0.5cm and 0.2cm);
\draw (7.2,4)--(7.2,7);
 \draw (9.3,4.75)node{$R^d$};
 \draw (7.8,6.5)node{$\R_+$};
 \draw (8,4.3)node{$a^*$};
\draw[blue,very thick] (7.2,4)--(7.2,5.7); 

\draw (6.21,3.4)node[left]{$X$};

\end{tikzpicture}
\caption{Projection $M$ to $\R_+ \cup \R_{\varepsilon}^d\cup \{a^*\}$, and $\tilde{X}$ and $X$}\label{Fig:proj}
\end{figure}

\begin{Thm}
	\label{proj1}
$\tilde{Y}$ has the same law as $Y$. Here, $Y$ is the signed radial process of $X$ defined in Section \ref{smallsec}.
\end{Thm}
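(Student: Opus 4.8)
The plan is to identify both $\tilde Y=v(\tilde X)$ and $Y=u(X)$ with the \emph{same} one–dimensional diffusion by showing that each of them solves the \emph{same} stochastic differential equation, which by Proposition \ref{dY_t} is
\begin{equation*}
dZ_t=dB_t+\frac{(d-1){\bf 1}_{\{Z_t>0\}}}{2(Z_t+\varepsilon)}dt+\frac{p(d'-1){\bf 1}_{\{Z_t<0\}}}{2(Z_t-\varepsilon')}dt+\frac{|\partial \R_{\varepsilon}^d|-p|\partial \R_{\varepsilon'}^{d'}|}{|\partial \R_{\varepsilon}^d|+p|\partial \R_{\varepsilon'}^{d'}|}d\hat{L}_t^0(Z),
\end{equation*}
specialized to $d'=1$. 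Recall from Section \ref{lsec1} that $q=p/|\partial\R_\varepsilon^d|$ and that on the cylindrical end $\R_+\times S^{d-1}_\varepsilon$ the relevant weight is $q$ while the surface area of the gluing sphere is $|\partial\R_\varepsilon^d|$. Since $\tilde X$ is the $\tilde m_q$–symmetric Brownian motion on $M$, the function $v$ is (locally) in the Dirichlet space, and we can run the Fukushima decomposition for $v(\tilde X)$ exactly as in the proof of Proposition \ref{dY_t}. First I would carry out this decomposition.

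The key steps, in order: (1) Compute $\mathcal{E}^M(v,\psi)$ for $\psi\in C_c^\infty(M)$, where $\mathcal{E}^M$ is the Dirichlet form of $\tilde X$. On the end $\R_+\times S^{d-1}_\varepsilon$ the function $v(x)=-x^{(1)}$ is the signed distance to the gluing sphere; $|\nabla v|=1$ there, $\Delta v=0$, and the only boundary contribution comes from the gluing sphere $\{0\}\times S^{d-1}_\varepsilon$, contributing $q\,|S^{d-1}_\varepsilon|=q|\partial\R_\varepsilon^d|=p$ times the value $\psi(a^*)$ of $\psi$ at the gluing locus (here $|\partial\R^1_\varepsilon|=1$ by the convention in Proposition \ref{dY_t}, so $p|\partial\R^{d'}_{\varepsilon'}|=p$). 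On the $\R^d$ end, $v(x)=|x|_\rho=|x|-\varepsilon$ gives the drift term $-\frac{d-1}{2|x|}$ and the boundary contribution $|\partial\R_\varepsilon^d|\,\psi(a^*)$ with the opposite sign. So the signed measure $\nu$ representing $-\mathcal{E}^M(v,\cdot)$ is exactly the analogue of the one in Proposition \ref{dY_t} with $d'=1$. (2) By \cite[Theorem 5.5.5]{FOT}, read off $dN^{[v]}_t$: the drift $\frac{(d-1){\bf 1}_{\{\tilde Y_t>0\}}}{2(\tilde Y_t+\varepsilon)}dt$ plus the boundary-local-time term $(|\partial\R_\varepsilon^d|-p)\,d\tilde L^0_t$, where $\tilde L^0$ is the PCAF of $\tilde X$ with Revuz measure $\tfrac12$ (surface measure on the gluing sphere). (3) Identify the martingale part: $d\mu_{\langle v\rangle}=|\nabla v|^2\,d\tilde m_q=d\tilde m_q$, hence $\langle M^{[v]}\rangle_t=t$ and $M^{[v]}$ is a one–dimensional Brownian motion, as in the proof of Proposition \ref{dY_t}. (4) Convert the boundary-local-time coefficient into symmetric semimartingale local time of $\tilde Y$ at $0$, using Tanaka's formula for $|\tilde Y|=v(\tilde X)$ restricted appropriately (or rather for the reflected process), exactly mirroring equations \eqref{eq:Tanaka}--\eqref{eq:L^}; this produces the coefficient $\frac{|\partial\R_\varepsilon^d|-p}{|\partial\R_\varepsilon^d|+p}$ in front of $d\hat L^0_t(\tilde Y)$. (5) Conclude that $\tilde Y$ solves the same SDE as $Y$ (with $d'=1$); since this SDE has a pathwise unique (hence unique in law) solution — it is a one–dimensional diffusion with smooth drift away from $0$ and a single sticky-free skew-type condition at $0$, which is exactly the content implicitly used in Section \ref{smallsec} — the laws of $\tilde Y$ and $Y$ coincide.

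The main obstacle I expect is step (4): making the bookkeeping between the \emph{surface-measure} local time $\tilde L^0(\tilde X)$ of the ambient process on $M$ and the \emph{semimartingale} local time $\hat L^0(\tilde Y)$ of the projected one–dimensional process completely rigorous, including getting the normalizing constants right (the factor $|\partial\R_\varepsilon^d|+p$ rather than, say, $1+q$ or $|\partial\R_\varepsilon^d|+p|\partial\R_\varepsilon^d|$). The cleanest route is to repeat verbatim the argument in the proof of Proposition \ref{dY_t}: apply the Fukushima decomposition to $v(\tilde X)$, to $-v(\tilde X)$ (i.e. $|\tilde Y|$, which equals the distance to the gluing sphere on \emph{both} ends) and use Tanaka's formula to match the bounded-variation parts, invoking uniqueness of the semimartingale decomposition. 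A secondary point worth a sentence is the symmetry of the local time: because $v$ is genuinely the signed distance on the $\R^d$ end but on the cylinder it is $-x^{(1)}$, one should check $\tilde Y$ spends zero time at $0$ and that the one-sided local times $L^0(\tilde Y)$, $L^0(-\tilde Y)$ behave as in \eqref{eq:L^}; this is immediate from the occupation-time formula since $\langle\tilde Y\rangle_t=t$. Once the SDE is matched, uniqueness in law — which may be quoted from the well-posedness already implicitly established for $Y$ in Section \ref{smallsec}, or proved directly via a Zvonkin-type transformation removing the smooth drift and a comparison of scale functions and speed measures near $0$ — finishes the proof.
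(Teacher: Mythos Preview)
Your approach is essentially the same as the paper's: apply the Fukushima decomposition to $v(\tilde X)$, compute $\tilde{\mathcal E}(v,\psi)$ via integration by parts to identify the drift and the boundary-local-time term, show the martingale part is a Brownian motion from $|\nabla v|=1$, convert the PCAF into $\hat L^0(\tilde Y)$ by the Tanaka argument of Proposition~\ref{dY_t}, and conclude by uniqueness in law. Two small points to tighten: on $M$ the gluing locus $\{0\}\times S^{d-1}_\varepsilon$ is a genuine sphere (not collapsed to a point), so the boundary term in $\tilde{\mathcal E}(v,\psi)$ is $\tfrac{1-q}{2}\int_{\{0\}\times S^{d-1}_\varepsilon}\psi\,d\sigma$ with $\psi$ varying over the sphere, and the Revuz measure of the relevant PCAF is $\sigma$ itself rather than $\tfrac12\delta_{a^*}$---tracking this gives the coefficient $\tfrac{1-q}{1+q}=\tfrac{|\partial\R_\varepsilon^d|-p}{|\partial\R_\varepsilon^d|+p}$ cleanly; and for the final step the paper invokes \emph{weak} uniqueness from \cite[Theorem 2.1]{AB} rather than pathwise uniqueness, which is the natural citation here since the drift is discontinuous at $0$.
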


\begin{proof}
By Proposition \ref{dY_t}, it holds that 
\begin{eqnarray}dY_t=dB_t+\frac{(d-1){\bf 1}_{\{Y_t>0\}}}{2(Y_t+\varepsilon)}dt+\frac{|\partial \R_{\varepsilon}^d|-p}{|\partial \R_{\varepsilon}^d|+p}d\hat{L}_t^0(Y), \label{eq:dY1d} \end{eqnarray}
 where $B$ is one-dimensional Brownian motion. We will prove $\tilde{Y}$ also satisfies (\ref{eq:dY1d}). Let $(\mathcal{\tilde{E}},\mathcal{\tilde{F}})$ on $L^2(M;d\tilde{m}_q)$ be the Dirichlet form associated with $\tilde{X}$. Then we have $v\in \mathcal{\tilde{F}}^{loc}$ . By the Fukushima decomposition, there exist local martingale additive functional $M^{[v]}$ and continuous additive functional locally having zero energy $N^{[v]}$ such that $\tilde{Y}_t-\tilde{Y}_0=M_t^{[v]}+N_t^{[v]}$, $\mathbb{P}_x$-a.s. for q.e. $x\in M$.
	For any $\psi \in C_c^{\infty}(M)$, it holds that
	\begin{eqnarray*}\mathcal{\tilde{E}}(v,\psi)=\frac{1}{2}\int_{\R_{\varepsilon}^d}\nabla |x|_{\rho}\cdot \nabla{\psi} dx+\frac{q}{2}\int_{(\R_+\cup \{0\})\times S_{\varepsilon}^{d-1}}\nabla (-x^{(1)})\cdot \nabla{\psi} dm^{(1,d-1)}\\
	=-\frac{1}{2}\int_{\R_{\varepsilon}^d}\frac{d-1}{|x|}\psi dx+\frac{1}{2}\int_{\partial \R_{\varepsilon}^d}\psi (x)\frac{\partial |x|_{\rho}}{\partial {\bf n}}\sigma(dx)-\frac{q}{2}\int_{S_{\varepsilon}^{d-1}}\int_0^{\infty} \frac{\partial \psi}{\partial x^{(1)}}dm^{(1,d-1)}\\
=-\int_M \frac{d-1}{|2x|}\psi {\bf 1}_{\R_{\varepsilon}^{d}}dx -\int_M \frac{\psi}{2} {\bf 1}_{\partial \R_{\varepsilon}^d} d\sigma +\int_M \frac{q\psi}{2} {\bf 1}_{\{0\} \times S_{\varepsilon}^{d-1}} d\sigma \ \ 
=\ \ -\int_M \psi d\nu, \end{eqnarray*}
where {\bf n} is the outward normal vector of the surface $\partial \R_{\varepsilon}^d$, $\sigma$ is the surface measure on $\partial \R_{\varepsilon}^d=\{0\}\times S_{\varepsilon}^{d-1}$, and
 $$\nu(dx):=\frac{d-1}{2|x|}{\bf 1}_{\R_{\varepsilon}^d}(x)dx+\frac{1-q}{2}d\sigma.$$
By {\cite[Theorem 5.5.5]{FOT}}, it holds that
\begin{equation} dN_{t}^{[v]}=\frac{(d-1){\bf 1}_{\{\tilde{Y}_t>0 \}}}{2(\tilde{Y}_t+\varepsilon)}dt +\frac{1-q}{2}dL_t \label{eq:hatY}\end{equation}
Here, $L$ is the positive continuous additive functional of $\tilde{X}$ whose Revuz measure is $\sigma$. By the same proof as that of Proposition \ref{dY_t}, it holds that $M^{[v]}$ is one-dimensional Brownian motion $\hat{B}$, and $\hat{L}^0_t(\tilde{Y})=\frac{1+q}{2} dL_t,$ where $\hat{L}^0_t(\tilde{Y})$ is a symmetric semimartingale local time of $\tilde{Y}$ at $0$.
Combining these with (\ref{eq:hatY}) and $q=p/|\partial \R_{\varepsilon}^d|$, $\tilde{Y}$ satisfies (\ref{eq:dY1d}). By {\cite[Theorem 2.1] {AB}}, weak solutions of (\ref{eq:dY1d}) have the same law, so this completes the proof.  \end{proof}

\begin{proof}[Proof of Theorem \ref{d'=1}]\ \\
{\bf Step1} (the case of $x$ or $y\in \R_+$)\ Fix large $T>0$ and $t\geq T$. For $f\in C_c(\R_+\cup \R_{\varepsilon}^d \cup \{a^*\})$ with ${\rm supp}(f)\subset \R_+$, we define $\tilde{f}:M\to \R$ by 

\begin{eqnarray*}\tilde{f}(\tilde{y}):= \left \{ \begin{split}&f(\tilde{y}^{(1)})&& :\tilde{y}=(\tilde{y}^{(1)},\tilde{y}^{(2)})\in \R_+\times S_{\varepsilon}^{d-1} \\
&\ \ \ 0 && :{\rm otherwise}.\end{split} \right. \end{eqnarray*}
For $x\in \R_+\cup \R_{\varepsilon}^d\cup \{a^*\}$ and $x_2\in S_{\varepsilon}^{d-1},$ we define $\tilde{x}\in M$ by 
\begin{eqnarray} \tilde{x}:=\tilde{x}(x_2):= \left \{ \begin{split}&(x,{x}_2)&&: x\in \R_+\cup \{a^*\},\\& \hspace{4mm}x &&: x\in \R_{\varepsilon}^d .\end{split} \right. \label{eq:tildex} \end{eqnarray}
Here, we defined $(a^*,x_2):=(0,x_2).$ Now, we take $x_2, x_2^*\in S_{\varepsilon}^{d-1}$ and define $\tilde{x}(x_2),\tilde{x}(x_2^*)\in M$ as in (\ref{eq:tildex}). Then, since $\tilde{f}$ is independent of $x_2$ and $x_2^*$, it holds that $\mathbb{E}_{\tilde{x}(x_2)}(\tilde{f}(\tilde{X}_t))=\mathbb{E}_{\tilde{x}(x_2^*)}(\tilde{f}(\tilde{X}_t))$, so we simply write $\tilde{x}(x_2)$ as $\tilde{x}$.
	
By Theorem \ref{proj1}, we have $\mathbb{E}_x(f(X_t))=\mathbb{E}_{u(x)}(f(-Y_t))=\mathbb{E}_{v(\tilde{x})}(f(-\tilde{Y}_t))=\mathbb{E}_{\tilde{x}}(\tilde{f}(\tilde{X}_t))$.
 While, we have
	\begin{eqnarray*}\mathbb{E}_{\tilde{x}}(\tilde{f}(\tilde{X}_t))&=&\int_{\R_+\times S_{\varepsilon}^{d-1}}\tilde{f}(\tilde{y})\tilde{p}(t,\tilde{x},\tilde{y})\tilde{m}_q(d\tilde{y})\\
	&\asymp &\int_{\R_+}f(y)\left( \int_{S_{\varepsilon}^{d-1}}\tilde{p}(t,\tilde{x},(y,y_2))dy_2\right)m_p(dy).\end{eqnarray*}
	Thus, for $x\in \R_+\cup \R_{\varepsilon}^d\cup \{a^*\}$ and $y\in \R_+$, we have
	\begin{equation} p(t,x,y)\asymp \int_{S_{\varepsilon}^{d-1}}\tilde{p}(t,\tilde{x},({y},y_2))d{y}_2. \label{eq:proj3b}\end{equation}
	
	We next consider the relation between the distance $d$ on $M$ and $\rho$ on $\R_+\cup \R_{\varepsilon}^d\cup \{a^*\}$.
	\begin{enumerate}
	\item (Figure \ref{1fig}, left) For $x,y\in \R_+$, since $S_{\varepsilon}^{d-1}$ is bounded, there exists a constant $C>0$ with $\rho (x,y)\leq d(\tilde{x},\tilde{y})\leq C+\rho(x,y)$. Hence, for $t\geq T$, it holds that $\e \asymp e^{-{d(\tilde{x},\tilde{y})^2}/{t}}$ and  $|\tilde{x}|_e \asymp 1+d(\tilde{x},K)=|x|=|x|_{\rho}$.
	
	\item (Figure \ref{1fig}, right) For $x\in \R_{\varepsilon}^d\cup \{a^*\}, y\in \R_+$, since $S_{\varepsilon}^{d-1}$ is bounded, there exists a constant $C>0$ with $\rho (x,y)\leq d(x,\tilde{y})\leq C+\rho(x,y)$. Then for $t\geq T$, it holds that $\e \asymp e^{-{d(x,\tilde{y})^2}/{t}}$.
	\end{enumerate}
Thus, for $x$ or $y\in \R_+$, the desired estimates follow from (\ref{eq:proj3b}), Theorem \ref{GSc1} and the boundedness of $S_{\varepsilon}^{d-1}.$ In particular, for all $x\in \R_+$ and $t\geq T$, it holds that $p(t,x,x)\asymp t^{-3/2}$ and by continuity of $p$, we have $p(t,a^*,a^*)\asymp t^{-3/2}$ for $t\geq T$. Combining this with the small time estimates, we have
\begin{equation}
p(t,a^*,a^*)\asymp t^{-1/2}\wedge t^{-3/2} \hspace{5mm} {\rm for}\ t>0.
\label{eq:on3aa}\end{equation}

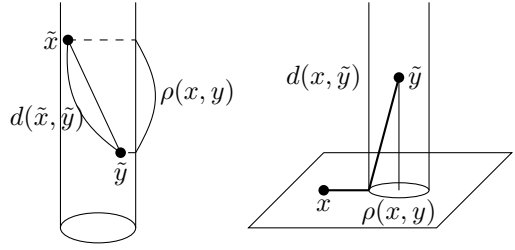
\begin{figure}[h] 
\hspace{20mm}
\begin{tikzpicture}

\draw  (2,4) ++ (-2,0) circle (0.5cm and 0.2cm);
\draw (-0.5,4)--(-0.5,7);
\draw (0.5,4)--(0.5,7);

 \draw (-0.4,6.5)node[left]{$\tilde{x}$};
 \fill (-0.4,6.5)circle (2pt);
 \draw (0.3,5)node[below]{$\tilde{y}$};
 \fill (0.3,5)circle (2pt);
 
 \coordinate (A) at (-0.4,6.5);
\coordinate (B) at (0.3,5);
 \draw [bend right,distance=0.7cm] (A)
to (B);
 \coordinate (c) at (0.5,6.5);
\coordinate (d) at (0.5,5);
\draw (c) 
(d);
\draw [bend left,distance=0.7cm] (c)
to (d);
\draw(0.3,5) -- node[below left]{$d(\tilde{x},\tilde{y})$} (-0.4,6.5);
\draw(0.7,5.8) node[right]{$\rho (x,y)$};
\draw [dashed](A) to (c);
\draw (0.4,5) to (d);

 \draw (3,5) -- (2,4) -- (4.5,4) -- (5.5,5) -- (3,5);

\draw   (4,4.5) circle (0.4cm and 0.1cm);
\draw (3.6,4.5)--(3.6,7);
\draw (4.4,4.5)--(4.4,7);
\draw (4,6)node[right]{$\tilde{y}$};
\fill (4,6)circle (2pt);
\draw (3,4.5)node[below]{$x$};
\fill (3,4.5)circle (2pt);
\draw [thick](3,4.5)--(3.6,4.5);
\draw[thick] (3.6,4.5)--(4,6);
\draw (3,6) node {$d(x,\tilde{y})$};
\draw (4,6)--(4,4.5);
\draw (4,3.9)node[above]{$\rho(x,y)$};
 
\end{tikzpicture}

\caption{the relation between $d$ and $\rho$}
\label{1fig}
\end{figure}

\noindent {\bf Step2} (the case of $x,y\in \R_{\varepsilon}^d\cup \{a^*\}$) Fix large $T\geq 2$ and $t\geq T$.\\ 
(1)For $y\in \R_{\varepsilon}^d$, by (\ref{eq:on3aa}), Lemma \ref{hitting3}, Lemma \ref{elem3} and Lemma \ref{GSh3}, we have
\begin{eqnarray}
\nonumber p(t,a^*,y)=\int_0^t p(t-s,a^*,a^*)\mathbb{P}_y(\sigma_{a^*}\in ds) \hspace{50mm} \\
\nonumber \asymp \int_0^{t/2}+\int_{t/2}^{t-1}(t-s)^{-3/2}\mathbb{P}_y(\sigma_{a^*}\in ds)+\int_{t-1}^{t}(t-s)^{-1/2}\mathbb{P}_y(\sigma_{a^*}\in ds)\hspace{4mm}\\
\nonumber \asymp \frac{\mathbb{P}_y(\sigma_{a^*}\leq t/2)}{t^{3/2}}+\!\!\left(\int_{t/2}^{t-1}\!\!(t-s)^{-3/2}ds+\int_{t-1}^{t}\!\!(t-s)^{-1/2}ds \right)\frac{|y|_{\rho}}{|y|}\frac{\ey}{t^{d/2}}\\
\asymp \frac{1}{t^{3/2}}\frac{\ey}{|y|^{d-2}}+\frac{|y|_{\rho}}{|y|}\frac{\ey}{t^{d/2}} \hspace{66mm} \label{eq:off1ayu}
\end{eqnarray}
If $1\leq |y|_{\rho}$, by $\frac{1}{1+\varepsilon}\leq \frac{|y|_{\rho}}{|y|}\leq 1$ and (\ref{eq:off1ayu}), it holds that 
$$p(t,a^*,y)\asymp \left(\frac{1}{t^{d/2}}+\frac{1}{t^{3/2}|y|^{d-2}}\right)\ey.$$
If $1> |y|_{\rho}$, by $$\frac{1}{t^{3/2}|y|^{d-2}}\ey \geq \frac{T^{(d-3)/2}}{t^{d/2}(1+\varepsilon)^{d-2}}\ey,\ \frac{|y|_{\rho}}{|y|}\leq 1$$ and (\ref{eq:off1ayu}), it holds that 
$$p(t,a^*,y)\asymp \left(\frac{1}{t^{d/2}}+\frac{1}{t^{3/2}|y|^{d-2}}\right)\ey.$$
(2)For $x,y\in \R_{\varepsilon}^d$, with $|x|_{\rho}\wedge |y|_{\rho}>1$ and $t\geq T$, by $(1)$, Lemma \ref{GSh3},  Theorem \ref{small}, Lemma \ref{hitting3}, Proposition \ref{part kernel}, Lemma \ref{rholem} and Lemma \ref{elem3}, we obtain that

\begin{eqnarray*}p(t,x,y)&=&\int_0^{t/2}+\int_{t/2}^{t-1}+\int_{t-1}^{t}p(t-s,a^*,y)\mathbb{P}_{x}(\sigma_{a^*}\in ds)+p_{\R_{\varepsilon}^d}(t,x,y)\\
&\asymp& \left(\frac{\ey}{t^{d/2}}+\frac{\ey}{t^{3/2}|y|^{d-2}}\right) \frac{\ex}{|x|^{d-2}}\\
&&\hspace{10mm}+\int_{t/2}^{t-1}+\int_{t-1}^t p(t-s,a^*,y) \frac{\ex}{t^{d/2}}ds+\frac{1}{t^{d/2}}\ee\\
&\lesssim& \frac{1}{t^{3/2}|x|^{d-2}|y|^{d-2}}\exy+\frac{1}{t^{d/2}}\e\end{eqnarray*}
and\begin{eqnarray*}p(t,x,y)&\geq&\int_0^{t/2}p(t-s,a^*,y)\mathbb{P}_{x}(\sigma_{a^*}\in ds)+p_{\R_{\varepsilon}^d}(t,x,y)\\
&\gtrsim &\frac{1}{t^{3/2}|x|^{d-2}|y|^{d-2}} \exy +\frac{1}{t^{d/2}}\e.\end{eqnarray*}

\noindent (3)For $x,y\in \R_{\varepsilon}^d$ with $|x|_{\rho}\wedge |y|_{\rho}\leq 1$ and $t\geq T$, we may assume $|x|_{\rho}\leq 1$ without loss of generality since $p(t,x,y)$ is symmetric. By $(1)$, $|x|\asymp 1$, Lemma \ref{GSh3},  Theorem \ref{small}, Lemma \ref{hitting3}, Proposition \ref{part kernel}, Lemma \ref{rholem} and Lemma \ref{elem3}, we obtain that
\begin{eqnarray*}p(t,x,y)&=&\int_0^{t/2}+\int_{t/2}^{t-1}+\int_{t-1}^{t}p(t-s,a^*,x)\mathbb{P}_{y}(\sigma_{a^*}\in ds)+p_{\R_{\varepsilon}^d}(t,x,y)\\
&\asymp& \left(\frac{\ex}{t^{d/2}}+\frac{|x|\ex}{t^{3/2}|y|^{d-2}}\right) \ey \\
&&\hspace{0mm}+\int_{t/2}^{t-1}+\int_{t-1}^t p(t-s,a^*,x) \frac{\ey}{t^{d/2}}ds+\frac{|x|_{\rho}(1\wedge |y|_{\rho})}{t^{d/2}}\ee\\
&\lesssim& \left(\frac{1}{t^{d/2}}+\frac{|x|}{t^{3/2}|y|^{d-2}}\right)\e\hspace{7mm}\end{eqnarray*}and if $|y|_{\rho}\leq 1$ or $2\leq |y|_{\rho}$, by Lemma \ref{rholem}, then we have
$$p(t,x,y)\geq \int_0^{t/2}p(t-s,a^*,x)\mathbb{P}_{y}(\sigma_{a^*}\in ds)\asymp \left(\frac{1}{t^{d/2}}+\frac{|x|}{t^{3/2}|y|^{d-2}}\right)\e.$$
Since there exists constant $c>0$ such that it holds that $\mathbb{P}_x(\sigma _{a^*}\leq 1)>c$ for $x$ with $|x|_{\rho}\leq 1$, if $1<|y|_{\rho}<2$, then we have
\begin{eqnarray*}
 p(t,x,y)&\geq & \int_0^1 p(t-s,a^*,y)\mathbb{P}_x(\sigma_{a^*}\in ds)\\
 &\gtrsim &\int_0^1 \left(\frac{1}{t^{d/2}}+\frac{|x|}{t^{3/2}|y|^{d-2}}\right)e^{-|y|_{\rho}^2/t-1}\mathbb{P}_x(\sigma_{a^*}\in ds)\\
 &\gtrsim & \left(\frac{1}{t^{d/2}}+\frac{|x|}{t^{3/2}|y|^{d-2}}\right)e^{-1/T-1} \gtrsim \left(\frac{1}{t^{d/2}}+\frac{|x|}{t^{3/2}|y|^{d-2}}\right)\e.
\end{eqnarray*}
This completes the proof of Theorem \ref{d'=1}. \end{proof}

\begin{Rem}
	\rm{In \cite{GIS}, the heat kernel estimate for Brownian motion on $(\R_+\times S_{\varepsilon}^1)\# \R^2$ is obtained. Therefore, by the same way as in this section, we can obtain the large time estimate on $\R_+\cup \R_{\varepsilon}^2\cup \{a^*\}$. By elementary computations, this estimate is the same as the one appearing in \cite{CL}.}
\end{Rem}

\section{Large time estimate($d'\geq 3$)} \label{lsec3}
In this section, we will prove Theorem \ref{d'=3}. We assume $d\geq d'\geq 3$. Moreover, we may assume $\varepsilon ,\varepsilon' <1$ without loss of generality. Unlike the case $d’=1$ in Section \ref{lsec1}, we cannot project $(\R ^{d'}\times S^{d-d'})\# \R^d$ to get $\R^{d'}_{\varepsilon'}\cup \R^d_{\varepsilon} \cup \{a^*\}$ when $d'\geq 2$. Hence, we will take careful approach. \\
For $x,y\in \R_{\varepsilon}^d$, it holds that $$p(t,x,y)=p_{\R_{\varepsilon}^d}(t,x,y)+\int _0^t \int _0 ^{t-s}p(t-s-u,a^*,a^*)\mathbb{P}_x(\sigma_{a^*} \in du)\mathbb{P}_y(\sigma_{a^*} \in ds).$$
For $x\in \R_{\varepsilon}^d, y\in \R_{\varepsilon'}^{d'}$, it holds that
$$p(t,x,y)=\int _0^t \int _0 ^{t-s}p(t-s-u,a^*,a^*)\mathbb{P}_x(\sigma_{a^*} \in du)\mathbb{P}_y(\sigma_{a^*} \in ds).$$
So, we consider the estimate of $p(t,a^*,a^*)$ in order to prove Theorem \ref{d'=3}.
 
\begin{Prop}\label{on3uu} For $t>0$, we have $$p(t,a^*,a^*)\lesssim \frac{1}{\sqrt{t}}\wedge \frac{1}{t^{{d'}/{2}}}.$$ \end{Prop}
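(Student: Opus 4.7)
My plan is to split the interval at $t = 1$ and extract each half of the minimum from a result already established earlier in the paper. The key observation is that the two bounds $t^{-1/2}$ and $t^{-d'/2}$ in the claimed minimum correspond to two different ``effective dimensions'' at $a^*$: the $1$-dimensional bottleneck that governs short times, and the smaller ambient dimension $d'$ that governs the transient long-time return behavior. Proposition \ref{Nash} already gives $p(t,a^*,a^*) \lesssim t^{-d/2} \vee t^{-d'/2}$, but for small $t$ this is much weaker than $t^{-1/2}$ since $d/2 \geq 3/2$; conversely, Theorem \ref{small} gives a bound that is only useful for small $t$. Neither result alone is sharp on both regimes, but together they cover the entire half-line.

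For the small-time regime $t \in (0,1]$, I would apply Theorem \ref{small}(iii) with $x = y = a^*$, which is admissible since $a^*$ lies in both $\R_\varepsilon^d \cup \{a^*\}$ and $\R_{\varepsilon'}^{d'}\cup \{a^*\}$. Because $\rho(a^*,a^*) = 0$ the Gaussian factor $\e$ equals $1$, and the theorem yields $p(t,a^*,a^*) \asymp t^{-1/2}$. Since $d' \geq 3$ and $t \leq 1$ give $t^{d'/2} \leq t^{1/2}$, equivalently $t^{-1/2} \leq t^{-d'/2}$, the bound $p(t,a^*,a^*) \lesssim t^{-1/2} \wedge t^{-d'/2}$ holds throughout this range.

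For the large-time regime $t \geq 1$, I would appeal directly to Proposition \ref{Nash}, which gives $p(t,a^*,a^*) \lesssim t^{-d/2} \vee t^{-d'/2}$. Under the standing hypothesis $d \geq d' \geq 3$, for $t \geq 1$ one has $t^{-d/2} \leq t^{-d'/2}$, so the Nash-type bound reduces to $p(t,a^*,a^*) \lesssim t^{-d'/2}$; since in this regime $t^{-d'/2} \leq t^{-1/2}$, this matches the claimed minimum. Concatenating the two regimes completes the proof. I do not anticipate any genuine obstacle here: the proposition is a bookkeeping exercise combining the small-time on-diagonal asymptotic at the junction point with the Nash inequality consequence, with the only conceptual point being that taking the minimum (rather than the maximum as in Proposition \ref{Nash}) is legitimate once the two regimes are separated.
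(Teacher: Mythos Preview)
Your proposal is correct and follows exactly the same argument as the paper: split at $t=1$, use Theorem \ref{small}(iii) at $x=y=a^*$ to get $p(t,a^*,a^*)\lesssim t^{-1/2}$ for $t\le 1$, and use Proposition \ref{Nash} (which for $t\ge 1$ reduces to $t^{-d'/2}$ since $d\ge d'$) for $t\ge 1$. The paper's proof is the same two-line observation.
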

\begin{proof}
	 For $t> 1$, it holds that $t^{-d'/2}\leq t^{-1/2}$ and $p(t,a^*,a^*)\lesssim t^{-d'/2}$ by Proposition \ref{Nash}. For $t\leq 1$, it holds that $t^{-1/2}\leq t^{-d'/2}$ and $p(t,a^*,a^*)\lesssim t^{-1/2}$ by the small time estimate (Theorem \ref{small}). Thus for $t>0$, we have $$p(t,a^*,a^*)\lesssim \frac{1}{\sqrt{t}}\wedge \frac{1}{t^{{d'}/{2}}}.$$\end{proof}
	
	\begin{Prop}\label{on3} For $t>0$, \begin{equation}p(t,a^*,a^*)\asymp \frac{1}{\sqrt{t}}\wedge \frac{1}{t^{{d'}/{2}}}.\label{eq:on3}\end{equation} \end{Prop}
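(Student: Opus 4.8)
The upper bound in (\ref{eq:on3}) is exactly Proposition \ref{on3uu}, so the whole task is the matching lower bound $p(t,a^*,a^*)\gtrsim \frac{1}{\sqrt t}\wedge\frac{1}{t^{d'/2}}$. For $t$ in any bounded range this is already contained in the small time estimate: putting $x=y=a^*$ in Theorem \ref{small}(iii) (say with $T=4$) gives $p(t,a^*,a^*)\asymp t^{-1/2}$ for $t\in(0,4]$, and since $d'\ge 3$ one checks directly that $t^{-1/2}\asymp t^{-1/2}\wedge t^{-d'/2}$ on $(0,4]$ (for $t\le 1$ the minimum equals $t^{-1/2}$, and on $[1,4]$ all three quantities are comparable to a constant). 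So it remains to show $p(t,a^*,a^*)\gtrsim t^{-d'/2}$ for all $t\ge 4$, for which $t^{-1/2}\wedge t^{-d'/2}=t^{-d'/2}$.

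The plan for this large-time lower bound is the soft ``enter the $d'$-dimensional end, diffuse there, come back'' argument. Fix once and for all a point $y_0\in\R^{d'}_{\varepsilon'}$ with $|y_0|_\rho=1$ and put $B_0:=B(y_0;1/2)$; then $B_0\subset\R^{d'}_{\varepsilon'}$, every $z\in B_0$ satisfies $|z|_\rho\in(1/2,3/2)$ and $|z-y_0|<1/2$, and $m_p(B_0)$ equals a fixed positive constant. Applying the Chapman--Kolmogorov identity twice,
$$p(t,a^*,a^*)=\iint p(1,a^*,z)\,p(t-2,z,w)\,p(1,w,a^*)\,dm_p(z)\,dm_p(w),$$
I would restrict both integrations to $B_0$ and estimate the three factors separately. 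By Theorem \ref{small}(iii), $p(1,a^*,z)\asymp e^{-\rho(a^*,z)^2}$, which is bounded below by a positive constant for $z\in B_0$ since $\rho(a^*,z)=|z|_\rho<3/2$; likewise $p(1,w,a^*)$ is bounded below for $w\in B_0$. For the middle factor I would use the domain monotonicity $p(t-2,z,w)\ge p_{\R^{d'}_{\varepsilon'}}(t-2,z,w)$ for $z,w\in B_0$ (which holds because $B_0$ lies in the interior of $\R^{d'}_{\varepsilon'}$, and follows from the nonnegativity of $\overline p_{\R^{d'}_{\varepsilon'}}$ in the decomposition $p=p_{\R^{d'}_{\varepsilon'}}+\overline p_{\R^{d'}_{\varepsilon'}}$), combined with Proposition \ref{part kernel} in dimension $d'$: for $t-2\ge1$ the factors $1\wedge\frac{|z|_\rho}{\sqrt{t-2}\wedge1}$ and $1\wedge\frac{|w|_\rho}{\sqrt{t-2}\wedge1}$ are each at least $1/2$, the Gaussian factor is bounded below because $|z-w|<1$ and $t-2\ge1$, and $(t-2)^{-d'/2}\ge t^{-d'/2}$, so $p_{\R^{d'}_{\varepsilon'}}(t-2,z,w)\gtrsim t^{-d'/2}$ uniformly for $z,w\in B_0$ and $t\ge4$. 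Putting the three bounds together gives $p(t,a^*,a^*)\gtrsim m_p(B_0)^2\,t^{-d'/2}\asymp t^{-d'/2}$, and together with Proposition \ref{on3uu} and the small-time case this yields (\ref{eq:on3}) for all $t>0$.

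I do not anticipate a genuine obstacle: the argument only uses Chapman--Kolmogorov, the already-established small time estimate near $a^*$, domain monotonicity, and the known interior (Dirichlet) heat kernel of $\R^{d'}_{\varepsilon'}$. The two points to keep an eye on are purely bookkeeping: the interval $[1,4]$ should be handled in the small-time regime (where $p(t,a^*,a^*)\asymp t^{-1/2}\asymp1\asymp t^{-d'/2}$) so that it overlaps the range $t\ge4$; and one should note that this route genuinely requires the $d'$-dimensional end to have a nondegenerate interior, so that Proposition \ref{part kernel} applies --- this is exactly why the case $d'=1$, outside the scope of this section, was instead treated by the projection method of Section \ref{lsec1}.
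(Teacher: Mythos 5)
Your argument is correct, but it takes a genuinely different route from the paper's. The paper first estimates $p(t,a^*,x)$ from below for $x\in\R_{\varepsilon'}^{d'}$ in the annulus $\sqrt t\le|x|\le2\sqrt t$, by combining the small-time on-diagonal bound $p(t-s,a^*,a^*)\gtrsim(t-s)^{-1/2}$ on $t-1<s<t$ with the Bessel hitting-time density from Lemma~\ref{hitting3}; it then squares and integrates over the annulus via $p(2t,a^*,a^*)\ge\int p(t,a^*,x)^2\,dm_p(x)\gtrsim t^{-d'/2}$. You instead iterate Chapman--Kolmogorov once more, writing $p(t,a^*,a^*)=\iint p(1,a^*,z)\,p(t-2,z,w)\,p(1,w,a^*)\,dm_p(z)\,dm_p(w)$, restricting to a fixed ball $B_0\subset\R_{\varepsilon'}^{d'}$ near $a^*$, bounding the two outer factors below by constants via Theorem~\ref{small}(iii), and bounding the middle factor below by the Dirichlet heat kernel $p_{\R_{\varepsilon'}^{d'}}(t-2,z,w)\gtrsim t^{-d'/2}$ via domain monotonicity and Proposition~\ref{part kernel}. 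The trade-off is that your route avoids Lemma~\ref{hitting3} (the explicit hitting density) entirely and replaces the moving annulus by a fixed ball, at the cost of a three-fold Chapman--Kolmogorov splitting rather than the single symmetric one $p(2t,a^*,a^*)=\int p(t,a^*,x)^2$; both work, and your version is arguably slightly more elementary in the ingredients it uses.
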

	\begin{proof}
	Take $t\geq 2$ and $x\in \R_{\varepsilon'}^{d'}$ with $\sqrt{t}\leq |x| \leq 2\sqrt{t}$. For $s>0$ with $t-1<s<t$, it holds that $t-s<1$, so we can apply Theorem \ref{small} to $p(t-s,a^*,a^*)$. Thus, by Theorem \ref{small} and Lemma \ref{hitting3}, we have
\begin{eqnarray} \nonumber p(t,a^*,x)&=&\int_0^t p(t-s,a^*,a^*)\mathbb{P}(\sigma_{a^*}\in ds)\geq  \int_{t-1}^t p(t-s,a^*,a^*)\mathbb{P}(\sigma_{a^*}\in ds) \\
&\gtrsim& \int_{t-1}^t (t-s)^{-{1}/{2}} \frac{|x|_{\rho}}{|x|}\frac{e^{-{|x|_{\rho}^2}/{s}}}{s^{{d'}/{2}}+s^{{3}/{2}}|x|^{{(d'-3)}/{2}}}ds. \label{eq:on3lll}\end{eqnarray}
Since ${t}/{2}\leq t-1$ and $\sqrt{2}-\varepsilon \leq |x|_{\rho}$, we have
\begin{eqnarray} p(t,a^*,x) \gtrsim \frac{\ex}{t^{{d'}/{2}}+t^{{3}/{2}}|x|^{{(d'-3)}/{2}}}\geq \frac{e^{-{(2\sqrt{t})^2}/{t}}}{t^{{d'}/{2}}+t^{{3}/{2}}(2\sqrt{t})^{{(d'-3)}/{2}}}
\gtrsim \frac{1}{t^{{d'}/{2}}}.\label{eq:on3llll}\end{eqnarray}
By the Markov property and (\ref{eq:on3llll}), we have
\begin{eqnarray} \nonumber p(2t,a^*,a^*)&\geq&\int_{\{x\in\R_{\varepsilon'}^{d'};\sqrt{t}\leq |x|\leq 2\sqrt{t}\}} p(t,a^*,x)^2m_p(dx)\\
\nonumber&\gtrsim& \int_{\{x\in\R_{\varepsilon'}^{d'};\sqrt{t}\leq |x|\leq 2\sqrt{t}\}} t^{-d'}m_p(dx)\\
&=&\int_{\sqrt{t}}^{2\sqrt{t}}r^{d'-1}dr\times pt^{-d'} \asymp \frac{1}{t^{{d'}/{2}}},\label{eq:on3l5}\end{eqnarray}
where we used polar coordinates $r:=|x|$. (\ref{eq:on3l5}) and the small time estimate (Theorem \ref{small}) imply $p(t,a^*,a^*)\gtrsim \frac{1}{\sqrt{t}}\wedge \frac{1}{t^{{d'}/{2}}}$ for $t>0$. Thus (\ref{eq:on3}) follows from it and Proposition \ref{on3uu}.
\end{proof}

We will prove Theorem \ref{d'=3}, by using the on-diagonal estimate at $a^*$ and  hitting probability.

\begin{Prop}\label{off3u}
	Let $d\geq d'\geq3$. Then $p(t,x,y)$ satisfies the following estimates when $1\leq t:$
	\begin{enumerate}
 	\item For $x,y \in \R_{\varepsilon'}^{d'}$, $p(t,x,y)\lesssim {t^{-{d'}/{2}}}\e.$
 	\item For $x,y \in \R_{\varepsilon}^{d}$ with $|x|_{\rho}\vee |y|_{\rho} \leq 1$, $p(t,x,y)\lesssim {t^{-{d'}/{2}}}\e.$\\
 	For $x,y \in \R_{\varepsilon}^{d}$ with $|x|_{\rho}\vee |y|_{\rho} > 1$, $$p(t,x,y)\lesssim \frac{1}{t^{{d'}/{2}}|x|^{d-2}|y|^{d-2}}\exy+\frac{1}{t^{{d}/{2}}}\e.$$
 	\item For $x\in \R_{\varepsilon}^{d}\cup \{a^*\}, y\in \R_{\varepsilon'}^{d'}\cup \{a^*\}$, $$p(t,x,y)\lesssim \left( \frac{1}{t^{{d}/{2}}|y|^{{d'}-2}}+\frac{1}{t^{{d'}/{2}}|x|^{d-2}}\right) \e.$$
	\end{enumerate}
\end{Prop}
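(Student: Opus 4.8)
The plan is to prove the three upper bounds in Proposition \ref{off3u} by combining the on-diagonal estimate $p(t,a^*,a^*)\asymp t^{-1/2}\wedge t^{-d'/2}$ from Proposition \ref{on3}, the hitting-time estimates for $\sigma_{a^*}$ (Lemma \ref{hitting3} and Lemma \ref{GSh3}), the part-process kernel bound (Proposition \ref{part kernel}), and the reduction formulas
\begin{equation*}
p(t,x,y)=p_{\R_{\varepsilon}^d}(t,x,y)+\int_0^t\!\!\int_0^{t-s}p(t-s-u,a^*,a^*)\,\mathbb{P}_x(\sigma_{a^*}\in du)\,\mathbb{P}_y(\sigma_{a^*}\in ds)
\end{equation*}
for $x,y\in\R_{\varepsilon}^d$, and the analogous formula without the $p_{\R_{\varepsilon}^d}$ term when $x$ and $y$ lie in different components. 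First I would handle (iii): one reduces to $x,y\in\R_{\varepsilon}^d\cup\{a^*\}$ and $y'\in\R_{\varepsilon'}^{d'}\cup\{a^*\}$, splits the double integral over $u\in(0,(t-s)/2)$ versus $u\in((t-s)/2,t-s)$ and likewise for $s$, bounds $p(t-s-u,a^*,a^*)\lesssim (t-s-u)^{-1/2}\wedge(t-s-u)^{-d'/2}$, and uses $\int_0^{t/2}\mathbb{P}_x(\sigma_{a^*}\in du)=\mathbb{P}_x(\sigma_{a^*}\le t/2)\asymp|x|^{-(d-2)}e^{-c|x|_\rho^2/t}$ by Lemma \ref{GSh3} (and its $\R_{\varepsilon'}^{d'}$ analogue). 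Assembling the pieces and absorbing the Gaussian factors into $e^{-\rho(x,y)^2/t}$ via Lemma \ref{rholem} yields the claimed bound; the $a^*$ cases follow by continuity.

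For (i) and (ii), the structure is the same but slightly more delicate because the transience makes the time-integral of $\mathbb{P}_y(\sigma_{a^*}\in ds)$ behave differently. For (i), $x,y\in\R_{\varepsilon'}^{d'}$, there is no part-kernel term, so I would again split the double integral: on the region where both $s$ and $u$ are bounded away from $t$ one uses $p(t-s-u,a^*,a^*)\lesssim t^{-d'/2}$ and the total mass bounds $\mathbb{P}_x(\sigma_{a^*}\le t),\mathbb{P}_y(\sigma_{a^*}\le t)\lesssim 1$ together with the Gaussian decay $e^{-c(|x|_\rho^2+|y|_\rho^2)/t}$ coming from Lemma \ref{hitting3}; on the region where $s$ or $u$ is within $O(1)$ of $t$ one uses $p(t-s-u,a^*,a^*)\lesssim(t-s-u)^{-1/2}$, which is integrable against $ds$ (the local time of $\sigma_{a^*}$ near $t$ is harmless), and the short-time Gaussian pieces combine with Lemma \ref{rholem}(ii)-(iii) to give $e^{-c\rho(x,y)^2/t}$. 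For (ii), the same argument gives the $t^{-d'/2}e^{-\rho(x,y)^2/t}$ contribution from the integral term; when $|x|_\rho\vee|y|_\rho>1$ one additionally has the part-kernel term $p_{\R_{\varepsilon}^d}(t,x,y)$, which by Proposition \ref{part kernel} is $\lesssim t^{-d/2}e^{-|x-y|^2/t}\le t^{-d/2}e^{-c\rho(x,y)^2/t}$, and the integral term now carries the factor $|x|^{-(d-2)}|y|^{-(d-2)}e^{-c(|x|_\rho+|y|_\rho)^2/t}$ from the two hitting probabilities via Lemma \ref{GSh3}, matching $\frac{1}{t^{d'/2}|x|^{d-2}|y|^{d-2}}\exy$; when $|x|_\rho\vee|y|_\rho\le 1$ both $|x|$ and $|y|$ are $\asymp 1$ and one absorbs the polynomial factors, while Lemma \ref{rholem}(ii) collapses all the exponentials, and the $t^{-d/2}$ part-kernel term is dominated by $t^{-d'/2}$ since $d\ge d'$.

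The main obstacle I anticipate is the careful bookkeeping of the nested time integral $\int_0^t\int_0^{t-s}p(t-s-u,a^*,a^*)\,dP_x^{\sigma}(u)\,dP_y^{\sigma}(s)$: because $p(t-s-u,a^*,a^*)$ blows up like $(t-s-u)^{-1/2}$ as $u+s\uparrow t$, one must verify that this singularity is integrable against the hitting-time measures and, more importantly, that it does not produce a contribution larger than $t^{-d'/2}$ (for (i), (ii)) or the stated mixed bound (for (iii)). The resolution is to note that near $u+s=t$ one of the two Gaussian factors $e^{-|x|_\rho^2/(t-s)}$ or $e^{-|y|_\rho^2/u}$ (from Lemma \ref{hitting3}) is evaluated at a small time, forcing $|x|_\rho$ or $|y|_\rho$ to be $O(\sqrt{t-s})=O(1)$ on the region contributing at that order, so the short-time regime only matters when both points are within $O(1)$ of $a^*$ — and there the $t^{-1/2}$ singularity is harmless after integration and Lemma \ref{rholem} applies. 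A secondary technical point is keeping the constants in the exponents consistent: since $\asymp$ here allows the exponential constants to change, I would freely pass between $e^{-c_1\rho^2/t}$ and $e^{-c_2\rho^2/t}$, and the triangle-type inequalities in Lemma \ref{rholem} are exactly what is needed to convert $e^{-c(|x|_\rho+|y|_\rho)^2/t}$ and $e^{-c|x-y|^2/t}$ into $e^{-c\rho(x,y)^2/t}$ in each regime.
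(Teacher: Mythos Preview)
Your direct-computation approach is correct in outline and is in fact the alternative the paper itself flags in the Remark immediately following its proof: ``One can prove Proposition \ref{off3u} directly by using the estimates of $p(t,a^*,a^*)$ and $\mathbb{P}_x(\sigma_{a^*}\in ds)$.'' The paper, however, takes a genuinely different route: rather than estimating the double hitting-time integral from scratch, it compares $p(t,x,y)$ with the heat kernel $\tilde{p}(t,\tilde x,\tilde y)$ on the connected sum $(\R^{d'}\times S_{\varepsilon}^{d-d'})\#\R^d$, for which the desired sharp bounds are already known from \cite{GSc}. The comparison works because the hitting-time laws $\mathbb{P}_x(\sigma_{a^*}\in ds)$ and the part kernels on $\R_{\varepsilon}^d$, $\R_{\varepsilon'}^{d'}$ coincide with their counterparts on the manifold (identifying $a^*$ with a suitable compact central part $K$), and because $p(t,a^*,a^*)\lesssim t^{-d/2}\vee t^{-d'/2}$ is dominated by $\tilde p(t,\tilde x,\tilde y)$ for $\tilde x,\tilde y\in K$; one then reassembles the strong-Markov decomposition on the manifold side to conclude $p(t,x,y)\lesssim\tilde p(t,\tilde x,\tilde y)$. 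The paper's approach buys brevity --- the shape of the answer and most of the integral bookkeeping are outsourced to \cite{GSc} --- though it still has to verify by hand (as it does) that the $t-s-w<1$ region of the double integral, where $p(t-s-w,a^*,a^*)\sim(t-s-w)^{-1/2}$ rather than $(t-s-w)^{-d'/2}$, is absorbed by $\tilde p$. Your approach is more self-contained but requires carrying out the full case analysis you describe.

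One small slip: for (i) with $x,y\in\R_{\varepsilon'}^{d'}$ there \emph{is} a part-kernel term $p_{\R_{\varepsilon'}^{d'}}(t,x,y)$ in the decomposition; it is immediately bounded by $t^{-d'/2}e^{-c|x-y|^2/t}\le t^{-d'/2}e^{-c\rho(x,y)^2/t}$ via Proposition \ref{part kernel}, so the omission is harmless but should be corrected.
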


\begin{proof}
	In order to avoid a long calculation, we will prove the estimates by comparing $\R_{\varepsilon}^d \cup \R_{\varepsilon'}^{d'}\cup \{a^*\}$ with a manifold with ends. First, we assume $\varepsilon \leq \varepsilon'$ (See Figure \ref{off3ufig}).
	
	\begin{figure}[h]
\hspace{2mm}
\begin{tikzpicture}
\draw (-3,4)--(-4,3)--(0,3)--(1,4)--(-3,4);
\draw[dashed] (-3,4)--(-3,1);
\draw (-4,3)--(-4,0);
\draw (0,3)--(0,0);
\draw (1,4)--(1,1);
\draw (-3,1)--(-4,0)--(0,0)--(1,1)--(-3,1);

\draw (-1.6,3.5) circle (0.45 and 0.2);
\draw (-1.6,0.5) circle (0.45 and 0.2);

\draw (-2.05,3.5)--(-2.05,0.5);
\draw (-1.15,3.5)--(-1.15,0.5);

\coordinate (A) at (1,4);
\coordinate (B) at (1,1);
\draw (A) 
(B);
\draw [bend left,distance=1.3cm] (A)
to node [fill=white,inner sep=0.01pt,circle] {$S_{\varepsilon}^{d-d'}$} (B);
\draw (0.6,3.8) node{$\R^{d'}$};
\draw (-1.6,3.5)--(-1.15,3.5)node[above]{$\varepsilon'$};

\node[circle,shading=ball, outer color=red!70, inner color=pink, minimum width=6mm][label=below:$S_{\varepsilon}^d$] (ball) at (-1.6,2) {};

\end{tikzpicture}
\tdplotsetmaincoords{65}{45}
\begin{tikzpicture}[tdplot_main_coords]
 \draw[->,>=stealth,very thick] (-3,0,0)--(3,0,0)node[above]{};
 \draw[->,>=stealth,very thick] (0,-2.5,0)--(0,3,0)node[right]{};
 \draw[->,>=stealth,very thick] (0,0,-2)--(0,0,3)node[right]{};
 \draw (1.2,1.0,2.2)--(1.2,1.0,2.8);
 \draw (1.2,1.0,2.2)--(1.6,1.4,2.2);
 \draw (1.5,1.2,2.5)node{$\R^{d}_{\varepsilon}$};
\node[circle,shading=ball, outer color=red!70, inner color=pink, minimum width=6mm][label=left:$S_{\varepsilon}^d$] (ball) at (0,0) {};
\end{tikzpicture}
 
\caption{$(\R^{d'}\times S_{\varepsilon}^{d-d'})\# \R^d$}
\label{off3ufig}
 \end{figure}
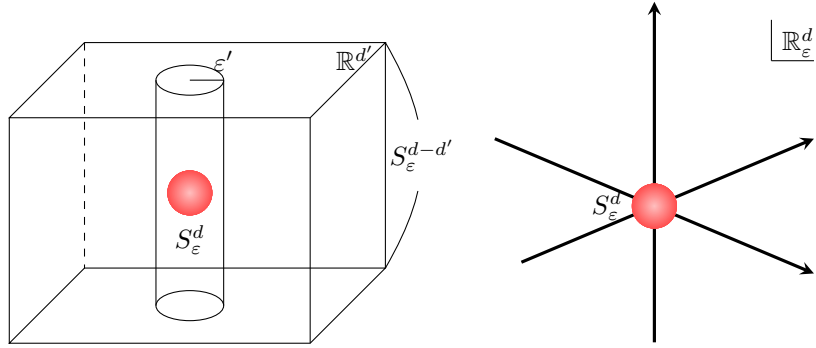
	Let $\tilde{p}(t,x,y)$ be the heat kernel of Brownian motion $\tilde{X}$ on $(\R^{d'}\times S_{\varepsilon}^{d-d'})\# \R^d$, where $S_{\varepsilon}^{d-d'}:=\{x\in \R^{d-d'+1}: |x|=\varepsilon \}$. According to {\cite[Example 4.5 and Example 5.5]{GSc}}, for $t>1$, $\tilde{p}(t,x,y)$ has sharp estimates as the right hands side of  this proposition up to the difference between distances $\rho$ and $d$, where $d$ is a geodesic distance on  $(\R^{d'}\times S_{\varepsilon}^{d-d'})\# \R^d$. Furthermore, let $K:=(\overline{B}^{d'}(0;\varepsilon') \times S_{\varepsilon}^{d-d'}) \cup \overline{B}^{d}(0;\varepsilon)$ then, for $t>1$ and $\tilde{x},\tilde{y}\in K$, it holds that $\tilde{p}(t,\tilde{x},\tilde{y})\asymp t^{-{d'}/{2}}$. Here $\overline{B}^d$ is a closed ball on $\R^d$. By combining this with small time estimates ({\cite[Theorem 5.10]{GSc}}), for $t>0$ and $\tilde{x}, \tilde{y}\in K$, we obtain $\tilde{p}(t,\tilde{x},\tilde{y})\asymp t^{-{d}/{2}}e^{-d(\tilde{x},\tilde{y})/t} \vee t^{-{d'}/{2}}$.

	By proposition \ref{on3}, we have $p(t,a^*,a^*)\asymp t^{-{1}/{2}} \wedge t^{-{d'}/{2}}\leq t^{-{d}/{2}} \vee t^{-{d'}/{2}}$,
	 \begin{eqnarray*}&&\mathbb{P}_{x}(\sigma_{a^*}\in ds)=\tilde{\mathbb{P}}_{x}(\tilde{\sigma}_K\in ds),\ p_{\R_{\varepsilon}^d}(t,x,y)=\tilde{p}_{\R^d\setminus K}(t,x,y)\  {\rm for} \ x,y\in \R^d_{\varepsilon}\  {\rm and}\\
	&& \mathbb{P}_{x}(\sigma_{a^*}\in ds)=\tilde{\mathbb{P}}_{(x,x_2)}(\tilde{\sigma}_K\in ds)\  {\rm for\ } x\in \R^{d'}_{\varepsilon'}, \ x_2\in S_{\varepsilon}^{d-d'}, \end{eqnarray*}
	where $\tilde{\mathbb{P}}$, $\tilde{\sigma}_K$ and $\tilde{p}_{\R^d\setminus K}$ are those for the process $\tilde{X}$. Moreover, for the part process on $(\R^{d'}\times S_{\varepsilon}^{d-d'})\setminus K =\mathbb{R}^{d'}_{\varepsilon'}\times S_{\varepsilon}^{d-d'}$ of $\tilde{X}$,  the projection to $\R^{d'}_{\varepsilon'}$ has the same law as the part process on $\R^{d'}_{\varepsilon'}$ of $X$. Thus, for $x,y\in \R_{\varepsilon'}^{d'},\ x_2\in S_{\varepsilon}^{d-d'}$, by the same reason as the proof of Theorem \ref{d'=1}, and continuity of $\tilde{p}$,
	\begin{eqnarray*}p_{\R_{\varepsilon'}^{d'}}(t,x,y)&=&\int_{S_{\varepsilon}^{d-d'}} \tilde{p}_{(\R^{d'}\times S_{\varepsilon}^{d-d'}) \setminus K}(t,(x,x_2),(y,y_2))dy_2\\
	&\lesssim & \max_{y_0\in S_{\varepsilon}^{d-d'}}\tilde{p}_{(\R^{d'}\times S_{\varepsilon}^{d-d'}) \setminus K}(t,(x,x_2),(y,y_0)).\end{eqnarray*}
	Hence we have
	\begin{eqnarray*}p(t,x,y)=p_{\R_{\varepsilon'}^{d'}}(t,x,y)+\int _0^t\int_0^{t-s} p(t-s-w,a^*,a^*)\mathbb{P}_{x}(\sigma_{a^*}\in dw)\mathbb{P}_{y}(\sigma_{a^*}\in ds)\\
	=p_{\R_{\varepsilon'}^{d'}}(t,x,y)+\tilde{\mathbb{E}}_{\tilde{x}}\tilde{\mathbb{E}}_{\tilde{y}}\int _0^t\int_0^{t-s} p(t-s-w,a^*,a^*)\mathbb{P}_{x}(\sigma_{a^*}\in dw)\mathbb{P}_{y}(\sigma_{a^*}\in ds)\\
	\lesssim \tilde{p}_{(\R^{d'}\times S_{\varepsilon}^{d-d'}) \setminus K}(t,\tilde{x},\tilde{y}) \hspace{80mm}\\
	 +\tilde{\mathbb{E}}_{\tilde{x}}\tilde{\mathbb{E}}_{\tilde{y}}\int _0^t\! \int_0^{t-s} \hspace{-3mm}\tilde{p}(t-s-w,\tilde{X}_s,\tilde{X}_w)\tilde{\mathbb{P}}_{\tilde{x}}(\tilde{\sigma}_K\in dw)\tilde{\mathbb{P}}_{\tilde{y}}(\tilde{\sigma}_K\in ds)\\
	=\tilde{p}(t,\tilde{x},\tilde{y}), \hspace{100mm}  \end{eqnarray*}
	where we denote $\tilde{x}:=(x,x_2),\ \tilde{y}:=(y,y_2)$ for $x,y\in \R_{\varepsilon'}^{d'}$ and $x_2,y_2\in S_{\varepsilon}^{d-d'}$
	with $$\max_{y_0\in S_{\varepsilon}^{d-d'}}\tilde{p}_{(\R^{d'}\times S_{\varepsilon}^{d-d'}) \setminus K}(t,(x,x_2),(y,y_0))=\tilde{p}_{(\R^{d'}\times S_{\varepsilon}^{d-d'}) \setminus K}(t,(x,x_2),(y,y_2)).$$
	In the above inequalities, we used the following estimates in order to treat the effect of $e^{-d(\tilde{x},\tilde{y})/t}$ appearing in the estimate of $\tilde{p}(t,\tilde{x},\tilde{y})$ for $t<1, \tilde{x},\tilde{y}\in K$. For $x,y\in \R^d_{\varepsilon}$, we have
	\begin{eqnarray*} \int_{\{0\leq t-s-w\leq1,\ s\geq w\}}p(t-s-w,a^*,a^*)\mathbb{P}_y(\sigma_{a^*}\in ds)\mathbb{P}_x(\sigma_{a^*}\in dw)\hspace{15mm}\\
	\lesssim \int_0^1\hspace{-2mm}\int_{t-1}^{t-w}\hspace{-3mm} +\!\! \int_0^{t/2}\hspace{-3mm}\int _{(t-w-1)\vee (t/2-1)}^{t-1}\hspace{-4mm}+\int_{(t-1)/2}^{t/2} \int_{w\vee(t-w-1)}^{t/2}\hspace{-14mm}(t-s-w)^{-1/2}ds \frac{\ey}{t^{d/2}}\mathbb{P}_x(\sigma_{a^*}\in dw)\\
	\leq 2\frac{\ey}{t^{d/2}}\mathbb{P}_x(\sigma_{a^*}\leq t)+\int_{(t-1)/2}^{t/2}\left(\frac{t}{2}-w\right)^{-1/2}\frac{\ey}{t^{d/2}}\mathbb{P}_x(\sigma_{a^*}\in dw)\hspace{14mm}\\
	\lesssim \frac{\exy}{|x|^{d-2}t^{d/2}}+\frac{\exy}{t^{d}}\ \lesssim \tilde{p}(t,x,y).\hspace{48mm}\end{eqnarray*}
	Thus, by the symmetry, we have
	$$\int_{\{t-s-w\leq1 \}}p(t-s-w,a^*,a^*)\mathbb{P}_x(\sigma_{a^*}\in dw)\mathbb{P}_y(\sigma_{a^*}\in ds) \lesssim \tilde{p}(t,x,y).$$
	The same inequalities hold for the cases of $x\in \R_{\varepsilon}^d, y\in \R_{\varepsilon'}^{d'}$ and $x,y\in \R_{\varepsilon'}^{d'}$.

	By the compactness of $K$, we can ignore the difference between $\rho$ and $d$ and derive upper estimates similarly as in the proof of Theorem \ref{d'=1}.

	If $\varepsilon >\varepsilon'$, we can prove in the same way as above by exchanging $(\R^{d'}\times S_{\varepsilon}^{d-d'})\# \R^d$ and $K$ to $(\R^{d'}\times S_{\varepsilon'}^{d-d'})\# \R^d$ and $(\overline{B}^{d'}(0;\varepsilon') \times S_{\varepsilon'}^{d-d'}) \cup \overline{B}^{d}(0;\varepsilon)$, respectively.
\end{proof}

\begin{Rem}
{\rm One can prove Proposition \ref{off3u} directly by using the estimates of $p(t,a^*,a^*)$ and $\mathbb{P}_x(\sigma_{a^*}\in ds)$.}
\end{Rem}

\begin{proof}[Proof of Theorem \ref{d'=3}]
The upper estimates is already proved in Proposition \ref{off3u}, so we consider the lower estimates. In this proof, let $T>3$ be large, and $t\in [T,\infty)$.\\
{\bf Step1} (the estimate of $p(t,x,a^*)$) \\
$($1$)$ For $x\in \R_{\varepsilon}^d$ with $|x|_{\rho}\geq1$, by the Markov property, Theorem \ref{small}, (\ref{eq:on3}), Lemma \ref{hitting3}, Lemma \ref{elem3} and Lemma \ref{GSh3}, we have
 \begin{eqnarray*}p(t,x,a^*)\geq \int_0^{{t}/{2}}p(t-s,a^*,a^*)\mathbb{P}_x(\sigma _{a^*}\in ds)+\int_{t-1}^tp(t-s,a^*,a^*)\mathbb{P}(\sigma _{a^*}\in ds)\\
 \asymp t^{-{d'}/{2}}\mathbb{P} \left(\sigma_{a^*}\leq \frac{t}{2}\right)+\int_{t-1}^t (t-s)^{-{1}/{2}}\frac{|x|_{\rho}}{|x|}\frac{\ex}{t^{{d}/{2}}+t^{{3}/{2}}|x|^{{(d-3)}/{2}}}ds \hspace{3mm}\\
 \gtrsim \left( \frac{1}{t^{{d'}/{2}}|x|^{d-2}}+\frac{1}{t^{{d}/{2}}} \right) \ex. \hspace{55mm}
 \end{eqnarray*}
 For $x\in \R_{\varepsilon}^d$ with $|x|_{\rho}<1$, by the Markov property, (\ref{eq:on3}) and Lemma \ref{GSh3}, we have
 \begin{eqnarray*}p(t,x,a^*)\geq \int_0^{{t}/{2}}p(t-s,a^*,a^*)\mathbb{P}_x(\sigma _{a^*}\in ds) \gtrsim \frac{1}{t^{{d'}/{2}}}\ex.\end{eqnarray*}
 $($2$)$ For  $x\in \R_{\varepsilon'}^{d'}$, we can prove in the same way as in the case of $x\in \R_{\varepsilon}^{d}$. Since the estimate of $p(t,a^*,a^*)$ depends only on $d'$, we can derive $p(t,x,a^*)\gtrsim t^{-{d'}/{2}}\ex$ from ($1$) by changing $d$ to $d'$.\\
{\bf Step2} (Theorem \ref{d'=3} $($i$)$ and $($ii$)$) \\
 $($1$)$ For $x,y\in \R_{\varepsilon}^d$, by (\ref{eq:on3}), (\ref{eq:part kernel}), Step1, Lemma \ref{rholem} and Lemma \ref{GSh3}, we have
  \begin{eqnarray} p(t,x,y)&=&p_{\R_{\varepsilon}^d}(t,x,y)+\overline{p}_{\R_{\varepsilon}^d}(t,x,y) \hspace{60mm} \label{eq:off3b}\\
 \nonumber &\geq& p_{\R_{\varepsilon}^d}(t,x,y)+ \int_{0}^{{t}/{2}}p(t-s,a^*,x)\mathbb{P}_y(\sigma_{a^*}\in ds) \hspace{28mm} \\
  \nonumber &\gtrsim& \frac{\left(1\wedge |x|_{\rho}\right)\left(1\wedge |y|_{\rho}\right)}{t^{{d}/{2}}}\ee+p(t,a^*,x)\mathbb{P}_y\left(\sigma_{a^*}\leq \frac{t}{2}\right)\\
   \nonumber &\gtrsim& \frac{\left(1\wedge |x|_{\rho}\right)\left(1\wedge |y|_{\rho}\right)}{t^{{d}/{2}}}\e+ p(t,a^*,x)\frac{\ey}{|y|^{d-2}}.\end{eqnarray}
$($a$)$ If $|x|_{\rho}\vee |y|_{\rho} \leq 1$,  by (\ref{eq:off3b}) and Lemma \ref{rholem}, we have
\begin{equation*}p(t,x,y)\gtrsim 0+ 0+\frac{1}{t^{{d'}/{2}}}\e \gtrsim \frac{1}{t^{{d'}/{2}}}\e. \end{equation*}
$($b$)$ If $|x|_{\rho}>1\geq |y|_{\rho}>\frac{1}{2}$, by (\ref{eq:off3b}), we have
$$p(t,x,y)\gtrsim \frac{1}{t^{{d}/{2}}}\e+ \frac{\exy}{t^{{d'}/{2}}|x|^{d-2}|y|^{d-2}}+0.$$
$($c$)$ If $|x|_{\rho}>1,\ \frac{1}{2}\geq |y|_{\rho}$, by (\ref{eq:off3b}) and Lemma \ref{rholem} $($iii$)$,  we have
\begin{eqnarray*}p(t,x,y)\gtrsim \! \frac{\left(1\wedge |x|_{\rho}\right)\! \left(1\wedge |y|_{\rho}\right)}{t^{{d}/{2}}}\e+ \left( \frac{1}{t^{{d'}/{2}}|x|^{d-2}}\! +\! \frac{1}{t^{{d}/{2}}} \right)\! \frac{\exy}{|y|^{d-2}}\\
\gtrsim 0+ \left(\frac{1}{t^{{d'}/{2}}|x|^{d-2}}+ \frac{1}{t^{{d}/{2}}} \right)\frac{\e}{|y|^{d-2}} \hspace{47mm}\\
\asymp \frac{1}{t^{{d'}/{2}}|x|^{d-2}|y|^{d-2}}\exy+\frac{1}{t^{{d}/{2}}}\e. \hspace{28mm}
\end{eqnarray*}
By the  above estimates $($a$)$-$($c$)$ and using the symmetry of $p(t,x,y)$, we obtain the estimates in Theorem \ref{d'=3} $($ii$)$.\\
$($2$)$ For $x,y\in \R_{\varepsilon'}^{d'}$, we can prove in the same way as in the case of $x,y\in \R_{\varepsilon}^{d}$. Since the estimate of $p(t,a^*,x)$ depends only on $d'$, and we can derive $$p(t,x,y)\gtrsim \frac{\e}{t^{{d'}/{2}}}$$ from ($1$) by changing $d$ to $d'$.\\
{\bf Step3} (Theorem \ref{d'=3}$($iii$)$)\\
For $x\in \R_{\varepsilon}^d,y\in \R_{\varepsilon'}^{d'}$, by Step1, Lemma \ref{hitting3}, Lemma \ref{elem3} and Lemma \ref{GSh3}, we obtain
\begin{eqnarray}\nonumber p(t,x,y)\geq \int_0^{{t}/{2}}+\int_{{t}/{2}}^{t-1}p(t-s,a^*,y)\mathbb{P}_x(\sigma_{a^*}\in ds)\hspace{37mm}\\
\nonumber \gtrsim \! \frac{\ex \mathbb{P}_x \left( \sigma_{a^*}\leq \frac{t}{2}\right)}{t^{{d'}/{2}}}\! +\! \int_{{t}/{2}}^{t-1}\frac{e^{-{|y|^2_{\rho}}/{(t-s)}}ds}{(t-s)^{{d'}/{2}}}\frac{|x|_{\rho}}{|x|} \frac{\ex}{t^{{d}/{2}}+t^{{3}/{2}}|x|^{{(d-3)}/{2}}}\hspace{-7mm} \\
\gtrsim \frac{1}{t^{{d'}/{2}}|x|^{d-2}}\e+ \int_{{t}/{2}}^{t-1}\frac{e^{-{|y|^2_{\rho}}/{(t-s)}}}{(t-s)^{{d'}/{2}}}ds\frac{|x|_{\rho}}{|x|} \frac{\ex}{t^{{d}/{2}}}. \hspace{8mm} \label{eq:off3c}\end{eqnarray}
$($a$)$ If $|x|_{\rho}<1$, by (\ref{eq:off3c}) and $|y|\geq \varepsilon ',$ we have
$$p(t,x,y)\gtrsim \frac{1}{t^{{d'}/{2}}|x|^{d-2}}\e+0 \gtrsim \left(\frac{1}{t^{{d'}/{2}}|x|^{d-2}}+\frac{1}{t^{{d}/{2}}|y|^{d-2}}\right)\e.$$
$($b$)$ If $|x|_{\rho}\geq1,\ |y|_{\rho}\leq 1$, by (\ref{eq:off3c}) and $3<T\leq t$, we have
\begin{eqnarray*}p(t,x,y) &\gtrsim&\frac{1}{t^{{d'}/{2}}|x|^{d-2}}\e+ \int_{{t}/{2}}^{t-1}\frac{e^{-1}}{(t-s)^{{d'}/{2}}}ds\frac{\ex}{t^{{d}/{2}}}\\
&\gtrsim&\frac{1}{t^{{d'}/{2}}|x|^{d-2}}\e+ \left(1-\left(\frac{t}{2} \right)^{1-{d'}/{2}}\right) \frac{\ex}{t^{{d}/{2}}}\\
&\gtrsim&\left(\frac{1}{t^{{d'}/{2}}|x|^{d-2}}+\frac{1}{t^{{d}/{2}}|y|^{d'-2}} \right) \e.
\end{eqnarray*}
$($c$)$ If $|x|_{\rho}\geq1,\ 1<|y|_{\rho}<|y|<{\sqrt{t}}/{2}$, by (\ref{eq:off3c}) and let $\theta:=\frac{|y|^2_{\rho}}{t-s}$, we have
 \begin{eqnarray*}p(t,x,y) &\gtrsim&\frac{1}{t^{{d'}/{2}}|x|^{d-2}}\e+ \int_{{t}/{2}}^{t-1}\frac{e^{-{|y|^2_{\rho}}/{(t-s)}}}{(t-s)^{{d'}/{2}}}ds\frac{|x|_{\rho}}{|x|} \frac{\ex}{t^{{d}/{2}}}\\
& \asymp& \frac{1}{t^{{d'}/{2}}|x|^{d-2}}\e+ \int_{{2|y|_{\rho}^2}/{t}}^{|y|_{\rho}^2}e^{-\theta}\theta^{{d'}/{2}-2}d\theta \frac{\ex}{t^{{d}/{2}}|y|^{d'-2}}\\
&\gtrsim&\frac{1}{t^{{d'}/{2}}|x|^{d-2}}\e+ \int_{{1}/{2}}^{1}e^{-\theta}\theta^{{d'}/{2}-2}d\theta \frac{\ex}{t^{{d}/{2}}|y|^{d'-2}}\\
&\asymp&\left(\frac{1}{t^{{d'}/{2}}|x|^{d-2}}+\frac{1}{t^{{d}/{2}}|y|^{d'-2}} \right) \e.
\end{eqnarray*}
$($d$)$ If $|x|_{\rho}\geq1,\ {\sqrt{t}}/{2}\leq|y|$, by (\ref{eq:off3c}) and ${2t}/{3}<t-1$, we have
\begin{eqnarray*}p(t,x,y) &\gtrsim&\frac{1}{t^{{d'}/{2}}|x|^{d-2}}\e+ \int_{{t}/{2}}^{{2t}/{3}}\frac{e^{-{|y|^2_{\rho}}/{(t-s)}}}{(t-s)^{{d'}/{2}}}ds\frac{1}{t^{{d}/{2}}}\ex\\
&\asymp&\frac{1}{t^{{d'}/{2}}|x|^{d-2}}\e+ \int_{{t}/{2}}^{{2t}/{3}}t^{-{d'}/{2}}ds \frac{1}{t^{{d}/{2}}}\e\\
&\asymp&\frac{1}{t^{{d'}/{2}}|x|^{d-2}}\e+ \frac{1} {t^{{(d+d'-2)}/{2}}}\e\\
&\gtrsim&\left(\frac{1}{t^{{d'}/{2}}|x|^{d-2}}+ \frac{1} {t^{{d}/{2}}|y|^{d'-2}}\right)\e.
\end{eqnarray*}
By $($a$)$-$($d$)$, we obtain the assertion of Theorem \ref{d'=3} $($iii$)$.
\end{proof}

\section{Large time estimate($d'=2$)} \label{lsec2}
In this section, we will prove Theorem \ref{d'=2,2} and Theorem \ref{d'=2,3}. Let $d'=2,\ d\geq 2$ and without loss of generality, we assume $\varepsilon ,\varepsilon' <1$.

For a same reason as in the case of $d'=3$, we consider the estimate of $p(t,a^*,a^*)$. When $d=d'=2$, this is easy. When $d\geq 3, d'=2$, we will obtain the estimate by using Doob's $h$-transform and the relative Faber-Krahn inequality.

\begin{Prop}\label{on2weak}
	Let $d\geq d'=2$. Then, for $t>0$, it holds that
	\begin{equation}t^{-{1}/{2}}\wedge t^{-{d}/{2}} \lesssim p(t,a^*,a^*) \lesssim t^{-{1}/{2}}\wedge t^{-1}. \end{equation}
\end{Prop}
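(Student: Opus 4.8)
The plan is to prove the upper and lower bounds separately. The \emph{upper bound} is immediate: by Proposition~\ref{Nash}, $p(t,a^*,a^*)\lesssim t^{-d/2}\vee t^{-d'/2}=t^{-d/2}\vee t^{-1}$ for all $t>0$, and since $d\geq d'=2$ this equals $t^{-1}$ as soon as $t\geq1$; for $t\in(0,1]$ the small time estimate (Theorem~\ref{small}(iii) with $x=y=a^*$, so that $\rho(x,y)=0$) gives the sharper bound $p(t,a^*,a^*)\asymp t^{-1/2}$. In both ranges $p(t,a^*,a^*)\lesssim t^{-1/2}\wedge t^{-1}$.

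For the \emph{lower bound} I would use the standard principle that an on-diagonal lower bound follows from a volume bound together with an escape-rate (second moment) estimate. By the semigroup property, symmetry of $p$, and the Cauchy--Schwarz inequality, for every $R>0$ and $t>0$,
\[
\mathbb{P}_{a^*}\bigl(X_t\in B(a^*;R)\bigr)^2\;\leq\; m_p\bigl(B(a^*;R)\bigr)\int_{B(a^*;R)}p(t,a^*,y)^2\,dm_p(y)\;\leq\; m_p\bigl(B(a^*;R)\bigr)\,p(2t,a^*,a^*).
\]
An elementary computation as in the proof of Proposition~\ref{VD} gives $m_p(B(a^*;r))\asymp r+r^d$ for all $r>0$ (the shell in $\R_{\varepsilon}^d$, of volume $\asymp r$ for $r\leq1$ and $\asymp r^d$ for $r\geq1$, dominates the one in $\R_{\varepsilon'}^{d'}$ since $d\geq d'$), so $m_p(B(a^*;C\sqrt t))^{-1}\asymp t^{-1/2}\wedge t^{-d/2}$ for any fixed $C>0$. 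Hence, once we know that $\mathbb{P}_{a^*}(X_t\in B(a^*;C\sqrt t))\geq\tfrac12$ for a suitable constant $C$ and all $t>0$, the displayed inequality with $R=C\sqrt t$ gives $p(2t,a^*,a^*)\gtrsim t^{-1/2}\wedge t^{-d/2}$, i.e.\ $p(s,a^*,a^*)\gtrsim s^{-1/2}\wedge s^{-d/2}$ for all $s>0$, which is the desired lower bound.

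It thus remains to produce the escape-rate bound $\mathbb{E}_{a^*}[\rho(X_t,a^*)^2]\lesssim t$, the one step where the structure of the space enters, and I expect it to be the main (though still routine) obstacle. Since $\rho(X_t,a^*)=|X_t|_\rho=|Y_t|$ with $Y_t=u(X_t)$ the signed radial process from Section~\ref{smallsec}, I would apply It\^o's formula to $Y_t^2$ using the SDE of Proposition~\ref{dY_t}: the quadratic variation contributes $dt$; the local time term contributes nothing, since $\hat L^0(Y)$ increases only on $\{Y_t=0\}$, where the factor $2Y_t$ vanishes; and the remaining drift
\[
2Y_t\Bigl(\tfrac{(d-1)\mathbf 1_{\{Y_t>0\}}}{2(Y_t+\varepsilon)}+\tfrac{p(d'-1)\mathbf 1_{\{Y_t<0\}}}{2(Y_t-\varepsilon')}\Bigr)=\tfrac{(d-1)Y_t\,\mathbf 1_{\{Y_t>0\}}}{Y_t+\varepsilon}+\tfrac{p(d'-1)|Y_t|\,\mathbf 1_{\{Y_t<0\}}}{|Y_t|+\varepsilon'}
\]
is bounded by the constant $c_0:=(d-1)+p(d'-1)$. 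Stopping at $\tau_n=\inf\{s:|Y_s|>n\}$, taking expectations (so the It\^o integral is a true martingale), and letting $n\to\infty$ by Fatou gives $\mathbb{E}_{a^*}[\rho(X_t,a^*)^2]=\mathbb{E}_{a^*}[Y_t^2]\leq(c_0+1)t$; Chebyshev's inequality then yields $\mathbb{P}_{a^*}(\rho(X_t,a^*)>C\sqrt t)\leq(c_0+1)/C^2\leq\tfrac12$ for $C=\sqrt{2(c_0+1)}$, as required. Feeding this into the previous paragraph completes the lower bound, and combining with the upper bound proves the proposition; apart from this It\^o estimate, every step is either a direct citation of an earlier result or an elementary computation.
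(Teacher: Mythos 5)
Your proof is correct, and the overall structure coincides with the paper's: Markov property plus Cauchy--Schwarz reduce the on-diagonal lower bound to a volume estimate at scale $\sqrt t$ together with a confinement estimate $\mathbb{P}_{a^*}(\rho(X_t,a^*)\leq C\sqrt t)\geq\tfrac12$, and the upper bound is exactly the Nash-plus-small-time argument. The one genuine difference is in how the confinement estimate is obtained. The paper cites it from the proof of Theorem~5.10 in \cite{CL}, which is stated for $(d,d')=(2,1)$ and leaves the reader to check that the argument carries over to general dimensions. You instead derive $\mathbb{E}_{a^*}[\rho(X_t,a^*)^2]\lesssim t$ directly by applying It\^o's formula to $Y_t^2$ for the signed radial process of Proposition~\ref{dY_t}: the local time term is annihilated by the factor $Y_t$ (since $\hat L^0(Y)$ grows only on $\{Y_t=0\}$), the drift contribution $2Y_t\bigl(\tfrac{(d-1)\mathbf 1_{\{Y_t>0\}}}{2(Y_t+\varepsilon)}+\tfrac{p(d'-1)\mathbf 1_{\{Y_t<0\}}}{2(Y_t-\varepsilon')}\bigr)$ is bounded by a constant, and the quadratic variation contributes $t$; localizing at $\tau_n=\inf\{s:|Y_s|>n\}$ and using Fatou then gives the second-moment bound, and Chebyshev finishes. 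This is a self-contained and arguably cleaner route, and it has the small extra virtue that the resulting volume factor $m_p(B(a^*;C\sqrt t))^{-1}\asymp t^{-1/2}\wedge t^{-d/2}$ already yields the full claimed lower bound for all $t>0$ without needing to patch in Theorem~\ref{small} separately for small time as the paper does.
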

\begin{proof}
The upper estimate follows from Proposition \ref{Nash} and Theorem \ref{small}. By the Markov property and the Cauchy-Shwarz inequality, for large $M>0$, we have
	\begin{eqnarray}\nonumber p\left(t,a^*,a^*\right)&=&\int p\left(\frac{t}{2},a^*,x\right)^2m_p(dx)\geq \int_{\{|x|\leq M\sqrt{t} \}} p\left(\frac{t}{2},a^*,x\right)^2m_p(dx)\\
	\nonumber&\geq&m_p\left( \{|x|\leq M\sqrt{t} \} \right)^{-1}\times \left(\int_{\{|x|\leq M\sqrt{t} \}} p\left(\frac{t}{2},a^*,x\right)m_p(dx)\right)^2\\
	&\gtrsim& \left(\frac{1}{t}\wedge \frac{1}{t^{{d}/{2}}} \right)\mathbb{P}_{a^*}\left(|X_t|\leq M\sqrt{t} \right)^2. \label{eq:on3l}\end{eqnarray}
By the proof of {\cite[Theorem 5.10]{CL}}, there is large $M>0$ such that for all $t>0$, $\mathbb{P}_{a^*}\left(|X_t|\leq M\sqrt{t} \right)\geq \frac{1}{2}$. Thus the right hand side of (\ref{eq:on3l}) is equal to $t^{-1}\wedge t^{-{d}/{2}}$ up to a constant multiple. Therefore, by Theorem \ref{small} again, we have
\begin{equation}p(t,a^*,a^*)\gtrsim \frac{1}{\sqrt{t}}\wedge \left(\frac{1}{t} \wedge \frac{1}{t^{{d}/{2}}}\right)=\frac{1}{\sqrt{t}}\wedge \frac{1}{t^{{d}/{2}}}. \label{eq:on3ll}\end{equation}
\end{proof}

\begin{Cor}\label{on22}
	Let $d=d'=2$. Then, for $t>0$, $p(t,a^*,a^*)\asymp t^{-{1}/{2}}\wedge t^{-1}.$
\end{Cor}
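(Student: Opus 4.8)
The plan is simply to specialize Proposition \ref{on2weak} to the case $d=d'=2$. That proposition already supplies, for every $d\geq d'=2$ and every $t>0$, the two-sided bound
$$t^{-1/2}\wedge t^{-d/2}\lesssim p(t,a^*,a^*)\lesssim t^{-1/2}\wedge t^{-1}.$$
When $d=2$ the exponent $d/2$ equals $1$, so the lower bound becomes $t^{-1/2}\wedge t^{-1}$, which coincides with the upper bound. Hence $p(t,a^*,a^*)\asymp t^{-1/2}\wedge t^{-1}$, as claimed. In other words, the corollary is exactly the statement that the gap between the two sides of Proposition \ref{on2weak} closes when the two constituent parts have equal dimension, so there is no transience/recurrence asymmetry between the ends to separate the exponents.

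There is essentially no obstacle: the only thing worth checking is that the proof of Proposition \ref{on2weak} is genuinely valid at $d=2$ and does not secretly require the $h$-transform / relative Faber--Krahn input announced in the introduction for $d\geq 3$. Inspecting that proof, its lower bound uses only the Markov property, the Cauchy--Schwarz inequality, the volume estimate $m_p(\{|x|\leq M\sqrt t\})\asymp t\vee t^{d/2}$, the uniform lower escape bound $\mathbb{P}_{a^*}(|X_t|\leq M\sqrt t)\geq 1/2$ from \cite[Theorem 5.10]{CL}, and the small-time estimate of Theorem \ref{small}; none of these restrict to $d\geq 3$. So the specialization $d=2$ is legitimate and the corollary follows immediately from Proposition \ref{on2weak}.
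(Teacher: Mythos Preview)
Your proposal is correct and matches the paper's approach exactly: the corollary is stated immediately after Proposition~\ref{on2weak} with no separate proof, precisely because setting $d=2$ collapses the lower bound $t^{-1/2}\wedge t^{-d/2}$ to $t^{-1/2}\wedge t^{-1}$, which coincides with the upper bound. Your additional remark that the proof of Proposition~\ref{on2weak} uses only ingredients valid for $d=2$ is a reasonable sanity check but not strictly needed, since the proposition is already stated for all $d\geq d'=2$.
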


\begin{proof}[Proof of Theorem \ref{d'=2,2}]
	Let $d=d'=2$. We may assume $\varepsilon \geq \varepsilon'$ without loss of generality. $\tilde{p}(t,x,y)$ denotes the heat kernel for Brownian motion $\tilde{X}$ on $\R^2 \# \R^2$. Then, by {\cite[Example 2.12]{GIS}}, $\tilde{p}(t,x,y)$ has the estimates of this theorem as a sharp estimate. In particular, it holds that $$\tilde{p}(t,x,y)\asymp t^{-1}e^{-d(x,y)/t} \ {\rm for}\  t>0\ {\rm and}\  x,y\in K:=\overline{B}^2(0;{\varepsilon}) \cup \overline{B}^2(0;{\varepsilon'}),$$ where $d$ is a geodesic distance on $\R^2\# \R^2$. By Corollary \ref{on22}, we have $p(t,a^*,a^*)\lesssim t^{-1}$. Furthermore, it holds that $\mathbb{P}_{x}(\sigma_{a^*}\in ds)=\mathbb{\tilde{P}}_{x}(\tilde{\sigma}_{K}\in ds)$ and heat kernels of part processes of $X$ and $\tilde{X}$ on $\R_{\varepsilon}^2$ are equivalent, where $\mathbb{\tilde{P}}$ and $\tilde{\sigma}_K$ are those for $\tilde{X}$. Thus, by the same way as the proof of Proposition \ref{off3u}, it holds that $p(t,x,y)\lesssim \tilde{p}(t,x,y)$ for $x,y \in \R_{\varepsilon}^2 \cup \R_{\varepsilon'}^2\cup \{a^*\}$, so the upper estimates are proved.

	Next, we prove the lower estimates. Let $T>0$ be large and $t\in [T, \infty)$.\\
	$($1$)$ $($a$)$ For $x\in \R_{\varepsilon}^2$ with $|x|\leq 1$, by Corollary \ref{on22}, we have
	\begin{equation*}p(t,x,a^*)\geq \int_0 ^{{t}/{2}}p(t-s,a^*,a^*)\mathbb{P}_{x}(\sigma_{a^*}\in ds)\gtrsim \frac{1}{t}\mathbb{P}_{x}(\sigma_{a^*}\leq 1)\asymp \frac{\ex}{t}.\end{equation*}
	$($b$)$ For $x\in \R_{\varepsilon}^2$ with $1<|x|\leq {\sqrt{t}}/{2}$, by Lemma \ref{hitting2}, Lemma \ref{GSh2}, and Corollary \ref{on22},  we have
\begin{eqnarray*}	p(t,x,a^*) \geq \int_{0}^{{t}/{2}}p(t-s,a^*,a^*)\mathbb{P}_{x}(\sigma_{a^*}\in ds)\!+\! \int_{{t}/{2}}^{t-1}p(t-s,a^*,a^*)\mathbb{P}_{x}(\sigma_{a^*}\in ds)\\
	\asymp \frac{1}{t}\left( 1-\frac{\log{|x|}}{\log{\sqrt{{t}/{2}}}} \right)+\int_{{t}/{2}}^{t-1}\frac{1}{t-s}ds\ \mathbb{P}_{x}(\sigma_{a^*}\in dt) \hspace{29mm}\\
 \gtrsim \frac{1}{t}\left( 1-\frac{\log{|x|}}{\log{\sqrt{{t}/{2}}}} \right)\ex+\frac{\log{|x|}}{t\log{t}}\ex \hspace{37mm}\\
\gtrsim \frac{1}{t}\ex. \hspace{88mm} \end{eqnarray*}
	$($c$)$ For $x\in \R_{\varepsilon}^2$ with ${\sqrt{t}}/{2}<|x|$, by Lemma \ref{hitting2}, and Corollary \ref{on22},   we have
	\begin{eqnarray*} p(t,x,a^*)\geq \int_{{t}/{2}}^{t-1}p(t-s,a^*,a^*)\mathbb{P}_{x}(\sigma_{a^*}\in ds)\asymp \int_{{t}/{2}}^{t-1}(t-s)^{-1}ds\mathbb{P}_{x}(\sigma_{a^*}\in dt)\\
	\gtrsim \log{t} \frac{1+\log{|x|}}{\left(1+\log{(1+t)} \right)\left(1+\log{(|x|^2+|x|)} \right)}\frac{\ex}{t}\asymp \frac{1}{t}\ex.\hspace{6.5mm}
	\end{eqnarray*}
$($2$)$ For $x,y\in \R_{\varepsilon}^2$, by (\ref{eq:part kernel}), Lemma \ref{rholem} and Proposition \ref{on22}, it holds that
 \begin{eqnarray}\nonumber p(t,x,y)&\geq& p_{\R_{\varepsilon}^2}(t,x,y)+\int_{0}^{{t}/{2}}p(t-s,a^*,y)\mathbb{P}_x(\sigma_{a^*}\in ds)\\
 &\asymp& \frac{(1\wedge |x|_{\rho})(1\wedge |y|_{\rho})}{t}\e+\frac{1}{t}\mathbb{P}_x (\sigma_{a^*}\leq t/2). \label{eq:off22a}\end{eqnarray}
$($a$)$ If $\frac{1}{2}\leq |x|_{\rho},\ 1\leq |y|_{\rho}$, by (\ref{eq:off22a}), we have
$$p(t,x,y)\gtrsim \frac{1}{t}\e+0 =\frac{1}{t}\e.$$
$($b$)$ If $|x|_{\rho}<\frac{1}{2},\ 1\leq |y|_{\rho}$, by (\ref{eq:off22a}) and Lemma \ref{rholem} (iii), we have
$$p(t,x,y)\gtrsim 0+\frac{1}{t}\mathbb{P}_x(\sigma_{a^*}\leq 1) \asymp \frac{1}{t}\exy\asymp \frac{1}{t}\e.$$
$($c$)$ If $|x|_{\rho}\leq 1,\ |y|_{\rho}\leq 1$, by (\ref{eq:off22a}) and Lemma \ref{rholem} (ii), we have
$$p(t,x,y)\gtrsim 0+\frac{1}{t}\mathbb{P}_x(\sigma_{a^*}\leq 1) \asymp \frac{1}{t}\exy\asymp \frac{1}{t}\e.$$
$($3$)$ For $x\in \R_{\varepsilon}^2, y\in \R_{\varepsilon'}^2$, it holds that \begin{equation}p(t,x,y)\geq \int_{0}^{{t}/{2}}p(t-s,a^*,x)\mathbb{P}_y(\sigma_{a^*}\in ds). \label{eq:off22b}\end{equation}
$($a$)$ If $|x|\vee |y|\leq \sqrt{t}/{2}$, by (\ref{eq:off22b}) and Lemma \ref{GSh2}, we have
$$p(t,x,y) \gtrsim \frac{1}{t}\left(1-\frac{\log{|y|}}{\log{\sqrt{t/2}}} \right)\asymp \frac{\frac{1}{2}\log{\frac{t}{2}}-\log{|y|}}{t\log{t}}.$$
By the symmetry, we have 
\begin{eqnarray*}p(t,x,y)&\gtrsim& \frac{\log{\frac{t}{2}}-\log{|x|}-\log{|y|}}{t\log{t}}\\
&\gtrsim&\frac{U_t(x)}{t}\left(U_t(y)+\frac{\log{|y|}}{\log{(t|y|)}} \right)+\frac{U_t(y)}{t}\left(U_t(x)+\frac{\log{|x|}}{\log{(t|x|)}} \right)\\
&\asymp&\frac{\e}{t}\left( U_t(x)U_t(y)+\frac{U_t(x)\log{|y|}}{\log{(1+t|y|)}}+\frac{U_t(y)\log{|x|}}{\log{(1+t|x|)}} \right). \end{eqnarray*}
$($b$)$ If $|x|\wedge |y|\geq {\sqrt{t}}/{2}$, by (\ref{eq:off22b}) and Lemma \ref{GSh2}, we have
$$p(t,x,y) \gtrsim \frac{1}{t}\left(1-\frac{\log{|y|}}{\log{\sqrt{t/2}}} \right)\asymp \frac{1}{t\log{|y|}}\e.$$
By the symmetry, we have
\begin{eqnarray*}p(t,x,y)&\gtrsim&\frac{1}{t}\left(\frac{1}{\log{|x|}}+ \frac{1}{\log{|y|}}\right)\e \\
&\asymp&\frac{\e}{t}\left( U_t(x)U_t(y)+\frac{U_t(x)\log{|y|}}{\log{(1+t|y|)}}+\frac{U_t(y)\log{|x|}}{\log{(1+t|x|)}} \right).\end{eqnarray*}
$($c$)$ If $|y|\leq {\sqrt{t}}/{2}\leq |x|$, by (\ref{eq:off22b}) and Lemma \ref{GSh2}, we have
$$p(t,x,y) \gtrsim \frac{1}{t}\left(1-\frac{\log{|y|}}{\log{\sqrt{t/2}}} \right)\e.$$
Moreover, it holds that $$p(t,x,y)\geq \int_{{t}/{2}}^{{2t}/{3}}p(t-s,a^*,y)\mathbb{P}_x(\sigma_{a^*}\in ds)
\gtrsim \frac{1}{t\log{t}}\e. $$
Thus we obtain \begin{eqnarray*} p(t,x,y)&\gtrsim &\frac{1+\log{\left(\frac{1}{|y|}\sqrt{\frac{t}{2}} \right)}}{t\log{t}}\e \\
&\gtrsim&\frac{\e}{t}\left( U_t(x)U_t(y)+\frac{U_t(x)\log{|y|}}{\log{(1+t|y|)}}+\frac{U_t(y)\log{|x|}}{\log{(1+t|x|)}} \right). \end{eqnarray*}
By the symmetry, the all cases have been proved.
\end{proof}

Next, we prove Theorem \ref{d'=2,3}.
\begin{Prop}\label{on23l} Let $d\geq 3,\  d'=2$. Then, for $t>0$, we have
$$p(t,a^*,a^*)\gtrsim \frac{1}{\sqrt{t}}\wedge \frac{1}{(t+1)\left(\log{(t+1)}\right)^2}.$$
\end{Prop}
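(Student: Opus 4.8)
The statement splits naturally into the ranges $t\le 1$, $1\le t\le T_{0}$ and $t\ge T_{0}$ for a suitably large constant $T_{0}$. For $t\le 1$ one checks that $(1+t)(\log(1+t))^{2}\le t^{1/2}$, so the claimed bound reduces to $p(t,a^{*},a^{*})\gtrsim t^{-1/2}$, which is exactly the small-time estimate of Theorem \ref{small}; for $t$ in the compact interval $[1,T_{0}]$ both sides of the asserted inequality are bounded between two positive constants (continuity and strict positivity of $p$, using that $a^{*}$ is nonpolar). So the whole content is the large-time bound $p(t,a^{*},a^{*})\gtrsim t^{-1}(\log t)^{-2}$ for $t\ge T_{0}$, and this is where I would work.

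For large $t$ I would run the Markov--Cauchy--Schwarz argument of Proposition \ref{on2weak}, but with a much smaller test set. Writing $p(t,a^{*},a^{*})=\int p(t/2,a^{*},z)^{2}\,m_{p}(dz)\ge m_{p}(A)^{-1}\mathbb{P}_{a^{*}}(X_{t/2}\in A)^{2}$ for any Borel $A$, and choosing $A=A_{t}:=\{z\in\R_{\varepsilon'}^{2}:\tfrac12\sqrt t\le|z|\le\sqrt t\}$, we have $m_{p}(A_{t})\asymp t$, so it suffices to show
\[
\mathbb{P}_{a^{*}}\big(X_{t/2}\in A_{t}\big)\ \gtrsim\ \frac{1}{\log t}\qquad(t\ \text{large}).
\]
(The ball of radius $M\sqrt t$ used in Proposition \ref{on2weak} has $m_{p}$-measure $\asymp t^{d/2}$ and only yields the weaker lower bound $t^{-d/2}$; the point is to localise on the two-dimensional piece, where the relevant annulus has measure $\asymp t$, at the cost of a probability that is of order $1/\log t$ rather than of order one.)

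To prove the displayed bound, note first that by Theorem \ref{small}(iii) the density $p(1,a^{*},\cdot)$ is bounded below on a fixed ball $B(x_{1};\tfrac12)$ with $x_{1}\in\R_{\varepsilon'}^{2}$ and $|x_{1}|_{\rho}=1$, so $\mathbb{P}_{a^{*}}(X_{1}\in B(x_{1};\tfrac12))\ge c_{0}>0$; by the Markov property at time $1$ it is then enough to bound $\mathbb{P}_{z}(\sigma_{a^{*}}>t/2-1,\ X_{t/2-1}\in A_{t})$ below by $c/\log t$ uniformly in $z\in B(x_{1};\tfrac12)$. On $\{\sigma_{a^{*}}>s\}$ the process started in $\R_{\varepsilon'}^{2}$ agrees in law with planar Brownian motion $W$ killed on $\{|x|=\varepsilon'\}$ (Definition \ref{BMVD_def}(i)), so the task becomes: for $|z|\asymp1$ and $s\asymp t$,
\[
\mathbb{P}_{z}\big(T_{\varepsilon'}>s,\ W_{s}\in A_{t}\big)\ \gtrsim\ \frac{1}{\log t},
\]
where $T_{\varepsilon'}$ is the hitting time of $\{|x|=\varepsilon'\}$ by $W$. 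I would obtain this by the strong Markov property at $\tau:=\inf\{u:|W_{u}|=\tfrac14\sqrt t\}$: on $\{\tau<T_{\varepsilon'}\}\cap\{\tau<\tfrac14 t\}$ we ask $W$ subsequently to avoid $\{|x|=\varepsilon'\}$ on $[\tau,s]$ and to end in $A_{t}$. The gambler's ruin identity for the martingale $\log|W|$ gives $\mathbb{P}_{z}(\tau<T_{\varepsilon'})=\log(|z|/\varepsilon')\,/\,\log(\sqrt t/(4\varepsilon'))\asymp1/\log t$, and, conditionally on this, $W$ reaches radius $\tfrac14\sqrt t$ within time $\tfrac14 t$ with probability bounded below by Brownian scaling; for the continuation, from a point at distance $\tfrac14\sqrt t$ from $0$, Gaussian concentration keeps $|W_{s}|\in[\tfrac12\sqrt t,\sqrt t]$ with probability bounded below, while hitting the small circle $\{|x|=\varepsilon'\}$ before time $s\asymp t$ from distance $\asymp\sqrt t$ has probability $O(1/\log t)$, so the continuation probability is $\ge c_{1}>0$ for $t$ large. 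Multiplying gives the claim. Equivalently --- and this is the route matching the method announced for Section \ref{lsec2} --- one may Doob transform $W$ killed at $\{|x|=\varepsilon'\}$ by the positive harmonic function $h(x)=\log(|x|/\varepsilon')$; the transformed process is symmetric with respect to $h^{2}\,dm^{(2)}$, the $h^{2}$-volume of the centred ball of radius $\sqrt s$ is $\asymp s(\log s)^{2}$, and the relative Faber--Krahn inequality then yields two-sided Gaussian bounds $p^{h}_{s}(z,y)\asymp (s(\log s)^{2})^{-1}$ for $|y|\lesssim\sqrt s$, whence $\mathbb{P}_{z}(T_{\varepsilon'}>s,W_{s}\in A_{t})=h(z)\int_{A_{t}}h(y)\,p^{h}_{s}(z,y)\,dy\asymp 1\cdot(\log t)\cdot (t(\log t)^{2})^{-1}\cdot m_{p}(A_{t})\asymp 1/\log t$.

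Assembling the pieces, for large $t$ we get $p(t,a^{*},a^{*})\gtrsim m_{p}(A_{t})^{-1}(\log t)^{-2}\asymp t^{-1}(\log t)^{-2}$, and together with the two smaller-$t$ ranges this gives the assertion for all $t>0$. The main obstacle is the position-constrained avoidance estimate $\mathbb{P}_{z}(T_{\varepsilon'}>s,W_{s}\in A_{t})\gtrsim1/\log t$: the bare avoidance probability $\mathbb{P}_{z}(\sigma_{a^{*}}>s)\asymp1/\log t$ is immediate from Lemma \ref{GSh2}, but showing that on this logarithmically rare event $W$ is nonetheless at the diffusive scale $\sqrt t$ with non-negligible probability is what forces either the strong-Markov decomposition above or, more systematically, the $h$-transform together with the relative Faber--Krahn inequality.
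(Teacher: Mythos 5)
Your overall strategy is correct and equivalent to the paper's: both reduce the problem, via Cauchy--Schwarz and the semigroup identity, to estimating the amount of mass that BMVD started from $a^*$ puts in the $\sqrt t$-scale annulus of the two-dimensional piece, and the large-time bound $t^{-1}(\log t)^{-2}$ is obtained because the annulus has $m_p$-measure $\asymp t$ and carries probability $\asymp 1/\log t$. The difference is in how that probability is established, and it is not merely cosmetic.

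The paper works on the density: for $x\in \R_{\varepsilon'}^{2}$ with $|x|\asymp\sqrt t$ it lower-bounds $p(t,a^*,x)$ through the first-hitting decomposition $p(t,a^*,x)=\int_0^t p(t-s,a^*,a^*)\,\mathbb{P}_x(\sigma_{a^*}\in ds)$, restricting to $s\in[t-1,t]$, where the small-time on-diagonal bound $p(t-s,a^*,a^*)\gtrsim(t-s)^{-1/2}$ (Theorem \ref{small}) and the two-sided hitting-time density estimate of Lemma \ref{hitting2} are directly available; this immediately gives $p(t,a^*,x)\gtrsim\frac{1}{t\log t}$, squaring and integrating over the annulus finishes. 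You instead set out to prove $\mathbb{P}_{a^*}(X_{t/2}\in A_t)\gtrsim 1/\log t$ by a path argument. Note this is, by Fubini, exactly $\int_{A_t}p(t/2,a^*,x)\,m_p(dx)$, so both proofs ultimately need the same bound $p(t/2,a^*,x)\gtrsim \frac{1}{t\log t}$ on $A_t$; the paper's use of Lemma \ref{hitting2} is the economical way to obtain it given the machinery already in place.

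Two points in your argument need more care. First, in the strong-Markov route, the claim that ``conditionally on $\{\tau<T_{\varepsilon'}\}$, $W$ reaches radius $\tfrac14\sqrt t$ within time $\tfrac14 t$ with probability bounded below by scaling'' is not immediate: the conditioning changes the law of $\tau$, and a bare scaling argument no longer applies. One way to repair this is to compute (from the Green's function of the annulus $\{\varepsilon'<|x|<R\}$) that $\mathbb{E}_z[\tau;\,\tau<T_{\varepsilon'}]\asymp R^{2}/\log R$ and apply Chebyshev on the event $\{\tau<T_{\varepsilon'}\}$, tuning the inner radius $R=c\sqrt t$ so that the error term $\mathbb{E}_z[\tau;\tau<T_{\varepsilon'}]/(t/4)$ is a small multiple of $\mathbb{P}_z(\tau<T_{\varepsilon'})\asymp 1/\log t$; but this is a genuine extra step, not ``Brownian scaling''. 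Second, for the $h$-transform route you invoke the relative Faber--Krahn inequality to produce a two-sided Gaussian bound $p^h_s(z,y)\asymp(s(\log s)^2)^{-1}$. Faber--Krahn by itself gives only the diagonal \emph{upper} bound; the matching lower bound you use in the final display needs volume doubling together with a Poincar\'e inequality for the weighted measure $h^2\,dm^{(2)}$ on $\R^2_{\varepsilon'}$, or a direct appeal to the two-sided estimates in \cite{GSe}. (Indeed, in the paper the Faber--Krahn machinery is deployed only for the \emph{upper} bound in Proposition \ref{on23}.) With either of these repairs your argument goes through, but as written each route has a gap precisely at the point where the paper's Lemma \ref{hitting2} makes the estimate painless.
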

\begin{proof}
For $t>3$ and $x\in \R_{\varepsilon'}^2$ with $\sqrt{t}\leq |x|\leq 2\sqrt{t}$, by Theorem \ref{small}, Lemma \ref{hitting2} and Proposition \ref{on2weak}, we have
\begin{eqnarray*} p(t,a^*,x)&\geq &\int_{t-1}^tp(t-s,a^*,a^*)\mathbb{P}_x(\sigma_{a^*}\in ds)\\
	&\gtrsim&\int_{t-1}^t(t-s)^{-{1}/{2}}ds\ \frac{e^{-|x|_{\rho}^2/t}}{t}\frac{1+\log{|x|}}{\left( 1\!+\log{(1+t/|x|)} \right)\! \left( 1+\log{(t+|x|)} \right)}\\
	&\gtrsim& \frac{1}{t}\frac{1+\log{t}}{\left( 1+\log{(1+\sqrt{t})} \right)\left( 1+\log{(t+\sqrt{2t})} \right)}\\
	&\gtrsim& \frac{1}{(t+1)\log{(t+1)}}.\end{eqnarray*}
Thus, by the Markov property, we have
\begin{eqnarray*}
	p(2t,a^*,a^*)&\geq& \int_{\{x\in \R_{\varepsilon'}^2; \sqrt{t}\leq |x|\leq 2\sqrt{t}\}} p(t,a^*,x)^2m_p(dx)\\
	&\gtrsim&\int_{\{x\in \R_{\varepsilon'}^2; \sqrt{t}\leq |x|\leq 2\sqrt{t}\}}\frac{1}{(t+1)^2\left(\log{(t+1)}\right)^2}m_p(dx)\\
	&\asymp& \int_{\sqrt{t}}^{\sqrt{2t}}\frac{r}{(t+1)^2\left(\log{(t+1)}\right)^2}dr
	\asymp \frac{1}{(t+1)\left(\log{(t+1)}\right)^2},
\end{eqnarray*} where we used polar coordinates.
Combining this with Theorem \ref{small}, the desired estimate is proved.
\end{proof}

Next, we prove that the estimate in Proposition \ref{on23l} is sharp by using Doob's $h$-transform. First, we construct a harmonic function, which is comparable to $1$ on $\R^d_{\varepsilon}$ and $1+\log{|x|_{\rho}}$ on $\R_{\varepsilon'}^2$. In the following proposition, we use some ideas from {\cite[Theorem 2.6]{STW}}.
\begin{Prop}\label{harmonic} Let $d\geq 3$. Then, there exists a positive harmonic function $h$ on $\R_{\varepsilon}^d\cup \R_{\varepsilon'}^2\cup \{a^*\}$ such that $h \asymp 1$ on $\R_{\varepsilon}^d$ and $h(x)\asymp 1+\log{|x|_{\rho}}$ for $x\in \R_{\varepsilon'}^2$ with $|x|_{\rho}$ large enough.
\end{Prop}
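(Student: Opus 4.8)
The plan is to write $h$ down essentially explicitly, as a rotationally invariant function on each of the two parts, and to glue the two radial profiles together at $a^*$ by imposing (i) continuity at $a^*$ and (ii) a flux-balance (Kirchhoff-type) condition that forces $\mathcal{E}(h,\psi)=0$ for every test function $\psi$; this is in the spirit of the construction in {\cite[Theorem 2.6]{STW}}, and such an $h$ is exactly what one needs to run Doob's $h$-transform in the next step and thereby upgrade Proposition \ref{on23l} to a matching two-sided estimate of $p(t,a^*,a^*)$. Since $d\geq3$, the only rotationally invariant harmonic functions on $\R^d\setminus\{0\}$ (resp. $\R^2\setminus\{0\}$) are affine combinations of $1$ and $|x|^{2-d}$ (resp. of $1$ and $\log|x|$), so the natural ansatz is
$$h(x):=\begin{cases} C+D\,|x|^{2-d}, & x\in\R_{\varepsilon}^d,\\ A+\log|x|, & x\in\R_{\varepsilon'}^2, \end{cases}$$
with real constants $A,C$ and $D>0$ still to be chosen; on the interior of each part $h$ is then automatically $\Delta$-harmonic, and it remains only to arrange the correct behaviour at $a^*$.

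First I would impose continuity at $a^*$: sending $|x|\to\varepsilon$ within $\R_{\varepsilon}^d$ and $|x|\to\varepsilon'$ within $\R_{\varepsilon'}^2$, this is the single equation $C+D\varepsilon^{2-d}=A+\log\varepsilon'$, and I set $h(a^*)$ equal to this common value; together with $h|_{\R_{\varepsilon}^d}$, $h|_{\R_{\varepsilon'}^2}$ being locally $H^1$, this places $h\in\mathcal{F}^{loc}$ with the gluing constraint of Theorem \ref{BMVDexuni}. Next, for $\psi\in C_c^\infty$, Green's identity on each part, together with $\Delta h=0$ in the bulk and $\psi\equiv\psi(a^*)$ on $\partial\R_{\varepsilon}^d\cup\partial\R_{\varepsilon'}^2$ (the constraint defining $\mathcal{F}$), collapses $\mathcal{E}(h,\psi)$ to a multiple of $\psi(a^*)$:
$$\mathcal{E}(h,\psi)=\frac{\psi(a^*)}{2}\left(\int_{\partial\R_{\varepsilon}^d}\frac{\partial h}{\partial{\bf n}}\,d\sigma+p\int_{\partial\R_{\varepsilon'}^2}\frac{\partial h}{\partial{\bf n}}\,d\sigma\right),$$
where ${\bf n}$ is the outward normal of each part (pointing toward $a^*$) and the weight $p$ comes from $m_p$. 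Computing the two surface integrals from the ansatz gives $(d-2)\,\omega_{d-1}\,D$ and $-2\pi$ respectively, with $\omega_{d-1}:=|\partial B^d(0;1)|$, so $\mathcal{E}(h,\psi)=0$ for all $\psi$ exactly when $D=2\pi p/((d-2)\,\omega_{d-1})>0$. I expect this flux-balance identity to be the one genuinely delicate point: one must correctly account for the weight $p$, for the surface measures $\sigma$ on the two boundary spheres, and for the orientation of the normals, and observe that only the trace $\psi(a^*)$ survives, so that everything reduces to a single scalar equation.

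With $D$ now determined, the constants $C$ and $A$ remain free subject only to $C+D\varepsilon^{2-d}=A+\log\varepsilon'$, so I would take $C=1$ and then $A:=1+D\varepsilon^{2-d}-\log\varepsilon'$, which yields $h(a^*)=1+D\varepsilon^{2-d}>0$ and $A+\log\varepsilon'=1+D\varepsilon^{2-d}>0$. On $\R_{\varepsilon}^d$ one then has $1\leq h(x)=1+D|x|^{2-d}\leq 1+D\varepsilon^{2-d}$ with both bounds strictly positive, so $h\asymp 1$ there; on $\R_{\varepsilon'}^2$, $h(x)=A+\log(|x|_{\rho}+\varepsilon')$ is increasing in $|x|$, stays above $A+\log\varepsilon'>0$, and for $|x|_{\rho}$ large satisfies $\log(|x|_{\rho}+\varepsilon')\asymp\log|x|_{\rho}\asymp 1+\log|x|_{\rho}$, hence $h(x)\asymp 1+\log|x|_{\rho}$. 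Thus $h$ is positive on all of $\R_{\varepsilon}^d\cup\R_{\varepsilon'}^2\cup\{a^*\}$, harmonic in the sense that $\mathcal{E}(h,\psi)=0$ for every $\psi\in C_c^\infty$, and satisfies the two claimed two-sided bounds, which completes the proof.
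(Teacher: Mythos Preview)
Your proof is correct and takes a genuinely different, more elementary route than the paper. You exploit the rotational symmetry of both parts to write $h$ explicitly as an affine combination of $1$ and the fundamental radial harmonic on each piece, then determine the free constants by imposing continuity at $a^*$ and a Kirchhoff flux-balance so that $\mathcal{E}(h,\psi)=0$ for every $\psi\in C_c^\infty$; the computation of the two surface integrals and the resulting value $D=2\pi p/((d-2)\omega_{d-1})$ are correct, and positivity together with the claimed asymptotics follow immediately. The paper instead proceeds abstractly: it takes harmonic functions $h_1,h_2$ on the two ends separately (from \cite[Lemma~6.1]{GSc}), glues them with a cutoff $\eta$ to form $\eta f$, and then repairs harmonicity by adding the Green potential $\int G(\cdot,y)\Delta(\eta f)(y)\,dy$; the bulk of the argument is devoted to showing that this correction is bounded, via the elliptic Harnack inequality and a comparison of hitting densities. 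Your approach is shorter and yields an explicit formula, but it hinges on the exact radial structure of $\R_\varepsilon^d$ and $\R_{\varepsilon'}^2$; the paper's cutoff--plus--Green-correction method is the robust construction that would survive on ends that are not rotationally symmetric (and is the device actually proved in \cite{STW}), at the cost of invoking non-parabolicity, Harnack, and Green-function bounds.
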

\begin{proof}
	Let $R>0$ be large, $K_1:=\R_{\varepsilon}^d\cap \{|x|_{\rho}\leq R\}$ and $K_2:=\R_{\varepsilon'}^2\cap \{|x|_{\rho}\leq R\}$. By {\cite[Lemma 6.1]{GSc}}, there exists a positive harmonic function $h_1$ on $\R_{\varepsilon}^d \setminus K_1$ such that $h_1=0$ on $K_1\cup \{a^*\}$ and, for large $|x|_{\rho}$, $h_1 \asymp 1$. By {\cite[Lemma 6.1]{GSc}} again, there exists a positive harmonic function $h_2$ on $\R_{\varepsilon'}^2 \setminus K_2$ such that $h_2=0$ on $K_2\cup \{a^*\}$ and $h_2(x) \asymp \log{|x|_{\rho}}$ for large $|x|_{\rho}$.\\
	Let $K:=\{|x|_{\rho}\leq R\}$ and \begin{eqnarray*} f(x):=\left \{ \begin{split} h_1(x)\ &:&x\in \R_{\varepsilon}^d\cup \{a^*\} ,\\
	h_2(x)\ &:&x\in \R_{\varepsilon'}^{d'}\cup \{a^*\}. \end{split} \right . \end{eqnarray*}
	We take $\eta \in C^{\infty}(\R_{\varepsilon}^d\cup \R_{\varepsilon'}^2\cup \{a^*\})$ satisfying $\eta =1$ on $\{|x|_{\rho}>2R\}$ and $\eta=0$ on $K$.
	Let $$h(x):=(\eta f)(x)+\int_{\R_{\varepsilon}^d\cup \R_{\varepsilon'}^2\cup \{a^*\}}G(x,y)\Delta (\eta f)(y)dy,$$
	where $G(x,y):=\int_0^{\infty}p(t,x,y)dt$. Since $\R_{\varepsilon}^d\cup \R_{\varepsilon'}^2\cup \{a^*\}$ is non-parabolic, we have $G(x,y)<\infty$. It holds that $\Delta (\eta f)\in C_c^{\infty}(\R_{\varepsilon}^d\cup \R_{\varepsilon'}^2\cup \{a^*\})$ since $(\eta f)(x)=f(x)$ and $\Delta(\eta f)(x)=0$ for $x$ with $|x|_{\rho}>2R$. Hence, for all $x$, we have
	$\Delta h(x)=0,$ so $h$ is a harmonic function.

	For $x$ with $|x|_{\rho}>4R$, 
	\begin{eqnarray*}|f(x)-h(x)|&=&\left|\int_{\{|y|_{\rho}\leq 2R\}} G(x,y)\Delta(\eta f)(y)dy \right| \\
	&\leq& \sup_{\{|y|_{\rho}\leq2R \}}{G(x,y)} \times |\{|y|_{\rho}\leq2R \}|\times \sup{\Delta(\eta f)}\\
	&\leq& C\sup_{\{|y|_{\rho}\leq2R \}}{G(x,y)}.  \end{eqnarray*}
By using the elliptic Harnack inequality on $\R_{\varepsilon}^d\cup \{a^*\}$ and $\R_{\varepsilon'}^{d'}\cup \{a^*\}$ (see for example {\cite[Theorem 13.10]{G1}}), it holds that $|f(x)-h(x)|\leq CG(x,a^*)$ for $x$ with $|x|_{\rho}>4R$.

	Let fix $x_1\in \R_{\varepsilon}^d$ and $x_2\in \R_{\varepsilon'}^2$ with $|x_1|_{\rho}=|x_2|_{\rho}=4R$. For $x\in \R_{\varepsilon}^d$ with $|x|_{\rho}>4R$, by Lemma \ref{hitting3}, we have $\mathbb{P}_x(\sigma_{a^*}\in ds)\lesssim \mathbb{P}_{x_1}(\sigma_{a^*}\in ds)$. Furthermore, for $x\in \R_{\varepsilon'}^2$ with $|x|_{\rho}>4R$, by Lemma \ref{hitting2} and for large $R$, we have $\mathbb{P}_x(\sigma_{a^*}\in ds)\lesssim \mathbb{P}_{x_2}(\sigma_{a^*}\in ds)$. Thus, we have
	$$p(t,x,a^*)=\int_0^t p(t-s,a^*,a^*)\mathbb{P}_x(\sigma_{a^*}\in ds) \lesssim p(t,x_1,a^*)+p(t,x_2,a^*)$$ and $G(x,a^*)\lesssim G(x_1,a^*)+G(x_2,a^*)<\infty$.
	Then, $|f-h|$ is bounded on $\{ |x|_{\rho}>4R\}$ and, by the continuity of $G$, $|f-h|$ is bounded.
\end{proof}

Let $h$ be a positive harmonic function constructed as above. Define
$$H:L^2(\R_{\varepsilon}^d\cup \R_{\varepsilon'}^2\cup \{a^*\};h^2m_p)\ni f \mapsto fh \in L^2(\R_{\varepsilon}^d\cup \R_{\varepsilon'}^2\cup \{a^*\};m_p),$$
$$\mathcal{E}^h(f,f):=\mathcal{E}(fh,fh)=\int|\nabla(fh)|^2dm_p\ {\rm for}\  f\in \mathcal{F}^h:=H^{-1}\mathcal{F}.$$
Then, $H^{-1}\circ P_t \circ H$ admits a transition density $p^h(t,x,y)$  with respect to $dm_p^h:=h^2dm_p$ and $p^h(t,x,y)h(x)h(y)=p(t,x,y)$ holds ({\cite[Lemma 5.6]{GSn}}). Since $h$ is harmonic, by {\cite[Proposition 5.7]{GSn}}, we have
$$\mathcal{E}^h(f,f)=\int |\nabla f|^2dm_p^h\ \ \ {\rm for}\ f\in \mathcal{F}^h.$$
\ \\
The next lemma follows from {\cite[Lemma 4.8]{GSe}}.
\begin{Lem}\label{ext2}
	Let $q(t,x,y)$ be the transition density function with respect to $m_p$ on $\R_{\varepsilon'}^2\cup \{a^*\}$ and $q^h(t,x,y)$ be the $h$-transform of $q(t,x,y)$. Then it holds that $q^h(t,x,x)\lesssim m_p\left(\{y\in \R^2_{\varepsilon'} \cup \{a^*\}|\rho(x,y)\leq \sqrt{t} \} \right)^{-1}$ for $t>0, x\in \R^2_{\varepsilon'} \cup \{a^*\}$.
\end{Lem}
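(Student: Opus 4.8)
The plan is to obtain the bound as a direct application of \cite[Lemma 4.8]{GSe}, which, for the heat kernel of an $h$-transform on a space satisfying the relative Faber--Krahn inequality, yields precisely an on-diagonal estimate of the form $q^h(t,x,x)\lesssim m_p(B(x;\sqrt t))^{-1}$ in terms of the \emph{un}transformed volume. So the only work is to verify the hypotheses for the metric measure space $(\R_{\varepsilon'}^2\cup\{a^*\},\rho,m_p)$ and for the function $h$ (the restriction of the harmonic function of Proposition \ref{harmonic}, which is harmonic on $\R_{\varepsilon'}^2$ and satisfies $h\asymp\max(1,\log|x|_\rho)$, in particular $h\gtrsim 1$).

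First I would record the geometry. When $x$ is away from $a^*$ and $r$ is small, $B(x;r)$ is Euclidean, while $B(a^*;r)=\{y\in\R^2:\varepsilon'<|y|<\varepsilon'+r\}$ is a thin annulus, so that $m_p(B(a^*;r))\asymp r$ for $r\lesssim 1$ and $m_p(B(x;r))\asymp r^2$ for $r\gtrsim 1$; in every case $m_p(B(x;2r))\asymp m_p(B(x;r))$, so volume doubling holds. The scaled $L^2$-Poincar\'e inequality holds as well: away from $a^*$ it is the classical Poincar\'e inequality on $\R^2$ (or on a Euclidean annulus), and near $a^*$ one uses that each point of $B(a^*;r)$ is joined to $a^*$ by a radial segment of $\rho$-length $<r$, giving $\int_{B(a^*;r)}|u-u(a^*)|^2\,dm_p\lesssim r^2\int_{B(a^*;2r)}|\nabla u|^2\,dm_p$; equivalently one may quote that $\R_{\varepsilon'}^2\cup\{a^*\}$ (the plane with the disk $\overline B(0;\varepsilon')$ darned to a point) satisfies the parabolic Harnack inequality, as follows from \cite{CL} or from the general theory of Brownian motion with darning. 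By the equivalences recalled in the introduction, volume doubling and the Poincar\'e inequality give the parabolic Harnack inequality, hence the relative Faber--Krahn inequality and, in particular, the elliptic Harnack inequality on $\R_{\varepsilon'}^2\cup\{a^*\}$.

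With these in hand, \cite[Lemma 4.8]{GSe} applies and gives the claim. One can also see it by hand: $q^h(t,x,x)=q(t,x,x)/h(x)^2\lesssim q(t,x,x)$ since $h\gtrsim 1$, and $q(t,x,x)\lesssim m_p(B(x;\sqrt t))^{-1}$ follows from the relative Faber--Krahn inequality for the darned plane just established; but \cite[Lemma 4.8]{GSe} is the cleanest reference and would also cover the case of an $h$ that degenerates near $a^*$. The point deserving the most care --- and the one I expect to be the main obstacle --- is the scale transition at $a^*$, where the space is effectively one-dimensional at small scales and two-dimensional at large scales: one must check that collapsing the disk $\overline B(0;\varepsilon')$ to a single point preserves volume doubling, the Poincar\'e inequality, and the Harnack property (rather than merely asserting it), which is the same darning argument already invoked in the earlier sections and is worth writing out explicitly here.
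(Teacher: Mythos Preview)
Your proposal is correct and follows exactly the paper's approach: the paper's entire proof is the single sentence ``follows from \cite[Lemma 4.8]{GSe}'', and you are invoking the same reference while additionally spelling out the verification of volume doubling and Poincar\'e on the darned plane $\R_{\varepsilon'}^2\cup\{a^*\}$ that the paper leaves implicit. Your ``by hand'' alternative via $h\gtrsim 1$ and the Aronson bound for $q$ is also valid and arguably simpler here, since in this particular case $h$ is bounded below.
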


In order to get the sharp estimate of $p(t,a^*,a^*)$, we imitate the technique of the relative Faber-Krahn inequality appearing $\cite{GSs}$.
\begin{Lem}\label{FK2}
For some constant $c>0$, $\alpha_2>0$ and any ball $B:=B(x_0;R)$, let $$\Lambda _1(B,v):=\frac{c}{R^2}\left( \frac{m_p^h(B)}{v} \right)^{{2}/{d}},\ \Lambda _2(B,v):=\frac{c}{R^2}\left( \frac{m_p^h(B)}{v} \right)^{\alpha_2}.$$
Then, for any ball $B_1\subset \R_{\varepsilon}^d\cup \{a^*\}$ and a non-empty open subset $\Omega \subset B_1$, we have $$\inf_{f\in C_c^{\infty}(\Omega)\setminus \{0\}}\frac{\int_{\Omega}|\nabla f|^2dm_p^h}{\int_{\Omega}|f|^2dm_p^h}\geq \Lambda _1(B_1,m_p^h(\Omega))$$
and,  for any ball $B_2\subset \R_{\varepsilon'}^2\cup \{a^*\}$ and non-empty open subset $\Omega \subset B_2$,  we have $$\inf_{f\in C_c^{\infty}(\Omega)\setminus \{0\}}\frac{\int_{\Omega}|\nabla f|^2dm_p^h}{\int_{\Omega}|f|^2dm_p^h}\geq \Lambda _2(B_2,m_p^h(\Omega)).$$
These inequalities are called the relative Faber-Krahn inequality.
\end{Lem}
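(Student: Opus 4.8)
\ The plan is to reduce both inequalities to the classical Faber--Krahn inequality in Euclidean space. The first point is that a $\rho$-ball of positive radius contained in $\R_\varepsilon^d\cup\{a^*\}$ (resp.\ in $\R_{\varepsilon'}^2\cup\{a^*\}$) cannot contain $a^*$: any ball centred in $\R_\varepsilon^d$ and containing $a^*$ also contains points of $\R_{\varepsilon'}^{d'}$, and a ball centred at $a^*$ meets both parts. Consequently, if $a^*\notin B(x_0;R)$ then $|x_0|_\rho\ge R$ and one checks that $B(x_0;R)$ coincides with the Euclidean ball of radius $R$ about $x_0$, which then lies in the interior of $\R_\varepsilon^n$ ($n=d$ or $n=2$); in particular $m^{(n)}(B_i)=c_nR^n$. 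For open $\Omega\subset B_i$ and $f\in C_c^\infty(\Omega)$, since $h$ is harmonic the ground-state substitution $g=fh$ turns the Rayleigh quotient $\mathcal{E}^h(f,f)/\|f\|_{L^2(m_p^h)}^2$ into $\int_\Omega|\nabla g|^2dx / \int_\Omega g^2dx$ with $g$ ranging over $h\,C_c^\infty(\Omega)\subset H_0^1(\Omega)$, whence the infimum in the statement is $\ge\lambda_1^{\mathrm{eucl}}(\Omega)\ge c_n\,m^{(n)}(\Omega)^{-2/n}$ by classical Faber--Krahn in $\R^n$.

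\textbf{The $d$-dimensional part.}\ On $\R_\varepsilon^d$ one has $h\asymp 1$ by Proposition~\ref{harmonic}, so $m_p^h\asymp m^{(d)}$ there and $m_p^h(B_1)\asymp R^d$; the previous bound then gives $\lambda_1^{\mathrm{eucl}}(\Omega)\gtrsim m^{(d)}(\Omega)^{-2/d}\asymp m_p^h(\Omega)^{-2/d}=m_p^h(B_1)^{-2/d}\bigl(m_p^h(B_1)/m_p^h(\Omega)\bigr)^{2/d}\asymp R^{-2}\bigl(m_p^h(B_1)/m_p^h(\Omega)\bigr)^{2/d}$, which is $\Lambda_1(B_1,m_p^h(\Omega))$ after fixing $c$. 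This settles the first inequality, with the stated exponent $2/d$.

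\textbf{The $2$-dimensional part, and the main obstacle.}\ Here $h$ is \emph{not} uniformly comparable to a constant; combining Proposition~\ref{harmonic} with the positivity and continuity of $h$ on compact sets gives $h(x)\asymp\log(e+|x|_\rho)$ on all of $\R_{\varepsilon'}^2$. If $B_2=B(x_0;R)$ is such that $h$ \emph{is} comparable to a constant on it---which happens unless $R\asymp|x_0|_\rho$---then exactly as above with $n=2$ the infimum is $\gtrsim m^{(2)}(\Omega)^{-1}\asymp R^{-2}\bigl(m^{(2)}(B_2)/m^{(2)}(\Omega)\bigr)\asymp R^{-2}\bigl(m_p^h(B_2)/m_p^h(\Omega)\bigr)$, so $\alpha_2=1$ works. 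The remaining case, $R\asymp|x_0|_\rho$ with $B_2$ reaching within a bounded distance of $a^*$, is the crux. There I would split $\Omega$ along the dyadic shells $\{|x|_\rho\in(2^k,2^{k+1}]\}\cap\R_{\varepsilon'}^2$, on which $m^{(2)}\lesssim 4^k$ and $h\asymp 1+k$; a short optimisation then yields $m_p^h(\Omega)\gtrsim m^{(2)}(\Omega)\bigl(1+\log_+m^{(2)}(\Omega)\bigr)^2$ for every $\Omega\subset\R_{\varepsilon'}^2$ and $m_p^h(B_2)\lesssim\bigl(\log(e+|x_0|_\rho)\bigr)^2R^2$. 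Feeding these, together with $m^{(2)}(\Omega)\le m^{(2)}(B_2)\asymp R^2$ and $\lambda_1^{\mathrm{eucl}}(\Omega)\gtrsim m^{(2)}(\Omega)^{-1}$, into the definition of $\Lambda_2(B_2,m_p^h(\Omega))$ and distinguishing the regimes $m^{(2)}(\Omega)\approx m^{(2)}(B_2)$ and $m^{(2)}(\Omega)\ll m^{(2)}(B_2)$, I expect the inequality to hold for any fixed $\alpha_2\in(0,1)$: the $(\log|x_0|_\rho)^2$ factor needed when $m^{(2)}(\Omega)$ is near $m^{(2)}(B_2)$ comes from $\log_+m^{(2)}(\Omega)\asymp\log|x_0|_\rho$, while for smaller $\Omega$ the volume gap $(m^{(2)}(B_2)/m^{(2)}(\Omega))^{1-\alpha_2}$ outweighs it. (Alternatively, since $(\log(e+|x|))^2$ is a Muckenhoupt $A_2$ weight on $\R^2$, this case also follows from a scale-invariant weighted Poincar\'e inequality together with the equivalence---exploited in \cite{GSs}---between the relative Faber--Krahn inequality and volume doubling plus a Poincar\'e inequality, with $\alpha_2$ then governed by the $A_2$-constant.) The one genuinely delicate estimate is precisely this near-$a^*$ quantitative control of how fast the $h^2$-weighted volume of a subset of a large ball can fall below that of the ball itself; everything else is a routine reduction to classical Faber--Krahn in Euclidean space.
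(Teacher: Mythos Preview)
Your argument rests on the claim that a ball $B_i\subset\R_{\varepsilon_i}^i\cup\{a^*\}$ cannot contain $a^*$, and hence is an honest Euclidean ball inside $\R_{\varepsilon_i}^i$. That reading is too restrictive: in the paper the phrase ``ball $B_i\subset\R_{\varepsilon_i}^i\cup\{a^*\}$'' means an intrinsic ball in the metric space $(\R_{\varepsilon_i}^i\cup\{a^*\},\rho)$, i.e.\ a set of the form $\{x\in\R_{\varepsilon_i}^i\cup\{a^*\}:\rho(x,y_i)<R\}$. Such balls certainly can contain $a^*$, and the very next result (Theorem~\ref{RFKineq}) applies the lemma precisely to sets $B^i:=\{x\in\R_{\varepsilon_i}^i\cup\{a^*\}:\rho(x,y_i)\le 3R+2\varepsilon+2\varepsilon'\}$ which do contain $a^*$ whenever the ambient ball $B$ does. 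Your version of the lemma is therefore not strong enough for that application: when $a^*\in B^i$, the set $\Omega\cap(\R_{\varepsilon_i}^i\cup\{a^*\})$ need not lie in any Euclidean ball contained in $\R_{\varepsilon_i}^i$, so your reduction to the classical Euclidean Faber--Krahn inequality breaks down exactly where it is needed.

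The paper avoids this difficulty by not going through Euclidean Faber--Krahn at all. It quotes the general implication from \cite[Proposition~4.2]{GSs}: on a complete weighted space, a diagonal upper bound of the form $q(t,x,x)\lesssim V(x,\sqrt{t})^{-1}$ for the heat kernel implies the relative Faber--Krahn inequality. On $\R_\varepsilon^d\cup\{a^*\}$ one has $h\asymp 1$, so the $h$-transform is harmless and the required diagonal bound $q(t,x,x)\lesssim t^{-d/2}$ is the standard one for Brownian motion with darning; this gives the first inequality with exponent $2/d$. On $\R_{\varepsilon'}^2\cup\{a^*\}$ the weight $h^2$ is nontrivial, but Lemma~\ref{ext2} supplies exactly the diagonal estimate $q^h(t,x,x)\lesssim m_p\bigl(\{y\in\R_{\varepsilon'}^2\cup\{a^*\}:\rho(x,y)\le\sqrt t\}\bigr)^{-1}$ needed to invoke the same implication and obtain the second inequality for some $\alpha_2>0$. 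Thus the whole ``main obstacle'' you isolate---quantifying how the logarithmic weight distorts volumes on annuli near $a^*$---is absorbed into Lemma~\ref{ext2} and the cited black box, rather than handled by a direct Muckenhoupt or dyadic argument.
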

\begin{proof}
	From {\cite[Proposition 4.2]{GSs}}, for a complete weighted manifold, the relative Faber-Krahn inequality holds if the diagonal upper estimate of heat kernel holds. $h\asymp 1$ and $q(t,x,x)\lesssim t^{-{d}/{2}}$ on $\R_{\varepsilon}^d\cup \{a^*\}$, where $q$ is a heat kernel with respect to $m_p^h$ on $\R_{\varepsilon}^d\cup \{a^*\}$, so the first inequality holds. The second inequality follows from Lemma \ref{ext2}.\end{proof}

\begin{Thm}\label{RFKineq}
 Let $\alpha:=\alpha_2\wedge \frac{2}{d}$. For $B:=B(x_0; R)\subset \R_{\varepsilon}^d\cup \R_{\varepsilon'}^2\cup \{a^*\}$ and any open subset $\Omega \subset B$, it holds that
$$\inf_{f\in C_c^{\infty}(\Omega)\setminus \{0\}}\frac{\int_{\Omega}|\nabla f|^2dm_p^h}{\int_{\Omega}|f|^2dm_p^h}\geq \Lambda(B,m_p^h(\Omega)),\ {\rm where}\  \Lambda(B,v):=\frac{c}{R^2}\left(\frac{F(B)}{v} \right)^{\alpha},\ $$
\begin{equation*}F(B):=\left \{ \begin{split} &m_p^h(B)&:&\ B\subset \R_{\varepsilon_i}^i {\rm for}\  i=2,d,\\  &m_p^h(\{x\in\R_{\varepsilon'}^2\cup\{a^*\}\ |\ \rho(x,y_2)\leq R\}) &:&\ a^*\in B\ {\rm and\ large}\ R. \end{split} \right.  \end{equation*}
Here, $\varepsilon_2:=\varepsilon', \varepsilon_d:=\varepsilon$ and $y_2\in \R_{\varepsilon'}^2\cup \{a^*\}$ with $|x_0|_{\rho}=|y_2|_{\rho}$.
\end{Thm}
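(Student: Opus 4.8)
The plan is to reduce the global statement (Theorem \ref{RFKineq}) to the two ``local'' relative Faber--Krahn inequalities already obtained in Lemma \ref{FK2}, one on the $d$-dimensional end $\R_{\varepsilon}^d\cup\{a^*\}$ and one on the $2$-dimensional end $\R_{\varepsilon'}^2\cup\{a^*\}$, and then to interpolate across $a^*$. First I would dispose of the easy case: if $B=B(x_0;R)$ is entirely contained in $\R_{\varepsilon}^d$ or entirely in $\R_{\varepsilon'}^2$ (equivalently $a^*\notin B$, so $B$ does not meet the other end), then $\Omega\subset B$ sits inside a single end and one of the two inequalities of Lemma \ref{FK2} applies verbatim with $F(B)=m_p^h(B)$ and exponent $\alpha_2$ or $2/d$; since $\alpha=\alpha_2\wedge\frac2d\le$ both exponents and $m_p^h(\Omega)\le m_p^h(B)=F(B)$, replacing the exponent by the smaller $\alpha$ only weakens the bound, so $\Lambda_i(B,v)\ge \Lambda(B,v)$ and we are done.

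The substantive case is $a^*\in B$. Here I would split $\Omega=\Omega_d\cup\Omega_2$ where $\Omega_d:=\Omega\cap(\R_{\varepsilon}^d\cup\{a^*\})$ and $\Omega_2:=\Omega\cap(\R_{\varepsilon'}^2\cup\{a^*\})$, and for a test function $f\in C_c^\infty(\Omega)$ compare the Rayleigh quotient of $f$ on $\Omega$ with those of its restrictions. The delicate point is that $f$ need not vanish near $a^*$, so $f|_{\Omega_d}$ is not compactly supported in $\Omega_d\setminus\{a^*\}$; however $a^*$ is a single point of zero $m_p^h$-measure and is polar-like for the quadratic form in the sense that $\int_{\Omega_d}|\nabla f|^2\,dm_p^h$ and $\int_{\Omega_d}|f|^2\,dm_p^h$ are unaffected by the value at $a^*$, and the Faber--Krahn bound of Lemma \ref{FK2} is stated for open $\Omega\subset B_1$ with $B_1\subset\R_{\varepsilon}^d\cup\{a^*\}$, i.e.\ it already tolerates $a^*$. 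So I would apply Lemma \ref{FK2} to $\Omega_d$ inside a ball $B_1$ of radius $\asymp R$ centered appropriately (using the ball $B(x_0;R)$ itself intersected with the $d$-end, enlarged if needed) and likewise to $\Omega_2$ inside a ball $B_2\subset\R_{\varepsilon'}^2\cup\{a^*\}$ of radius $\asymp R$ centered at a point $y_2$ with $|y_2|_\rho=|x_0|_\rho$ — this is exactly why $y_2$ and that choice of $F(B)$ appear in the statement, since $m_p^h(\{x\in\R_{\varepsilon'}^2\cup\{a^*\}:\rho(x,y_2)\le R\})$ is the correct normalizing volume for the $2$-dimensional piece around $a^*$.

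With these two bounds in hand I would combine them by the standard ``two-piece'' argument: write $\int_\Omega|\nabla f|^2\,dm_p^h=\int_{\Omega_d}|\nabla f|^2\,dm_p^h+\int_{\Omega_2}|\nabla f|^2\,dm_p^h$ and similarly for $\int_\Omega|f|^2$, note $\lambda(\Omega)\ge\min(\lambda_d(\Omega_d),\lambda_2(\Omega_2))$ where $\lambda$ denotes the bottom of the Rayleigh quotient (this uses the elementary inequality $\frac{a_1+a_2}{b_1+b_2}\ge\min(\frac{a_1}{b_1},\frac{a_2}{b_2})$), and then bound each $\lambda_i(\Omega_i)$ below by $\Lambda_i(B_i,m_p^h(\Omega_i))\ge \frac{c}{R^2}(F(B)/m_p^h(\Omega))^{\alpha}$, using $m_p^h(\Omega_i)\le m_p^h(\Omega)$, the comparability $m_p^h(B_i)\asymp F(B)$ for the two natural choices of $B_i$ (here one uses $h\asymp1$ on the $d$-end and $h(x)\asymp1+\log|x|_\rho$ on the $2$-end from Proposition \ref{harmonic}, together with the volume-growth estimates), and $\alpha\le\alpha_2,\,2/d$. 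Adjusting the constant $c$ to absorb the finitely many comparability factors yields $\lambda(\Omega)\ge\Lambda(B,m_p^h(\Omega))$. I expect the main obstacle to be the bookkeeping around $a^*$ and the centering of $B_2$: one must check that the ball $B_2=B(y_2;CR)\subset\R_{\varepsilon'}^2\cup\{a^*\}$ (for a suitable absolute $C$) really contains $\Omega_2$ — this follows from $\rho(x,y_2)\le\rho(x,a^*)+\rho(a^*,x_0)+\rho(x_0,y_2)$ and $|x_0|_\rho=|y_2|_\rho$ when $a^*\in B(x_0;R)$ — and that $m_p^h(B_2)$ matches the designated $F(B)$ up to constants; once that geometric compatibility is set up, the rest is the routine Rayleigh-quotient splitting described above.
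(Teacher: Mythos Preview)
Your approach is essentially identical to the paper's: dispose of the case $B\subset\R_{\varepsilon_i}^i$ by direct appeal to Lemma \ref{FK2}, and when $a^*\in B$ split $\Omega$ into its two pieces, enclose each in an enlarged ball $B^i$ of radius $\asymp R$ centered at a point $y_i$ with $|y_i|_\rho=|x_0|_\rho$, apply Lemma \ref{FK2} there, and combine via $\frac{a_1+a_2}{b_1+b_2}\ge\min(\frac{a_1}{b_1},\frac{a_2}{b_2})$ (the paper phrases this equivalently as choosing the $i$ for which $\int_{\Omega_i}|f|^2\ge\tfrac12\int_\Omega|f|^2$). One small correction: your claimed comparability $m_p^h(B_i)\asymp F(B)$ is false for $i=d$, since $m_p^h(B_d)\asymp R^d$ while $F(B)\asymp R^2(\log R)^2$; only the one-sided bound $m_p^h(B_d)\gtrsim F(B)$ holds for large $R$, and this is precisely what the paper verifies (and all that the argument requires).
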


\begin{proof}
When $B\subset \R_{\varepsilon_i}^i {\rm for}\  i=2,d$, the estimate holds by Lemma \ref{FK2}. When $a^*\in B:=B(x_0; R)\subset \R_{\varepsilon}^d\cup \R_{\varepsilon'}^2\cup \{a^*\}$ for large $R>0$, for any open subset $\Omega \subset B$ and $f\in C_c^{\infty}(\Omega)\setminus \{0\}$, it holds that $f|_{\R_{\varepsilon}^d\cup \{a^*\}}\in C_c^{\infty}\left(\Omega \cap (\R_{\varepsilon}^d\cup \{a^*\})\right)\setminus \{0\}$ and $f|_{\R_{\varepsilon'}^2\cup \{a^*\}}\in C_c^{\infty}\left(\Omega \cap (\R_{\varepsilon'}^2\cup \{a^*\})\right)\setminus \{0\}$.
For $i=2,d$, fix $y_i\in \R_{\varepsilon_i}^i\cup \{a^*\}$ satisfying $|y_i|_{\rho}=|x_0|_{\rho}$ and $B^i:=\{x\in \R_{\varepsilon_i}^i\cup \{a^*\}; \rho(x,y_i) \leq 3R+2\varepsilon +2\varepsilon' \}$.\\
$($1$)$ For $x_0\in \R_{\varepsilon'}^2$, we have (see Figure \ref{lemballfig1})
\begin{eqnarray*}B\cap (\R_{\varepsilon'}^2\cup \{a^*\})\subset B(y_2;|x_0|_{\rho}+2\varepsilon'+|x_0|_{\rho}+R)\cap (\R_{\varepsilon'}^2\cup \{a^*\})\subset B^2,\\
B\cap (\R_{\varepsilon}^d\cup \{a^*\})\subset B(y_d;2\varepsilon+2R)\cap (\R_{\varepsilon}^d\cup \{a^*\})\subset B^d\ {\rm if}\ R-|x_0|_{\rho}\leq |y_d|_{\rho},\\
B\cap (\R_{\varepsilon}^d\cup \{a^*\})\subset B(y_d;2R)\cap (\R_{\varepsilon}^d\cup \{a^*\})\subset B^d\ {\rm if}\  R-|x_0|_{\rho}> |y_d|_{\rho}. \hspace{8mm} \end{eqnarray*}

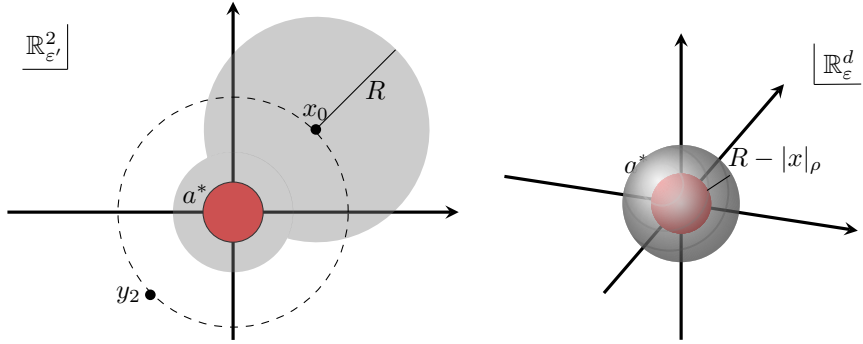
\begin{figure}[h]
 \begin{tikzpicture}
 \draw[->,>=stealth,very thick] (-3,0)--(3,0)node[above]{};
 \draw[->,>=stealth,very thick] (0,-1.7)--(0,2.8)node[right]{};
 \draw (-2.2,2.5)--(-2.2,1.9);
 \draw (-2.2,1.9)--(-2.8,1.9);
 \draw (-2.5,2.2)node{$\R^{2}_{\varepsilon'}$};
 \draw[fill=red!80](0,0)circle(0.4);
 \fill[gray, opacity =0.4] (0.723,-0.35) arc (255:554:1.5)--(-0.348,0.719) (-0.348,0.719) arc(-244.55:-383.5:0.8)--(0.723,-0.35);
 \fill[gray, opacity = 0.4] (0,0)circle(0.8);
\fill (1.1,1.1) circle (2pt) coordinate (x) circle node [above] {$x_0$} ;
\draw[dashed] (0,0)circle(1.53);
\fill (-1.1,-1.1) circle (2pt) coordinate (y2) circle node [left] {$y_2$} ;
 \draw (-0.2,0)node[above left]{$a^*$} ;
\draw[] (1.1,1.1) -- node[right]{$R$} (2.16,2.16);
\end{tikzpicture}
\tdplotsetmaincoords{65}{20}
\begin{tikzpicture}[tdplot_main_coords]
 \draw[->,>=stealth,very thick] (-2.5,0,0)--(2.5,0,0)node[above]{};
 \draw[->,>=stealth,very thick] (0,-3,0)--(0,4,0)node[right]{};
 \draw[->,>=stealth,very thick] (0,0,-2)--(0,0,2.5)node[right]{};
 \draw (1.7,0.5,1.8)--(1.7,0.5,2.4);
 \draw (1.7,0.5,1.8)--(2.1,1.23,1.55);
 \draw (1.8,1.2,1.8)node{$\R^{d}_{\varepsilon}$};
\node[circle,shading=ball, outer color=red, inner color=pink, minimum width=8mm][label=above left:$a^*$] (ball) at (0,0) {};
\shade[ball color = gray!60, opacity = 0.4] (0,0,0) circle (0.78cm);
\draw[] (0.33,0.1,0.2) -- node[above right]{$R-|x|_{\rho}$} (0.6,0.25,0.4);
\end{tikzpicture} 
\caption{locations of $y_2$ and $B=B(x_0;R)$}
\label{lemballfig1}
\end{figure}

$($2$)$ For $x_0\in \R_{\varepsilon}^d$, by the same way as $($1$)$, it holds that $B\cap (\R{\varepsilon}^d\cup \{a^*\})\subset B^d$ and $B\cap (\R{\varepsilon'}^2\cup \{a^*\})\subset B^2$.\\
Hence, for $i=2$ or $d$ satisfying $\int_{\Omega \cap \R_{\varepsilon_i}^i}|f|^2dm_p^h\geq \frac{1}{2} \int_{\Omega}|f|^2dm_p^h$, by Lemma \ref{FK2} and $m_p^h(B^i)\geq m_p^h(\Omega \cap \R_{\varepsilon_i}^i)$, we have
\begin{eqnarray} \nonumber \int_{\Omega} |\nabla f|^2dm_p^h &\geq& \int_{\Omega \cap \R_{\varepsilon_i}^i}|\nabla f|^2dm_p^h \\
\nonumber &\geq& \Lambda_i\left( B^i, m_p^h(\Omega \cap \R_{\varepsilon_i}^i)  \right)\int_{\Omega \cap \R_{\varepsilon_i}^i}|f|^2dm_p^h\\
\nonumber &\geq& \frac{c}{(3R+2\varepsilon +2\varepsilon')^2}\left( \frac{m_p^h(B^i)}{m_p^h(\Omega \cap \R_{\varepsilon_i}^i)} \right)^{\alpha} \frac{1}{2} \int_{\Omega}|f|^2dm_p^h\\
&\gtrsim& \frac{c}{R^2}\left( \frac{m_p^h(B^i)}{m_p^h(\Omega)} \right)^{\alpha}\int_{\Omega}|f|^2dm_p^h. \label{eq:FKa1}
\end{eqnarray}
Hence, the proof is finished if $i=2$. If $i=d$, we have
\begin{eqnarray}
\nonumber m_p^h(B^d)&\geq& m_p(B(y_d;3R)\cap \R_{\varepsilon}^d)\geq m_p(B(y'_d;R)\cap \R_{\varepsilon}^d)\asymp  R^d\\
\nonumber & \geq& (1+\log{(5R)})^2(8R)^2 \\
\nonumber &\geq& \int_{B(y_2;4R)\cap \R_{\varepsilon'}^2}(1+\log{(|y_2|_{\rho}+3R+2\varepsilon+2\varepsilon')})^2\\
&\geq& m_p^h(B^2)\geq m_p^h((B(y_2;R)\cap \R_{\varepsilon'}^2), \label{eq:FKa2}
\end{eqnarray}
where $y'_d$ is the point with $|y_d'|_{\rho}=2R$ on the line joining $a^*$ and $y_d$. 
Therefore, by (\ref{eq:FKa1}) and (\ref{eq:FKa2}), the desired inequality holds.
\end{proof}

\begin{Prop}\label{on23} Let $d\geq 3$ and $d'=2$. Then, it holds that for $t>0$, 
$$p(t,a^*,a^*)\asymp \frac{1}{\sqrt{t}}\wedge \frac{1}{(t+1)\left(\log{(t+1)}\right)^2}.$$
\end{Prop}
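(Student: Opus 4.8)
The plan is to establish the matching lower bound for $p(t,a^*,a^*)$, since the upper bound $p(t,a^*,a^*)\lesssim t^{-1/2}$ for small $t$ and $p(t,a^*,a^*) \lesssim (t(\log t)^2)^{-1}$ for large $t$ will follow from Proposition~\ref{Nash} together with Theorem~\ref{small} and the on-diagonal part of the Faber--Krahn machinery. The lower bound for small time is already contained in Theorem~\ref{small} (which gives $p(t,a^*,a^*)\asymp t^{-1/2}$ for $t\le 1$), and the lower bound for large time is exactly Proposition~\ref{on23l}. So the genuinely new content is the large-time \emph{upper} bound $p(t,a^*,a^*)\lesssim (t(\log t)^2)^{-1}$, which I would deduce from the relative Faber--Krahn inequality of Theorem~\ref{RFKineq} applied to the $h$-transformed process.

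First I would recall the general principle (due to Grigor'yan, see \cite{GSs}) that a relative Faber--Krahn inequality of the form $\lambda_1(\Omega)\ge \frac{c}{R^2}\bigl(F(B)/m_p^h(\Omega)\bigr)^\alpha$ for $\Omega\subset B=B(x_0;R)$ implies an on-diagonal heat kernel upper bound
$$
p^h(t,x_0,x_0)\lesssim \frac{1}{F\bigl(B(x_0;\sqrt{t})\bigr)}
$$
for all $t>0$, where $p^h$ is the heat kernel with respect to $m_p^h=h^2\,m_p$. I would apply this at the point $x_0=a^*$. By Theorem~\ref{RFKineq}, for $a^*\in B(a^*;\sqrt{t})$ and large $t$ we have $F(B(a^*;\sqrt t)) = m_p^h(\{x\in\R_{\varepsilon'}^2\cup\{a^*\}\,|\,\rho(x,y_2)\le \sqrt t\})$ with $|y_2|_\rho=0$, i.e. $y_2=a^*$. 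Using $h(x)\asymp 1+\log|x|_\rho$ on $\R_{\varepsilon'}^2$ from Proposition~\ref{harmonic}, a direct computation in polar coordinates gives
$$
m_p^h\bigl(\{x\in\R_{\varepsilon'}^2\cup\{a^*\}\,:\,|x|_\rho\le \sqrt t\}\bigr)
\asymp \int_0^{\sqrt t} (1+\log(1+r))^2\, r\,dr \asymp t(\log t)^2
$$
for $t$ large. Hence $p^h(t,a^*,a^*)\lesssim (t(\log t)^2)^{-1}$. Since $h(a^*)\asymp 1$ and $p(t,a^*,a^*)=h(a^*)^2\,p^h(t,a^*,a^*)$ (using $p^h(t,x,y)h(x)h(y)=p(t,x,y)$ stated after Lemma~\ref{ext2}), we conclude $p(t,a^*,a^*)\lesssim (t(\log t)^2)^{-1}$ for large $t$. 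Combining this with Proposition~\ref{on23l} and with Theorem~\ref{small} for $t\le 1$ (where $t^{-1/2}$ dominates) yields the claimed two-sided estimate $p(t,a^*,a^*)\asymp t^{-1/2}\wedge \bigl((t+1)(\log(t+1))^2\bigr)^{-1}$ for all $t>0$.

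The main obstacle is verifying that the relative Faber--Krahn inequality of Theorem~\ref{RFKineq}, which is stated with the somewhat unusual volume function $F$ (equal to $m_p^h(B)$ when $B$ avoids $a^*$ but to the $\R_{\varepsilon'}^2$-volume of a ball around a point $y_2$ of the same $\rho$-distance to $a^*$ when $a^*\in B$), is precisely of the shape needed to run Grigor'yan's argument producing the on-diagonal bound $p^h(t,x,x)\lesssim F(B(x;\sqrt t))^{-1}$. One must check that $F$ is doubling in $R$ (for $a^*\in B$ this reduces to the doubling of $R\mapsto R^2(\log R)^2$) and satisfies the monotonicity $F(B(x;R))\le F(B(x;R'))$ for $R\le R'$, which are the hypotheses under which the relative Faber--Krahn inequality self-improves to the diagonal bound; both follow from elementary properties of $r\mapsto r^2(\log r)^2$. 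A secondary technical point is to confirm that the chain from $p^h$ back to $p$ is valid at the non-polar point $a^*$, which is legitimate because $p^h(t,\cdot,\cdot)$ is continuous (inherited from continuity of $p$ and positivity of $h$) and $h(a^*)$ is a finite positive constant.
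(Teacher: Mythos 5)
Your proposal is essentially the paper's own proof: the large-time upper bound is derived from the relative Faber--Krahn inequality of Theorem~\ref{RFKineq} for the $h$-transformed process, the relevant volume $F\bigl(B(a^*;\sqrt t)\bigr)=m_p^h(\{x\in\R_{\varepsilon'}^2\cup\{a^*\}:|x|_\rho\le\sqrt t\})\gtrsim t(\log t)^2$ is computed, and one transfers back via $p(t,a^*,a^*)=p^h(t,a^*,a^*)h(a^*)^2$ with $h(a^*)\asymp 1$, while the lower bound and small-time regime come from Proposition~\ref{on23l} and Theorem~\ref{small}. The one place where your write-up differs in emphasis is the technical step you flag as the ``main obstacle'': the paper does not verify doubling/monotonicity of $F$ but instead invokes \cite[Theorem~5.2]{G2} directly for the implication (RFK $\Rightarrow$ on-diagonal upper bound), and the point the paper is careful about is that \cite{G2} is stated for smooth complete Riemannian manifolds whereas here the space is two manifolds attached at $a^*$; the paper notes that the proof carries over because $|\nabla\rho|\le 1$ still holds on each piece. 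Also, you cite \cite{GSs} for the implication RFK $\Rightarrow$ on-diagonal bound, but \cite{GSs} (as used in Lemma~\ref{FK2}) contains the converse direction; the one you need is \cite[Theorem~5.2]{G2}. These are minor discrepancies of exposition and citation, not of substance.
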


\begin{proof}
The lower estimate is given in Proposition \ref{on23l}, so we prove the upper estimate.

By the same proof of {\cite[Theorem 5.2]{G2}}, for large $T>0$, $t\in [T,\infty)$ and $x,y\in \R_{\varepsilon}^d\cup \R_{\varepsilon'}^2\cup \{a^*\}$, it holds that \begin{equation}p^h(t,x,y)\lesssim {(t\wedge R^2)^{-1/\alpha}\left( \frac{F(B(x;R))^{\alpha}}{R^2}\frac{F(B(y;R))^{\alpha}}{R^2} \right)^{-1/2\alpha}}. \label{eq:RFKU}\end{equation}
Indeed, in \cite{G2}, Grigor'yan proved (\ref{eq:RFKU}) for $t>0$ on a smooth connected non-compact complete Riemannian manifold. In the proof, it is used that $|\nabla \hat{\rho}|\leq 1$, where $\hat{\rho}$ is a Riemannian distance. In our setting, we consider the space attached by two manifolds on which $|\nabla \rho|\leq 1$ still holds. Hence (\ref{eq:RFKU}) holds by the proof of {\cite[Theorem 5.2]{G2}}.

We take $R:=\sqrt{t}$ and large $t$, by Theorem \ref{RFKineq}, we have
\begin{eqnarray}p^h(t,a^*,a^*\nonumber)&\lesssim & {t^{-1/\alpha}\left( \frac{m_p^h(\{x\in\R_{\varepsilon'}^2\cup\{a^*\};|x|_{\rho}\leq \sqrt{t} \})^{\alpha}}{t}\right)^{-1/\alpha}}\\
&=&{m_p^h(\{x\in\R_{\varepsilon'}^2\cup\{a^*\};|x|_{\rho}\leq \sqrt{t} \})^{-1}}.\label{eq:on23c1}\end{eqnarray}
Let $\tilde{B}:=B((3\sqrt{t}/2,0);\sqrt{t}/2)\cap \R_{\varepsilon'}^2$, then we obtain
\begin{eqnarray}
m_p^h(\{x\in\R_{\varepsilon'}^2\cup\{a^*\};|x|_{\rho}\leq \sqrt{t} \}) &\geq& m_p^h(\tilde{B}) \label{eq:on23c2}\\
\nonumber &\asymp& \int_{\{|x-(3\sqrt{t}/2,0)\leq\sqrt{t}/2\}|}(1+\log{|x|_{\rho}})^2dx \\
\nonumber  &\geq &(1+\log{\sqrt{t}/2})^2|\tilde{B}|\gtrsim (t+1)(\log{(t+1)})^2 .
\end{eqnarray}
By (\ref{eq:on23c1}), (\ref{eq:on23c2}), $p^h(t,a^*,a^*)=p(t,a^*,a^*)h(a^*)^2$ and Theorem \ref{small}, the upper estimate holds.
\end{proof}

\begin{proof}[Proof of Theorem \ref{d'=2,3}]
By comparing with the heat kernel on $\R^d\# (\R^2\times S_{\varepsilon}^{d-2})$ (\cite{GSc}), the upper estimates can be proved in the same way as the proof of Proposition \ref{off3u}.
We prove the lower estimates. Let $T>0$ be large and $t\in [T,\infty)$.\\
{\bf Step1} (the estimate of $p(t,x,a^*)$)\\
$($1$)$ For $x\in \R_{\varepsilon}^d\cup \{a^*\}$, by Proposition \ref{on23}, Theorem \ref{small}, Lemma \ref{GSh3}, Lemma \ref{hitting3}, and Lemma \ref{elem3}, we have
\begin{eqnarray*}p(t,x,a^*)&\geq &\int_{0}^{{t}/{2}} + \int_{t-2}^{t}p(t-s,a^*,a^*)\mathbb{P}_x(\sigma_{a^*}\in ds) \\
&\asymp&\frac{1}{t(\log{t})^2}\frac{1}{|x|^{d-2}}\ex+\frac{1}{t^{{d}/{2}}}\ex .\end{eqnarray*}
$($2$)$ For $y\in \R_{\varepsilon'}^2\cup \{a^*\}$ with $|y|<\sqrt{t}/2$, by Proposition \ref{on23}, Theorem \ref{small}, Lemma \ref{GSh2} and Lemma \ref{hitting2}, we have
\begin{eqnarray*} p(t,y,a^*)&\geq &\int_{0}^{{t}/{2}} + \int_{t-2}^{t}p(t-s,a^*,a^*)\mathbb{P}_y(\sigma_{a^*}\in ds)\\
&\asymp& \frac{1}{t(\log{t})^2}\left(1-\frac{\log{|y|}}{\log{\sqrt{t/2}}}\right)+\frac{|y|}{t(\log{t})^2}\asymp \frac{1}{t(\log{t})^2}\ey.  \end{eqnarray*}
$($3$)$ For $y\in \R_{\varepsilon'}^2\cup \{a^*\}$ with $|y|\geq \sqrt{t}/2$, by Theorem \ref{small} and Lemma \ref{GSh2}, we have
\begin{eqnarray*} p(t,y,a^*)&\geq &\int_{t-2}^{t}p(t-s,a^*,a^*)\mathbb{P}_y(\sigma_{a^*}\in ds)\\
&\asymp& \frac{1+\log{|y|}}{(1+\log{(1+t/|y|)})(1+\log{(t+|y|)})}\frac{(|y|+t)^{{1}/{2}}}{t^{{3}/{2}}}\ey\\
&\gtrsim& \frac{1}{t(\log{t})}\ey \gtrsim \frac{1}{t(\log{t})^2}\ey. \end{eqnarray*}
Since ${H}_t(y)\leq (\log{(1+\varepsilon')}^{-2})+(2\log{(1+\varepsilon')})^{-1}\asymp 1$, these estimates are sharp.
\\
{\bf Step2} (the proof of Theorem \ref{d'=2,3} $($i$)$)\\
$($1$)$ For $x,y\in \R_{\varepsilon}^d$ with $1\leq |x|_{\rho} \wedge |y|_{\rho}$, by (\ref{eq:part kernel}), Step1 and Lemma \ref{GSh3}, we have
\begin{eqnarray*} p(t,x,y)\geq \int_0^{{t}/{2}}p(t-s,a^*,x)\mathbb{P}_{y}(\sigma_{a^*}\in ds)+ p_{\R_{\varepsilon}^d}(t,x,y)\hspace{28mm}\\
\gtrsim \frac{1}{t(\log{t})^2|x|^{d-2}}\ex \mathbb{P}_y(\sigma_{a^*} \leq t/2)+(|x|_{\rho}\wedge 1)(|y|_{\rho}\wedge 1)\frac{1}{t^{{d}/{2}}}\e \hspace{-6mm}\\
\asymp \frac{1}{t(\log{t})^2|x|^{d-2}|y|^{d-2}}\exy +\frac{1}{t^{{d}/{2}}}\e .\hspace{23mm}
\end{eqnarray*}
$($2$)$ For $x,y\in \R_{\varepsilon}^d$ with $|x|_{\rho} <1$, by Step1, Lemma \ref{hitting3}, (\ref{eq:part kernel}), Lemma \ref{GSh3}, Lemma \ref{elem3} and Lemma \ref{rholem} (iii), we have
\begin{eqnarray*} p(t,x,y)\geq \int_0^{{t}/{2}}+ \int_{t-1}^t p(t-s,a^*,x)\mathbb{P}_{y}(\sigma_{a^*}\in ds)+p_{\R_{\varepsilon}^d}(t,x,y) \hspace{20mm} \\
\asymp \frac{\mathbb{P}_y(\sigma_{a^*} \leq t/2)}{t(\log{t})^2|x|^{d-2}}+\frac{\ey}{t^{d/2}+t^{3/2}|y|^{{(d-3)}/2}}+\frac{(|x|_{\rho}\wedge 1)(|y|_{\rho}\wedge 1)}{t^{{d}/{2}}}\e \hspace{2mm}\\
\gtrsim \frac{\exy}{t(\log{t})^2|x|^{d-2}|y|^{d-2}}+\frac{\exy}{t^{d/2}} +\frac{(|x|_{\rho}\wedge 1)(|y|_{\rho}\wedge 1)}{t^{d/2}}\e\hspace{1mm}\\
\gtrsim \frac{1}{t(\log{t})^2|x|^{d-2}}\exy +\frac{1}{t^{{d}/{2}}}\e. \hspace{36mm}
\end{eqnarray*}
{\bf Step3} (the proof of Theorem \ref{d'=2,3} $($ii$)$)\\
$($1$)$ For $x,y\in \R_{\varepsilon'}^2$ with $|x|_{\rho}\leq 1,  |y|_{\rho}\leq \sqrt{t}/2$, by Theorem \ref{small}, Lemma \ref{GSh2}, (\ref{eq:part kernel}), and (\ref{eq:on3llll}), we have
\begin{eqnarray*} p(t,x,y)&\geq& \int _{t-2}^{t-1} p(t-s,x,a^*)\mathbb{P}_y(\sigma_{a^*}\in ds)+ p_{\R_{\varepsilon'}^2}(t,x,y)\\
&\gtrsim& \int _{t-2}^{t-1} \frac{e^{-|x|_{\rho}^2/(t-s)}ds}{\sqrt{t-s}} \frac{\log{(1+|y|)}}{t(\log{t})^2}+\frac{(|x|_{\rho}\wedge \!1)(|y|_{\rho}\wedge 1)}{t}\e\\
&\asymp& \frac{\log{(1+|y|)}}{t(\log{t})^2}+\frac{|x|_{\rho}(|y|_{\rho}\wedge 1)}{t}\e\\
&\asymp& \frac{\log{(1+|x|)}\log{(1+|y|)}}{t(\log{t})^2}+\frac{|x|_{\rho}(|y|_{\rho}\wedge 1)}{t}\e\\
&\gtrsim& \frac{\log{(1+|x|)}\log{(1+|y|)}}{t(\log{t})^2}\e\\
&\asymp& \frac{\log{(1+|x|)}\log{(1+|y|)}}{t(\log{(1+t|x|)})(\log{(1+t|y|)})}\e.
\end{eqnarray*}
$($2$)$ For $x,y\in \R_{\varepsilon'}^2$ with $1\leq |x|_{\rho}\leq \sqrt{t}/2, |y|_{\rho}\leq \sqrt{t}/2$, by (\ref{eq:part kernel}), we have
\begin{eqnarray*}p(t,x,y) \geq p_{\R_{\varepsilon'}^2}(t,x,y) \gtrsim \frac{1}{t}\e \gtrsim \frac{\log{(1+|x|)}\log{(1+|y|)}}{t(\log{t})^2}\e\\
\asymp \frac{\log{(1+|x|)}\log{(1+|y|)}}{t(\log{(1+t|x|)})(\log{(1+t|y|)})}\e.
\end{eqnarray*}
$($3$)$ For $x,y\in \R_{\varepsilon'}^2$ with $|x|_{\rho}\leq  1,\  \sqrt{t}/2 \leq |y|_{\rho}$, by Theorem \ref{small}, Lemma \ref{hitting2} and Lemma \ref{rholem} (iii), we have
\begin{eqnarray*}p(t,x,y) &\geq&\int_{t-2}^{t-1}p(t-s,a^*,x)\mathbb{P}_y(\sigma_{a^*}\in ds)\\
&\gtrsim& \frac{1}{t\log{t}}\exy \gtrsim  \frac{\log{(1+|x|)}}{t\log{t}}\frac{|y|_{\rho}^2}{t}\exy\\
&\gtrsim& \frac{\log{(1+|x|)}\log{(1+|y|)}}{t(\log{t})^2}\exy \\
&\asymp& \frac{\log{(1+|x|)}\log{(1+|y|)}}{t(\log{(1+t|x|)})(\log{(1+t|y|)})}\e.
\end{eqnarray*}
$($4$)$ For $x,y\in \R_{\varepsilon'}^2$ with $1\leq |x|_{\rho}\leq \sqrt{t}/2, \sqrt{t}/2 <|y|_{\rho}$, by (\ref{eq:part kernel}), we have
\begin{eqnarray*}p(t,x,y) &\geq& p_{\R_{\varepsilon'}^2}(t,x,y) \gtrsim \frac{1}{t}\e \gtrsim \frac{\log{(1+|x|)}}{t\log{t}}\e\\
&\asymp& \frac{\log{(1+|x|)}\log{(1+|y|)}}{t(\log{(1+t|x|)})(\log{(1+t|y|)})}\e.
\end{eqnarray*}
$($5$)$ For $x,y\in \R_{\varepsilon'}^2$ with $\sqrt{t}/2 \leq |x|_{\rho} \wedge |y|_{\rho}$, by (\ref{eq:part kernel}), we have
\begin{eqnarray*}p(t,x,y) \geq p_{\R_{\varepsilon'}^2}(t,x,y) \gtrsim \frac{1}{t}\e \gtrsim \frac{\log{(1+|x|)}\log{(1+|y|)}}{t(\log{(1+t|x|)})(\log{(1+t|y|)})}\e.
\end{eqnarray*}
{\bf Step4} (the proof of Theorem \ref{d'=2,3} $($iii$)$)\\
$($1$)$ For $x\in \R_{\varepsilon}^d, y\in \R_{\varepsilon'}^2$ with $|x|_{\rho}<1$,  by Theorem \ref{small} and Lemma \ref{hitting2}, we have
\begin{eqnarray*}p(t,x,y)&\geq &\int_{t-2}^{t-1}p(t-s,a^*,x)\mathbb{P}_y(\sigma_{a^*}\in ds)\\
&\gtrsim& \frac{1}{t\log{t}}\frac{1+\log{|y|}}{1+\log{(t+|y|)}}\e\\
&\gtrsim& \frac{1}{t(\log{t})^2}\e\\
&\gtrsim& \left(\frac{1}{t(\log{t})^2|x|^{d-2}}+\frac{{H}_t(y)}{t^{d/2}}\right)\e.
\end{eqnarray*}
$($2$)$For $x\in \R_{\varepsilon}^d, y\in \R_{\varepsilon'}^2$ with $1\leq |x|_{\rho}<|x|\leq \sqrt{t}/2,|y|_{\rho}\leq 1$,  by Step1, Theorem \ref{small}, Lemma \ref{GSh3}, Lemma \ref{hitting3} and ${H}_t(y)\lesssim 1$, we have
\begin{eqnarray*}p(t,x,y)&\geq &\int_0^{{t}/{2}}+\int_{t-2}^{t-1}p(t-s,a^*,y)\mathbb{P}_x(\sigma_{a^*}\in ds)\\
&\gtrsim& \frac{1}{t(\log{t})^2|x|^{d-2}}\e+ \int_{t-2}^{t-1}\frac{e^{-|y|_{\rho}^2/(t-s)}}{\sqrt{t-s}}ds\frac{\ex}{t^{d/2}}\\
&\gtrsim& \left(\frac{1}{t(\log{t})^2|x|^{d-2}}+\frac{{H}_t(y)}{t^{d/2}}\right)\e.
\end{eqnarray*}
$($3$)$ For $x\in \R_{\varepsilon}^d, y\in \R_{\varepsilon'}^2$ with $1\leq |x|_{\rho}<|x|\leq \sqrt{t}/2,\ 1\leq |y|_{\rho}<|y|\leq \sqrt{t}/2$, by Step1 and Lemma \ref{GSh3}, we have
\begin{eqnarray*}p(t,x,y)\geq \int_0^{{t}/{2}}p(t-s,a^*,y)\mathbb{P}_x(\sigma_{a^*}\in ds)
\gtrsim \frac{1}{t(\log{t})^2|x|^{d-2}}\e. \end{eqnarray*}
Furthermore, by Step1 and Lemma \ref{GSh2}, we have
\begin{eqnarray*}p(t,x,y)\geq \int_0^{{t}/{2}}p(t-s,a^*,x)\mathbb{P}_y(\sigma_{a^*}\in ds)
\gtrsim \frac{\e}{t^{d/2}}\left(1-\frac{\log{|y|}}{\log{\sqrt{t/2}}} \right)\\
\gtrsim \frac{{H}_t(y)}{t^{d/2}}\e. \hspace{67.5mm}
\end{eqnarray*}
$($4$)$ For $x\in \R_{\varepsilon}^d, y\in \R_{\varepsilon'}^2$ with $1\leq |x|_{\rho}<|x|\leq \sqrt{t}/2,\  \sqrt{t}/2\leq |y|$,  by Step1 and Lemma \ref{GSh3}, we have
\begin{eqnarray*}p(t,x,y)\geq \int_0^{{t}/{2}}p(t-s,a^*,y)\mathbb{P}_x(\sigma_{a^*}\in ds)
\gtrsim \frac{1}{t(\log{t})^2|x|^{d-2}}\e. \end{eqnarray*}
Furthermore, by Step1, Lemma \ref{GSh2} and ${H}_t(y)=(\log{(1+|y|)})^{-2}$,
\begin{eqnarray*}p(t,x,y)\geq \int_0^{{t}/{2}}p(t-s,a^*,x)\mathbb{P}_y(\sigma_{a^*}\in ds)
\gtrsim \frac{\e}{t^{d/2}\log{|y|}}
\gtrsim \frac{{H}_t(y)}{t^{d/2}}\e.
\end{eqnarray*}
$($5$)$ For $x\in \R_{\varepsilon}^d, y\in \R_{\varepsilon'}^2$ with $\sqrt{t}/2 \leq|x|$, by Step1, Lemma \ref{GSh3}, we have
\begin{eqnarray*}p(t,x,y)\geq \int_0^{{t}/{2}}p(t-s,a^*,y)\mathbb{P}_x(\sigma_{a^*}\in ds)\asymp \frac{1}{t(\log{t})^2|x|^{d-2}}\e,  \end{eqnarray*}
\begin{eqnarray}p(t,x,y)\geq \int_0^{{t}/{2}}p(t-s,a^*,x)\mathbb{P}_y(\sigma_{a^*}\in ds)\asymp \frac{\e}{t^{d/2}}\mathbb{P}_y(\sigma_{a^*}\leq t/2).\label{eq:off23s4}  \end{eqnarray}
By the same way as in Step 4 $($3$)$,$($4$)$, the right hand side of $(\ref{eq:off23s4}) $ is larger than ${H}_t(y)\e/t^{d/2}$ up to a constant multiple. \\
By the symmetry, we have proved all the cases and complete the proof of Theorem \ref{d'=2,3}.\end{proof}

\begin{Rem}
	\rm{We already proved Theorem \ref{d'=1} for the case of $d'=1,d\geq 3$ in Section \ref{lsec1}. Since it is the mixed case of transient and recurrent, it can also be proved by the same way as in this section.}
\end{Rem}


\begin{thebibliography}{99}
	\bibitem{AB} \textsc{R. Atar and A. Budhiraja}, On the multi-dimensional skew Brownian motion, Stochastic Process. Appl. 125 (2015), 1911--1925.
\bibitem{BMR} \textsc{T. Byczkowski, J. Ma\l ecki and M. Ryznar}, Hitting times of Bessel processes, Potential Anal. 38 (2013), 753--786.
	\bibitem{CKS} \textsc{E. A. Carlen,S. Kusuoka and D. W. Stroock}, Upper bounds for symmetric Markov transition functions, Ann. Inst. H. Poincar\'{e} Probab. Statist. 23, (1987), 245--287. 
	\bibitem{CF} \textsc{Z.-Q. Chen and M. Fukushima}, Symmetric Markov processes, time change, and boundary theory, Princeton University Press, Princeton, 2012.
	\bibitem{CL} \textsc{Z.-Q. Chen and S. Lou}, Brownian motion on some spaces with varying dimension, Ann. Probab. 47 (2019), 213--269.
	\bibitem{D} \textsc{T. Delmotte}, Parabolic Harnack inequality and estimates of Markov chains on graphs, Rev. Mat. Iberoamericana 15 (1999), 181--232.
	\bibitem{FOT} \textsc{M. Fukushima, Y. Oshima and M. Takeda}, Dirichlet forms and symmetric Markov processes. 2nd rev. and ext. ed. de Gruyter, Berlin, 2011.
	\bibitem{G3} \textsc{A. Grigor'yan}, The heat equation on noncompact Riemannian manifolds, (Russian) Mat. Sb. 182 (1991), 55--87, translation in Math. USSR-Sb. 72 (1992), 47--77.
	\bibitem{G2} \textsc{A. Grigor'yan}, Heat kernel upper bounds on a complete non-compact manifold, Rev. Mat. Iberoamericana 10 (1994), 395--452.
	\bibitem{G1} \textsc{A. Grigor'yan}, Heat kernel and analysis on manifolds, American Mathematical Society, Providence, International Press, Boston, 2009.
	\bibitem{GIS} \textsc{A. Grigor'yan, S. Ishiwata and L. Saloff-Coste},  Heat kernel estimates on connected sums of parabolic manifolds, J. Math. Pures Appl. 113 (2018), 155--194.
	\bibitem{GSe} \textsc{A. Grigor'yan and L. Saloff-Coste}, Dirichlet heat kernel in the exterior of a compact set, Comm. Pure Appl. Math. 55 (2002),  93--133.
	\bibitem{GSh}\textsc{A. Grigor'yan and L. Saloff-Coste}, Hitting probabilities for Brownian motion on Riemannian manifolds, J. Math. Pures Appl. 81 (2002), 115--142.
	\bibitem{GSc} \textsc{A. Grigor'yan and L. Saloff-Coste}, Heat kernel on manifolds with ends, Ann. Inst. Fourier (Grenoble) 59 (2009), 1917--1997.
	\bibitem{GSs} \textsc{A. Grigor'yan and L. Saloff-Coste}, Surgery of the Faber-Krahn inequality and applications to heat kernel bounds, Nonlinear Anal 131 (2016), 243--272.
	\bibitem{GSn} \textsc{P. Gyrya and L. Saloff-Coste}, Neumann and Dirichlet heat kernels in inner uniform domains, Ast\'{e}risque No. 336, Soc. Math, France 2011.
	\bibitem{L1} \textsc{S. Lou}, Brownian motion with drift on spaces with varying dimension, Stochastic Process. Appl. 129 (2019) 2086--2129.
	\bibitem{L2} \textsc{S. Lou}, Explicit heat kernels of a model of distorted Brownian motion on spaces with varying dimension, arXiv:2001.09226
	\bibitem{S1} \textsc{L. Saloff-Coste}, A note on Poincar\'{e}, Sobolev, and Harnack inequalities. Internat. Math. Res. Notices  no.2 (1992), 27--38.
	\bibitem{St1} \textsc{K.-T. Sturm}, Analysis on local Dirichlet spaces. II. Upper Gaussian estimates for the fundamental solutions of parabolic equations, Osaka J. Math. 32 (1995), 275--312.
	\bibitem{St2} \textsc{K.-T. Sturm}, Analysis on local Dirichlet spaces. III. The parabolic Harnack inequality, J. Math. Pures Appl. 75 (1996), 273--297.
	\bibitem{STW} \textsc{C.-J. Sung, L.-F. Tam and J. Wang}, Spaces of harmonic functions, J. London Math. Soc. 61 (2000), 789--806.
	\bibitem{Z} \textsc{Q. S. Zhang}, The global behavior of heat kernels in exterior domains, J. Funct. Anal. 200 (2003), 160--176.
	\end{thebibliography}
\end{document}